\newtheorem{theorem}{Theorem}
\newtheorem{corollary}[theorem]{Corollary}
\newtheorem{cor}[theorem]{Corollary}
\newtheorem{lemma}[theorem]{Lemma}
\newtheorem{definition}[theorem]{Definition}
\newtheorem{prop}[theorem]{Proposition}
\newtheorem{proposition}[theorem]{Proposition}
\newtheorem{ex}[theorem]{Example}
\newtheorem{remark}[theorem]{Remark}
\newtheorem{claim}[theorem]{Claim}
\newtheorem*{theorem*}{Theorem}
\newcommand{\nc}{\newcommand}
\nc{\dmo}{\DeclareMathOperator}
\nc{\rnc}{\renewcommand}
\rnc{\phi}{\varphi}
\rnc{\qed}{{\nopagebreak \hfill $\dashv$ \par\bigskip}}
\dmo{\pri}{\textsc{Prime}}
\dmo{\Tr}{\textsc{Tr}}
\dmo{\ACA}{\mathbf{ACA}}
\dmo{\RCA}{\mathbf{RCA}}
\dmo{\WWKL}{\mathbf{WWKL}}
\dmo{\WKL}{\mathbf{WKL}}
\dmo{\PA}{\mathbf{PA}}
\dmo{\AD}{\mathbf{AD}}
\dmo{\PD}{\mathbf{PD}}
\dmo{\CH}{\mathbf{CH}}
\dmo{\AC}{\mathbf{AC}}
\dmo{\ZFC}{\mathbf{ZFC}}
\dmo{\ZF}{\mathbf{ZF}}
\dmo{\DC}{\mathbf{DC}}
\dmo{\CC}{\mathbf{CC}}
\dmo{\RH}{\mathbf{RH}}
\dmo{\GC}{\mathbf{GC}}
\dmo{\crit}{\textsc{Crt}}
\dmo{\Ord}{\textsc{Ord}}
\dmo{\cl}{cl}
\dmo{\tc}{tc}
\dmo{\pre}{\textsc{Pre}}
\dmo{\RUlt}{\textsc{Ult}_0}
\dmo{\std}{\textsc{Std}}
\dmo{\id}{\textsc{id}}
\dmo{\ev}{\textsc{ev}}
\dmo{\cof}{\textsc{Cof}}
\dmo{\Seq}{\textsc{Seq}}
\dmo{\pII}{II}
\dmo{\pI}{I}
\dmo{\ot}{\textsc{Ot}}
\dmo{\BS}{\mathbf{\Sigma}}
\dmo{\BP}{\mathbf{\Pi}}
\dmo{\KB}{KB}
\dmo{\Zs}{\EUR}
\dmo{\sh}{\textsc{Sh}}
\dmo{\rot}{\overline{\mathcal{R}}}
\dmo{\MPT}{\mathbf{MPT}}
\dmo{\DT}{\mathcal{DT}}
\dmo{\RT}{\mathcal{RT}}
\dmo{\difflam}{Diff^\infty(\B T, \lambda)}
\nc{\B}[1]{\mathbb{#1}}
\nc{\C}[1]{\mathcal{#1}}
\nc{\Sc}[1]{\mathscr{#1}}
\nc{\la}{\langle}
\nc{\ra}{\rangle}
\nc{\I}[1]{\operatorname{\mathbf{I}}_{#1}}
\nc{\rest}{\mathbin{\upharpoonright}}
\nc{\func}[3][]{\prescript{#2}{#1}{#3}}
\nc{\LV}[1]{L_{#1}(V_{\lambda+1})}
\nc{\wt}{\widetilde}
\nc{\imp}{\rightarrow}
\nc{\var}[1]{\textbf{#1}}
\nc{\F}[1]{\mathfrak{#1}}
\nc{\diff}{\mathbin{\triangle}}
\nc{\diag}{\mathbin{\mathlarger{\mathlarger{\triangle}}}}
\nc{\lb}{\llbracket}
\nc{\rb}{\rrbracket}
\nc{\suffix}[2]{\(#1^{\text{#2}}\)}
\nc{\suf}[2]{\(#1^{\text{#2}}\)}
\nc{\HRule}{\rule{\linewidth}{0.5mm}}
\nc{\wh}{\widehat}
\nc{\rev}[1]{\textsc{Rev}({#1})}
\nc{\mci}{\ensuremath{\mathcal I}}
\nc{\mcl}{\ensuremath{\mathcal L}}
\nc{\mcq}{\ensuremath{\mathcal Q}}
\nc{\mca}{\ensuremath{\mathcal A}}
\nc{\mcr}{\ensuremath{\mathcal R}}
\nc{\mcu}{\mathcal U}
\nc{\mcv}{\mathcal V}
\nc{\fc}{\ensuremath{\mathcal{FC}}}
\nc{\cg}{\ensuremath{\mathcal{CG}}}
\nc{\mcp}{\mathcal P}
\nc{\mch}{\mathcal H}
\nc{\mcw}{\mathcal W}
\nc{\mce}{{\mathcal E}}
\nc{\mcx}{{\mathcal X}}
\nc{\mcc}{{\mathcal C}}
\nc{\mck}{{\mathcal K}}
\nc{\mcb}{\mathcal B}
\nc{\mcs}{{\mathcal S}}
\nc{\mco}{\mathcal O}
\nc{\mcf}{\mathcal F}
\nc{\mbp}{{\ensuremath{\mathbb P}}}
\nc{\mbq}{{\ensuremath{\mathbb Q}}}
\nc{\mbr}{{\ensuremath{\mathbb R}}}
\nc{\uc}{\ensuremath{{\mathbb T}}}
\nc{\rat}{\ensuremath{{\mathbb Q}}}
\nc{\poN}{\mathbb N}
\nc{\poZ}{\mathbb Z}
\nc{\zt}{\mathbb Z_2}
\nc{\poz}{\mathbb Z}
\nc{\bk}{{\mathbb K}}
\nc{\bl}{\mathbb L}
\nc{\poP}{\ensuremath{\mathbb P}}
\nc{\poQ}{\ensuremath{\mathbb Q}}
\nc{\poR}{\ensuremath{\mathbb R}}
\nc{\poB}{\ensuremath{\mathbb B}}
\nc{\sz}{\Sigma^\poZ}
\nc{\rid}[1]{{\overline{\mcr}}_{#1}}	
\nc{\nn}{\mathbb N}
\nc{\bt}{\mathbb T}
\nc{\mcT}{{\ensuremath{\mathcal T}}}
\nc{\mct}{{\ensuremath{\mathcal T}}}
\nc{\mcD}{{\ensuremath{\mathcal D}}}
\nc{\mcd}{{\ensuremath{\mathcal D}}}
\nc{\pf}{{\par\noindent{$\vdash$\ \ \ }}}
\nc{\acts}{\curvearrowright}
\nc{\klb}{\mathit{k}_{\textsc{lb}}}
\nc{\kmax}{\mathit{k}_{\textsc{Max}}}
\newcommand{\kron}{\mathcal K r}
\newcommand{\mcj}{\mathcal J}
\newcommand{\misal}{\not\Downarrow}
\newcommand{\zoo}{{[0,1)}}
\newcommand{\dbar}{\ensuremath{\bar{d}}}
\newcommand{\bfni}[1]{\noindent {{\bf{#1}}}}
\newcommand{\footremember}[2]{%
    \footnote{#2}
    \newcounter{#1}
    \setcounter{#1}{\value{footnote}}%
}
\title{G\"odel Diffeomorphisms}
\author{
    Matthew Foreman\footremember{uci}{University of California,
    Irvine.}\footnote{ Foreman's research was supported by the National Science
    foundation grant DMS-1700143}
   }
\begin{document}

\maketitle

\begin{abstract}
    A basic problem in smooth dynamics is determining if a system can be
    distinguished from its inverse, i.e., whether a smooth diffeomorphism
    $T$ is isomorphic to \(T^{-1}\). We show that this problem is
    sufficiently general that asking it for particular choices of $T$ is
    equivalent to the validity of well-known number theoretic conjectures
    including the Riemann Hypothesis and Goldbach's conjecture. Further one
    can produce computable diffeomorphisms $T$ such that the question of
    whether $T$ is isomorphic to $T^{-1}$ is independent of ZFC.
\end{abstract}

\tableofcontents

\section{Introduction}
\label{sec:Intro}

When is forward time  isomorphic to backward time for a given dynamical
system? When the acting group is \(\B Z\), this asks when a transformation $T$
is  isomorphic to its inverse. It was not until 1951,
that {Anzai \cite{ANZAI}} refuted a conjecture of Halmos and von Neumann by
exhibiting the first example of a transformation where $T$ is not measure theoretically isomorphic to
its inverse.\footnote{
	See for example Math Review MR0047742 where Halmos states ``By
	constructing an example of the type described in the title the author
	solves (negatively) a problem proposed by the reviewer and von Neumann
	[Ann. of Math. (2) 63, 332--350 (1962); MR0006617]."
} 
In fact the general problem is so complex that it cannot be be resolved
using an arbitrary countable amount of information: {in \cite{part3}, it
was shown that the collection of ergodic Lebesgue measure preserving diffeomorphisms of the 2-torus isomorphic to their inverse is complete analytic and hence not Borel.}

In this paper we show that for a broad class of problems there is a one-to-one
computable method of associating a Lebesgue measure preserving diffeomorphism \(T_P\) of
the two-torus to each problem \(P\) in this class so that:

\begin{itemize}
    \item \(P\) is true
\end{itemize}

if and only if

\begin{itemize}
    \item \(T_P\) is measure theoretically isomorphic to \(T_P^{-1}\).
\end{itemize}

The class of problems is large enough to include the \emph{Riemann Hypothesis},
\emph{Goldbach's Conjecture} and statements such as ``\emph{Zermelo-Frankel Set
Theory (\(\ZFC\)) is consistent.}" In consequence, each of these problems is
{equivalent} to the question of whether \(T\cong T^{-1}\) for the diffeomorphism $T$ of 2-torus canonically associated to that problem.

Restating this,  there is an ergodic diffeomorphism of the two-torus \(T_{\RH}\) such that
\(T_{\RH} \cong {T_{\RH}}^{-1}\) if and only if the Riemann Hypothesis holds, and
a different, non-isomorphic ergodic diffeomorphism \(T_{\GC}\) such that \(T_{\GC} \cong
{T_{\GC}}^{-1}\) if and only if Goldbach's conjecture holds, and so forth.
\smallskip

G\"odel's Second Incompleteness Theorem states that for any recursively
axiomatizable theory $\Sigma$ that is sufficiently strong to prove basic
arithmetic facts, if $\Sigma$ proves the statement ``\emph{$\Sigma$ is
consistent}", then $\Sigma$ is in fact \emph{inconsistent}. The statement
``$\Sigma$ is consistent" can be formalized in the manner of the problems we consider.
 Consider
 the most standard axiomatization for mathematics:  Zermelo-Frankel Set Theory with the Axiom of Choice and the  formalization of its consistency, the 
 statement Con(ZFC).

If \(T_{\ZFC}\) is the diffeomorphism associated with Con(ZFC) then (assuming the consistency of
conventional mathematics) the question of whether \(T_{\ZFC}\cong
{T_{\ZFC}}^{-1}\) is independent of Zermelo-Frankel Set Theory---that is, it
cannot be settled with the usual assumptions of mathematics.

One can compare this with more standard independence results, the most prominent
being the Continuum Hypothesis. Those independence results inherently involve
comparisons between and properties of uncountable objects.  The results
in this paper are about the relationships between finite computable objects.

We now give precise statements of the main theorem and its corollaries. The
machinery for proving these results combines ergodic theory and descriptive set
theory with logical and meta-mathematical techniques originally developed by
G\"odel. While the statements use only standard terminology, it is combined from
several fields. We have included 
{several appendices} in an attempt to
convey this background to non-experts. 

There are several standard references for connections between non-computable sets and analysis and PDE's.  We note one in particular with results of Marian Pour-El and Ian Richards that give an example of a wave equation with computable initial data but no computable solution \cite{Pourel}.

\subsection{The Main Theorem}

\noindent As an informal guide to reading the theorem, we say a couple of words.
More formal definitions appear in later sections. 
 
\begin{itemize}
    \item A function $F$ being computable means that there is a computer program
        that on {input $N$ outputs $F(N)$.}
    \item The diffeomorphisms in the paper are taken to be $C^\infty$ and Lebesgue measure preserving. A
        diffeomorphism $T$ is computable if there is a computer program that
        when serially fed the decimal expansions of a pair $(x,y)\in\bt^2$
        outputs the decimal expansions of $T(x,y)$ and for each $n$ there is a
        computable function computing the decimal expansion of the modulus of
        continuity of the \(n\)-th differential.\footnote{Recent work of Banerjee and Kunde in \cite{BK} allow Theorem \ref{thm:main} to be extended to real analytic functions by improving the realization results in \cite{part1}.} Since computable functions have codes, computable diffeomorphisms also can be coded by natural numbers.
        \item By isomorphism, it is meant \emph{measure isomorphism}. Measure preserving transformations $S:X\to X$ and $T:Y\to Y$ are measure theoretically isomorphic if there is a measure isomorphism $\phi:X\to Y$ such that 
        \[S\circ\phi=T\circ S\] 
        up to a sets of measure zero.
    \item In Appendix~\ref{app:ET} we discuss questions such as \emph{Why
        $\poZ$? Why $\bt^2$? Why $C^\infty$?}.
    
        We use the notation \(\difflam\) for the collection of \(C^\infty\)
        measure-preserving diffeomorphisms of \(\B T^2\).
    \item $\Pi^0_1$ statements are those number-theoretic statements that start
        with a block of universal quantifiers and are followed by Boolean
        combinations of equalities and inequalities of polynomials with
        natural number coefficients.
     \item We fix G\"odel numberings: computable ways of enumerating
         $\Pi^0_1$ statements $\la\phi_n:n\in\nn\ra$ and computer programs $\la
         C_m:m\in\nn\ra$. The \emph{code} of $\phi_n$ is $n$, the \emph{code} of
         $C_m$ is $m$. 
     \item Older literature uses the word \emph{recursive} and more recent
         literature uses the word \emph{computable} as a synonym. We use the
         latter in this paper. Indeed, since none of the phenomenon discussed here involve recursive behavior that is not primitive recursive we use \emph{effective}, and \emph{computable} as synonyms for \emph{primitive recursive.}
\end{itemize}

\noindent Here is the statement of the main theorem.\\

\begin{theorem}\emph{(Main Theorem)}
\label{thm:main}
    There is a computable function 
        \begin{equation*}
        \label{thm:main:eqn}
            F:\{\text{Codes for \(\Pi^0_1\)-sentences}\} \to \{\text{Codes for
            computable diffeomorphisms of $\bt^2$}\}
        \end{equation*}
    such that:

    \begin{enumerate}
        \item {\(N\)} is the code for a true statement if and only if {\(F(N)\)}
            is the code for \(T\), where \(T\) is measure theoretically
            isomorphic to \(T^{-1}\);
        \item For \(M\ne N\), \(F(M)\) is not isomorphic to \(F(N)\).
    \end{enumerate}
The diffeomorphisms in the range of \(F\) are Lebesgue measure preserving and ergodic.
\end{theorem}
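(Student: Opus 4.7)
The plan is to combine an effective version of the circular/symbolic construction developed in \cite{part1} and \cite{part3} with an Anosov--Katok style smooth realization, threading the entire process through a primitive-recursive encoding of $\Pi^0_1$-sentences. Given $\phi_N=\forall n\,P_N(n)$ with $P_N$ primitive recursive, I would first build a primitive-recursive tree $\mathcal{T}_N\subseteq\omega^{<\omega}$ whose branching records a search for counterexamples: the tree grows a ``reversal-compatible'' continuation at level $n$ precisely when $P_N(n)$ holds, and branches into an ``incompatible'' sub-tree at the first failure. Thus $\mathcal{T}_N$ is computable uniformly in $N$, and it admits an infinite symmetric branch iff $\phi_N$ is true.

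Next I would feed $\mathcal{T}_N$ into the symbolic circular construction to form a measure-preserving system $\mathbb{K}(\mathcal{T}_N)$. The key lemma to import from the earlier parts reads: $\mathbb{K}(\mathcal{T})$ is measure-theoretically isomorphic to its inverse iff $\mathcal{T}$ admits an infinite symmetric branch of the prescribed form. The backward direction is constructive---given such a branch, one builds a palindromic measure-isomorphism directly from the coding. The forward direction is harder: one must extract a branch of $\mathcal{T}$ from an arbitrary abstract measure-isomorphism $\mathbb{K}(\mathcal{T})\to\mathbb{K}(\mathcal{T})^{-1}$, which is accomplished via the joining and graph-of-isomorphism analysis of \cite{part3}.

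Then I would apply the effective smooth realization theorem of \cite{part1} (with the improvement of \cite{BK}) to each $\mathbb{K}(\mathcal{T}_N)$, producing a $C^\infty$ Lebesgue measure preserving ergodic diffeomorphism $T_N=F(N)$ of $\bt^2$ measure-isomorphic to $\mathbb{K}(\mathcal{T}_N)$, as a limit $T_N=\lim_n h_n\circ R_{\alpha_n}\circ h_n^{-1}$. Because the stage-$n$ data $(\alpha_n,h_n)$ depend only on $\mathcal{T}_N\rest n$ in a uniform primitive-recursive way, $F$ is computable with an effective modulus of continuity for each derivative; and because realization preserves the isomorphism class in both directions on the relevant class of transformations, $T_N\cong T_N^{-1}$ iff $\mathbb{K}(\mathcal{T}_N)\cong\mathbb{K}(\mathcal{T}_N)^{-1}$ iff $\phi_N$ is true, giving part~(1). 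Ergodicity is automatic from the fast-approximation choice of the $\alpha_n$.

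For part~(2), I would prepend to each $\mathcal{T}_N$ a rigid ``signature block'' depending injectively on $N$---for instance reserved initial levels spelling out $N$ in a dedicated alphabet---which then appears as a measure-isomorphism invariant of $\mathbb{K}(\mathcal{T}_N)$ (a distinguished finite factor or spectral marker), and hence of $T_N$, so that $F(M)\not\cong F(N)$ whenever $M\ne N$. The main obstacle throughout is the forward direction of the reversibility lemma in the second step: controlling \emph{every} abstract isomorphism $\mathbb{K}(\mathcal{T}_N)\to\mathbb{K}(\mathcal{T}_N)^{-1}$ tightly enough to read off a branch of $\mathcal{T}_N$ is precisely the ``translation'' between the infinitary isomorphism problem and the finitary combinatorics of the tree, and is the point at which the descriptive-set-theoretic machinery from the earlier papers carries most of the weight. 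A secondary but delicate obstacle is maintaining uniform primitive recursion through the smooth realization, tracking effectivity of all Anosov--Katok approximation rates and conjugation norms in $N$ simultaneously.
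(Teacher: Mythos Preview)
Your overall strategy is correct and matches the paper's: encode truth of $\phi_N$ into a combinatorial object whose ``symmetric branch'' structure controls whether the resulting symbolic system is isomorphic to its inverse, then realize smoothly via Anosov--Katok with primitive-recursive bookkeeping throughout. You also correctly identify the two genuine pressure points---the forward direction of the reversibility lemma (extracting a branch from an arbitrary isomorphism via the joining analysis of \cite{FRW} and \cite{part3}) and tracking effectivity through the realization.

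Two organizational differences are worth noting. First, the paper does not feed the combinatorial data directly into a circular construction; it first builds an \emph{odometer-based} construction sequence $F_\mco(N)$, then applies the categorical functor $\mcf$ from the Global Structure Theorem of \cite{part2} to pass to circular systems, and only then realizes. This factoring is not merely expository: the joining analysis that drives the forward direction of the reversibility lemma is carried out on the odometer side, where the timing mechanism is the odometer factor, and $\mcf$ then transports the conclusion to circular systems with their rotation factor. Second, the ``tree'' here is degenerate---a single stalk of length $\Omega\le\infty$ recording the first failure of $P_N$---so the construction is considerably simpler than the full tree-to-system reduction of \cite{part3}.

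For part~(2), your ``signature block'' idea is in the right spirit but vague as stated; it is not clear what measure-theoretic invariant a finite initial block would produce in these constructions. The paper's mechanism is more concrete and more natural for this setting: assign to each $N$ a distinct prime $P_N$ and take $k_0(N)=P_N$, so that the Kronecker factor of the odometer-based system is the $\la P_N,2,2,\dots\ra$-odometer, and after passing to circular systems the Kronecker factor is rotation by a distinct irrational $\alpha(N)$. Since any isomorphism must match Kronecker factors, $F(M)\not\cong F(N)$ for $M\ne N$. This requires the additional work (Section~\ref{K facts}) of showing that the rotation factor \emph{is} the full Kronecker factor of the circular system, which in turn rests on a non-central-value argument and the joining specifications.
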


We now explicitly draw corollaries.

\begin{cor}
    There is an ergodic diffeomorphism of the two-torus \(T_{\RH}\) such that
    \(T_{\RH}\cong T_{\RH}^{-1}\) if and only if the Riemann Hypothesis holds.
\end{cor}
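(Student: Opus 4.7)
The plan is to deduce the corollary directly from the Main Theorem, the only nontrivial input being a classical reformulation of the Riemann Hypothesis as a $\Pi^0_1$ arithmetic statement.

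First I would recall (or cite) one of the standard arithmetizations of RH. Several are available: for instance, Lagarias's elementary reformulation as ``$\sigma(n) \le H_n + \exp(H_n)\log(H_n)$ for all $n \ge 1$'', or equivalently bounds on partial sums of the Möbius function such as ``$|M(n)| \le n^{1/2+1/k}$ for all sufficiently large $n$ and all $k$'', or the Robin inequality for $n > 5040$. Each of these can, via standard coding of the relevant transcendental functions through effective rational approximations with explicit error bounds, be rewritten as a $\Pi^0_1$-sentence in the sense of the excerpt, i.e., a universal number-quantifier block followed by a Boolean combination of polynomial equalities and inequalities over $\nn$. Call the resulting sentence $\phi_{\RH}$ and let $N_{\RH}$ be its Gödel number under the fixed enumeration $\langle \phi_n : n \in \nn\rangle$.

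Next I would simply apply the computable function $F$ of Theorem~\ref{thm:main} to $N_{\RH}$, and let $T_{\RH}$ be the computable diffeomorphism of $\bt^2$ whose code is $F(N_{\RH})$. By clause~(1) of the Main Theorem,
\[
T_{\RH} \cong T_{\RH}^{-1} \iff \phi_{\RH} \text{ is true} \iff \RH,
\]
where the last equivalence is precisely the content of the arithmetization step. The final sentence of Theorem~\ref{thm:main} guarantees that $T_{\RH}$ is Lebesgue measure preserving and ergodic, which yields the stated conclusion.

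The only real obstacle is the arithmetization of RH, which is not part of Theorem~\ref{thm:main} itself and must be imported. This is well known but requires care: one must verify that the chosen analytic reformulation admits a primitive recursive transcription in which every real-valued quantity (harmonic numbers, exponentials, logarithms, square roots) is replaced by rational approximants with effective error bounds, so that the final predicate is genuinely a Boolean combination of polynomial (in)equalities over $\nn$ under a single universal quantifier block. Once this is established, the rest of the argument is a one-line appeal to the Main Theorem.
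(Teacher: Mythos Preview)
Your proposal is correct and is essentially the same argument the paper gives: the corollary is drawn immediately from Theorem~\ref{thm:main} by citing the known fact (due to Davis--Matijasevi\v{c}--Robinson, with the Lagarias form noted explicitly) that the Riemann Hypothesis is equivalent to a $\Pi^0_1$-sentence, and then applying clause~(1) and the ergodicity statement of the Main Theorem. The paper treats the corollary as immediate and defers the arithmetization details to the cited references and Appendix~\ref{GC}, just as you propose importing them.
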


Similarly:

\begin{cor}
    There is an ergodic diffeomorphism of the two-torus \(T_{\GC}\) such that
    \(T_{\GC}\cong T_{\GC}^{-1}\) if and only if Goldbach's Conjecture holds.
\end{cor}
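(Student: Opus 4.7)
The plan is to reduce Goldbach's Conjecture to a $\Pi^0_1$ sentence and then invoke Theorem \ref{thm:main} directly. First I would write Goldbach's Conjecture---every even integer $n>2$ is a sum of two primes---in the form
\[
\forall n \, \bigl[ (n \text{ is even} \wedge n>2) \imp (\exists p,q \le n)(p \text{ and } q \text{ are prime} \wedge n=p+q) \bigr].
\]
Since the inner existential quantifiers are bounded by $n$, and primality is itself a bounded-quantifier condition, the matrix is decidable. By the Davis--Putnam--Robinson--Matiyasevich theorem (or, more elementarily, by direct bounded-quantifier manipulation combined with a Diophantine encoding), this is equivalent to a sentence of the form $\forall x_1,\dots,x_k\,[P(\vec x) \ne Q(\vec x)]$ for polynomials $P,Q$ with natural number coefficients, which is precisely the template for $\Pi^0_1$ sentences fixed in the bullet list preceding Theorem \ref{thm:main}.

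Next I would let $N$ be the G\"odel code of this $\Pi^0_1$ sentence under the enumeration $\langle \phi_n : n \in \nn\rangle$ fixed in the introduction, and set $T_{\GC}$ to be the computable diffeomorphism whose code is $F(N)$, where $F$ is the function supplied by Theorem \ref{thm:main}. By clause (1) of that theorem, $T_{\GC}$ is measure theoretically isomorphic to $T_{\GC}^{-1}$ if and only if $N$ codes a true $\Pi^0_1$ sentence, i.e., if and only if Goldbach's Conjecture holds. Ergodicity and Lebesgue measure preservation of $T_{\GC}$ follow from the concluding assertion of Theorem \ref{thm:main}.

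The only substantive step is the first one: recasting Goldbach's Conjecture in the precise $\Pi^0_1$ normal form demanded by the introduction. This is routine for logicians but depends on the specific syntactic template chosen. The main obstacle---really an issue of exposition rather than of mathematical depth---is verifying that the bounded existentials over $p,q$ and the primality predicate can be absorbed into a single leading block of universal quantifiers over polynomial (in)equalities. DPRM guarantees this can be done, so no new dynamical or measure-theoretic work is required beyond Theorem \ref{thm:main}.
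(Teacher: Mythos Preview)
Your proposal is correct and matches the paper's approach: the corollary is stated without proof as an immediate consequence of Theorem~\ref{thm:main}, with the $\Pi^0_1$ formulation of Goldbach's Conjecture deferred to Appendix~\ref{GC}. The only remark is that invoking DPRM is heavier machinery than needed; the paper simply writes Goldbach's Conjecture with bounded existential quantifiers in the matrix and relies on the standard fact (discussed in Appendix~\ref{app:LB}) that bounded quantifiers do not raise complexity, so a universal sentence with a $\Delta^0_0$ matrix is already $\Pi^0_1$ in the required sense.
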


There are at least two reasons that this theorem is not trivial.  The first is that the function $F$ is computable, hence the association of the diffeomorphism to the $\Pi^0_1$ statement is canonical.  Secondly the function is one-to-one;
 \(T_{\RH}\) encodes the Riemann hypothesis and \(T_{\GC}\) encodes
Goldbach's conjecture and \(T_{\RH}\not\cong T_{\GC}\).

\begin{cor}
\label{sec:intro:cor:independence}
    Assume that \(\ZFC\) is consistent. Then there is a computable ergodic diffeomorphism
    \(T\) of the torus such that \(T\) is measure theoretically isomorphic to
    \(T^{-1}\), but this is unprovable in Zermelo-Frankel set theory together
    with the Axiom of Choice. 
\end{cor}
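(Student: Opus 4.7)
The plan is to instantiate the Main Theorem at the $\Pi^0_1$ sentence expressing $\operatorname{Con}(\ZFC)$ and then invoke G\"odel's Second Incompleteness Theorem. First, fix a standard recursive axiomatization of $\ZFC$ and form the primitive recursive proof predicate $\operatorname{Prf}_{\ZFC}(n,\phi)$ asserting that $n$ codes a formal deduction of $\phi$ from $\ZFC$. Then $\operatorname{Con}(\ZFC)$ is the sentence $\forall n\,\neg \operatorname{Prf}_{\ZFC}(n,\lceil 0=1\rceil)$, which is manifestly $\Pi^0_1$. Let $N_0$ be its code in the fixed G\"odel numbering of $\Pi^0_1$ sentences, and set $T=F(N_0)$; by Theorem~\ref{thm:main}, $T$ is a computable ergodic Lebesgue-measure-preserving diffeomorphism of $\bt^2$.

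For the existence part, the hypothesis that $\ZFC$ is consistent says precisely that $\operatorname{Con}(\ZFC)$ is true in the standard model $\nn$, so clause~(1) of the Main Theorem yields $T\cong T^{-1}$. For unprovability, the crucial observation is that the construction of $F$ and the biconditional in clause~(1) are effective and arithmetic, so the Main Theorem is itself a theorem of $\ZFC$ (indeed of a much weaker arithmetic subtheory). Instantiating the universally quantified conclusion at $N_0$ and noting that $\phi_{N_0}$ is by definition $\operatorname{Con}(\ZFC)$, one concludes that $\ZFC$ proves
\[
    \operatorname{Con}(\ZFC) \,\leftrightarrow\, T \cong T^{-1}.
\]
If $\ZFC$ proved the right-hand side, then it would prove $\operatorname{Con}(\ZFC)$, contradicting G\"odel's Second Incompleteness Theorem under the assumed consistency of $\ZFC$.

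The main obstacle in this plan is justifying that the Main Theorem is not merely true but \emph{provable} in $\ZFC$. The delicate direction is the backward implication: extracting a $\Pi^0_1$ witness from the bare existence of a measure isomorphism $\phi:\bt^2\to\bt^2$ intertwining $T$ and $T^{-1}$ must be formalizable without appealing to non-standard set-theoretic principles. Since $F$ is computable and the isomorphism arguments underlying the Main Theorem are carried out via explicit combinatorial/symbolic codings of the Anosov--Katok construction, this formalization should transfer smoothly into a weak fragment of arithmetic contained in $\ZFC$; nevertheless, it is the step requiring care, and it is what distinguishes the present application from an informal semantic equivalence.
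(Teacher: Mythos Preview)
Your proposal is correct and follows exactly the approach the paper intends: instantiate the Main Theorem at $\operatorname{Con}(\ZFC)$, use the assumed consistency to get $T\cong T^{-1}$, and then appeal to G\"odel's Second Incompleteness Theorem together with the provability in $\ZFC$ of the biconditional. The ``main obstacle'' you flag---that the Main Theorem is itself provable in $\ZFC$---is addressed in the paper by the remark (Gaebler's Theorem) that the Main Theorem is already provable in $\ACA_0$, a subsystem far weaker than $\ZFC$.
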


We note again that there is nothing particularly distinctive about
Zermelo-Frankel set theory with the Axiom of Choice. We {choose} it for the
corollary because it forms the usual axiom system for mathematics. Thus
Corollary~\ref{sec:intro:cor:independence} states an independence result in a
classical form. Similar results can be drawn for theories of the form ``\(\ZFC\)
+ there is a large cardinal'' or simply ZF without the Axiom of Choice.

Finally, these results can be modified quite easily to produce diffeomorphisms
of (e.g.) the unit disc with the analogous properties. Moreover techniques from
the thesis of Banerjee (\cite{Banerjee}) and Banerjee-Kunde (\cite{BK}) can
be used to improve the reduction $F$ so that the range consists of real analytic
maps of the 2-torus. 

We finish this section by thanking Tim Carlson for asking whether Theorem 
\ref{thm:main} can be extended to lightface $\Sigma^1_1$ statements, which it can 
in a straightforward way.  This increases the collection of statements encoded into 
diffeomorphisms to include virtually all standard mathematical  statements.

\paragraph{Primitive recursion} Informally, primitive recursive functions are those that can be computed by a program that uses only \emph{for} statements and not \emph{while} statements. This means that the computational time can be bounded constructively using iterated exponential maps. In the statements of the results we discuss ``computable functions" but in fact all of the functions constructed are primitive recursive.  In particular the functions and computable diffeomorphisms  asserted to exist in Theorem \ref{thm:main} are primitive recursive.

\subsection{Hilbert's 10th problem}

Hilbert's 10th problem asks for a general algorithm for deciding whether 
Diophantine equations have  integer solutions. The existence of
such an algorithm was shown to be impossible by a succession of results of
Davis, Putnam and Robinson culminating a complete solution by Matijasevi\v{c} in
1970 (\cite{maty, DMR}).

Their solution can be recast as a statement very similar to
Theorem~\ref{thm:main}:
    \begin{quotation}
    \noindent There is a computable function 
        \begin{equation*}
            F:\{\text{Codes for \(\Pi^0_1\)-sentences}\} \to
            \{\text{Diophantine Polynomials}\}
        \end{equation*}
    such that {\(N\)} is the code for a true statement if and only if $F(N)$
    has no integer solutions.
    \end{quotation}
Thus their theorem reduces general questions about the truth of $\Pi^0_1$
statements to questions about zeros of polynomials. Theorem~\ref{thm:main} states that 
there is
an effective reduction of the true $\Pi^0_1$ statements to $C^\infty$
transformations isomorphic to their inverse.

\subsection{\(\Pi^0_1\)-sets and G\"odel numberings}

While the interesting corollaries of Theorem~\ref{thm:main} are about the
Riemann Hypothesis, other number theoretic statements, and
 independence results for dynamical systems, it is actually a theorem about
subsets of \(\B N\). In order to prove it, one has to provide a way of
translating between the interesting mathematical objects as they are usually constructed
and the natural numbers that encode them. This is done by means of \emph{G\"odel
numberings}, natural numbers which \emph{code} the structure of familiar
mathematical objects.

The arithmetization of syntax via \emph{G\"odel Numbers} is one of the main
insights in the proofs of the Incompleteness Theorems. It is used to state
``\(\Sigma\) is consistent" (where \(\Sigma\) is an enumerable set of axioms) as
a \(\Pi^0_1\) statement. G\"odel numberings originally appear in \cite{GODEL},
but are covered in any standard logic text such as \cite{ENDERTON}.

The idea behind G\"odel numberings is very simple: let $\la p_n:n\in\nn\ra$ be
an enumeration of the prime numbers. Associate a positive integer to each
symbol: ``$x$" might be 1, ``$0$" might be 2, ``$\forall$" might be 3 and so on.
Then a sequence of symbols of length $k$ can be coded as {$c=2^{n_1} \cdot
3^{n_2} \cdot 5^{n_3} \cdots p_k^{n_k}$.}

\begin{ex}
    {Suppose we use the following coding scheme:
        \begin{center}
            \begin{tabular}{|l||c|c|c|c|c|c|c|}
                \hline
		\emph{Symbol}&x& 0& $\forall$ & $*$ &$=$ & $($ & $)$\\
		\hline
                \emph{Integer}&1&2& 3& 4& 5& 6& 7 \\
                \hline
            \end{tabular}
	\end{center}}
    Then the G\"odel number associated with the sentence:
        \[\forall x(x*0= 0)\]
    is $c=2^3*3^1*5^6*7^1*11^4*13^2*17^5*19^2*23^7$.
\end{ex}

\noindent Clearly the sentence can be uniquely recovered from its code. With
more work, one can also use natural numbers to effectively code computer programs and their computations, sequences of formulas that constitute a proof and many other objects. The methods  use the Chinese
Remainder Theorem.

We now turn to $\Pi^0_1$ sentences.

\begin{definition}
    A sentence \(\phi\) in the language \(\mathcal L_{\PA}=\{+, *, 0, 1, <\}\)
    is \(\Pi^0_1\) if it can be written in the form \((\forall x_0)(\forall
    x_1) \dots (\forall x_n)\psi\), where \(\psi\) is a Boolean combination of
    equalities and inequalities of polynomials in the variables \(x_0, \dots
    x_n\) and the constants $0,1$. (We do not allow unquantified---i.e.,
    \emph{free}---variables to appear in $\phi$.)
\end{definition}

\noindent It is not difficult to show that
    \begin{equation*}
        \{n:\text{\(n\) is the G\"odel number of a \(\Pi^0_1\) sentence
        in a finite language}\}
    \end{equation*}
is a computable set.

It is however, non-trivial to show that some statements such as the Riemann
Hypothesis and the consistency of \(\ZFC\) are provably equivalent to
\(\Pi^0_1\)-statements. The Riemann Hypothesis was shown to be $\Pi^0_1$ by
Davis, Matijasevi\v{c} and Robinson (\cite{DMR}) and a particularly elegant
version of such a statement is due to Lagarias (\cite{lag}). Appendix \ref{GC} exhibits $\Pi^0_1$-statements that are equivalent to the 
 Riemann Hypothesis (using \cite{lag}) and 
Goldbach's Conjecture.

\paragraph{Truth:} We say a sentence \(\phi\) in the language \(\C L_{PA}\) is
\emph{true} if it holds in the structure \((\mathbb N, +, *, 0, 1, <)\).

\subsection{Effectively computable diffeomorphisms}\label{effcompdif}

Since \(\B T^2\) is compact, a \(C^\infty\)-diffeomorphism \(T\) is uniformly
continuous, as are its differentials. Thus, it makes sense to view their moduli
of continuity as functions \(d: \B N \to \B N\) which say, informally, that if
one wishes to specify the map \((x,y)\mapsto T(x, y)\) to within \(2^{-n}\),
then the original point \((x, y)\) must be specified to within a tolerance of
\(2^{-d(n)}\). With better and better information about \((x,y)\), one can
produce better and better information about \(T(x, y)\). This intuitive notion
is formalized by the definitions given below, and in more detail in
Appendix~\ref{app:LB:CRF1}.\footnote{
    Since diffeomorphisms are Lipshitz, we could
    have worked with computable Lipshitz constants rather then computable moduli
    of continuity. The methods give the same collections of computable diffeomorphisms.} We note in passing that the moduli of continuity and approximations are not uniquely defined.

\begin{definition}[Effective Uniform Continuity]\label{effcont}
    We say that a map \(T:\B T^2 \to \B T^2\) is \emph{effectively uniformly
    continuous} if and only if the following two computable functions exist:
        \begin{itemize}
            \item \textbf{A computable  Modulus of Continuity:} A computable function \(d:
                \B N\to \B N\) which, given a target accuracy $\epsilon$ finds the
                $\delta$ within which the source must be known to approximate
                the function within $\epsilon$. 
                
                More concretely, suppose $T:[0,1)\times [0,1)\to [0,1)\times [0,1)$. View elements in $[0,1)$ as their binary expansions.
                Then the first $d(n)$ digits of each of $(x,y)$ determine the first $n$ digits of the two entries of $T(x,y)$.
        
            \item\textbf{A Computable Approximation:} A computable function \(f: (\{0,
                1\} \times \{0, 1\})^{<\B N} \to (\{0, 1\} \times \{0, 1\})^{<\B
                N}\), which, given the first \(d(n)\) digits of
                the binary expansion of \((x, y)\)---or, equivalently, the
                dyadic rational numbers \((k_x \cdot 2^{-d(n)}, k_y \cdot
                2^{-d(n)})\) for \(0\leq k_x, k_y \leq 2^{d(n)}\) closest to \((x,
                y)\)--- outputs the first \(n\) digits of the binary
                expansion of the coordinates of \(T(x, y)\).
        \end{itemize}
\end{definition}

The diffeomorphisms $T$ we build are $C^\infty$ and map from $\bt^2$ to $\bt^2$. Because we are working on $\bt^2$ we can view 
 $T$ as a map from $\mathbb R^2$ to $\mathbb R^2$. The $k^{th}$ differential is determined by the collection of $k^{th}$ partial derivatives 
 $\{{\partial^k\over \partial^ix\partial^{k-i}y}:0\le i\le k\}$
  of $T$ 
  with respect to the standard coordinate system for $\mathbb R^2$. For $k<\infty$, $T$ is effectively  $C^k$ provided that
for each $n<k$ there are computable $d(n, -)$ and $f(n,-)$ that give the moduli of continuity and approximations to the  partial $n^{th}$ derivatives. Being $C^\infty$ requires that the $d(n,-)$ and $f(n,-)$ exist and are uniformly computable; that is that there is a single algorithm that on every input $n\in \nn$ computes $d(n,-)$ and $f(n,-)$.

 For clarity, in  these definitions we discussed functions with domain and range $\bt^2$.  There is no difficulty generalizing effective uniform continuity  to effectively presented metric spaces. The notion of a computable $C^k$ diffeomorphism  also easily generalized to smooth manifolds $M$ and their diffeomorphisms, using atlases. 
  
We note that computable diffeomorphisms are uniquely determined by the
procedures for computing \(d\) and \(f\) and hence they too may be coded using
G\"odel numbers.  The elements of the range of the function $F$ in Theorem \ref{thm:main} code diffeomorphisms in this manner. 

\paragraph{Inverses of recursive diffeomorphisms} It is not true that the inverse of a primitive recursive function 
$f:\nn\to \nn$ is primitive recursive.  However for primitive recursive diffeomorphisms of compact manifolds it is.  Suppose that $M$ is a smooth compact manifold and $T$ is a $C^\infty$-diffeomorphism.  Then $T$ is a diffeomorphism and hence has uniformly Lipschitz differentials of all orders.  Since $T$ is invertible and $M$ is compact, $T^{-1}$ also has uniformly Lipschitz differentials of all orders.  Moreover the Lipschitz constants for $T^{-1}$  are ``one over" the Lipschitz constants for $T$.  It follows in a straightforward way that the inverse of a primitive recursive diffeomorphism on $M$ is a primitive recursive diffeomorphism. 

\subsection{Reductions}

The key idea for proving Theorem~\ref{thm:main} is that of a \emph{reduction}.

\begin{definition}
    Suppose that $A\subseteq X$ and $B\subseteq Y$ and $f:X\to Y$. Then $f$
    reduces $A$ to $B$ if
        \[x\in A \mbox{ iff } f(x)\in B.\]
\end{definition}

The idea behind a reduction is that to determine whether a point $x$ belongs to
$A$ one looks at $f(x)$ and asks whether it belongs to $B$: $f$ reduces the
question ``$x\in A$" to ``$f(x)\in B$. 

For this to be interesting the function $f$ must be relatively simple. In many cases the spaces $X$ and $Y$ are Polish spaces and 
$f$ is taken to be a Borel map. In this paper $X=Y=\nn$ and $F$ is primitive recursive.

 In
\cite{part3} the function $f$ has domain the space of trees (equivalently,
acyclic countable graphs) and has range the space of measure preserving diffeomorphisms of the
two-torus. It reduces the collection of ill-founded trees (those with an
infinite branch or, respectively, acyclic graphs with a non-trivial end) to
diffeomorphisms isomorphic to their inverse. 

The function $f$ is a Borel map. The point there is that if $\{T:T\cong
T^{-1}\}$ were Borel then its inverse by the Borel function $f$ would also have
to be Borel. But the set of ill-founded trees is known not to be Borel. Hence
the isomorphism relation of diffeomorphisms is not Borel.

In the current context the function $F$ in Theorem~\ref{thm:main} maps from a
computable subset of $\mathbb N$ (the collection of G\"odel numbers of $\Pi^0_1$
statements) to $\mathbb N$. It takes values in the collection of codes for
diffeomorphisms of the two-torus. 

Theorem \ref{thm:main} can be restated as saying that  $F$ is a primitive recursive reduction of the collection $A$ of G\"odel numbers for
true $\Pi^0_1$ statements to the collection $B$ of codes for computable measure
preserving diffeomorphisms of the torus that are isomorphic to their inverses.
For {$N\ne M$} the transformation {$F(N)$} is not isomorphic to
{$F(M)$}.

Thus Theorem~\ref{thm:main} can be restated as saying that the collection of
true $\Pi^0_1$ statements is computably reducible to the collection of measure
preserving diffeomorphisms that are isomorphic to their inverses. In the jargon:
the collection of diffeomorphisms isomorphic to their inverses is
``$\Pi^0_1$-hard."

\subsection{Structure of the paper}

The proof of the main theorem in this paper depends on background in two
subjects, requiring the quotation of key results {that}  would be
prohibitive to prove. The actual construction itself---that is, the reduction
\(F\) of the main theorem---is described in its entirety, along with the
intuition behind these results.

The paper heavily uses results proved in  \cite{FRW}, \cite{part3}, \cite{part2} and  \cite{part1}.
When used, the results are quoted, and informal intuition is
given for the proofs. When specific numbered lemmas, theorems and equations from \cite{part3} are referred to, the numbers correspond to the arXiv version cited in the bibliography.

\paragraph{Structure of the paper}
The logical background required for the proof of Theorem~\ref{thm:main} 
is minimal and the exposition is aimed at an audience with a basic
working knowledge of ergodic theory, in particular the Anosov-Katok method. 
  
Section~\ref{sec:Odom} {defines the odometer-based transformations, a large
class of measure preserving symbolic systems.} These are built by iteratively
concatenating words without spacers. We then construct the reduction $F_\mco$
from the true $\Pi^0_1$ statements to the ergodic odometer-based transformations
isomorphic to their inverse.

Section~\ref{sec:CT} moves from symbolic dynamics to smooth dynamics. This
proceeds in two steps. The first step is to define a class of symbolic systems,
the \emph{circular systems} that are realizable as measure preserving
diffeomorphisms of the two-torus. The second step uses the \emph{Global
Structure Theorem} of \cite{part2}, which shows that the category whose objects
are odometer-based systems and whose morphisms are synchronous and
anti-synchronous joinings is canonically isomorphic with the category whose
objects are circular systems and whose objects are synchronous and
anti-synchronous joinings. Thus the odometer-based systems in the range of
$F_\mco$ can be canonically associated with symbolic shifts that are isomorphic
to diffeomorphisms.

Section \ref{K facts} shows that different elements of the range of $\mcf\circ F_\mco$ are not isomorphic, by showing that their Kronecker factors are different. Sections \ref{sec:CT:ss:tor} discusses diffeomorphisms of the torus and how to realize circular systems using method of \emph{Approximation by Conjugacy} due to Anosov and Katok. Section \ref{sec:CT:ss:tor} builds a primitive recursive map $R$ from circular construction sequences to measure preserving diffeomorphisms of $\bt^2$ such that $\bk^c\cong R(\bk^c)$.

In section~\ref{sec:victory} we argue that the functor $\mcf$ defined in the Global
Structure Theorem is itself a reduction when composed with $F_\mco$.
Hence composing $R$, $\mcf$ and  $F_\mco$ gives a
reduction $F$ from the collection of true $\Pi^0_1$ statements to the collection of
ergodic diffeomorphisms of the torus that are isomorphic to their inverse. This
completes the proof of Theorem~\ref{thm:main}.

The overall content of the paper is summarized by
Figure~\ref{the square}. The reduction to odometer-based systems is $F_{\mathcal
O}$,  $\mathcal F$ is the
functorial isomorphism, the realization as smooth transformations is $R$ and the composition $F$ is the reduction in Theorem \ref{thm:main}. 

\begin{figure}[h!]
    \centering
    \includegraphics[height=.25\textheight]{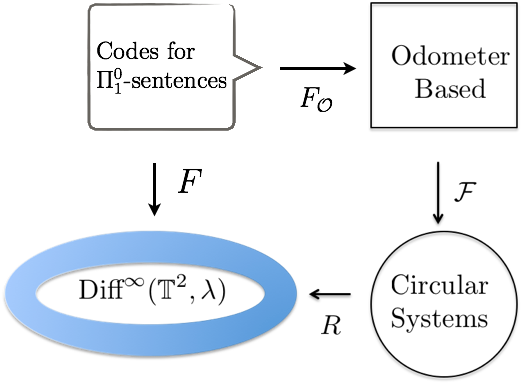}
    \caption{The reduction $F$.}
    \label{the square}
\end{figure}

\paragraph{The Appendix} In the course of the proof of Theorem \ref{thm:main} various numerical
parameters are chosen with complex relationships. The are collected, explicated and shown to be 
coherent in Appendix \ref{NumPar}.

Sections~\ref{sec:Odom} and~\ref{sec:CT} of the body of the paper use certain
standard notions and constructions in ergodic theory and computability
theory. A complete presentation is impossible, but for readers who want an
general overview we present basic ideas from each subject as well exhibit
explicit formulations of certain techniques used in the paper.

Appendix~\ref{app:LB} is an overview of the logical background necessary for
the proof of the theorem. It includes a basic description of $\Pi^0_1$ formulas,
a discussion of bounded quantifiers, how to express Goldbach's conjecture as a
$\Pi^0_1$ formula and the definition of ``truth." Appendix~\ref{app:LB:CT}
gives basic background on recursion theory, computable functions, and primitive
recursion. Appendices~\ref{app:LB:CRF1} and~\ref{app:LB:CRF2} give background on
effectively computable functions. 

Appendix~\ref{app:ET} gives background about ergodic theory and measure theory.
It includes the notion of a measurable dynamical system, the Koopman operator,
and the ergodic theorem. Appendix~\ref{symbsys} describes symbolic systems and
gives the notation and basic definitions and conventions used in this paper.
Appendix~\ref{append:odo} gives basic facts about odometers and odometer-based
systems. These include the eigenvalues of the Koopman Operator associated to an
odometer transformation and the canonical odometer factor associated with an
odometer-based system. Appendix~\ref{factors and joinings} gives basic
definitions including the relationship between joinings and isomorphisms. It
discusses disintegrations and relatively independent products.

Appendix~\ref{diffeos} gives basic definitions of the space of $C^\infty$
diffeomorphisms and gives an explicit construction of a smooth measure preserving
near-transposition of adjacent rectangles. The latter is a tool used in constructing the smooth permutations of
subrectangles of the unit square. These permutations are the basic building
blocks of the approximations to the diffeomorphisms in the reduction. The
section verifies that these are recursive diffeomorphisms with recursive moduli
of continuity and that they can be given primitively recursively. 

\paragraph{Gaebler's Theorem} The writing of this paper began as a collaboration between J. Gaebler and the author  with the goal of recording Foreman's results establishing Theorem \ref{thm:main}. Mathematically, Gaebler was concerned with understanding the foundational significance of Theorem \ref{thm:main}. Though unable to finish this writing project, Gaebler established the following theorem in Reverse Mathematics:

\begin{theorem*}[Gaebler's Theorem] Theorem \ref{thm:main} can be proven in the system ACA$_0$.
\end{theorem*}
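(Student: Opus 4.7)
The plan is to verify that the proof of Theorem~\ref{thm:main} can be formalized in $\ACA_0$ by tracing each stage of the reduction $F = R \circ \mcf \circ F_\mco$ displayed in Figure~\ref{the square}. The definition of $F$ itself poses no obstacle: $F_\mco$, the functor $\mcf$, and the realization $R$ are all primitive recursive operations on finite combinatorial data (alphabets and words, construction sequences, codes for smooth maps with rational moduli of continuity), so $F$ is primitive recursive and its graph is $\Delta^0_1$-definable already in $\RCA_0$. The mathematical content lies in verifying clauses (1) and (2) of Theorem~\ref{thm:main} using only arithmetic comprehension.

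First I would fix the ambient arithmetic framework. Effectively presented diffeomorphisms of $\bt^2$ are coded by natural numbers together with primitive recursive moduli of continuity, so all assertions about them reduce to first-order arithmetic statements about $\nn$. The measure-theoretic machinery invoked from \cite{FRW}, \cite{part2}, and \cite{part3}---shifts on construction sequences, Koopman eigenvalues, synchronous and anti-synchronous joinings, and the Kronecker factor---is each canonically extractable as an arithmetic object from the construction data rather than being introduced as an arbitrary measurable object. In particular, $\ACA_0$ produces, by arithmetic comprehension, the Kronecker factor of any $F(N)$ and any candidate reversing intertwiner synthesized from the construction sequence.

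For the forward implication of clause (1), when the $\Pi^0_1$-sentence coded by $N$ is true, the construction words at every stage of $F_\mco(N)$ admit canonical reversals, and $\mcf$ and $R$ preserve this symmetry stage by stage. Splicing them yields an arithmetically definable measure isomorphism between $F(N)$ and $F(N)^{-1}$, whose existence $\ACA_0$ proves by arithmetic comprehension and whose intertwining property it verifies by arithmetic induction on the construction levels. For the converse and for clause (2), one argues invariant-theoretically: a false $\Pi^0_1$-sentence is witnessed by a specific number at a computable stage, from which the construction thereafter injects an obstruction to reversibility detectable by an arithmetic invariant---the Kronecker factor suffices by Section~\ref{K facts}---so non-isomorphism becomes an arithmetic, rather than genuinely $\Pi^1_1$, assertion. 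The main obstacle is exactly this last point: ``$T \cong T^{-1}$'' appears second-order because it quantifies over all measure isomorphisms, and the essential content of Gaebler's theorem is that this quantifier is inessential for systems in the range of $F$, since both the existence and the nonexistence of a reversing isomorphism are witnessed by arithmetic data already present in the construction sequence. Once this reduction is made explicit, no comprehension beyond arithmetic is invoked anywhere in the argument, and Theorem~\ref{thm:main} fits inside $\ACA_0$.
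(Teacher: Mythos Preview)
The paper does not contain a proof of Gaebler's Theorem: it is announced in the introduction with the sentence ``This result will appear in a future paper \cite{hans},'' and no argument for it is given anywhere in the body. So there is nothing in this paper to compare your proposal against.

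Evaluated on its own, your outline has a real gap in the hard direction of clause~(1). You write that for the converse and for clause~(2) ``the Kronecker factor suffices by Section~\ref{K facts}.'' That is correct for clause~(2)---distinct $N$ give distinct $\alpha(N)$ and hence non-isomorphic rotation factors---but it cannot work for the converse of clause~(1): $T$ and $T^{-1}$ always have isomorphic Kronecker factors, since rotation by $\alpha$ and by $-\alpha$ are conjugate via $z\mapsto\bar z$. The paper's route to ``$\Omega<\infty\Rightarrow \bk\not\cong\bk^{-1}$'' is not an invariant computation at all; it is the classification of \emph{every} ergodic joining of $\bk$ with $\bk^{-1}$ (Theorem~13, Propositions~32 and~37 of \cite{FRW}, transported to circular systems in \cite{part3}), showing that when $\Omega<\infty$ each such joining is a relatively independent extension over a finite-stage factor and hence not a graph joining. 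Taming the $\Pi^1_1$ quantifier over measure isomorphisms therefore means carrying that joining classification---with its disintegrations, ergodic-theorem and Borel--Cantelli inputs, and counts over arbitrary invariant measures---through in $\ACA_0$. You correctly flag this reduction as the crux, but your proposal does not say how it is to be done; that is precisely the content deferred to \cite{hans}.
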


This result will appear in a future paper \cite{hans}. 

\paragraph{Acknowledgements} The author has benefited from conversations with a large
number of people. These include J.~Avigad, T. Carlson, S.~Friedman, M.~Magidor, A.~Nies, T.~
Slaman (who pointed out the analogy with Hilbert's 10th problem), J.~Steel, H.~Towsner, B.~Velickovic and others.  B.~Kra was generous with suggestions for the emphases of the paper and
with help editing the introduction. B. Weiss, was always available and as helpful as usual. Finally my colleague A. Gorodetski was indispensable
for providing suggestions about how to edit the paper to make it more accessible to dynamicists. 

\section{Odometer-Based Systems and Reductions}
\label{sec:Odom}
In this section we prove the existence of the preliminary reduction $F_\mco$. 

\begin{theorem}\label{red to odos}
There is a primitive recursive  function $F_\mco$ from the codes for  \(\Pi^0_1\)-sentences to primitive recursive 
construction sequences for ergodic odometer based transformations 
    such that:

    \begin{enumerate}
        \item {\(N\)} is the code for a true statement if and only if {\(F_\mco(N)\)}
            is the code for a construction sequence with limit \(T\), where \(T\) is measure theoretically
            isomorphic to \(T^{-1}\).
        \item For \(M\ne N\), \(F_\mco(M)\) is not isomorphic to \(F_\mco(N)\).
    \end{enumerate}

\end{theorem}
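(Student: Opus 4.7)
The plan is to adapt the reduction from ill-founded trees to transformations non-isomorphic to their inverse developed in \cite{FRW} and \cite{part3}, replacing the role of the tree with the syntactic structure of a $\Pi^0_1$-sentence. A $\Pi^0_1$-sentence $\phi_N = \forall n\,\psi_N(n)$ comes with a primitive recursive decision procedure for $\psi_N(n)$, so at every stage of the construction one can effectively ask: ``has a counterexample $\psi_N(i)$, $i\le n$, shown up yet?'' This dichotomy will be used exactly as the ``has a branch of the tree terminated yet?'' dichotomy was used before: as long as no counterexample has been seen, continue building a construction sequence that will produce a transformation isomorphic to its inverse; once a counterexample appears, switch into a mode that permanently destroys any possibility of such an isomorphism.

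First I would fix, for each $N$, an initial word length $k_0^N$ and a sequence of multiplicities $\{q_n^N\}$, chosen as a primitive recursive function of $N$, to which the odometer factor of the resulting transformation will be sensitive; this serves the bookkeeping needed for clause (2). Then I would define the construction sequence $\langle W_n^N : n\in\nn\rangle$ stage by stage. At stage $n+1$ I primitively recursively evaluate $\psi_N(0),\dots,\psi_N(n)$. If all are true, I build $W_{n+1}^N$ as a collection of concatenations of words from $W_n^N$ which is closed under word reversal and in which every word's reverse appears with the same frequency as the word itself; inductively this guarantees a symmetry of the invariant measure that yields $T\cong T^{-1}$ in the limit. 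If some $\psi_N(i)$ first fails at stage $n+1$, I switch to an ``asymmetric'' recipe: $W_{n+1}^N$ and all subsequent $W_m^N$ are chosen by the Foreman--Rudolph--Weiss type scheme, which uses the freedom in choosing concatenations to force a specific, un-reversible joining structure on the limiting transformation and thereby obstructs the existence of any measure-theoretic isomorphism with $T^{-1}$. The ``symmetric'' and ``asymmetric'' recipes are both primitive recursive, and the mode switch is controlled by a primitive recursive test, so the entire map $N\mapsto\langle W_n^N\rangle$ is primitive recursive.

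For clause (2), I would exploit the fact that the Kronecker factor of the odometer-based system with multiplicities $\{q_n^N\}$ is canonically the $\{q_n^N\}$-adic odometer, as recalled in Appendix~\ref{append:odo}. By arranging the primitive recursive choice of $\{q_n^N\}$ so that distinct $N$ yield $\{q_n^N\}$-adic odometers with distinct supernatural numbers (equivalently, non-conjugate Kronecker factors), no two $F_\mco(M)$ and $F_\mco(N)$ with $M\ne N$ can be measure-theoretically isomorphic, regardless of truth values. The details justifying that this is possible while maintaining every other constraint are deferred to Section~\ref{K facts}.

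The principal obstacle will be verifying that the ``asymmetric'' mode, once triggered at an arbitrary stage, is robust enough to overwhelm whatever symmetric structure has accumulated in $W_0^N,\dots,W_n^N$ and still produce a $T$ with $T\not\cong T^{-1}$. This is the analogue in the tree setting of the anti-classification argument of \cite{FRW}, and I would invoke those results essentially verbatim, noting that their constructions on ``non-extendable nodes'' apply word-for-word to the post-counterexample tail of $\langle W_m^N\rangle$. A secondary difficulty is matching the two modes at the switching stage so that the resulting sequence still satisfies the uniform combinatorial conditions defining a construction sequence (uniqueness of parsing, prescribed substitution rates, etc.); this amounts to a careful choice of the transition word set $W_{n+1}^N$ and is the kind of bookkeeping handled by the machinery imported from \cite{part3}.
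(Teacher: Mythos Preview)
Your high-level plan---adapt the \cite{FRW} construction, let the primitive-recursive truth check for $\psi_N(0),\dots,\psi_N(n)$ drive the stage-$n$ decision, and separate different $N$'s by forcing distinct odometer (Kronecker) factors---is exactly the architecture the paper uses. Clause~(2) in particular is handled just as you say: the paper assigns a distinct prime $P_N$ as $k_0(N)$, making the odometer factors non-isomorphic.

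The genuine gap is in your implementation of the two ``modes.'' You propose building $W_{n+1}^N$ literally closed under word reversal while no counterexample has appeared, and then switching to an \cite{FRW}-style asymmetric recipe afterward. The paper does \emph{not} do this, and for a substantive reason. In the paper the word collections $\mcw_n$ and the equivalence relations $\mcq_n$ are constructed \emph{identically} regardless of whether $\phi_N$ has failed yet; the only thing that toggles is whether the auxiliary $\poZ_2$-action $\acts_{n+1}$ on $\mcw_{n+1}/\mcq_{n+1}$ is free or trivial. The words themselves are never closed under reversal (specification~Q4 in fact prevents it); the eventual isomorphism $T\cong T^{-1}$, when it exists, is not the naive reversal map but the limit of the graph joinings $\eta_n$ induced by the coherent skew-diagonal $\poZ_2$-actions on the factor tower $\bk_n$. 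This matters because the joining analysis you want to quote from \cite{FRW} (their Theorem~13, Propositions~32 and~37) classifies \emph{all} ergodic joinings of $\bk$ with $\bk^{-1}$, and that classification rests on the counting specifications J10.1, J11, J11.1 holding at every stage from the start. Those specifications are what rule out stray isomorphisms; they are not properties of an ``asymmetric tail'' but global constraints on the entire construction sequence.

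So your worry that the asymmetric tail must ``overwhelm whatever symmetric structure has accumulated'' is well-placed, but the resolution is not a transition argument---it is to never have two modes of word-building in the first place. Build $\mcw_n$, $\mcq_n$ uniformly via the Substitution Lemma so that the \cite{FRW} specifications hold throughout, and let the truth of $\psi_N$ govern only whether $\acts_{n+1}$ is nontrivial. Then Proposition~37 of \cite{FRW} applies directly: if $\Omega=\infty$ the $\eta_n$'s cohere to an isomorphism, and if $\Omega<\infty$ every ergodic joining of $\bk$ with $\bk^{-1}$ is a relatively independent extension of some $\eta_n$, hence not a graph joining, hence $T\not\cong T^{-1}$.
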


\begin{remark}
When discussing the construction of $F_\mco$ and $F$ we will always have the unstated  assumption  that the input $N$ is  a G\"odel number of a  $\Pi^0_1$-statement.

This is justified by remarking that, though formally the domain of $F_\mco$ (and so of $F$) is the collection of $N$ that are G\"odel numbers of $\Pi^0_1$-statements, the collection of G\"odel numbers of $\Pi^0_1$-statements is primitive recursive. Theorem \ref{thm:main} is equivalent to asking that $F$ be  defined on all of $\nn$ and output a code for the identity map when the input is an $N$ that is not a G\"odel number of a  $\Pi^0_1$-statement.

\end{remark}

\subsection{Basic Definitions}\label{base defs} 

Both Odometer Based and Circular symbolic systems are built using
{\emph{construction sequences}}, a tool we now describe. They code cut-and-stack
constructions and give a collection of words that constitute a clopen basis for
the support of an invariant measure.

Fix a non-empty alphabet $\Sigma$. If $\mcw$ is a collection of words in
$\Sigma$, we will say that $\mcw$ is \emph{uniquely readable} if and only if
whenever $u, v, w\in \mcw$ and $uv=pws$ then either:
    \begin{itemize}
        \item $p=\emptyset$ and $u=w$ or
        \item $s=\emptyset$ and $v=w$.
    \end{itemize}
    A consequence of unique readability is that an arbitrary infinite concatenation of words from $\mcw$ can be 
    \emph{uniquely} parsed into elements of $\mcw$.

Fix an alphabet $\Sigma$. A \emph{Construction Sequence} is a sequence of collections of uniquely readable
words $\la\mcw_n:n\in\nn\ra$ with the properties that:
    \begin{enumerate}
    \item Each word in $\mcw_n$ is in the alphabet $\Sigma$.
    \item {F}or each $n$ all of the words in $\mcw_n$ have the same length
        $q_n$. The number of words in $\mcw_n$ will be denoted $s_n$.
    \item {E}ach $w\in\mcw_{n}$ occurs at least once as a subword of every
        $w'\in \mcw_{n+1}$.
    \item \label{not too much space} {T}here is a summable sequence $\la
        \epsilon_n:n\in\nn\ra$ of positive numbers such that for each $n$, every
        word $w\in \mcw_{n+1}$ can be uniquely parsed into segments 
            \begin{equation}
            \label{words and spacers}
                u_0w_0u_1w_1\dots w_lu_{l+1}
            \end{equation}
	such that each $w_i\in \mcw_n$, $u_i\in \Sigma^{<\nn}$ and for this
	parsing
            \begin{equation}
            \label{small boundary numeric}
                \frac {\sum_i|u_i|} {q_{n+1}}<\epsilon_{n+1}.
            \end{equation}
    \end{enumerate}
The segments $u_i$ in condition~\ref{words and spacers} are called the
\emph{spacer} or \emph{boundary} portions of $w$. The uniqueness requirement in
clause 3 implies unique readability of each word in every $\mcw_n$.

Let $\bk$ be the collection of $x\in \Sigma^\poZ$ such that every finite
contiguous subword of $x$ occurs inside some $w\in \mcw_n$. Then $\bk$ is a
closed shift-invariant subset of $\sz$ that is compact if $\Sigma$ is finite.
The symbolic shift $(\bk, sh)$ will be called the \emph{limit} of $\la \mcw_n:n\in\nn\ra$.

\begin{definition}
\label{principal subwords}
    Let $f\in \bk$ where $\bk$ is built from a construction sequence $\la
    \mcw_n:n\in\nn\ra$. Then by unique readability, for all $n$ there is a
    unique $w\in \mcw_n$ and $a_n\le 0<b_n$ such that $f\rest[a_n, b_n)\in
    \mcw_n$. This $w$ is called the \emph{principal $n$-subword} of $f$.
    If the principal $n$-subword of $f$ lies on $[a_n, b_n)$ we define $r_n(f)=-a_n$, the location of $f(0)$ relative to the interval $[a_n,b_n)$.
    \smallskip 
    
    The construction sequences built in this paper are \hypertarget{stun}{\emph{strongly uniform}} in
that for each $n$ there is a number $f_n$ such that each word $w\in \mcw_n$
occurs exactly $f_n$ times in each word $w'\in \mcw_{n+1}$. It follows that
$(\bk, sh)$ is uniquely ergodic.

    \end{definition} 
We note that in definition~\ref{principal subwords} we must have
$b_n-a_n=q_n$.
 
\paragraph{Notation} For a word $w\in \Sigma^{<\nn}$ we will write $|w|$ for
the length of $w$.

\paragraph{Inverses and reversals} If $\bk$ is a symbolic shift built from a construction sequence $\la
\mcw_n:n\in\nn\ra$ then we can consider its inverse in two ways. The first is
$(\bk, \sh^{-1})$. The second, which we call $\rev{\bk}$ is the system built
from the construction sequence $\la \rev{\mcw_n}:n\in\nn\ra$ where
$\rev{\mcw_n}$ is the collection of reversed words from $\mcw_n$: if $w\in
\mcw_n$ then $w$ written backwards belongs to $\rev{\mcw_n}$. Clearly $(\bk,
\sh^{-1})$ is isomorphic to $(\rev{\bk}, sh)$ and we will use both conventions
depending on context.

\paragraph{Odometer Based construction sequences} A construction sequence with $\mcw_0=\Sigma$ and
built without spacers is called an \emph{odometer-based} construction sequence.
For odometer-based sequences, {Clause 3} of the definition of \emph{Construction Sequence} implies that for odometer based systems
 $\mcw_{n+1}\subseteq\mcw_n^{k_n}$ for some sequence $\la k_n:n\in\nn\ra$ of natural numbers with
$k_n\ge 2$. Hence $|\mcw_{n+1}|\le |\mcw_n|^{k_n}$. In the special case of
odometer sequences we write the length of words in $\mcw_n$ as $K_n$. We note
that $K_n=\prod_{m=0}^{n-1}k_m$.

The sequence $\la k_n:n\in\nn\ra$ determines an \emph{odometer} transformation with domain the
compact space
    \begin{equation*}
        \boldmath{O} =_{def} \prod_n\poz_{k_n}.
    \end{equation*}

The space $\boldmath O$ is naturally a monothetic compact abelian group. We will
denote the group element $(1, 0, 0, 0,\dots)$ by $\bar{1}$, and the result of
adding $\bar{1}$ to itself $j$ times by \hypertarget{barj}{$\bar{j}$}. There is a natural map of
$\boldmath O$ given by $\mco(x)=x+\bar{1}$. Then $\mathcal O$ is a topologically
minimal, uniquely ergodic invertible homeomorphism of $\boldmath O$ that
preserves Haar measure. The map $x\mapsto -x$ is an isomorphism of $\mco$ with
$\mco^{-1}$. (See Appendix~\ref{append:odo} and \cite{FRW} for more background.)

Odometer transformations are characterized by their Koopman operators. They are
discrete spectrum and the group of eigenvalues is generated by the {$K_n$-th}
roots of unity.

\paragraph{{The odometer factor}} If $\bk$ is built from an odometer-based construction sequence and the principal
$n$-subword of $f$ sits at $[-a_n, b_n)$ then the sequence $\la a_n:n\in\nn\ra$
gives a well defined member $\pi_\mco(f)$ of $\boldmath O = \prod_i\poz_{k_i}$.
It is easy to verify that the map $f\mapsto \pi_\mco(f)$ is a factor map. 

A measure preserving transformation is \emph{odometer-based} if it is finite
entropy, ergodic and has an odometer factor. It is shown in \cite{measures}
that every odometer-based transformation is isomorphic to a symbolic shift with
an odometer-based construction sequence.

\subsection{Inverses and factors induced by equivalence relations}
\label{slow and easy}

Fix an odometer based construction sequence $\la \mcw_n:n\in\nn\ra$. If $\mcq$ is an equivalence relation on $\mcw_n$, then elements of $\bk$ can be
viewed as determining sequences of equivalence classes. More precisely if
$\Sigma^*$ is the alphabet consisting of classes $\mcw_n/\mcq$ we can consider
the collection $\mcw_n^*$ of words of length $K_n$ that are constantly equal to
an element of $\Sigma^*$. Let $m>n$. Then for some $K$, the words in $\mcw_m$
are concatenations of $K$-words from $\mcw_n$. Viewed this way, the words in $\mcw_m$
determine a sequence of $K$ many elements of $\mcw_n^*$. Concatenating them we get a word of length 
$K_m$ that is constant on contiguous blocks of length $K_n$.  Let $\mcw_m^*$ be the
collection of words in the alphabet $\Sigma^*$ arising this way. There is a
clear projection map $\pi:\mcw_m\to \mcw_m^*$ that sends two words in $\mcw_m$
to the same word in $\mcw_m^*$ if they induce the same sequence of
$\mcq$-classes. 

Equivalently define the \emph{diagonal} equivalence relation $\mcq^K$ on
$\mcw_n^K$ by setting
    \begin{equation*}
        w_0w_1\dots w_{K-1}\sim w'_0w'_1\dots w'_{K-1}
    \end{equation*}
if and only if for all 
$i, w_i\sim_\mcq w'_i$. Then for two words $w, w'\in \mcw_m, \pi(w)=\pi(w')$ if
and only if $w\sim_{\mcq^K}w'$. Similarly let \(w\in{(\mcw_n/Q)}^K\) and
\(w'\in\mcw_m^K\). Then \(w'\) is a \hypertarget{substitution
instance}{\emph{substitution instance}} of \(w\) if and only if
        \begin{equation*}
            w' = w_0w_1\cdots w_{K-1}\ \text{and}\ w =
            [w_0]_Q[w_1]_Q\cdots[w_{K-1}]_Q.
        \end{equation*}

The sequence $\la \mcw_m^*:m\ge n\ra$ determines a well-defined odometer-based
construction sequence in the alphabet $\Sigma^*$. If we define $\bk_\mcq$ to be
the limit of $\la \mcw_m^*:m\ge n\ra$ then there is a canonical factor map
$\pi_\mcq:\bk\to \bk_\mcq$.

We now discuss how this factor map behaves with inverse transformation. Suppose
that $\poZ_2$ acts freely on $\Sigma^*=\mcw_n/\mcq$. Then for all $K$ we can extend this
action to $(\Sigma^*)^K$ by the \hypertarget {skew-diagonal}
{\emph{skew-diagonal}} action. Suppose that $g$ is the generator of $\poZ_2$.
Define
    \begin{equation*}
        g\cdot([w_0]_\mcq[w_1]_\mcq\dots [w_{K-1}]_\mcq)=g\cdot[w_{K-1}]_\mcq
        g\cdot[w_{K-2}]\dots g\cdot[w_0].
    \end{equation*}
Assume that $\mcw_m^*$ is closed under the skew-diagonal action. Let
    \begin{equation*}
        w=[w_0][w_1][w_{K-1}]\in \mcw_m^*.
    \end{equation*}
Then we can apply $g$ pointwise to the $[w_i]$; i.e. the diagonal action. Since
$\mcw_m^*$ is closed under the skew-diagonal action, the word $g[w_0]g[w_1]\dots
g[w_{K-1}]\in \rev{\mcw_m^*}$.\footnote{We note in passing that being closed under the skew diagonal action 
does not imply that $\mcw_m/\mcq_n$ is closed  under
reverses.}

\begin{lemma}\label{for hans}
    Suppose for all $m>n, \mcw_m^*$ is closed under the skew-diagonal action of
    $g$. Then $\bk_\mcq\cong \rev{\bk_\mcq}$ and the isomorphism takes an $f\in
    \bk_\mcq$ with associated odometer sequence $x$ to an element of $\rev{\bk_\mcq}$ determined by the diagonal action that has 
    associated odometer sequence $-x$. 
\end{lemma}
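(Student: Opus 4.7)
The plan is to define the isomorphism directly by the pointwise diagonal action: let $\Phi:\bk_\mcq\to(\Sigma^*)^\poZ$ be given by $\Phi(f)(i)=g\cdot f(i)$. The first task is to verify that $\Phi(f)\in\rev{\bk_\mcq}$, and this is precisely where the skew-diagonal hypothesis enters. For any $m>n$, the principal $m$-subword $w=[w_0][w_1]\cdots[w_{K-1}]\in\mcw_m^*$ of $f$ is carried by $\Phi$ to the word $g[w_0]g[w_1]\cdots g[w_{K-1}]$ obtained by applying $g$ letter-by-letter. This image is precisely the reverse of the skew-diagonal image $g[w_{K-1}]g[w_{K-2}]\cdots g[w_0]$. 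By the hypothesis that $\mcw_m^*$ is closed under the skew-diagonal action, the skew-diagonal image lies in $\mcw_m^*$, so its reverse lies in $\rev{\mcw_m^*}$; hence every finite subword of $\Phi(f)$ is contained in some element of $\rev{\mcw_m^*}$ at every sufficiently large level, which is the membership criterion for $\rev{\bk_\mcq}$.

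Shift-equivariance of $\Phi$ is immediate from the pointwise definition, and because $g$ is an involution, $\Phi$ is a continuous involutive bijection (so $\Phi^{-1}$ is given by the same formula). To upgrade to a measure isomorphism I would invoke unique ergodicity: the strongly uniform construction sequence for $\bk$ yields unique ergodicity of $\bk$, hence of its factor $\bk_\mcq$, and analogously for $\rev{\bk_\mcq}$; the pushforward of the unique invariant measure on $\bk_\mcq$ under $\Phi$ is shift-invariant on $\rev{\bk_\mcq}$, so it must equal the unique invariant measure there.

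For the final claim about odometer sequences, the key observation is that $\Phi$ leaves the interval $[-a_m,b_m)$ of the principal $m$-subword unchanged; the subword sitting at that location simply transitions from being an element of $\mcw_m^*$ to being an element of $\rev{\mcw_m^*}$. Tracking the positions of principal subwords through the canonical identification $\rev{\bk_\mcq}\cong(\bk_\mcq,sh^{-1})$, which at the odometer level is the anti-isomorphism $x\mapsto -x$ of $\boldmath O$ recalled earlier in the paper, yields the odometer value $-x$ for $\Phi(f)$ as claimed. Substantively the proof rests entirely on the skew-diagonal closure hypothesis; the main obstacle is purely bookkeeping, namely keeping straight the various conventions for the odometer factor on $\bk_\mcq$ versus $\rev{\bk_\mcq}$ when passing through the identification with $(\bk_\mcq,sh^{-1})$.
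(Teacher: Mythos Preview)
Your proof is correct and follows essentially the same approach as the paper: define the map pointwise by the diagonal action $f\mapsto g\cdot f$, use skew-diagonal closure of $\mcw_m^*$ to verify the image word $g[w_0]\cdots g[w_{K-1}]$ lies in $\rev{\mcw_m^*}$, and observe shift-equivariance and invertibility. The paper's version is terser---it simply declares the resulting shift-equivariant bijection an invertible graph joining without spelling out the measure-preservation or odometer bookkeeping you provide---but the substance is the same.
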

    
\pf The sequence $\la \rev{\mcw_m^*}: m\ge n\ra$ is a construction sequence for
$\rev{\bk_\mcq}$. The map \[ [w_0][w_1]\dots [w_{K-1}]\mapsto
g[w_0]g[w_1]\dots g[w_{K-1}]\in \rev{\mcw_m^*}\] is an invertible
shift-equivariant map defined on the construction sequences for $\bk_\mcq$ and
$\rev{\bk_\mcq}$ and hence defines an invertible graph joining $\eta_g$ from
$\bk_\mcq$ to $\rev{\bk_\mcq}$\qed 

We note that the graph joining $\eta_g$ does
not depend on which elements of $\mcw_n$ are identified by $\mcq$. Moreover to recover $\rev{\bk}$ from 
$\rev{\bk_\mcq}$ one substitutes the appropriate \emph{reverse} words $\rev{w}$
into a $\mcq$-class $\mcc$.  Frequently
the graph joining $\eta_g$ of $\bk_\mcq$ with $\rev{\bk_\mcq}$ does not come
from a graph joining of $\bk$ with $\rev{\bk}$.

In the construction in \cite{FRW}, which we modify in this paper, this process
is iterated: there is an equivalence relation $\mcq_1$ on $\mcw_{n_1}$ and
another equivalence relation $\mcq_2$ on $\mcw_{n_2}$ with $n_1<n_2$ and
$\mcq_2$ a refinement of the product equivalence relation $\mcq_1^K$ (for the
appropriate $K$). There will be two copies of $\poZ_2$ generated by $g_1$ and
$g_2$ with $g_1$ acting freely on $\mcw_{n_1}/\mcq_1$ and $g_2$ acting freely on $\mcw_{n_2}/\mcq_2$. 

For $i=1, 2$ denote $\mcw_m/(\mcq_i)^K$ by $(\mcw_m^*)_i$. We build two construction sequences consisting of collections of words
made up of equivalence classes $\la (\mcw^*_m)_1:m\ge n_1\ra$ and $\la(\mcw^*_m)_2:m\ge
n_2\ra$ which we assume are closed under the skew-diagonal actions of $g_1$ and
$g_2$ respectively. Let $\bk_1$ be the limit of $\la (\mcw^*_m)_1:m\ge n_1\ra$ and $\bk_2$ the limit of 
$\la(\mcw^*_m)_2:m\ge n_2\ra$.

Then we get a tower 
    \begin{equation*}
        \begin{diagram}
            \node{\bk}\arrow[1]{s}\\
            \node{\bk_{2}}\arrow[1]{s}\\
            \node{\bk_1}
        \end{diagram}
    \end{equation*}
Suppose the $g_2$ action on $\mcq_{2}$ is
\hypertarget {subordinate} {\emph{subordinate}} to the $g_1$ action on
$\mcw_{n_2}/(\mcq_1)^K$; that is, whenever $\mcc_1$ and $\mcc_2$ are classes of
$\mcw_{n_2}/(\mcq_1)^K$ and $\mcw_{n_2}/\mcq_2$ and $\mcc_2\subseteq\mcc_1$,
then $g_2\mcc_2\subseteq g_1\mcc_1$. 
 
Then the various projection maps between $\bk$, $\bk_{\mcq_2}$ and $\bk_{\mcq_1}$   commute
with the shift and the joining $\eta_{g_2}$ of $\bk_{\mcq_2}\times
\rev{\bk_{\mcq_2}}$ extends the joining $\eta_{g_1}$ of $\bk_{\mcq_1}\times
\rev{\bk_{\mcq_1}}$. Given an infinite sequence of equivalence relations
$\mcq_i$, the associated joinings cohere into an invertible graph joining of
$\bk$ with $\rev{\bk}$ if and only if the $\sigma$-algebras associated with the
$\bk_{\mcq_i}$ generate the measure algebra on $\bk$.

\paragraph{Diagonal vs Skew-diagonal actions.}
Since $\acts_n$ extends to both the diagonal and skew-diagonal actions, we summarize the distinct rolls:

\begin{itemize}
	\item The skew-diagonal actions give closure properties on $\mcw_m/\mcq^m_n=(\mcw_m^*)_n$ . 
	\item Because of this closure the diagonal action gives an isomorphism 
	between $\bk_n$ and $\rev{\bk}_n$.  This approximates a potential isomorphism from 
	$\bk$ to $\rev{\bk}$. 
\end{itemize}

\subsection{Elements of the construction} \label{elements}

The construction of the first reduction $F_\mco$ closely parallels the
construction in \cite{FRW} and we refer the reader to that paper for details of
claims made here. For each $N$ the routine $F_\mco(N)$ inductively builds an odometer construction sequence 
$\la \mcw_n : n\in\nn\ra$ in the 
alphabet $\Sigma=\{0,1\}$ with $\mcw_{n+1}\subseteq \mcw_n^{k_n}$.  During the construction we will accumulate inductive numerical requirements.  
Some, such as the $\epsilon_n$'s and the $\varepsilon_n$'s are positive numbers that go to zero rapidly.  Some, such as the $k_n$'s and $l_n$'s are sequences of 
natural numbers that go to infinity.  These numbers depend on $N$, so when necessary we will write $\mcw_n(N)$, $\epsilon_n(N)$, $k_n(N)$, $l_n(N)$ and so forth. 
However for notational simplicity we will drop the $N$ whenever it is clear from context. At stage $n$ in the algorithm $F(N)$ for building $\mcw_n(N)$, for $M<N$ $F$ can recursively refer to objects build by $F(M)$ at stages $\le n$. For example $F(N)$ can assume that $k_n(N-1)$ is known.
 
These sequences of numbers are defined inductively and have complex relationships, requiring some 
verification that they are consistent and can be chosen primitively recursively.  That they are 
consistent is the content of section 10 of \cite{part3}. That they can be chosen primitively 
recursively involves a routine review of the arguments in that paper.  For completeness this is 
done in Appendix \ref{NumPar}.

\begin{description}
\item[Numerical Requirement A]  $s_n=2^{(n+1)e(n-1)}$ for an increasing sequence of natural numbers $e(n)$.
\end{description}

The construction
will use the following auxiliary objects and their properties:
    \begin{enumerate}
    \item A sequence of equivalence relations $\la \mcq_n:1\le n<\infty\ra$.
        Each $\mcq_n$ is an equivalence relation on $\mcw_n$, hence gives a
        factor $\bk_n$ of $\bk$.
    \item The equivalence relation $\mcq_{n+1}$  refines the product
        equivalence relation $(\mcq_n)^{k_n}$ on $\mcw_n^{k_n}$. 
    \item \hypertarget{Ass3}{The sub-$\sigma$-algebra} \(\C H_n\) of $\mcb(\bk)$ corresponding to
        $\bk_n$. In the construction here, as with the original construction,  $\bigcup_n\mch_n$ will generate 
        $\mcb(\bk)$ modulo
        the sets of measure zero with respect to the unique shift-invariant
        measure $\mu$. (This is Lemma \ref{cheat cheat} which uses  \hyperlink{Q4}{specification Q4}.)
    
    {We denote the sub-$\sigma$-algebra of $\mcb(\bk)$ corresponding to the odometer fact by $\mch_0$. 
        Because the odometer factor sits in side each $\bk_n$, $\mch_0\subseteq \mch_n$ for all $n$. }
        
    \item A system of free $\poZ_2$ actions $\acts_n$ on $\mcw_n/\mcq_n$ for
        $n<\Omega$.     
        Denote the generator of $\poZ_2$ corresponding to $\acts_n$ as $g_n$.
    \end{enumerate}

Suppose that $n<m$. As in section \ref{slow and easy}, the words in $\mcw_{m}$ are concatenations of
$K=K_m/K_n$-many words from $\mcw_n$. Hence the product equivalence relation
$(\mcq_n)^K$ gives an equivalence relation on $\mcw_{m}$, which we  call
$\mcq^m_n$.     We will denote $\mcw_m/\mcq^m_n$ by $(\mcw_m^*)_n$.
The $\poZ_2$ actions have the following properties:
    \begin{itemize}
    	 \item $(\mcw_m^*)_n$ is closed under the skew-diagonal action of $g_n$.       
        \item If $n+1<\Omega$, then the $g_{n+1}$ action is \hyperlink{subordinate}{subordinate} to the
            $g_n$ action.
        \item We let $\acts_n$ be the diagonal action of $g_n$ on $\bk_n$.  Since 
        $(\mcw_m^*)_n$ is closed under the skew-diagonal action, $\acts_n$ can be 
        viewed as mapping $(\mcw_m^*)_n$ to $\rev{(\mcw_m^*)_n}$. As described in 
        section~\ref{slow and easy}, for $n < \Omega$, 
            $\acts_n$ canonically creates an isomorphism between $\bk_n$ and
            $\bk_n^{-1}$ that induces the map $x\mapsto -x$ on the odometer
            factor.
    \end{itemize}

Restating this:
if the action $\acts_n$ is non-trivial, then it induces a graph joining $\eta_n$
of $\mch_n$ with $(\mch_n)^{-1}$ that projects to the map $x\mapsto -x$ on the
odometer factor. Assuming $n+1<\Omega$, the action $\acts_{n+1}$ is
{subordinate} to $\acts_n$, the joining $\eta_{n+1}$
projects to the joining $\eta_n$. If $\Omega=\infty$, since the
$\bigcup_n\mch_n$ will generate $\mcb(\bk)$, the $\eta_n$'s will consequently
cohere into a conjugacy of $T$ with $T^{-1}$.

Lemmas 26 and 27 of \cite{FRW} formalize this and show the following conclusion.

\begin{lemma}
    Suppose $\Omega=\infty$. Then there is a measure isomorphism
        $\eta$ of $\bk$ with $\bk^{-1}$ such that for all $n \in \nn$, $\eta$
    induces an isomorphism $\eta_n : \bk_n \to \bk_n$ that coincides with the graph joining determined by the    action of the generator for $\acts_n$ on $\bk_n$.
\end{lemma}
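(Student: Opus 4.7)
The plan is to realize $\eta$ as the inverse limit of the coherent family of graph joinings $\{\eta_n\}_{n\in\nn}$, exploiting the hypothesis that $\bigcup_n \mch_n$ generates $\mcb(\bk)$ modulo null sets (the content of Lemma \ref{cheat cheat}).

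First I would check that $\{\eta_n\}$ is coherent: the canonical factor projections $\pi_{n+1,n} : \bk_{n+1} \to \bk_n$ and their reverse-system counterparts push $\eta_{n+1}$ forward to $\eta_n$. This is exactly the content of the subordination hypothesis, since it forces the diagonal action of $g_{n+1}$ on equivalence classes in $(\mcw_m^*)_{n+1}$ to descend under the projection to the diagonal action of $g_n$ on $(\mcw_m^*)_n$. Hence the shift-equivariant bijection on construction-sequence words defining $\eta_{n+1}$ restricts, after coarsening the equivalence relation, to the one defining $\eta_n$.

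Second, since each $\eta_n$ is a graph joining of $\bk_n$ with $\rev{\bk_n}$, it is encoded by a measure-algebra isomorphism $\eta_n^{*} : \mch_n \to \mch_n^{-1}$, and coherence says these isomorphisms commute with the inclusions $\mch_n \hookrightarrow \mch_{n+1}$ on each side. Thus $\{\eta_n^{*}\}$ assembles into a single shift-equivariant isomorphism $\eta^{*}$ on the directed union $\bigcup_n \mch_n$. Because $\bigcup_n \mch_n$ is dense in $\mcb(\bk)$, the map $\eta^{*}$ has a unique isometric extension to an isomorphism $\mcb(\bk) \to \mcb(\bk^{-1})$, and standard measure-algebra theory then produces a measure-preserving point-map $\eta : \bk \to \bk^{-1}$ realizing $\eta^{*}$. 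By construction, for every $n$ the restriction of $\eta^{*}$ to $\mch_n$ is $\eta_n^{*}$, so $\eta$ induces $\eta_n$ on each factor, as required.

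The main obstacle is ensuring that the graph (equivalently, the isomorphism) property survives the limit. In general, an inverse limit of graph joinings need not be a graph joining; here the property is preserved precisely because the coherent family of measure-algebra isomorphisms is defined on a subalgebra that generates the whole measure algebra. Equivalently, the disintegration of $\eta$ over any $\mch_n$ is a point mass concentrated on the graph of $\eta_n$, and a martingale-convergence argument as $n\to\infty$ transfers this to the full $\sigma$-algebra, showing $\eta$ itself is supported on the graph of a measure-preserving bijection. As the text indicates, this is essentially the formalization carried out in Lemmas 26 and 27 of \cite{FRW}, which may be quoted directly in the symbolic setting at hand.
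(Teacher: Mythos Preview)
Your proposal is correct and follows essentially the same route as the paper. In fact the paper does not give a self-contained proof here: it simply records the mechanism (subordination gives coherence of the $\eta_n$, and generation of $\mcb(\bk)$ by $\bigcup_n\mch_n$ lets them assemble into a single conjugacy) and then cites Lemmas~26 and~27 of \cite{FRW} for the formal details---exactly as you anticipate in your final sentence.
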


The construction is arranged so that if the number $\Omega$ in clause 3 of the
description of the objects is finite, then $\bk\not\cong\bk^{-1}$. This is done
by making the sequences of equivalence classes of elements of $(\mcw_m^*)_n=\mcw_m/\mcq^m_n$
essentially independent of their reversals subject to the conditions described
above. The specifications given later in this section make this precise.

\subsection{An overview of $F_\mco$.}
\label{sec:Odom:ss:red}

The algorithm for the  reduction $F_\mco$ is diagrammed in Figure~\ref{diagramOfF}.

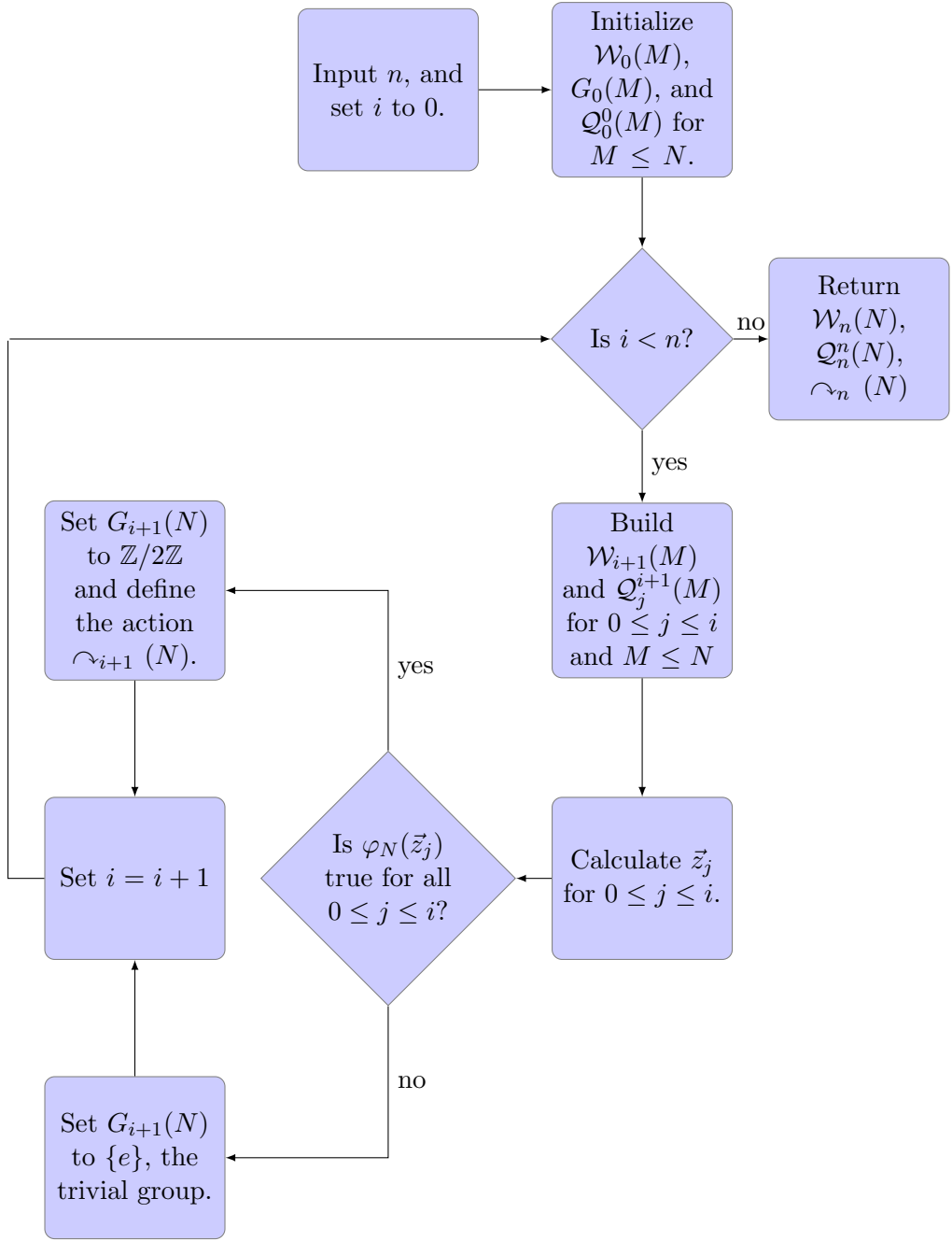
\begin{figure}
  \begin{tikzpicture}[
    decision/.style = {
      draw = black!50,
      diamond,
      fill = blue!20,
      text width = 6em,
      text badly centered,
      node distance = 3cm,
      inner sep = 0pt
    },
    block/.style = {
      draw = black!50,
      rectangle,
      fill = blue!20,
      text width = 6em,
      text centered,
      rounded corners,
      minimum height = 6em,
    },
    node distance = 3cm and 3cm,
    > = latex,
    auto
  ]
    \matrix[column sep = 0.5cm, row sep = 1cm]{
      {} 
    & {}
      & \node[block] (start) {Input \(n\), and set \(i\) to \(0\).};
        & \node[block] (initialize) {Initialize \(\C W_0(M)\), \(G_0(M)\), and \(\C Q^0_0(M)\) for $M\le N$.};
          & \\
      \node[inner sep = 0pt] (p1) {};
    & {}
      & 
        & \node[decision] (s) {Is \(i < n\)?};
          & \node[block] (ret) {Return \(\C W_n(N)\), $\mcq_n^n(N)$, $\acts_n(N)$};\\
      {}
    & \node[block] (yes) {Set \(G_{i+1}(N)\) to \(\B Z / 2 \B Z\) and define the action \(\acts_{i+1}(N)\).};
      & {}
        & \node[block] (build) {Build \(\C W_{i+1}(M)\) and \(\C Q^{i+1}_j(M)\) for \(0 \leq j \leq i\) and $M\le N$};\\
      {}
    & \node[block] (i) {Set \(i = i + 1\)};
      & \node[decision] (decision) {Is \(\phi_N(\vec z_j)\) true for all \(0\leq j \leq i\)?};
        & \node[block] (calc) {Calculate \(\vec z_j\) for \(0\leq j \leq i\).};\\
      {}
    & \node[block] (no) {Set \(G_{i+1}(N)\) to \(\{e\}\), the trivial group.};
      & {}
        & {}\\
  };

  \draw[->] (decision) |- node[near start, swap] {yes} (yes);
  \draw[->] (decision) |- node[near start] {no} (no);
  \draw[->] (s) -- node {no} (ret);
  \draw[->] (s) -- node {yes} (build);
  \draw[->] (build) -- (calc);
  \draw[->] (calc) -- (decision);
  \draw[->] (yes) -- (i);
  \draw[->] (no) -- (i);
  \draw[->] (i) -| (p1) -- (s);
  \draw[->] (start) -- (initialize);
  \draw[->] (initialize) -- (s);
    \end{tikzpicture}
    \caption{The algorithm $R_\phi=F_\mco(N)$.}
    \label{diagramOfF}
\end{figure}

Given $N$, $F_\mco$ determines the $\Pi^0_1$ formula coded by $N$:
    \begin{equation*}
        \phi_N=\forall z_0\forall z_1 \dots \forall z_m\phi(z_0, z_1, \dots z_m).
    \end{equation*}

The function $F$ then uses the formula to generate a computational routine $R_\phi$ that
recursively computes the objects $\mcw_n(N), \mcq_n(N)$ and $\acts_n\!(N)$ (as well as the various numerical parameters that are involved in the construction). 
Here is what
$R_\phi$ does.
\paragraph{The routine $R_\phi$}
    \begin{enumerate}
        \item Fixes a computable enumeration of all $m$-tuples $\la
            \vec{z}_n=(z_0,\dots z_m)_n: n\in \nn\ra$ of natural numbers
        \item On input $n$, $R_\phi$ initializes $i=0$, sets $\mcw_0=\{0,1\}$,
            $\mcq_0$ the trivial equivalence relation with one class and the
            action $\acts_0$ the trivial action.
        \item For $i < n$, $R_\phi$:
            \begin{enumerate}
                \item builds $\mcw_{i+1}, \mcq_{i+1}$, \item computes $\la
                    \vec{z}_j:0\le j\le i\ra$,
                \item Asks:
                    \begin{equation*}
                        \text{``Is $\phi_N(\vec{z}_j)$ true for all $0\le j\le
                        i$?''}
                    \end{equation*}
                
                    Since $\phi_N$ has no unbounded quantifiers, this question 
                    is primitive
                    recursive.
                \item If \emph{yes}, $R_\phi$ builds the action $\acts_{i+1}$
                \item If \emph{no}, $R_\phi$ makes the $\acts_{i+1}$ trivial.
                    (Note that if $i$ is the first integer in this case, then
                    \(\Omega\) will equal \(i+1\).)
             \end{enumerate}
        \item When $i = n$, $R_\phi$ returns $\mcw_n$.
    \end{enumerate}

\subsection{Properties of the words and actions.}
\label{wacs}

We describe the construction sequence, the equivalence relations and the
actions. To start we choose  a prime number $P_0>2$ sufficiently large, and let $\la P_N:N>0\ra$ enumerate the prime numbers 
bigger than $P_0$.

For the construction sequence
corresponding to $F_\mco(N)$, words in $\mcw_1$ have length $P_N$. The
words in $\mcw_n$ will have length ${K_n=P_N2^\ell}$ for some $\ell$ chosen large enough
as specified below. The $K_n$'s will be increasing and $K_m$ divides $K_n$ for $m<n$. Let
$k_n=K_{n+1}/K_n$. Thus $k_n$ is a large power of 2 and each word in $\mcw_{n+1}$
is a concatenation of $k_n$ many words from $\mcw_n$. The number of words in
$\mcw_n$ is $s_n$. We require that $s_n$ divides $s_{n+1}$ and $s_n$ is a power
of 2 that goes to goes to infinity quickly. 
Since $\mcw_{n+1}\subseteq \mcw_n^{k_n}$ this induces lower bounds on the
growth of the $k_n$'s.

The requirements described here are simpler than those in \cite{FRW} as modified
in \cite{part3}, and the ``specifications" used there are appropriately
simplified \emph{or omitted} if not relevant to this proof. The construction
carries along numerical parameters \(\la \epsilon_n \ra\), \(\la k_n \ra\),
\(\la K_n \ra\), \(\la s_n \ra\), \(\la Q_n \ra\), \(\la C_n \ra\), and \(\la
e(n) \ra\). (Showing that the various coefficients are compatible and primitively recursively computable appears in Appends \ref{NumPar}.)
 
As an aid to the reader we use the analogous labels for the simplified specifications
as those that appear in \cite{part3}.

	\begin{description}
    \item[Q4] \hypertarget{Q4}{For} $n\ge 1$, any two $\mcw_n$-words in the same $\mcq_n$
        class agree on an initial segment of proportion at least
        $(1-\epsilon_n)$.
    \item[Q6.] \hypertarget{Q6}{As} a relation on $\mcw_{n+1}$, for $1\le s\le n+1$,  $\mcq^{n+1}_{s}$ refines 
    $\mcq^{n+1}_{s-1}$
        and each $\mcq^n_{s-1}$ class contains $2^{e(n)}$ many $\mcq^n_s$ classes.
  	\end{description}

The point of {\bf Q4} is that the $\mcq_n$ classes approximate words in $\mcw_n$
by specifying arbitrarily long proportions of the words. A consequence of this
is:

\begin{lemma}\label{cheat cheat}
    $\bigcup_n\mch_n$ generates the measure algebra of $\bk$.
\end{lemma}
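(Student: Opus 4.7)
The plan is to show that every Borel subset of $\bk$ can be approximated, in measure, by sets in $\bigcup_n \mch_n$. Since the cylinder sets generate $\mcb(\bk)$, it suffices to show that for each cylinder $C$ of the form $\{f : f(i+j)=a_j,\ 0\le j<k\}$ and each $\epsilon>0$, there is some $n$ and some $A\in\mch_n$ with $\mu(C\triangle A)<\epsilon$.

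First I would choose $n$ large enough that $\epsilon_n + k/K_n < \epsilon/2$. For each $f\in\bk$, unique readability partitions $\poZ$ into contiguous $n$-subwords of length $K_n$, and by unique ergodicity together with strong uniformity the factor map $\pi_\mco:\bk\to\prod_j\poz_{k_j}$ pushes $\mu$ to Haar measure, so the position $r_n(f)$ of $0$ within its containing $n$-subword is equidistributed on $\{0,1,\dots,K_n-1\}$. Let $E_n\subseteq\bk$ be the set of $f$ for which the window $\{i,i+1,\dots,i+k-1\}$ lies entirely within the \emph{initial} $(1-\epsilon_n)K_n$ positions of a single $n$-subword. Two failure modes are possible: the window can straddle the boundary between two consecutive $n$-subwords (contributing measure at most $(k-1)/K_n$), or it can sit inside the final $\epsilon_n K_n$ portion of a single subword (measure at most $\epsilon_n$). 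Hence $\mu(\bk\setminus E_n)\le \epsilon_n + k/K_n<\epsilon/2$.

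Now by specification \textbf{Q4}, any two words in $\mcw_n$ that are $\mcq_n$-equivalent agree on an initial segment of proportion at least $1-\epsilon_n$. Therefore, on the set $E_n$ the values of $f$ on the cylinder window are determined by: (i) the position of the relevant $n$-subword, which is measurable with respect to the odometer factor $\mch_0\subseteq\mch_n$; and (ii) the $\mcq_n$-class of that $n$-subword, which is measurable with respect to $\mch_n$. Consequently $C\cap E_n = A\cap E_n$ for some $A\in\mch_n$, and $\mu(C\triangle A)\le \mu(\bk\setminus E_n)<\epsilon/2$, giving the required approximation.

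The main technical point to justify is the equidistribution of $r_n(f)$, but this is a direct consequence of unique ergodicity and strong uniformity: the invariant measure of the clopen set of points whose principal $n$-subword begins at a specified offset equals $1/K_n$. The only other subtlety is the boundary-straddling case, which contributes an $O(k/K_n)$ error that vanishes as $n\to\infty$; everything else is a routine combination of \textbf{Q4} with the measurability of the principal-subword data.
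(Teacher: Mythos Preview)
Your argument is correct. The paper does not actually prove this lemma in-line; it simply cites Proposition~23 of \cite{FRW}, so there is no paper-side proof to compare against beyond that pointer. What you have written is a correct self-contained reconstruction of that proposition: the key mechanism---using \textbf{Q4} to see that, on the large odometer-measurable set $E_n$ where the cylinder window lands in the initial $(1-\epsilon_n)$-proportion of a single $n$-block, the cylinder event depends only on the $\mcq_n$-class of that block and the odometer coordinate---is exactly the intended one. One small point worth making explicit for the reader: $E_n$ itself lies in $\mch_0\subseteq\mch_n$ (it is defined purely by $r_n(f)$), so taking $A=C\cap E_n$ already gives an $\mch_n$-set with $C\triangle A\subseteq\bk\setminus E_n$; this sidesteps any worry about how exactly the approximating set $A$ is manufactured.
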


\pf This is proved in Proposition 23 of \cite{FRW}.\qed
\noindent Thus {\textbf{Q4}} is the justification for \hyperlink{Ass3}{Assertion 3} of
Section~\ref{elements}. 

\bigskip

We now turn to the joining specifications. These are counting requirements that
determine the joining structure. The joining specifications we present here are
more complicated than strictly necessary for the simplified construction in this
paper, but we present them as appear in \cite{FRW} in order to be able to
directly quote the theorems proved there.  We note that specification J10.1 is a strengthening of J10 in \cite{FRW}.

Suppose that $u$ and $v$ are elements of $\mcw_{n+1}\cup \rev{\mcw_{n+1}}$ and
$(u', v')$ an ordered pair from $\mcw_{n}\cup \rev{\mcw_{n}}$. Suppose that $u$
and $v$ are in positions shifted relative to each other by $t$ units. Then an
\emph{occurrence} of $(u', v')$ in $(sh^t(u), v)$ is a $t'$ such that $u'$
occurs in $u$ starting at $t+t'$ and in $v$ starting at $t'$. If $X$ is an alphabet and $\mcw$ is a collection of words in $X$, and $u\in \mcw\cup \rev{\mcw}$ we say 
that $u$ has \emph{forward parity} if $u\in \mcw$ and \emph{reverse} parity if $u\in \rev{\mcw}$. 

By specification \hyperlink{Q4}{Q4} no word in $\mcw_{n+1}$ belongs to $\rev{\mcw_{n+1}}$, so parity is well-defined and unique.  However the words in 
$(\mcw^*_n)_i$ may belong to $\rev{(\mcw^*_n)_i}$ and we view those words as having  both parities.

\begin{description}
    \item[J10.1] \hypertarget{J10.1}{} Let $u$ and $v$ be elements of
        $\mcw_{n+1}\cup\rev{\mcw_{n+1}}$. Let $1\le t<(1-\epsilon_n)(k_n)$. Let
        $j_0$ be a number between {$\epsilon_nk_n$} and $k_n-t$. Then for each
        pair $u', v'\in\mcw_{n}\cup\rev{\mcw_{n}}$ such that $u'$ has the same
        parity as $u$ and $v'$ has the same parity as $v$, let $r(u',v')$ be the
        number of $j<j_0$ such that $(u',v')$ occurs in $(sh^{tK_n}(u),v)$ in
        the $j\cdot K_n$-th position in their overlap. Then
            \begin{equation*}
                \left|\frac {r(u',v')} {j_0} - \frac 1 {
                s_{n}^2}\right|<\epsilon_n.
            \end{equation*}
\end{description}
For fixed  $n$ and $s$, let  \(Q^n_s = |(\mcw^*_n )_s|\) and \(C^n_s\) be the number of
        equivalent elements in each block of the partition \(\C W_n / \C Q^n_s\).
\begin{description}
    \item[J11] \hypertarget{J11}{} Suppose that $ u\in\mcw_{n+1}$ and $v\in
        \mcw_{n+1}\cup \rev{\mcw_{n+1}}$. We let $s=s(u,v)$ be the maximal $i<\Omega$
        such that $[u]_i$ and $[v]_i$ are in the same $\acts_i$-orbit. Let
        $g=g_i$ 
        and $(u', v')\in \mcw_{n}\times (\mcw_{n}\cup\rev{\mcw_n})$ be such that
        $g[u']_s=[v']_s$. Let $r(u',v')$ be the number of occurrences of
        $(u',v')$ in $(u,v)$. Then:
            \begin{equation*}
                \left| \frac {r(u',v')} {k_n} - \frac 1 {Q^{n}_s}\left( \frac 1
                { C^{n}_s}\right)^2\right|<\epsilon_n.
            \end{equation*}
        
\end{description}

The next assumption is a strengthening of a special case of J11.

\begin{description}
    \item[J11.1] \hypertarget{J11.1}{} Suppose that $ u\in\mcw_{n+1}$ and $v\in
        \mcw_{n+1}\cup \rev{\mcw_{n+1}}$ and $[u]_1$ not in the $\acts_1$-orbit
        of $[v]_1$.\footnote{In the language of J11: $s(u,v)=0$, $Q^n_0=1$ and
        $C^n_0=s_n$.} Let $j_0$ be a number between $\epsilon_nk_n$ and $k_n$.
        Suppose that $I$ is either an initial or a tail segment of the interval
        $\{0, 1, \dots K_{n+1}-1\}$ having length $j_0K_n$. Then for each pair
        $u', v'\in\mcw_{n}\cup\rev{\mcw_{n}}$ such that $u'$ has the same parity
        as $u$ and $v'$ has the same parity as $v$, let $r(u',v')$ be the number
        of occurrences of $(u',v')$ in $(u\rest I,v\rest I)$. Then:
            \begin{equation*}
                \left| \frac {r(u',v')} {j_0} - \frac 1 {
                s_n^2}\right|<\epsilon_n.
            \end{equation*}
\end{description}

The properties and specifications described above imply the specifications in
\cite{FRW} as well as J10.1 and J11.1 from \cite{part3}.

\begin{remark}
We note that specification J10.1 implies unique readability of the words in $\mcw_{n+1}$.  This follows by induction on $n$. If the words in 
$\mcw_{n+1}$ were not uniquely readable then we would have $u, v, w\in \mcw_n$ with $uv=pws$ and neither $p$ nor $s$ empty.  But the one of $u$ or $v$ would have to overlap either an initial segment or a tail segment of $w$ of length $K_{n+1}/2$. Suppose it is an initial segment of $w$ and a tail segment $u$.  On this tail segment the $n$-subwords would have to agree exactly with the $n$-subwords of an initial segment of  $w$.  But this contradicts $J10.1$.
\end{remark}
\medskip

Suppose we have built a collection of words $\la \mcw_n : n\in \nn\ra$, equivalence
relations $\la \mcq_n : n\in\nn\ra$ and actions $\la \acts_n : n\in\nn\ra$
satisfying the properties described then we can cite the following results occurring
in \cite{FRW}. Fix a transformation $T$ built with the construction sequence 
$\la \mcw_n : n\in\nn\ra$. Recall that if $\acts_n$ is non-trivial then the generator $g_n\ne 0$ induces an
invertible graph
joining $\eta_n$ of $\bk_n$ with $\bk_n^{-1}$. We quote the following results of
\cite{FRW}, referencing their numbers in that paper. 

\begin{description}
    \item[Theorem 13 and Proposition 32] Suppose that $\eta$ is an ergodic
       {joining of $T$ with $T^{-1}$ that is not a relatively independent joining over the odometer factor.} 
        Then $\eta\rest \mch_0\times \mch_0$ is
        supported on the graph of some \hyperlink{barj}{$\bar{j}$}-shift of the odometer
        factor.

    \item[Proposition 37] If $\eta$ is an ergodic joining of $\bk$ with
        $\bk^{-1}$, then exactly one of the following holds:
            \begin{enumerate}
                \item $\Omega<\infty$ and for some $n\le \Omega$, $j\in \poZ$ and some $\eta_n$, \(\eta\) is
                    the relatively independent joining of $\bk$ with $\bk^{-1}$
                    over the joining $\eta_n\circ(1,sh^{-j})$ of $\bk_n\times
                    \bk_n^{-1}$.
                \item $\Omega=\infty$ and for some $j$, all
                    $n$ the projection of $\eta$ to a joining on $\bk_n\times
                    \bk_n^{-1}$ is of the form $\eta_n\circ(1,sh^{-j})$
            \end{enumerate}
        If $\Omega=\infty$, since the $\mch_n$'s generate,  $\eta$ is an invertible graph joining of $\bk$ with $\bk^{-1}$. In both cases the projection of $\eta_n$ to a joining of the odometer
        factor with itself concentrates on the map $x\mapsto -x$.
\end{description}
Thus it follows that:
\begin{enumerate}
\item If $\bk\cong \bk^{-1}$ then $\Omega=\infty.$ In particular
if $\bk\cong \bk^{-1}$, then the $\Pi^0_1$ statement $\phi_N$ is true.
\item The projection of  $\eta_n\circ(1,sh^{-j})$ to the odometer is of the form $x\mapsto -x-j$.
\item Similarly the projection of $\eta\circ(1,sh^{-j})$ to the odometer is of the form $x\mapsto -x-j$.
\end{enumerate}

Clause 2 of Theorem~\ref{red to odos} requires that if $M\ne N$ are different codes
for $\Pi^0_1$ sentences then the transformation $F_\mco(M)$ is not isomorphic to
$F_\mco(N)$. This is clear because the odometer sequence for $F_\mco(M)$
consists of $k$'s whose prime factors are $2$ and $P_M$, while the odometer
sequence for $F_\mco(N)$ has $k$'s whose prime factors are $2$ and $P_N$. Since $P_M\ne P_M$, 
the odometer factors are not isomorphic. 

Corollary 33 of \cite{FRW} implies that the Kronecker factor of each $F_\mco(N)$ is the odometer factor. Since 
any isomorphism $\phi$
 between $F_\mco(M)$ with
$F_\mco(N)$ must induce an isomorphism of the Kronecker factors,   $\phi$ has to induce an isomorphism of the corresponding odometer
factors, yielding a contradiction. {(See Corollary~\ref{cor:odoms} in
Appendix~\ref{app:ET})}

To finish the proof of Theorem \ref{red to odos} we must show that the words, equivalence relations and actions can be  built primitively
recursively.
\subsection{Building the words, equivalence relations and actions}
\label{bldg the words}

To finish the proof of Theorem~\ref{red to odos} the words $\mcw_n(N)$,
the equivalence relations $\mcq_n(N)$ and actions $\acts_n(N)$ must be constructed and
it must be verified that the construction is primitive recursive.

\paragraph{Note:} Formally we are just constructing actions
$\acts_n$ for $n<\Omega$.  However for notational convenience, when constructing the words at stage $n+1$,  we will write $\acts_i$ when $\Omega\le i<n+1$ with the understanding that it is the trivial identity action.
\bigskip

The collections of words $\mcw_n$ are built probabilistically.  A finitary version of law of large numbers shows that there are primitive recursive bounds on the length of the  words in a collection with the  necessary properties.  The actual collection of words can then be found with an exhaustive search of collections of words of that length, showing that the entire construction is primitive recursive.

\paragraph{{Structure of the induction.}} The collections of  words $\mcw_n$ are built by induction on $n$. For $n\ge 1$ the words in  $\mcw_{n+1}$ are built by iteratively substituting  words into $k_n$-sequences of classes 
$\mcq_i^n$, by  induction on $i\le n$.  We will adapt the notation of section \ref{slow and easy}.

The
length $K_1$ of words in $\mcw_1$ will be a large prime number $P_N$.
 To
pass from stage $n$ to $n+1$, one is required to build the words $\mcw_{n+1}$,
the equivalence relation $\mcq_{n+1}$ and, if $n+1<\Omega$ the action
$\acts_{n+1}$. The length $K_{n+1}$ of the words will be ${2^\ell \cdot
K_n}$ for an $\ell$ taken large enough. 

Suppose we have already chosen $k_n$ and it is a large power of 2. Then $(\mcq^n_i)^{k_n}$ for $0\le i\le
n$ give us a hierarchy of equivalence relations  of potential words as described in
Section~\ref{slow and easy} as well as the diagonal and skew-diagonal actions of $\acts_i$ for $i<\min(n,\Omega)$. 

\begin{remark} The construction of $\mcw_{n+1}$ is top-down.
We construct the 
$(\mcw^*_{n+1})_i=\mcw_{n+1}/\mcq^{n+1}_i$  by induction on $i$ \emph{before} we construct $\mcw_{n+1}$. The equivalence relations get more refined as $i$ increases, so each step  gives more 
information about 
$\mcw_{n+1}$. Having built $(\mcw^*_{n+1})_n$, an additional step constructs creates $\mcw_{n+1}$ and the equivalence relation $\mcq^{n+1}_{n+1}$.
\end{remark}

Start with $i=0$. Then $\mcw_n/\mcq^n_0$ has one element, a string of length
$K_n$ with a single letter. Let $(\mcw_{n+1}^*)_0$ be the single element
consisting of strings of length $k_n \cdot K_n$ in that single letter. 

Each element of $(\mcw^*_{n+1})_1$ is built by substituting $k_n$ elements of 
$(\mcw_n^*)_1$---each of which is a contiguous block of length
$K_n$---into $(\mcw_{n+1}^*)_0$.  We continue this
process inductively, ultimately arriving at $(\mcw^*_{n+1})_n$. 

\begin{center}
    \begin{tabular}{|c|c|}
        \hline
        The elements $X$ being substituted
            & The result of the substitution\\
        into previous words
            & \\ \hline\hline
            &$(\mcw_{n+1}^*)_0$\\ \hline
        $(\mcw_n^*)_1$
            & $(\mcw_{n+1}^*)_1$\\ \hline
        $(\mcw_n^*)_2$
            &$(\mcw_{n+1}^*)_2$\\ \hline
        \vdots 
            &\vdots\\ \hline
         $(\mcw_n^*)_n$
            &$(\mcw_{n+1}^*)_n$\\ \hline
    \end{tabular}
\end{center}

The result of this induction is a sequence of elements of $\mcw_n/\mcq_n$ of
length $k_n*K_n$, that is constant on blocks of length $K_n$. We must finish by substituting elements of 
$\mcw_n$ into the
$\mcw_n/\mcq_n$-classes to get $\mcw_{n+1}$ and defining $\mcq_{n+1}$. 

\paragraph{A step in the induction on $i$.}
 Fix an $i$ and view elements   $(\mcw_{n+1}^*)_i$ as $k_n$-sequences $C_0C_1\dots C_{k_n-1}$ of elements of $(\mcw^*_n)_i$.
Since $\mcq_{i+1}$ refines the diagonal equivalence relation $(\mcq_i)^{K_{i+1}/K_i}$, $(\mcq_{i+1}^n)^{k_n}$ refines 
$(\mcq_i^n)^{k_n}$.
Inside each $\mcq^{n}_i$ class $C_j$,  one can choose a 
$\mcq^n_{i+1}$ class $C'_j\in (\mcw_n^*)_{i+1}$ .  Concatenating these to get $C_0'C_1'\dots C_{k_n-1}'$ we create an element of 
$(\mcw^*_{n+1})_{i+1}$. We do the construction so that result is closed under the skew diagonal action of 
$\acts_{i+1}$.

\begin{remark}
Following section \ref{slow and easy}, elements of $(\mcw_{i+1}^*)_{i+1}$ are constant sequences of length $K_{i+1}$.  Thus the concatenation $C_0'C_1'\dots C_{k_n-1}'$ is a sequence of $k_n*(K_n/K_{i+1})$ many contiguous constant blocks of length $K_{i+1}$.

\end{remark}

\medskip

We now describe how these choices are made. Our discussion is aimed at the case where $n+1<\Omega$, for $n+1\ge \Omega$ take $\acts_{n+1}$ to be the trivial action. Fix a candidate $k$ for $k_n$. View $\{rev\}$ as acting on $(\mcw_n/\mcq_i^n)^{k}=((\mcw_n^*)_i)^k$. Together, the skew-diagonal action of $\acts_i$ and $\{rev\}$ generate an action on $(\mcw_n/\mcq_i^n)^{k}$. Let $R_i$ be a set of representatives  of each orbit of this action.
Fix the number {\(E\)} of
$i+1$-classes desired inside each $i$-class. Consider
    \begin{equation}
    \label{eq:sub_lem_lite}
        \B X_i=\prod_{r\in R_i}\prod_{q=0}^{E-1} S(r,q),
    \end{equation}
where $S(r,q)$ is the collection of all substitution instances of $\mcq^n_{i+1}$
classes into $r$.\footnote{Note that $q$ is a dummy index variable here.} More explicitly, if $r=C_0C_2\dots C_{k-1}$ where $C_j\in
\mcw_n/\mcq^n_i$. Let $C_j^*=\{C':C'\subseteq C_j \mbox{ and } C'\in
\mcw_n/\mcq^n_{i+1}\}$. For each $0\le q\le E-1$, let
    \begin{equation*}
        S(r,q)=\prod_{j=0}^{k-1}C_j^*.
    \end{equation*}

Fix an $r\in R_i$. The every element $\mcw$ of $\prod_{q=0}^{E-1} S(r,q)$ can be viewed as 
a collection of $E$ many words of length $k$ in the language $(\mcw_n^*)_{i+1}$ whose 
$\mcq_i^n$ classes form $r$. 
Each of these $E$ many words can be copied by the $\acts_{i+1}$ action. If $w$ is such a word, and is a substitution instance of $r$ then $\acts_{i+1}(w)$ is a substitution instance of $\acts_i(r)$.  

So comparing elements of $\mcw$ (and their shifts) is the same as comparing potential words in 
$(\mcw_{n+1}^*)_{i+1}$. The action of $\acts_{i+1}$ preserves the frequencies of occurrences of words in

We work with  $\mathbb X_i$ because it can be viewed as a discrete measure space with the counting
measure. The objects being counted in the various specifications correspond to random variables on this
measure space. 

\begin{definition}
If $\la w_{r,q}:r\in R_i, 0\le q<E\ra$ is the collection of words built using the Substitution Lemma passing from stage $i$ to stage $i+1$, the $(\mcw^*_{n+1})_{i+1}$ is the closure of $\{ w_{r,q}:r\in R_i, 0\le q<E\}$ under the skew-diagonal action of 
$\acts_i$.
\end{definition}

\begin{ex}\label{does this count?} If $C\subseteq C_j, D\subseteq C_{j'}$ are
substitution instances,  we have the independent random variables $X_{r,q,j}, X_{r',q',j'}$
taking value 1 at points $\vec{x}\in \mathbb X_i$ where $x(r,q, j)=C$ and 
$x(r',q',j')=D$, respectively. The event that $C$ occurs in $\mathbb X_i$ in the $q^{th}$ word in position $j$ and $D$ occurs in $r'$ in the $(q')^{th}$ word in position $j'$  is the event that both $X_{r',q',j'}=1$ and
$X_{{r,q,j}}=1$. If each $i$-class has {\(p\)} elements then the probability that both $X_{r',q',j'}=1$ and
$X_{{r,q,j}}=1$ is $1/p^2$. 
\end{ex}

The strong law of large numbers tells us that the collection of points in each
$\mathbb X_i$ that do \emph{not} satisfy the specifications (as they are coded in the conclusion of the Substitution Lemma) goes to zero
exponentially fast in $k$.  As $k$ grows, the number of requirements to satisfy the Substitution Lemma grows linearly. Hoeffding's inequality 
(Theorem \ref{Hoeffding's Inequality} below) says that the probabilities stabilize exponentially fast.  The Substitution Lemma follows.

\paragraph{In more detail:} The word construction proceeds by first getting a
very close approximation to what is desired and then \emph{finishing} the
approximations to exactly satisfy the requirements. These two steps correspond to
Proposition 43 and Lemma 41 of \cite{FRW}.

The general setup for the Substitution Lemma (Proposition~\ref{lem:sl}) at stage $n+1$ is as follows:
    \begin{itemize}
        \item An alphabet \(X\) and an equivalence relation \(\C Q\) on
            \(X\), with \(Q\) classes each of cardinality \(C\).

	\item A collection of words $\mcw\subseteq (X/\mcq)^k$ for some $k$.
	 \item Groups $G,H$ with generators $g, h$ that are either $\poZ_2$ or the trivial group. If $H=\poZ_2$ then $G=\poZ_2$. 
	   \item If $G=\poZ_2$ then we have a free action \(G\acts X/Q\) and if $H=\poZ_2$ we also have a free action   \(H\acts X\). Thus the skew-diagonal actions of $G$ on $(X/Q)^k$ and $H$ on $X^k$ are well-defined.  If either group is trivial, then the corresponding actions are trivial.
        \item The \(H\acts X\) action is \hyperlink{subordinate}{subordinate}
            to \(G\acts X/Q\) action via \(\rho\).
        \item Constants \(\epsilon_a, \epsilon_b\in (0,1)\) such that \(\epsilon_b
            < \epsilon_a^2 / 5 |X|\).
        \item A constant $E$ determining the number of substitution instances
            desired for each \(\C Q\) class.
      
        \item If $u, v, w, w'$ are words in the alphabet  $X$, then $r(u,v,sh^i(w),w')$ is the number of $j$ such that $u$ occurs in $w$ starting at $j+i$ and $v$ occurs in $w'$ starting at $j$. Similarly if $u, w$ are words in the alphabet $X$, the $r(u,w)$ is the number of occurrences of $u$ in $w$.
    \end{itemize}
A special case of the Substitution Lemma (Proposition 63 in \cite{FRW}) is:

\begin{prop}[Substitution Lemma]
\label{lem:sl}
   Let $E>0$ be an even number. There is a lower bound $\klb$ depending on
   $(\epsilon_b,\epsilon_a, Q, C, W, E)$ such that for all numbers $k \ge \klb$
   and all symmetric  $\mcw\subseteq {(X/\mcq)^k}$ with cardinality $W$ that are
   closed under the skew-diagonal action of $G$ and $\rev{}$, {\bf if} for all $i$ with $1\le
   i\le (1- \epsilon_b)k$, $u,v\in X/\mcq$ and $w,w'\in \mcw$:
        \begin{equation}
        \label{eqn: J11b}
            \left| \frac {r(u,v,sh^{i}(w),w')} {k-i} - \frac  1
            {Q^2}\right|<\epsilon_b
        \end{equation}
   and  each $u\in X/\mcq$ occurs with frequency $1/Q$ in each $w\in \mcw$,

   {\bf then} there is a collection of words $S\subseteq {X}^k$ consisting of
   substitution instances of $\mcw^k$ such that if $\mcw'=HS\cup \rev{HS}$ we
   have:\footnote{$H$ is acting on $X^k$ by the skew-diagonal action.}
       \begin{enumerate}
           \item \label{item 1}  Every element of $\mcw'$ is a substitution
               instance of an element of $\mcw$ and each element of $\mcw$ has
               exactly $E$ many substitution instances of words in  
               $\mcw'$.

           \item\label{item 2} For each $x\in X$ and each $w\in \mcw'$
                   \begin{equation}
                   \label{eqn: approximate frequencies}
                       \left| \frac {r(x,w)} {k} - \frac 1
                       {|X|}\right|<\epsilon_a
                   \end{equation}
               i.e., the frequency of $x$ in $w$ is within $\epsilon_a$ of
               $1/|X|$.
	 \item \label{2 bis} If $w_1, w_2\in S\cup \rev{S}$ with $[w_1]_\mcq=[w_2]_\mcq$ and 
 $w_2\notin H_0w_1$ and $x,y\in X$ with $[x]=[y]$. Then for $h\in H_0$:\footnote{
                   While there are typographical errors  in the statement of
                   this item in \cite{FRW}, the proof given there yields the
                   correct statement which is 
                   inequality \ref{freeness persists}. Similarly, conclusion \ref{conclusion 3} has been strengthened slightly here in
                   a way that does not materially change the proof. 
                }

                            \begin{equation}
                            \label{freeness persists}
                                \left| \frac {r(x,y,w_1,hw_2)} {k} - \frac 1 {Q\cdot
                                C^2}\right|<\epsilon_a.
                            \end{equation}

                  \item \label{conclusion 3} Let $i$ be a number with $1\le i\le (1-\epsilon_a)k$ and $j_0$ be a number between $\epsilon_ak/2$ and $k-i$, 
                  $x, y\in X$, 
		$w_1, w_2\in\mcw'\cup \rev{\mcw'}$, let $r(x,y)$ be the number of $j<j_0$ such that $(x,y)$ occurs in $(sh^i(w_1),w_2)$ in the 
		$j^{th}$ position. Then
                   \begin{equation}
                   \label{eqn: J11a}
                       \left| \frac {r(x,y)} {j_0} - \frac 1
                       {|X|^2}\right|<\epsilon_a.
                   \end{equation}

            \item\label{fourth item} For all $x, y\in X$ and all $w_1, w_2\in
               \mcw' \cup \rev{\mcw'}$ with different $H$ orbits, 
                    \begin{equation}
                        \label{eqn: estimate before sub} \left|\frac
                        {r([x]_\mcq,[y]_\mcq,[w_1]_\mcq,[w_2]_\mcq)}
                        {k}-c\right|<\epsilon_b
                    \end{equation}
                implies that,
                    \begin{equation}
                    \label{eqn:estimate after sub}
                        \left|\frac{r(x,y,w_1,w_2)} {k} - \frac c
                        {C^2}\right|<\epsilon_a.
                    \end{equation}
       \end{enumerate}
\end{prop}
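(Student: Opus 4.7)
The plan is to prove Proposition~\ref{lem:sl} by the probabilistic method on the product space $\mathbb{X}_i$ described in equation~(\ref{eq:sub_lem_lite}), exactly as sketched around Example~\ref{does this count?}. Endow $\mathbb{X}_i$ with the uniform (counting) probability measure, so that a point $\vec{x} \in \mathbb{X}_i$ selects, for every orbit representative $r \in R_i$ and every copy index $q < E$, an independent uniform substitution instance $x(r,q) \in S(r,q)$. Each conclusion~\ref{item 1}--\ref{fourth item} then becomes a statement that a certain random variable on $\mathbb{X}_i$ is close to its expectation, and our task is to show that the set of $\vec{x}$ simultaneously satisfying all of them is nonempty for $k$ above a primitive recursive threshold $\klb$.

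First I would handle the bookkeeping. Conclusion~\ref{item 1} holds by the very construction of $\mathbb{X}_i$, once we close the raw selections under the skew-diagonal actions of $G$, $H$, and $\rev{}$: since these actions are free on the orbit representatives, the count $E$ of substitution instances per $\mcq$-class is preserved. For each of the remaining conclusions I would write down the natural counting random variable. For instance, for \ref{item 2} the variable is $\sum_j \mathbbm{1}[x(r,q)_j = x]$ where $x(r,q)_j$ denotes the $j$-th coordinate of the substituted word; uniformity inside $\mcq$-classes gives expectation $k/|X|$. For \ref{2 bis} and \ref{conclusion 3} the variable counts matching pairs $(x,y)$ at various shifts; the hypotheses on $\mcw$ (uniform occurrence of $u \in X/\mcq$ at frequency $1/Q$ and equation~(\ref{eqn: J11b})) plug in to give expectations $1/(Q C^2)$ and $1/|X|^2$ respectively. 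For \ref{fourth item} the class-level estimate~(\ref{eqn: estimate before sub}) enters as a deterministic input, and conditioning on the $\mcq$-structure reduces the pair-count to independent uniform selections within classes, yielding expectation $c/C^2$ for the event in~(\ref{eqn:estimate after sub}).

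The key analytic step is to control the deviations. Each of the random variables above is a sum of bounded (indeed $\{0,1\}$-valued) functions of \emph{independent} coordinates of $\vec{x} \in \mathbb{X}_i$, because the substitutions into distinct orbit representatives and distinct copies are independent, and within a single substitution the choices of $\mcq^n_{i+1}$-class inside each block $C_j^*$ are independent across $j$. Hoeffding's inequality (Theorem~\ref{Hoeffding's Inequality}) therefore gives that the probability of any particular conclusion failing is at most $2\exp(-c\, k\, \epsilon_a^2)$ for some absolute $c > 0$ (the quadratic is relevant since $\epsilon_b < \epsilon_a^2/(5|X|)$, which matches the loss incurred when passing from class-level to letter-level frequencies in \ref{fourth item}). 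Taking a union bound over the finitely many specifications — the number of quadruples $(x,y,w_1,w_2)$, shifts $i$, and segment lengths $j_0$ is polynomial in $k$, $|X|$, $W$, and $E$ — the total probability of failure is at most $\mathrm{poly}(k) \cdot \exp(-c k \epsilon_a^2)$, which is less than $1$ once $k \ge \klb$ for a threshold $\klb$ that is primitively recursively computable from $(\epsilon_a, \epsilon_b, Q, C, W, E)$. Hence a good $\vec{x}$ exists; setting $S$ to be the resulting family of substitution words and $\mcw' = HS \cup \rev{HS}$ delivers all the conclusions.

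The main obstacle I anticipate is conclusion~\ref{fourth item}, which is not a pure concentration statement but a \emph{conditional} estimate: one must show that any class-level frequency $c$ passes to the letter-level frequency $c/C^2$ with tolerance $\epsilon_a$, even as $c$ itself is only known to within $\epsilon_b$. The factor $1/C^2$ amplifies the $\epsilon_b$ error, which is exactly why the quantitative assumption $\epsilon_b < \epsilon_a^2/(5|X|)$ appears in the hypotheses; it ensures that the deterministic error from the hypothesis and the stochastic error from Hoeffding can together be absorbed into $\epsilon_a$. A secondary subtlety is ensuring that the closure operations $H$ and $\rev{}$ do not disturb the frequency estimates; this is handled by choosing $R_i$ to be a set of orbit representatives for the combined $G \times \langle \rev{}\rangle$ action, so that the closure merely duplicates counts in a predictable manner, and the subordination of $H$ to $G$ via $\rho$ keeps the $H$-action compatible with the underlying $\mcq$-structure.
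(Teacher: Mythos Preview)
Your proposal is correct and follows essentially the same route as the paper: the Substitution Lemma is not proved in full here but is quoted from \cite{FRW}, and the surrounding discussion (the setup of $\mathbb{X}_i$ in equation~(\ref{eq:sub_lem_lite}), Example~\ref{does this count?}, and the remark immediately following the proposition invoking the Law of Large Numbers and Hoeffding's inequality) is exactly the probabilistic-method-plus-union-bound argument you outline. The one ingredient you do not explicitly name is the ``finishing'' step (Lemma~41 of \cite{FRW}) that the paper mentions alongside Proposition~43, but since the conclusions stated here are all approximate (within $\epsilon_a$) except for the exact count in conclusion~\ref{item 1}, which you correctly observe holds by construction of $\mathbb{X}_i$, that step is not needed for the version stated.
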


We remark again that the Law of Large numbers implies that conclusions 1-5 hold for almost all infinite sequences. For example if you perform i.i.d. substitutions of elements of $X$  to create a typical infinite sequence $\vec{w}$, then the density of occurrences of  a given $x$ in $\vec{w}$ will be $1/|x|$.  The Hoeffding inequality says that the finitary approximations to this conclusion converge exponentially fast. As a result, for large enough $k$ it is possible to satisfy conclusions 1-5 with very high probability. 

Another remark is that at each stage we start with a collection of words $\mcw$ closed under reversals and produce another collection of words $\mcw'$ closed under reversals. 
\bigskip
\paragraph{The sequence $e(n)$.} We will have a sequence $e(n)$ such that for $n\ge 1$, 
{$s_{n+1}=2^{(n+2)e(n)}$} that satisfies some growth conditions. (See \emph{Inherited Requirement 2} and \emph{Inherited Requirement 3} in Appendix \ref{NumPar} and Figure \ref{ecosystem} for an explicit statement of these conditions.)
To initialize the construction we take $e(0)=2$.

\paragraph{Finding $k_n$} We now use Proposition \ref{lem:sl} to build the collections of words. We will apply it with $E=2^{e(n)}$ except in one instance where we apply it with $E=2^{2e(n)}$.
To start the inductive construction, we take $P_0$ to be large enough to apply
the Substitution Lemma $\mcq_0$ the trivial equivalence
relation and $\mcw_0=\Sigma=\{0,1\}$. For $N>0$,  since $P_N\ge P_0$, $P_N$ can
also be used for $k_0(N)$ to initialize the construction as described below with $n=0$.

We then choose $k_n$ large enough to allow $n+1$ successive Lemma \ref{lem:sl}-style substitutions for $E=2^{2e(n)}$ corresponding to the equivalence relations  $\mcq^n_i$ for $1\le i\le n$ together with a final substitution of the letters in the base alphabet $\Sigma$ to produce $\mcw_{n+1}$.  (This is a total of $n+2$ substitutions.)

More explicitly, note that each of the \(n+2\) applications of the Substitution Lemma for the various $\mcq_i$ with $E=2^{2e(n)}$ and 
$\epsilon_a=\epsilon_n/100$  and the  finishing
lemma produces a lower bound $\klb^i$.

The following will be important later in the paper:

\begin{description}
\item[Numerical Requirement B] \hypertarget{nrb}{Let} $k_n(N-1)$ be the $k_n$ corresponding to the reduction $F_\mco(N-1)$ and $k_n(N)$ be the $k_n$ corresponding to the reduction $F_\mco(N)$ and $k_n(N)$.  Then  
	\begin{equation}
	k_n(N)\ge k_n(N-1)\label{ugly}
	\end{equation}
\end{description}

Choose a large power of two
    \begin{equation*}
        \kmax > \max \{ \klb^0, \klb^1, \ldots, \klb^n, k_n(N-1) \},
    \end{equation*}
ensuring that it be sufficiently large that \(2^{-\kmax} < \epsilon_n\).
Then, set 
	\begin{equation}
	k_n = \kmax^2*s_{n}.\label{kn def}
	\end{equation}

 Since $k_{n}$ is of this form and $s_{n}$ is a power of 2, this ensures that \(K_{n+1} = P_N \cdot
2^\ell\) for a large \(\ell\). By increasing $\kmax$ if necessary we can also assume 
	\begin{enumerate} 
	\item $1/k_n<\epsilon^3_n/4$.  
	\item $s_{n+1}\le s_n^{k_n}$.
	\end{enumerate}

\paragraph{Building $\mcw_{n+1}/\mcq_i^{n+1}$ for $i\le n$:}

This is done by applying the Substitution Lemma $n$ times to pass from 
$(\mcw^*_{n+1})_0$ successively to $(\mcw^*_{n+1})_n$. At each $i<n$ we substitute 
$2^{e(n)}$ many elements of $(\mcw^*_{n+1})_{i+1}$ into each element of 
$(\mcw^*_{n+1})_i$.

\paragraph{Completing $\mcw_{n+1}$:} Having constructed $\mcw_{n+1}/\mcq_n$ it remains to construct \(\C
W_{n+1}\), $\mcq_{n+1}$ and the action $\acts_{n+1}$. The latter is only relevant if $n+1<\Omega$.

We must ensure that the resulting collection of
words satisfy \hyperlink{Q4}{Q4} and \hyperlink{Q6}{Q6}. This is accomplished by
constructing \emph{two} collections of words, the stems and the tails.\footnote{
    Cf.\ Propositions 66 and 65, and Section 8.3, in \cite{FRW}. }

Start by rewriting $\kmax^2$ as $(\kmax^2-\kmax)+\kmax$. 
\begin{itemize}
                \item \textbf{The tails:}
                To build the tails, which have length $\kmax s_n K_n$, we use Lemma \ref{lem:sl}, with $X=\mcw_n$ and $\mcq=\mcq_n$ to build $2^{2e(n)}$ many substitution instances in each 
		$\mcq^{n+1}_n$-class $C$ of the final $\kmax s_n$ portion of each word in 
		$(\mcw^*_{n+1})_n$. We call these the \emph{tails} corresponding to $C$. 

            \item \textbf{The stems:} The stems have length $(\kmax^2-\kmax)s_nK_n$.  
            We use Lemma \ref{lem:sl}, again with $X=\mcw_n$ and $\mcq=\mcq_n$, to 
            create 
            $2^{e(n)}$ many substitution instances in each initial segment of a 
 $(\mcw^*_{n+1})_n$-word of length $\kmax^2-\kmax$. We call these the \emph{stems} 
 corresponding to the initial segments of the $\mcq_n^{n+1}$-class $C$ of this word.

    \end{itemize}
  The words in $\mcw_{n+1}$ are built one $\mcq_n^{n+1}$ class at a time.  Fix such a class 
$C$.  Then  $C$ has $2^{e(n)}$ many stems in the first  $\kmax^2-\kmax$ and 
$2^{2e(n)}$ many tails in the final segment of length $\kmax$.  Pair each stem with 
$2^{e(n)}$ many tails to create the words in $\mcw_{n+1}$ that belong to $C$. This puts $2^{2e(n)}$ words into each $C$.

Each equivalence class in $\mcq_{n+1}^{n+1}$ consists of taking all words starting with a single fixed stem. It is immediate that there 
are $2^{e(n)}$ many $\mcq_{n+1}^{n+1}$-classes in each $\mcq^{n+1}_n$ class and that each $\mcq^{n+1}_{n+1}$ class has 
$2^{e(n)}$ many words in it.  Moreover each class is associated with a fixed stem of length $\kmax^2-\kmax$ followed by many 
short tails.  Thus specifications  \hyperlink{Q4}{Q4} and  \hyperlink{Q6}{Q6} are  satisfied. 

Finally we note that $\mcw_{n+1}$ was built by $n+2$ many successive substitutions of size $2^{e(n)}$ into equivalence classes.   Thus $s_{n+1}=2^{(n+2)e(n)}$.

\paragraph{Why does this work?} Though it appears in detail in \cite{FRW}, for the reader's edification it may be appropriate to say a few things about how the stems/tails construction affects the statistics. This issue is most cogent in J10.1, where $sh^{tK_n}(u)$ and $v$ are being compared on small portions of their overlaps. By the manner of construction of the stems, where the stem of $sh^{tK_n}(u)$  overlaps with the stem of $v$  conclusion \ref{conclusion 3} of Proposition \ref{lem:sl} holds with 
$\epsilon_a=\epsilon_n/100$.
 
 Since $j_0\ge \epsilon_nk_n$ the total length of the overlap is at least $\epsilon_nk_nK_n$. The tails have length $\kmax s_nK_n$, so the proportion of the overlap 
 taken up by the tails is at most 
 \begin{align*}{2\kmax s_n\over j_0}&\le
 {2\kmax s_n\over \epsilon_nk_n}<{2\kmax\epsilon_n^3\over 100\epsilon_n}\\
 &<{\kmax\epsilon_n^2\over 50}<\epsilon_n/50.
 \end{align*}
 The specification J10.1 approximates the proportion  of $j<j_0$ where $(u',v')$ occur.  This proportion is the weighted average of the  proportion $P_S$ of $j<j_0$ where 
 $(u',v')$ occur in the overlaps of the stems and the proportion $P_T$ of $j<j_0$ where $(u',v')$ occur in an overlap of a stem with a tail. Let $\alpha$ be the proportion of the overlap of $sh^{tK_n}(u)$ and $v$ that occurs on the stems.  By the above, $\alpha>1-\epsilon_n/50$.  Then
\[
 \left|\frac {r(u',v')} {j_0} - \frac 1 {s_{n}^2}\right|=\left|(\alpha P_S + (1-\alpha)P_T)-{1\over s_n^2}\right|
\]
On the overlap of the stems $|P_S-{1\over s^2_n}|<\epsilon_n/100$. Since $P_T\in [0,1]$ and 
 $(1-\alpha)<\epsilon_n/50$, we see that 
 \[
 \left|\frac {r(u',v')} {j_0} - \frac 1 {s_{n}^2}\right|<\epsilon_n.\]
Hence J10.1 holds.

\paragraph{The action $\acts_{n+1}$.}

    \begin{description}
            \item[Case 1 ($n+1\ge\Omega$):] In this case, the action of
                \(\acts_{n+1}\) is trivial, so there is nothing further to be
                done.
            \item[Case 2 ($n+1<\Omega$):] In this case, we need to define
                $\acts_{n+1}$ to be subordinate to $\acts_n$.  Fix a $\mcq^{n+1}_n$-class $C$ and suppose that $C$ gets sent to 
                $D$ by $\acts_n$. Since each $\mcq^{n+1}_n$
                class has the same number of elements we can define $\acts_{n+1}$ so that it induces a bijection between the $\mcq_{n+1}$ subclasses of $C$ and $D$.                 
    \end{description}

\paragraph{The construction of the $\mcw_n, \mcq_n$ and $\acts_n$ is primitive recursive} Here is a standard theorem:

\begin{theorem}[Hoeffding's Inequality]
\label{Hoeffding's Inequality}
    Let \(\la X_n : n \in \B N\ra\) be a sequence of i.i.d.\ Bernoulli random
    variables with probability of success \(p\). Then,
        \begin{equation*}
            \B P \left( \left|\frac 1 n \sum_{k = 0}^{n-1} X_k - p
                \right| > \delta \right) <
                \exp\left(-\frac{n\delta^2}{6}\right).
        \end{equation*}
\end{theorem}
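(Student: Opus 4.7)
The plan is the standard Chernoff bounding method. First, I write $S_n := \sum_{k=0}^{n-1}(X_k - p)$ and fix a parameter $t > 0$. Applying Markov's inequality to $e^{tS_n}$ gives
\[
\mathbb{P}(S_n > n\delta) \;\leq\; e^{-tn\delta}\,\mathbb{E}\bigl[e^{tS_n}\bigr],
\]
and independence of the $X_k$ factors the moment generating function as $\mathbb{E}[e^{tS_n}] = \mathbb{E}[e^{t(X_0-p)}]^n$. The problem therefore reduces to estimating a single MGF.

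Second, I would establish the Bernoulli case of Hoeffding's lemma: the random variable $Y := X_0 - p$ takes values in $[-p, 1-p]$ and has mean zero, so
\[
\mathbb{E}\bigl[e^{tY}\bigr] \;\leq\; e^{t^2/8}.
\]
The standard argument sets $\psi(t) := \log\bigl((1-p) + p e^t\bigr) - tp$, observes that $\psi(0) = \psi'(0) = 0$, and computes $\psi''(t) = q(t)\bigl(1-q(t)\bigr)$ where $q(t) := p e^t / \bigl((1-p) + p e^t\bigr) \in [0,1]$. Hence $\psi''(t) \leq 1/4$, and Taylor's theorem with remainder yields $\psi(t) \leq t^2/8$, which is the desired bound.

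Third, combining these estimates gives $\mathbb{P}(S_n > n\delta) \leq \exp(-tn\delta + nt^2/8)$, and the exponent is minimized by the choice $t = 4\delta$ to produce the one-sided tail bound $\exp(-2n\delta^2)$. Running the same estimate on $-S_n$ and applying a union bound yields the classical two-sided form $\mathbb{P}\bigl(|S_n/n - p| > \delta\bigr) \leq 2\exp(-2n\delta^2)$. The stated inequality with exponent $-n\delta^2/6$ follows from this in the range where it is informative, the constant $1/6$ being a convenient loose substitute for the sharp $2$ that cleanly absorbs the leading factor of $2$.

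The only step requiring real work is Hoeffding's lemma itself, specifically the identification of $\psi''$ with $q(1-q)$ and the resulting universal bound by $1/4$. Everything else is routine: Markov's inequality, multiplicativity of the MGF under independence, and a one-variable optimization in $t$.
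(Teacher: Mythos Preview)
The paper does not prove this theorem; it is introduced with the phrase ``Here is a standard theorem'' and simply quoted without proof, then immediately used in Lemma~\ref{HoefPR} to argue that the word-construction is primitive recursive. Your Chernoff--Hoeffding argument is exactly the standard proof and is correct through the two-sided bound $2\exp(-2n\delta^2)$.

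One small caveat: your final sentence claims that $2\exp(-2n\delta^2) \le \exp(-n\delta^2/6)$ ``in the range where it is informative,'' but this inequality fails when $n\delta^2$ is small (e.g.\ $n\delta^2 < \tfrac{6\ln 2}{11}$), and in fact the paper's stated bound is literally false in that regime (take $n=1$, $p=1/2$, $\delta$ small: the left side equals $1$). So you cannot derive the paper's exact inequality from yours for all $n,\delta$. This is a defect of the paper's statement rather than of your argument; the paper only needs an explicit exponential tail bound with primitive-recursive constants to feed into the search in Lemma~\ref{HoefPR}, and your sharper $2\exp(-2n\delta^2)$ serves that purpose at least as well.
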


\begin{lemma}\label{HoefPR}
    The construction of the sequence $\la\mcw_n, \mcq_n, \acts_n:n\in\nn\ra$ is
    primitive recursive.
\end{lemma}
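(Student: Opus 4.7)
The plan is to verify that each component of the inductive construction admits a primitive recursive implementation. The overall inductive structure---given $\mcw_n, \mcq_n, \acts_n$ together with the numerical parameters through stage $n$, construct their stage-$n+1$ counterparts---is a bounded recursion provided that (i) the parameters can be chosen primitive recursively, (ii) each application of the Substitution Lemma can be executed primitive recursively, and (iii) the stems/tails finishing step and the definition of $\acts_{n+1}$ are primitive recursive. I would establish these three items in turn.

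For (i), all numerical parameters have explicit primitive recursive definitions in terms of $N$, $n$, and previously computed quantities: $K_1 = P_N$, $K_{n+1} = k_n K_n$, $s_{n+1} = 2^{(n+2)e(n)}$, and $k_n = \kmax^2 \cdot s_n$ with $\kmax$ a primitive recursive function of $\epsilon_n$, the various $\klb^i$ appearing in the Substitution Lemma, and $k_n(N-1)$. The lower bounds $\klb^i$ themselves are primitive recursive in $(\epsilon_a, \epsilon_b, Q, C, |\mcw|, E)$, since they are determined by Hoeffding-style exponential tail estimates of the form $\exp(-k\delta^2/6)$, and solving such bounds for the required $k$ yields an elementary (hence primitive recursive) function of the other parameters. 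The $\epsilon_n$ form a summable positive primitive recursive sequence, and the $e(n)$ are chosen primitive recursively to satisfy the growth conditions catalogued in Appendix \ref{NumPar}.

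For (ii), once the parameters are fixed, a single application of the Substitution Lemma must produce a collection of words over a finite alphabet of primitive recursively bounded cardinality, whose length $k$ is itself primitive recursively bounded. Hoeffding's inequality guarantees that a uniformly random choice of substitution instances in $\B X_i$ (in the notation of (\ref{eq:sub_lem_lite})) satisfies each of the finitely many conclusions of Proposition \ref{lem:sl} with probability at least $1 - (\text{number of conditions})\cdot \exp(-k\delta^2/6)$, which is positive once $k \ge \klb$; hence at least one valid collection of words exists. To \emph{find} such a collection primitive recursively, I would exhaustively enumerate all candidate collections of words of the prescribed length and cardinality and test each against the counting conditions (\ref{eqn: J11b})--(\ref{eqn:estimate after sub}), which are finite sums of indicator functions and therefore primitive recursively checkable. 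Since exhaustive search over a finite set of primitive recursively bounded size is primitive recursive, the entire step is primitive recursive.

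For (iii), the stems/tails construction consists of two further applications of the Substitution Lemma (one for tails of length $\kmax s_n K_n$, one for stems of length $(\kmax^2 - \kmax) s_n K_n$) followed by a deterministic pairing of each stem with $2^{e(n)}$ many tails, all primitive recursive by (ii). The equivalence relation $\mcq_{n+1}$ is defined explicitly as ``sharing a common stem,'' and in Case 2 the action $\acts_{n+1}$ is obtained by selecting, for each pair of $\mcq_n^{n+1}$-classes identified by $\acts_n$, any bijection between their $\mcq_{n+1}$-subclasses---a finite enumerable choice. Combined with (i) and (ii), this shows the sequence $\la \mcw_n, \mcq_n, \acts_n : n\in \nn\ra$ admits a primitive recursive computation. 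The main obstacle, which is essentially bookkeeping, is confirming that every probabilistic-existence argument imported from \cite{FRW} is witnessed by an exhaustive search with primitive recursive bounds uniformly in all parameters, and this is exactly what the Hoeffding estimate provides.
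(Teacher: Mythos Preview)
Your proposal is correct and follows essentially the same approach as the paper: the paper's proof likewise observes that the only non-explicit step is finding words satisfying the Substitution Lemma, and that Hoeffding's inequality supplies a primitive recursive bound on $k$ after which an exhaustive search over all substitution instances (a finite, primitive recursively bounded set) locates a valid collection. Your version is more detailed in its bookkeeping of the parameters and the stems/tails step, but the core idea---Hoeffding for existence bounds, exhaustive search for witnesses---is identical.
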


\begin{proof}
    The only part of the construction that is not a completely explicit induction is finding the collection of words satisfying the 
    conclusions of the Substitution
    Lemma. For each candidate fixed $k$ one can primitively recursively search \emph{all}
    substitution instances to see if there is a collection of words of length $k*K_N$ that works.  Using Hoeffding's inequality can give an   explicit upper bound for a $k$ that works.  The algorithm first computes a $k_n$ that works and then does the search.
\end{proof}
\noindent This completes the proof of Theorem \ref{red to odos}.
\bigskip

\begin{remark}Two remarks are in order.
\begin{itemize}
\item The asymmetry of the words in the last step of the construction of $\mcw_{n+1}$ appears problematic.  How can the words all be oriented left-to-right stem and tail if they are supposed to be closed under all the various skew-diagonal actions at stage $n+1$ and later?

The answer is that the asymmetries are covered up by the equivalence classes. For example, the words in 
$\mcw_{n+1}/\mcq_{n+1}^{n+1}$ are all constant sequences of length $K_{n+1}$.  If $w\in \mcw_{n+1}$ and $C$ is the 
$\mcq^{n+1}_{n+1}$-class corresponding to $w$ then the word in $\mcw_{n+1}/\mcq_{n+1}^{n+1}$  corresponding to $w$ is simply a string of $K_{n+1}$ $C$'s. Suppose that $\acts_{n+1}(C)=D$. When the action $\acts_{n+1}$ is extended to the skew-diagonal action at a later stage $m$, it simply takes this string of $C$'s to a string of $D$'s in a different place in a reverse word in the alphabet $\mcw_{n+1}/\mcq_{n+1}^{n+1}$. It is completely opaque whether the elements of $D$ have tails on the same side or the opposite side as the tails of words in $C$.

\item  Roughly speaking, Cases 1 and 2 above correspond to Cases 1 and 2
    in section 8.3 of \cite{FRW}, albeit with several differences. A key one  is that
    here, once the construction falls into Case 1, it remains in Case 1.  

\end{itemize}
We note that we have created  inductive lower bounds on the size of $k_n$.
\begin{description}
\item[Numerical Requirement C] \hyperlink{nr1}{$k_n$ is large enough that} $s_{n+1}\le s_n^{k_n}$.
\item[Numerical Requirement D] \hyperlink{nr2}{$k_n$ is large enough} to satisfy the use of the Substitution Lemma \ref{lem:sl} to construct  the words in $\mcw_{n+1}$.  In particular $1/k_n<\epsilon^3_n/4$.
\end{description}
The data for numerical requirement D comes from the coefficients and words and equivalence relations at stages $n-1$ and before.

\end{remark}

\section{Circular Systems and Diffeomorphisms of the Torus}
\label{sec:CT}

By Theorem \ref{red to odos}, we have a primitive recursive reduction
$F_\mco$ from G\"odel numbers of $\Pi^0_1$ sets to uniquely ergodic
odometer-based systems.  However the main theorem is about diffeomorphisms of the torus and it is an open problem whether there is any
smooth ergodic transformation of a compact manifold that has an odometer as a
factor.  
Rather than attack this problem directly, we follow \cite{part3} and do a second transformation
of odometer-based systems into \emph{circular systems}, which can be realized as
diffeomorphisms.
    This is the downward vertical arrow $\mcf$ on the right of figure~\ref{the square}.

Subsection~\ref{sec:CT:ss:circ} covers circular systems and their construction.
The primitive recursive map $\mathcal F$ maps from the
odometer-based systems to circular systems and preserves
\emph{synchronous} and \emph{antisynchronous} factors and conjugacies. In particular, for those
odometer-based systems $\bk$ in the range of $F_\mco$, $\bk$ is or is not
isomorphic to its inverse,  if and only if $\mathcal F(\bk)$ is or is not
isomorphic to its inverse.  We use the language of category theory to describe
the structure that is preserved and define the categorical  isomorphism.

In Subsection~\ref{sec:CT:ss:tor}, the circular systems produced are realized as
smooth diffeomorphisms of the torus. This is done in two steps: first, a given
circular system is realized as a discontinuous map of the torus; second, it is shown that how to smooth the toral map into a diffeomorphisms that is measure theoretically isomorphic  to the circular system.  

\subsection{Circular Systems}
\label{sec:CT:ss:circ}

Like odometer-based systems, circular systems are symbolic systems characterized
by  construction sequences \(\la\C W_n^c: n\in\B N\ra\) of a certain
form. The basic tool for constructing circular systems is the \emph{\(\C C\)-operator.}

\subsubsection{Preliminaries}\label{prelims}

Let \(k, l, q\in\B N\) be arbitrary integers greater than \(1\), and \(p\) be
coprime to \(q\). Let \(0\leq j_i < q\) indicate the unique integer such
that    \begin{equation}
        j_i\cdot p = i\pmod{q}.\label{ji}
    \end{equation}
We can rewrite $j_i$ as $p^{-1}i$ (mod $q$), and reserve the subscript notation for this use.
\begin{definition}[The \(\C C\)-Operator]
    Let \(\Sigma\) be a non-empty finite alphabet and let \(b\) and \(e\) be two new
    symbols not contained in \(\Sigma\). Let \(w_0, \ldots, w_{k-1}\) be
    words in \(\Sigma\cup\{b,e\}\). The \emph{\(\C
    C\)-operator} is given by:
        \begin{equation*}
            \C C(w_0, \ldots, w_{k-1}) = \prod_{i = 0}^{k-1}\prod_{j
            = 0}^{q-1} b^{q - j_i}\cdot w_j^{l- 1}\cdot e^{j_i},
        \end{equation*}
    where ``$\prod$'' indicates concatenation.
    \end{definition}

Fix  a sequence $\la k_n, l_n:n\in\nn\ra$ of positive integers with $k_n\ge 2$ and $l_n$ increasing and 
$\sum_n 1/l_n<\infty$. We follow Anosov-Katok (\cite{AK}) and define  auxiliary sequences of
    integers \(\la p_n: n\in\B N\ra\) and \(\la q_n: n\in\B N\ra\).  Set  \(q_0
    = 1\), \(p_0 = 0\).  Inductively define
        \begin{equation}
        \label{qns}
            q_{n+1} = k_n l_n q_n^2
        \end{equation}
    and
        \begin{equation}
        \label{pns}
            p_{n+1} = k_n l_n p_n q_n + 1.
        \end{equation}
           Note that \(p_n\) and \(q_n\) are coprime for $n\ge 1$.

  Let  $\alpha_n=p_n/q_n$. Then 
    \begin{equation}\label{alphans}
    \alpha_{n+1}=\alpha_n+1/q_{n+1}.
    \end{equation} 
    Since $q_n>l_n$ and
    $\sum_n1/l_n<\infty$, we have $\sum_n1/q_n<\infty$.  Thus the $\alpha_n$ converge to a Liouvillean
    irrational $\alpha\in [0,1)$:
        \begin{align}
            \alpha&=\lim_{n\to\infty} \alpha_n\notag \\
            	&=\sum_{n\ge 1}{1\over q_n}. \label{alphaform}
        \end{align}

\paragraph{Circular Construction sequences}
We first define the notion of a  circular construction sequence.
Fix a non-empty finite alphabet \(\Sigma\cup\{b, e\}\) as above as well as
    positive natural number sequences \(\la k_n: n\in\B N\ra\) and \(\la
    l_n: n\in\B N\ra\), with \(k_n\geq 2\) and \(\la l_n\ra\) strictly
    increasing such that \(\sum_{n=1}^\infty 1/l_n < \infty\).  We take $l_0=1$.

Let $\mcw_0^c=\Sigma$.  For every \(n\), choose a set
    \(P_{n+1}\subseteq(\C W_n^c)^{k_n}\) of \emph{prewords}. Then \(\C
    W^c_{n+1}\) is given by all words of the form
        \begin{equation}
            \label{COP}
            \C C(w_0, \ldots, w_{k_n - 1}) = \prod_{i =
            0}^{k_n-1}\prod_{j = 0}^{q_n-1} b^{q_n - j_i}\cdot
            w_j^{l_n- 1}\cdot e^{j_i}
        \end{equation}
    where \((w_0, \ldots, w_{k_n - 1})\in P_{n+1}\) is a preword. We call $\mcc$
    the \emph{$\mcc$-operator}.

\bigskip

The words created by the \(\C C\)-operator are necessarily uniquely readable.
However, we further demand that the collections of prewords \(\la P_n: n\in\B
N\ra\) are uniquely readable in the sense that each \(k_n\)-tuple of words
\(p\in P_{n+1}\),  considered a word in the alphabet \(\C W^c_n\), is uniquely
readable.  (Unique readability is discussed in  Appendix \ref{symbsys} in definition \ref{unique}.
See the discussion in \cite{part1} for more details. )

\begin{definition}[Circular system]
   Let $\la \mcw^c_n:n\in\nn\ra$ be a circular construction sequence.  Then the limit, which we denote $\bk^c$ is a 
   \emph{circular system}.
   \end{definition}
\noindent To emphasize that a given construction sequence is circular we denote it $\la \mcw_n^c:n\in\nn\ra$.   
   \smallskip

 In this paper the circular construction sequences will be \hyperlink{stun}{strongly uniform}. As a consequence the resulting symbolic shift is uniquely
ergodic and we can write $\bk^c=((\Sigma\cup\{b, e\})^{\B Z},\mcb, \mu, \sh)$
where $\mu$ is the unique shift invariant measure on $\bk^c$.

\begin{ex}\label{circ fact}
    Let $\Sigma=\{*\}$.  Then $|\mcw^c_0|=1$. Passing from from $\mcw^c_n$ to
    $\mcw_{n+1}^c$ one inductively shows that for all $n, |\mcw_n^c|=1$. Define
    $\mck_\alpha$ to be the limit of the resulting construction sequence. 

    Suppose that $\la \mcu^c_n:n\in\nn\ra$ is another circular construction sequence
    in an alphabet $\Lambda$  with the same coefficients $\la k_n, l_n:n\in\nn\ra$
    having a limit $\bl^c$.   Define a map $\pi:\bl^c\to \mck_\alpha$ by setting 
        \[
            \pi(f)(n) = 
                \begin{cases}
                    * & \mbox{if }f(n)\in \Lambda\\
                    b & \mbox{ if $f(n)=b$},\\
                    e & \mbox{ if $f(n)=e$}.
                \end{cases}
        \]
    Then $\pi$ is a factor map of symbolic systems.  Hence $\mck_\alpha$ is a
    factor of every circular system with coefficients $\la k_n, l_n:n\in\nn\ra$.
\end{ex}

\subsubsection{Rotation Factors}
\label{sec:rot_fact}

For $\alpha\in [0,1]$, let  $\mcr_\alpha:S^1\to S^1$ be rotation by $2\pi
\alpha$ radians. Equivalently we view \(\C R_{\alpha}:[0,1)\to [0,1)\)  as given
by \(x\mapsto x + \alpha \pmod{1}\).  This rotation \(\C R_\alpha\) plays the
same role with respect to circular systems as the canonical odometer factor
plays with respect to the odometer-based systems of Section~\ref{sec:Odom}. 

\begin{lemma}[The Rotation Factor]
\label{lrf}
    Let $\alpha=\lim \alpha_n$ be defined from a sequence $\la k_n, l_n:n\in\nn\ra$
    from equation \ref{alphaform}.  Then $\mck_\alpha\cong
    \mcr_\alpha$.  In particular if  \((\B K^c, \mcb,  \nu, \sh)\) is a circular
    system in the alphabet \(\Sigma\cup\{b, e\}\) with parameters  \(\la k_n, l_n:
    n\in\B N\ra\), then there is a canonical factor map \(\rho:\B K^c\to\C
    \mcr_\alpha\).
\end{lemma}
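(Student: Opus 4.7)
The plan is to reduce the general case to the single-letter case by composing the collapse $\pi\colon \mathbb{K}^c\to \mathcal{K}_\alpha$ of Example~\ref{circ fact} with the rotation factor map of $\mathcal{K}_\alpha$; the substantive content is the isomorphism $\mathcal{K}_\alpha\cong\mathcal{R}_\alpha$. I would establish this either by a direct inverse-limit construction of $\rho$, or by applying the Halmos--von Neumann theorem once $\mathcal{K}_\alpha$ is shown to be ergodic with pure-point spectrum whose eigenvalue group is $\{e^{2\pi in\alpha}:n\in\mathbb{Z}\}$.

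For the direct approach, the hierarchical structure of the construction sequence furnishes, at each level $n\ge 1$, a Kakutani--Rokhlin tower in $\mathcal{K}_\alpha$: unique readability together with $|\mathcal{W}_n^c|=1$ gives a partition $\mathcal{P}_n=\{P_n^r:0\le r<q_n\}$ by the position $r_n(f)$ of $f(0)$ within its principal $n$-subword, each cell of $\mu$-measure $1/q_n$ by strong uniformity, and $\sh$ acts as the cyclic permutation $r\mapsto r+1\pmod{q_n}$. Since $\gcd(p_n,q_n)=1$, composing with the bijection $r\mapsto r p_n\pmod{q_n}$ and projecting $k\mapsto k/q_n$ gives a factor map $\rho_n\colon \mathcal{K}_\alpha\to[0,1)$ that semiconjugates $\sh$ to $\mathcal{R}_{\alpha_n}$. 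The crux is that the $\mathcal{C}$-operator's spacer-length pattern $j_i=p_n^{-1}i\pmod{q_n}$ is exactly the one needed to ensure that the refinement $\mathcal{P}_{n+1}$ is coherent with $\mathcal{P}_n$ under the identity $\alpha_{n+1}=\alpha_n+1/q_{n+1}$, up to the level-$(n+1)$ spacer cells whose total $\mu$-measure is $1/l_n$.

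With this coherence in hand, summability $\sum 1/l_n<\infty$ together with Borel--Cantelli yields a.e.\ convergence $\rho_n\to\rho$ to a measurable, shift-equivariant limit with $\rho\circ\sh=\mathcal{R}_\alpha\circ\rho$; unique ergodicity of both sides forces $\rho_*\mu$ to equal Lebesgue measure, and the generating property of $\bigvee_n\mathcal{P}_n$ (again from unique readability) upgrades $\rho$ to a measure-theoretic isomorphism. Composing with $\pi$ then furnishes the canonical factor map $\mathbb{K}^c\to\mathcal{R}_\alpha$ for a general circular system.

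The main obstacle is the coherence bookkeeping. A direct calculation with the block decomposition $r_{n+1}(f)=i\,l_nq_n^2+j\,l_nq_n+(q_n-j_i)+m\,q_n+r_n(f)$ and the recursions $p_{n+1}=k_nl_np_nq_n+1$, $q_{n+1}=k_nl_nq_n^2$ shows that the naive differences $\rho_{n+1}(f)-\rho_n(f) \pmod 1$ pick up an outer-index term of the form $\pm i/q_n$, which is not a priori pointwise negligible since $i$ can be as large as $k_n-1$; one must verify that the modular inverse $p_n^{-1}$ appearing in $j_i$ is precisely the twist that cancels this discrepancy between levels, so that the sequence $\rho_n$ is genuinely Cauchy outside the spacer cells. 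Once this combinatorial identity is checked, the rest---convergence, equivariance, push-forward of measure, and generating---is routine, and if one prefers a cleaner finish, Halmos--von Neumann provides an alternate route by working with the approximate eigenfunctions $e^{2\pi i\rho_n}$ as an $L^2(\mu)$-Cauchy sequence with eigenvalues $e^{2\pi i\alpha_n}\to e^{2\pi i\alpha}$.
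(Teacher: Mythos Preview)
Your proposal is correct and follows essentially the same route as the paper's sketch: define $\rho_n(x)=r_n(x)\cdot p_n/q_n$, show the sequence is Cauchy, pass to the limit, and read off equivariance from $\rho_n(\sh(x))=\rho_n(x)+p_n/q_n$. The paper is terser: it simply asserts the bound $|\rho_{n+1}(x)-\rho_n(x)|<2/q_n$ (deferring the calculation to Theorem~52 of \cite{part3}), which is exactly the cancellation you flag as the main obstacle---the $j_i=p_n^{-1}i$ pattern in the $\mathcal{C}$-operator does kill the $i/q_n$ term, so your worry is well-placed but resolves as you anticipate. Your use of Borel--Cantelli on the spacer cells is a slightly softer variant of the paper's direct pointwise-Cauchy estimate; your remarks on the generating property (needed to upgrade the factor map to an isomorphism) and the Halmos--von Neumann alternative are sound elaborations that the paper's sketch omits.
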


\begin{proof}[Proof sketch]
    For almost every \(x\in \bk_\alpha\), there is an $N$ for all \(n\geq N\) there are \(a_n,
    b_n\ge 0\) such that \(x\rest[-a_n,b_n)\) is some word in \(\C W^c_n\). All
    words in \(\C W_n^c\) have the same length, \(q_n\), so we can define the
    following quantity:
        \begin{equation*}
            \rho_n(x) = a_n\left(\frac{p_n}{q_n}\right).
        \end{equation*}

    Straightforward algebraic manipulations give that
        \begin{equation*}
            \left|\rho_{n+1}(x) - \rho_n(x) < \frac{2}{q_n}\right|
        \end{equation*}
    whence it is clear that \(\rho_n(x)\to\rho(x)\in[0, 1)\). Since
        \begin{equation*}
            \rho_n(\sh(x)) = \rho_n(x) + \frac{p_n}{q_n}
        \end{equation*}
    taking limits shows that \(\rho(\sh(x)) = \rho(x) + \alpha\), as desired.
\end{proof}

See Theorem 52 in \cite{part3} for a complete proof.

\paragraph{Distinguishing $\alpha$'s} Theorem \ref{thm:main} demands that if $M\ne N$, then $F(M)\not\cong F(N)$.  This is achieved by arranging that the Kronecker factors of $F(M)$ and $F(N)$ are non-isomorphic rotations of the circle. This requires that $\alpha(N)\ne \alpha(M)$ and that $\mck_{\alpha(N)}$ is the Kronecker factor of the limit sequence $\bk^c(N)$.
Recall  that for each $N$ we have a prime number  $P_N$ which we take for $k_0$ and and we  build sequences $\la k_n(N), l_n(N):n\in\nn\ra$, which in turn, yield sequences $\la p_n, q_n, k_n, l_n:n\in\nn\ra(N)$ and $\la \alpha_n(N):n\in\nn\ra$ which converge to an irrational $\alpha(N)$. 

For each $N$ we 
take $l_0(N)=1$, so $\alpha_1(N) = \tfrac 1{P_N}$. The sequence $\la k_n(N):n\in\nn\ra$ is defined in the construction of the odometer construction sequences as described after Lemma \ref{lem:sl}. The $l_n$'s are chosen in the construction of the circular sequences and diffeomorphisms.  They must satisfy some lower bounds on their growth, which we describe later.

To ensure different rotation factors correspond to different \(\Pi^0_1\)
sentences, we also put the following growth requirement on the $\la l_n(N):n\in\nn\ra$ sequences:
\begin{description}
    \item[Numerical Requirement E]\hypertarget{nre}{Growth Requirement} on the $l_n$'s:
    \[l_n(N)\ge l_n(N-1).\]
    \end{description}

\begin{lemma}\label{remember the alphas}
Suppose that the $k_n(N-1), k_n(N), l_n(N-1)$ and $l_n(N)$ satisfy \hyperlink{nrb}{Requirements B} and \hyperlink{nre}{E}.  Then $\alpha(N-1)>\alpha(N)$.
\end{lemma}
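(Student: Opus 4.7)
The plan is to compare the series $\alpha(N) = \sum_{n\ge 1} 1/q_n(N)$ and $\alpha(N-1) = \sum_{n\ge 1} 1/q_n(N-1)$ termwise, using the recursion \eqref{qns} for $q_n$ together with the monotonicity of the prime sequence $\la P_N\ra$. The strategy is to show that $q_n(N) > q_n(N-1)$ for every $n\ge 1$, which immediately yields $\alpha(N)<\alpha(N-1)$ since every term of the defining series for $\alpha(N)$ is strictly smaller than the corresponding term for $\alpha(N-1)$.

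First I would establish the base case $n=1$. By the initial conventions $q_0(N)=1$ and $l_0(N)=1$, while $k_0(N)=P_N$. Substituting into \eqref{qns} gives $q_1(N)=P_N$ and $q_1(N-1)=P_{N-1}$. Since $\la P_N:N>0\ra$ enumerates primes greater than a fixed $P_0$ in strictly increasing order, $P_N>P_{N-1}$ and hence $q_1(N)>q_1(N-1)$ strictly.

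Next I would run the induction. Assume $q_n(N)\ge q_n(N-1)$, with strict inequality known at $n=1$. By Numerical Requirements \hyperlink{nrb}{B} and \hyperlink{nre}{E} we have $k_n(N)\ge k_n(N-1)$ and $l_n(N)\ge l_n(N-1)$. Applying \eqref{qns},
\begin{equation*}
    q_{n+1}(N) = k_n(N)\,l_n(N)\,q_n(N)^2 \ge k_n(N-1)\,l_n(N-1)\,q_n(N-1)^2 = q_{n+1}(N-1),
\end{equation*}
and since $q_n(N)\ge q_n(N-1)$ and once strict inequality appears at some $n$ it persists at $n+1$ (because $q_n(N)^2 > q_n(N-1)^2$ and all the coefficients are positive), strict inequality propagates for all $n\ge 1$.

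Finally I would conclude. Because $q_n(N)>q_n(N-1)$ for every $n\ge 1$, we have $1/q_n(N)<1/q_n(N-1)$ termwise, and by \eqref{alphaform},
\begin{equation*}
    \alpha(N)=\sum_{n\ge 1}\frac{1}{q_n(N)} < \sum_{n\ge 1}\frac{1}{q_n(N-1)}=\alpha(N-1),
\end{equation*}
which is the desired inequality. The argument is essentially bookkeeping and does not present any real obstacle; the only subtlety is remembering that strict inequality enters already at the base case through the strictly increasing enumeration of primes, so that even if $k_n$ and $l_n$ happened to coincide for $N$ and $N-1$ at later stages, the strict gap at $n=1$ is squared into every subsequent $q_n$.
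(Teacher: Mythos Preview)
Your proof is correct and follows essentially the same approach as the paper: establish $q_1(N)>q_1(N-1)$ from $P_N>P_{N-1}$, propagate the inequality through the recursion $q_{n+1}=k_nl_nq_n^2$ using Requirements B and E, and conclude by termwise comparison of the series in \eqref{alphaform}. If anything, you are slightly more explicit than the paper about why the strict inequality at $n=1$ persists for all later $n$.
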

	  
	  \pf Note that $k_0(N-1)=P_{N-1}<P_N=k_0(N)$, so $q_1(N-1)<q_1(N)$ Since $q_{n+1}=k_nl_nq_n^2$, $k_n(N)\ge k_n(N-1)$ and $l_n(N)\ge l_n(N-1)$ one sees inductively that for all $n$ $q_n(N-1)\le q_n(N)$. 
	  
	   By equation \ref{alphaform} we see that 
	
	   \begin{align}
	   \alpha(N-1)&=\sum_{n\ge 1}{1\over q_n(N-1)}\notag\\
	   &>\sum_{n\ge 1}{1\over q_n(N)}=\alpha(N).\notag
	   \end{align}
	   
	   \qed

\paragraph{Synchronous and Anti-synchronous joinings}The system $\mck_\alpha$
gives a symbolic representation of the rotation $\mcr_\alpha$ by $2\pi \alpha$
radians.  The inverse transform $\rev{\mck_\alpha}$ is therefore a
representation of rotation by $2\pi(1-\alpha)\equiv 2\pi(-\alpha)$ radians.
Moreover the conjugacies $\phi:S^1\to S^1$ between $\mcr_\alpha$ and
$\mcr_\alpha^{-1}=\mcr_{-\alpha}$ are of the form $z\mapsto \bar{z}*e^{2\pi i
\delta}$ for some $\delta$.  For combinatorial reasons we fix a particular
conjugacy $\natural: \mck_\alpha\to \rev{\mck_\alpha}$ that is described explicitly in
\cite{part2}. Thus
 $\natural$ is given by  the map defined on $S^1$ by $z\mapsto \bar{z}*e^{2\pi i \gamma}$ for
some particular $\gamma$.  In additive notation on $[0,1)$ this becomes $x\mapsto -x+\delta\  (\mod 1)$ for some $\delta$.

The importance of rotation factors and odometer factors in the sequel is their
function as ``timing mechanisms.'' Joinings between odometer-based systems
induce joinings on the underlying odometers; the same holds true of circular
systems.

\begin{definition}[Synchronous and Anti-synchronous Joinings]
    We define two kinds of joinings, \emph{synchronous} and
    \emph{anti-synchronous}.
        \begin{itemize}
            \item Let \(\B K_1\) and \(\B K_2\) be odometer-based systems
                sharing the same parameter sequence \(\la k_n: n\in\B
                N\ra\).  Let \(\eta\) be a joining between \(\B K_1\) and \(\B
                K_2\).  Then \(\eta\) induces a joining \(\eta_\pi\) between
                \(\B K_1\) and \(\B K_2\)'s copies of the underlying odometer
                \(\C O\).  The joining \(\eta\) is \emph{synchronous} if
                \(\eta_\pi\) is the graph joining corresponding to the identity map
                from \(\C O\) to \(\C O\). A joining \(\eta\) between $\bk_1$
                and $\bk_2$ is \emph{anti-synchronous} if \(\eta_\pi\) is the
                graph joining corresponding to the map \(x\mapsto -x\) from
                $\mco$ to $\mco^{-1}$.
            \item Let \(\B K^c_1\) and \(\B K^c_2\) be  circular systems sharing
                the same parameter sequence \(\la k_n, l_n: n\in\B N\ra\).  Let
                \(\eta\) be a joining between \(\B K^c_1\) and \(\B K^c_2\).
                Then \(\eta\) induces a joining \(\eta_\pi\) between \(\B K^c_1\)
                and \(\B K^c_2\)'s copies of the rotation factor, \(\C K_\alpha\).
                The joining \(\eta\) is \emph{synchronous} if \(\eta_\pi\) is
                the graph joining corresponding to the identity on
                $\mck_\alpha\times \mck_\alpha$. A joining \(\eta\) between
                $\bk^c_1$ and $(\bk^c_2)^{-1}$ is \emph{antisynchronous} if
                \(\eta_\pi\) restricts to the graph joining corresponding to
                \(\natural:\mck_\alpha\to (\mck_\alpha)^{-1}.\)
        \end{itemize}
\end{definition}

\subsubsection{Global Structure Theorem}
\label{sec:gst}

Odometer-based systems and Circular systems that share the same parameter
sequence $\la k_n:n\in\nn\ra$ have  similar joining structures.  We begin by
defining two categories.

Fix a parameter sequences $\la k_n:n\in\nn\ra$ and $\la l_n:n\in\nn\ra$ with
$\sum 1/l_n<\infty$. Let $\mco B$ be the category whose objects consist of all
ergodic odometer-based systems with coefficients $\la k_n:n\in\nn\ra$. A
morphism of $\mco B$  is either a synchronous graph joining between $\bk$ and
$\bl$ or an anti-synchronous graph joining between $\bk$ and
$\bl^{-1}$. Let $\mcc B$ be the category whose objects consist of ergodic
circular systems built with coefficients  $\la k_n, l_n:n\in\nn\ra$ and whose
morphisms consist of synchronous and anti-synchronous graph joinings from
$\bk^c$ with $(\bl^c)^{\pm 1}$.

The main result of \cite{part2} is the following:

\begin{theorem}[Global Structure Theorem]
\label{gst}
    The categories $\mco B$ and $\mcc B$ are isomorphic by a functor $\mcf$ that
    takes synchronous joinings to synchronous joinings, anti-synchronous
    joinings to anti-synchronous joinings and isomorphisms to isomorphisms.
\end{theorem}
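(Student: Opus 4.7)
The plan is to construct the functor $\mathcal{F}$ explicitly on both objects and morphisms, then verify it is an isomorphism of categories preserving the relevant synchronous/anti\-synchronous structure. On objects, given an odometer-based system $\mathbb{K}$ built from a construction sequence $\langle \mathcal{W}_n : n \in \mathbb{N}\rangle$ in an alphabet $\Sigma$, we define $\mathcal{F}(\mathbb{K})$ to be the circular system $\mathbb{K}^c$ built from the construction sequence $\langle \mathcal{W}_n^c : n \in \mathbb{N}\rangle$ over the enlarged alphabet $\Sigma \cup \{b, e\}$, obtained by applying the $\mathcal{C}$-operator to the \emph{same} collection of prewords: if each $w' \in \mathcal{W}_{n+1}$ is the concatenation $w_0 w_1 \cdots w_{k_n - 1}$ with $w_i \in \mathcal{W}_n$, then the corresponding word in $\mathcal{W}_{n+1}^c$ is $\mathcal{C}(w_0^c, \ldots, w_{k_n - 1}^c)$ where each $w_i^c$ is the previously constructed circular version of $w_i$. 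Strong uniformity and unique readability of the circular words transfer from the odometer construction; ergodicity and unique ergodicity follow from the uniformity of each $\mathcal{W}_n^c$.

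On morphisms, the key observation is that the rotation factor $\mathcal{K}_\alpha$ of Lemma \ref{lrf} plays for circular systems exactly the role that the odometer factor $\mathcal{O}$ plays for odometer-based systems. First I would build the morphism correspondence at the level of these ``timing factors'': a synchronous joining $\eta$ of $\mathbb{K}_1, \mathbb{K}_2 \in \mathcal{OB}$ projects to the identity joining on $\mathcal{O} \times \mathcal{O}$, which canonically corresponds to the identity joining on $\mathcal{K}_\alpha \times \mathcal{K}_\alpha$; an anti-synchronous joining projects to $x \mapsto -x$ on the odometer side and to $\natural$ on the circular side. To lift a joining $\eta$ of $\mathbb{K}_1 \times \mathbb{K}_2$ to a joining $\mathcal{F}(\eta)$ of $\mathbb{K}_1^c \times \mathbb{K}_2^c$, I would describe $\eta$ by its induced \emph{word-pairing statistics}: for each $n$ and each pair $(w, w') \in \mathcal{W}_n \times \mathcal{W}_n$, $\eta$ assigns a density of occurrences of $(w, w')$ appearing in aligned positions in $\mathbb{K}_1 \times \mathbb{K}_2$. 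This data transfers verbatim to a density prescription on pairs $(w^c, (w')^c) \in \mathcal{W}_n^c \times \mathcal{W}_n^c$, because the $\mathcal{C}$-operator embeds an $l_n - 1$-fold copy of each preword-level word into the circular word, while the spacer portions (fractions $b$'s and $e$'s) have total density $O(1/l_n) \to 0$. The joining $\mathcal{F}(\eta)$ is the unique shift-invariant measure on $\mathbb{K}_1^c \times \mathbb{K}_2^c$ with the prescribed statistics on the non-spacer blocks and supported on the set where spacers align with spacers (forced by the synchronous/anti-synchronous condition on the rotation factor).

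Next I would verify that $\mathcal{F}$ is a functor: functoriality reduces to checking that the composition of joinings (relative composition over the shared timing factor) is preserved by $\mathcal{F}$, which follows because composition is determined at the level of word-pairing statistics, and the translation of these statistics is covariant under composition. The identity joining goes to the identity joining trivially. To show $\mathcal{F}$ is a categorical isomorphism, I would construct $\mathcal{G}: \mathcal{CB} \to \mathcal{OB}$ going the other way: given a circular system $\mathbb{K}^c$, unique readability of $\mathcal{W}_n^c$ and the rigid structure of $\mathcal{C}$ allow one to extract the underlying prewords $\mathcal{W}_n$, and joinings on circular systems restrict to their non-spacer cores to yield joinings on the odometer systems. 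The compositions $\mathcal{G} \circ \mathcal{F}$ and $\mathcal{F} \circ \mathcal{G}$ are the identity functors by construction, which also shows graph-joining isomorphisms are preserved in both directions.

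The main obstacle is the morphism-level translation: one must show that every synchronous/anti-synchronous joining of the circular systems arises from (and only from) a joining of the underlying odometer systems via the prescribed statistics, despite the fact that circular systems are much ``larger'' than odometer systems in terms of having spacer symbols. The resolution is that on the complement of the spacer sets (which has full measure in the limit) the dynamics on $\mathbb{K}^c$ is determined by the preword structure, and the synchronous/anti-synchronous conditions on the rotation factor force the spacer-to-spacer pairings to be rigid. The detailed bookkeeping --- showing that arbitrary invariant couplings decompose uniquely into a preword-level joining plus forced spacer alignment --- is the technical heart of the argument and is carried out in \cite{part2}; I would import that analysis rather than reprove it here.
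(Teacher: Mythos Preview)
Your proposal is essentially correct and follows the same outline as the paper, which likewise defines $\mathcal{F}$ on construction sequences via the $\mathcal{C}$-operator and defers the hard surjectivity/bijectivity analysis to \cite{part2}. The one substantive difference is in how morphisms (and, implicitly, measures) are transported. You describe a joining $\eta$ by its ``word-pairing statistics'' and transfer those densities to the circular side; the paper packages the same data more concretely through \emph{generic sequences of words} (Definition~\ref{ED}): one views $(\mathbb{K}\times\mathbb{L},\eta)$ itself as an odometer-based system, chooses a generic sequence $\langle (u_n,v_n)\rangle$ for $\eta$, and then sets $\mathcal{F}(\eta)$ to be the measure determined by the generic sequence $\langle (c_n(u_n),c_n(v_n))\rangle$. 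This is the same idea in different clothing, but the generic-sequence formulation has the advantage of giving a single mechanism that also handles the object-level issue you glossed over: objects of $\mathcal{OB}$ are arbitrary ergodic odometer-based systems $(\mathbb{K},\mu)$, not just uniquely ergodic ones, so one must specify how to lift the \emph{measure} $\mu$ to $\mu^c$ on $\mathbb{K}^c$. Your sentence ``unique ergodicity follows from the uniformity of each $\mathcal{W}_n^c$'' is not available in general; the paper instead lifts $\mu$ by taking a generic word sequence for $\mu$ and applying the $c_n$'s, exactly parallel to the morphism case.
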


To prove Theorem~\ref{gst} one must define the map $\mcf$ on objects, and on morphisms
and then show that it is a bijection and preserves composition. Since we will
only be concerned here with how effective $\mcf$ is we confine ourselves to
defining it and refer the reader to \cite{part2} for complete proofs.  In
\cite{hans}, the proof is discussed to understand the strength of the assumptions  needed to prove it. 

We begin by defining $\mcf$ on the objects.

\paragraph{Defining $\mcf$ on objects.}
    Let an $\bk$ be an odometer-based system with associated construction and
    parameter sequences \(\la\C W_n: n\in\B N\ra\) and \(\la k_n: n\in\B N\ra\).
    Let   \(\la l_n:n\in\B N\ra\) be an arbitrary sequence of positive integers
    growing fast enough that $\sum_n 1/l_n<\infty$. Inductively define a
    map
     \(\C F\) taking the construction sequence for an odometer-based system \(\B K\) to a construction sequence for a circular system
    \(\B K^c\) by  applying the \(\C C\)-operator.   Define
maps \(c_n:\C W_n\to\C W_n^c\) as
    follows:
        \begin{itemize}
            \item Let \(\C W^c_0=\Sigma\) and \(c_0\) be the identity.
            \item Suppose that \(c_n\) and \(\C W_n^c\) have been defined. Let
                    \begin{equation*}
                        \C W^c_{n+1} = \{\C C(c_n(w_0), \ldots,
                        c_n(w_{k_n-1})) : w_0w_1\cdots 
                        w_{k_n-1}\in\C W_{n+1}\}
                    \end{equation*}
                and $w_i\in \mcw_{n}$.  Define $c_{n+1}:\mcw_{n+1} \to
                \mcw_{n+1}^c$ by setting 
                    \begin{equation*}
                        c_{n+1}(w_0\cdots w_{k_n-1}) = \C
                        C(c_n(w_0),\ \ldots,\ c_n(w_{k_n-1})).
                    \end{equation*}
                where  \(w_i\in \C W_n\) with \(w_0\cdots w_{k_n-1}\in\C
                W_{n+1}\).
        \end{itemize}
 The construction sequence $\la \mcw_n^c:n\in\nn\ra$ then gives rise to a circular system $\bk^c$.  The functor $\mcf$ will associate $\bk^c$ with $\bk$.                       
 \paragraph{Lifting measures and joinings}   
    
    We need to lift measures on odometer based systems to measures on circular systems  for two reasons:
    \begin{enumerate}
    \item To complete the definition of $\mcf$ on objects, given an odometer based system $(\bk, \mu)$ we need to canonically associate a measure $\mu^c$ to  $\bk^c$. Then $\mcf(\bk,\mu)=(\bk^c,\mu^c)$. 
    
     In the context of this paper this first reason is not pressing:  the construction sequences in the range of $F_\mco$  are \hyperlink{stun}{strongly uniform}, hence uniquely ergodic. Thus there is only one candidate for $\mu^c$. However to complete the definition of $F$ we need to understand what happens for arbitrary ergodic $\mu$.
     
    \item To define $\mcf$ on morphisms, given a joining $\mcj$ between $(\bk,\mu)$ and $(\bl,\nu)$ we need to associate a joining  $\mcj^c$ between $(\bk^c, \mu^c)$ with $(\bl^c,\nu^c)$.
    
    For the second issue, and to deal with general odometer based systems $(\bk, \mu)$, we review the notion of \emph{generic sequences} of words.  These were introduced in \cite{Benjy} and used in the proof of Theorem \ref{gst}  \cite{part2}. 
    
     \end{enumerate}

       \smallskip 
    
    Let $k,l>0$ and $\la \mcw_n:n\in\nn\ra$ be an arbitrary construction sequence.
    Using the unique readability of words in $\mcw_k$ a word $w$ in $\Sigma^{q_{k+l}}$ determines a unique sequence of words $w_j$ in $\mcw_k$ such that , 
\[w=u_0w_0u_1w_1\dots w_Ju_{J+1}.\] 
When $w\in \mcw_{k+l}$,  each $u_j$ is in the region of spacers added  in $\mcw_{k+l'}$, for $l'\le l$.
We will denote the \hypertarget{emptiest}{\emph{empirical distribution}} of $\mcw_k$-words in $w$ by EmpDist$_k(w)$. Formally:
\[\mbox{EmpDist}_k(w)(w')={|\{0\le j\le J: w_j=w'\}|\over J+1}, \ w'\in \mcw_k.\]
Then $EmpDist$ extends to a measure on $\mathcal P(\mcw_k)$ in the obvious way.

To finitize the idea of a generic point for a system  $(\bk,\mu)$ we introduce the notion of a generic sequence of words.  By $\mu_m$ we will denote the discrete measure on the finite 
 set $\Sigma^m$ given by $\mu_m(u)=\mu(\la u\ra)$.  Then $\mu_m$ is not a probability measure so we normalize it. Let  $\hat{\mu}_n(w)$ denote the discrete probability measure on $\mcw_n$ defined by
\[\hat{\mu}_n(w)={\mu_{q_n}(\la w\ra)\over \sum_{w'\in\mcw_n} \mu_{q_n}(\la w'\ra)}.
\]

Thus $\hat{\mu}_n(w)$ is the relative measure of $\la w\ra$ among all $\la w'\ra, w'\in \mcw_n$. The denominator is a normalizing constant to account for spacers at stages $m>n$ and for shifts of size less than $q_n$.

\begin{definition}\label{ED} A sequence $\la v_n\in\mcw_n:n\in\nn\ra$ is a \hypertarget{gen seq}{\emph{generic sequence of words}} if and only if
for all $k$ and $\epsilon>0$ there is an $N$ for all $m,n>N$,
\[\|EmpDist_k(v_m)-EmpDist_k(v_n)\|_{var}<\epsilon.\]
The sequence is generic for a measure $\mu$ if for all $k$:
\[\lim_{n\to \infty}\| \mbox{EmpDist}_k(v_n)-\hat{\mu}_k\|_{var}=0
\]
where $\|\ \|_{var}$ is the variation norm on probability distributions.
\end{definition}
The point here is that the ergodic theorem gives infinite generic sequences for measures $\mu$. These infinite generic sequences in turn, create generic sequences of finite words.  A generic sequence of finite words determines a measure. If the generic sequence is built from the measure then the measure it determines is the original measure
    \medskip

   We now deal with the first issue above for arbitrary $(\bk,\nu)$ (and not just those  that are strongly uniform).    Given an odometer based system 
   $(\bk, \nu)$ we must specify the measure $\nu^c$ we associate with  $\nu$.
      Section 2.6 of \cite{part2} gives a canonical method of
    constructing a \emph{generic} sequence of words $\la v_n:n\in\nn\ra$ that
    encode any ergodic measure on $\bk$.  The corresponding sequence of words
    $v^c_n=c_n(v_n)$ is also generic and  determines an ergodic  measure on
    $\bk^c$. The map $\mcf$ then takes $(\bk, \nu)$ to $(\bk^c, \nu^c)$    

\paragraph{Defining $\mcf$ on morphisms}   Given an arbitrary synchronous or anti-synchronous joining $\mcj$ between odometer based systems 
$\bk$ and $\bl^{\pm1}$ we can view $(\bk\times \bl, \mcj)$ as an odometer based system. Taking a generic sequence of pairs of words 
$\la (u_n, v_n):n\in\nn\ra$ for $\mcj$ as in \cite{part2} and lifting it with the sequence of $c_n$'s (and adjusting appropriately for reversing 
the circular operation with a mechanism denoted $\natural$ in \cite{part2}), one gets a joining $\mcj^c$ between $\bk^c$ and $\bl^c$.

Define $\mcf(\mcj)=\mcj^c$.

\paragraph{Is $\mcf$ primitive recursive?} Clearly the maps $c_n$ are primitive recursive so the map taking a construction sequence $\la \mcw_n:n\in\nn\ra$ to $\la \mcw_n^c:n\in\nn\ra$ is  primitive recursive.    For the same reason the map taking a joining $\mcj$ specified by a given generic sequence to $\mcj^c$ is primitive recursive. 
Thus, assuming that joinings $\mcj$ are presented in a manner that one can compute the generic sequences of words, the map $\mcj\mapsto \mcj^c$ is primitive recursive. 

In the context of the systems in the range of $F_\mco$, the relevant joinings between 
$\bk$ and $\bk^{-1}$ are given by limits of $\eta_n$'s, and the generic word sequences are easily seen to be primitive recursive and can thus be translated to the joinings of 
$\bk^c$ with $(\bk^c)^{-1}$.
   
\begin{remark} We have shown that if $\phi_N$ is true then $\mcf\circ F_\mco(N)$ is isomorphic to $\mcf\circ F_\mco(N)^{-1}$ and the isomorphism is primitive recursive. In section \ref{sec:CT:ss:tor} we build a primitive recursive realization function $R$ which maps from strongly uniform circular systems to measure preserving diffeomorphisms of the torus. Since $F=R\circ\mcf\circ F_\mco$, the result we prove is
something stronger than claimed in Theorem \ref{thm:main}. Namely we show that if $\phi_N$ is true then there is a measure isomorphism between $F(N)$ and $F(N)^{-1}$ coded by a primitive recursive generic sequence of words.
\end{remark}
   
\subsection{The Kronecker Factors}\label{K facts}
The second clause of the Main Theorem (Theorem \ref{thm:main:eqn}) says that if $M$ and $N$ are distinct natural numbers than the corresponding diffeomorphisms $T_M$ and $T_N$ are not isomorphic. To distinguish between them we use their Kronecker factors. (For more information on the Kronecker factors, see e.g. \cite{walters}.) For this purpose we prove the following proposition. This section is otherwise independent of the other sections. Readers who find the proposition and corollary obvious can skip to the next section.

 \begin{proposition}
\label{K-factor of circ}
 Let $\bk^c$ be circular system in the range of $\mcf\circ F_\mco$, built with coefficients $\la k_n, l_n:n\in\nn\ra$ and 
 $\alpha=\lim_n\alpha_n$ then the Kronecker factor of $\bk^c$ is measure theoretically isomorphic to the rotation 
 $\mcr_\alpha$.
 
 \end{proposition}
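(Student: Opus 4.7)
The plan is to establish two inclusions: that $\mcr_\alpha$ is contained in the Kronecker factor of $\bk^c$, and that no further eigenfunctions exist. The first inclusion is immediate from Lemma \ref{lrf}, which exhibits a factor map $\rho: \bk^c \to \mcr_\alpha$. Since $\mcr_\alpha$ is an irrational rotation with pure discrete spectrum and eigenvalue group $\{e^{2\pi i k \alpha}: k \in \poZ\}$, these eigenfunctions lift via $\rho$ to eigenfunctions of $\bk^c$, placing $\mcr_\alpha$ inside the Kronecker factor $K(\bk^c)$.

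For the reverse inclusion, the goal is to show that the extension $\rho: \bk^c \to \mcr_\alpha$ is \emph{relatively weakly mixing} in the Furstenberg--Zimmer sense; this forces every $L^2$ eigenfunction of $\bk^c$ to be $\rho$-measurable and hence $K(\bk^c)\subseteq \mcr_\alpha$. The strategy is to reduce this to the corresponding property on the odometer-based side via the Global Structure Theorem (Theorem \ref{gst}). Let $\bk = \mcf^{-1}(\bk^c)$, which lies in the range of $F_\mco$. Corollary 33 of \cite{FRW}, already invoked in Section \ref{wacs} to distinguish the $F_\mco(N)$, asserts that the Kronecker factor of $\bk$ is precisely its canonical odometer factor $\mco$. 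Equivalently, $\bk \to \mco$ is relatively weakly mixing, a condition expressible purely in joining-theoretic terms: every ergodic self-joining of $\bk$ whose projection to $\mco\times \mco$ is the identity graph coincides with the relatively independent product of $\bk$ with itself over $\mco$.

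The main obstacle is that the Global Structure Theorem is phrased for synchronous and anti-synchronous \emph{graph} joinings between pairs of distinct systems rather than for arbitrary self-joinings or relatively independent products. To address this, I would invoke the word-level construction of $\mcf$ from \cite{part2}: the functor is defined by lifting generic sequences of words (in the sense of Definition \ref{ED}) through the maps $c_n$, and these lifts are defined pointwise on pairs, so they send generic sequences for joinings of $\bk$ with itself over $\mco$ to generic sequences for joinings of $\bk^c$ with itself over $\mcr_\alpha$. In particular the relatively independent product over $\mco$ corresponds to the relatively independent product over $\mcr_\alpha$, and ergodic decompositions are preserved. Consequently the joining-theoretic characterization of relative weak mixing transfers from $\bk\to\mco$ to $\bk^c\to\mcr_\alpha$.

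Once relative weak mixing of $\rho: \bk^c \to \mcr_\alpha$ is in hand, any eigenfunction $f \in L^2(\bk^c)$ with eigenvalue $\lambda$ has $|f|$ constant by ergodicity, and the relative product argument then forces $f$ to be $\rho$-measurable; combined with the first paragraph this identifies $K(\bk^c)$ with $\mcr_\alpha$. I expect the technical heart of the argument to be the verification that the generic-sequence construction of $\mcf$ really does commute with fibered products over the timing factor, since this is the step where the joining-theoretic data on the odometer side is translated to the circular side.
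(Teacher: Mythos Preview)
Your overall architecture is sound---show $\mcr_\alpha$ sits inside the Kronecker factor via Lemma~\ref{lrf}, then argue the extension $\bk^c\to\mcr_\alpha$ is relatively weakly mixing---but the transfer step has a real gap. The Global Structure Theorem (Theorem~\ref{gst}) is an isomorphism of categories whose \emph{morphisms} are synchronous and anti-synchronous \emph{graph} joinings. The relatively independent self-joining $\bk\times_\mco\bk$ is not a morphism in that category; it is at best a new object, and nothing in the statement of Theorem~\ref{gst} tells you that $\mcf$ applied to it yields $\bk^c\times_{\mcr_\alpha}\bk^c$, nor that ergodicity is preserved under this operation. Your remark that ``the generic-sequence construction of $\mcf$ really does commute with fibered products over the timing factor'' identifies precisely the missing lemma, but you do not prove it, and it is not a formality: the $\mcc$-operator scrambles the timing in a nontrivial way (the $b$'s, $e$'s, and the $j_i$ reorderings), so showing that fiberwise independence over $\mco$ becomes fiberwise independence over $\mcr_\alpha$ requires genuine work with the word combinatorics.

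The paper takes a different route: rather than transferring relative weak mixing from the odometer side, it reproves the joining rigidity of Corollary~33 of \cite{FRW} \emph{directly on the circular side}. The role of ``translation by a non-periodic element of the odometer'' is played by rotation by a \emph{non-central} $\beta\in S^1$ (Definition~78 of \cite{part3}); Lemma~\ref{not central} constructs such a $\beta$ explicitly, and Lemma~\ref{yeah yeah yeah} shows that any self-joining of $\bk^c$ projecting to $\mcs_\beta$ on $\mck_\alpha$ is the relatively independent one, using specification~J10.1 and a Borel--Cantelli argument on misalignment. The contradiction then runs exactly as in \cite{FRW}: a hypothetical extra eigenvalue would produce a non-relatively-independent joining over a non-central rotation. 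This approach avoids needing $\mcf$ to preserve anything beyond what Theorem~\ref{gst} states, at the cost of redoing the FRW analysis in the circular setting.
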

 An immediate corollary of this is:\footnote{See section \ref{elements} for an explanation of the $(N)$-notation.}
 
 \begin{corollary}
  \label{one one}
  Suppose that $M<N$ are natural numbers. Then:
  \begin{enumerate}
  \item $\alpha(N)<\alpha(M)$, where $\alpha(N)$ and $\alpha(M)$ are the irrationals associated with the rotation factors of $F(N)$ and $F(M)$.
  \item $(\bk^c)^M\not\cong(\bk^c)^N$.
  \end{enumerate}
  
  \end{corollary}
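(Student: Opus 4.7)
\textbf{Proof plan for Corollary \ref{one one}.}

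For part (1), the plan is to bootstrap Lemma \ref{remember the alphas} from consecutive indices to arbitrary $M<N$. First I would iterate the lemma directly to get $\alpha(M) > \alpha(M+1) > \cdots > \alpha(N)$, which already suffices. A cleaner alternative, which I would present, is to repeat the proof of that lemma uniformly: by induction on $n$, the numerical requirements \hyperlink{nrb}{B} and \hyperlink{nre}{E} together with $P_M < P_N$ give $k_0(M)=P_M<P_N=k_0(N)$ and $k_n(M)\le k_n(N)$, $l_n(M)\le l_n(N)$ for all $n\ge 1$. Combining these with the recursion $q_{n+1}=k_n l_n q_n^2$ yields $q_1(M)<q_1(N)$ and $q_n(M)\le q_n(N)$ for all $n\ge 1$. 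Plugging into formula (\ref{alphaform}) gives
\[
    \alpha(M)=\sum_{n\ge 1}\frac{1}{q_n(M)} > \sum_{n\ge 1}\frac{1}{q_n(N)} = \alpha(N),
\]
where the strict inequality comes from the $n=1$ term.

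For part (2), the plan is to use the Kronecker factor as a measure-theoretic isomorphism invariant. By Proposition \ref{K-factor of circ}, the Kronecker factor of $(\bk^c)^M$ is the circle rotation $\mcr_{\alpha(M)}$ and that of $(\bk^c)^N$ is $\mcr_{\alpha(N)}$. Since any measure-theoretic isomorphism must carry one Kronecker factor onto the other, it suffices to show $\mcr_{\alpha(M)}\not\cong\mcr_{\alpha(N)}$. A standard fact (see e.g.\ \cite{walters}) is that irrational circle rotations $\mcr_\beta$ and $\mcr_\gamma$ are measure-theoretically isomorphic if and only if $\beta\equiv\pm\gamma\pmod 1$, so I need to rule out both $\alpha(M)=\alpha(N)$ and $\alpha(M)\equiv-\alpha(N)\pmod 1$.

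The first possibility is ruled out directly by part (1). For the second, I would estimate crudely: from $q_1(N)=P_N$ and the rapid growth $q_{n+1}=k_n l_n q_n^2$, the tail $\sum_{n\ge 2}1/q_n(N)$ is dominated by a geometric series starting at $1/q_2(N)$, hence is $\ll 1/P_N$. Thus $\alpha(N)$ and $\alpha(M)$ are each arbitrarily close to $1/P_N$ and $1/P_M$ respectively, and since $P_0>2$ was chosen sufficiently large, $\alpha(M)+\alpha(N)<1$. Therefore $\alpha(M)\ne 1-\alpha(N)$, the two Kronecker factors are non-isomorphic, and $(\bk^c)^M\not\cong(\bk^c)^N$.

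The main obstacle is essentially nonexistent once Proposition \ref{K-factor of circ} and Lemma \ref{remember the alphas} are in hand; the only point that requires a little care is the elementary tail estimate confirming $\alpha(M)+\alpha(N)<1$, since otherwise one could in principle have $\mcr_{\alpha(M)}\cong\mcr_{-\alpha(N)}$ even with $\alpha(M)\ne\alpha(N)$.
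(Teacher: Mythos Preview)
Your approach matches the paper's: both parts are derived from Lemma~\ref{remember the alphas} (iterated over consecutive indices) and Proposition~\ref{K-factor of circ}, exactly the two ingredients the paper's one-sentence proof cites. You are in fact more careful than the paper on one point: the paper's proof of the corollary does not explicitly exclude the possibility $\alpha(M)\equiv -\alpha(N)\pmod 1$, deferring this to a later passage (the ``Item~(C)'' verification in Section~\ref{sec:victory}) where it simply asserts that both rotation numbers are small. Your tail estimate showing $\alpha(M)+\alpha(N)<1$ fills this gap cleanly and is a welcome addition.
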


\pf This follows immediately from Lemma \ref{remember the alphas} and the fact that the Kronecker factor $(\bk^c)^M$ is isomorphic to  $\mcr_{\alpha(M)}$ and the Kronecker factor of $(\bk^c)^N$ is isomorphic to $\mcr_{\alpha(N)}$.
\qed

After Proposition \ref{K-factor of circ} is shown we will have proved the following intermediate step in the proof of Theorem \ref{thm:main}:

\begin{prop}
\label{victory for circs}
    For $N$ a code of a $\Pi^0_1$ sentence, then $\mcf\circ F_\mco(N)$ is a
    primitive recursive circular construction sequence and
        \begin{enumerate}
            \item \(N\) is the code for a true statement if and only if the
                circular system $T$ determined by \(\mcf\circ F_\mco(N)\)  is
                measure theoretically conjugate to \(T^{-1}\);
            \item $\mcf\circ F_\mco(N)$ is  ergodic--in fact strongly uniform;
                and
            \item For \(M\ne N\), \(\mcf\circ F_\mco(M)\) is not conjugate to
                \(\mcf\circ F_\mco(N)\).
        \end{enumerate}
\end{prop}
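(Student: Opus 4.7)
My plan is to derive each clause by combining the corresponding feature of $F_\mco$ from Theorem~\ref{red to odos} with the functorial correspondence from the Global Structure Theorem~\ref{gst} and the Kronecker identification of Proposition~\ref{K-factor of circ}. The primitive recursiveness of $\mcf \circ F_\mco$ will be immediate from the primitive recursiveness of both $F_\mco$ (Theorem~\ref{red to odos}) and the $\mcc$-operator that defines $\mcf$ on objects and on generic word sequences, so the substance lies in the three enumerated clauses.

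For clause (1), I would write $\bk = F_\mco(N)$ and note that by Theorem~\ref{red to odos} combined with the joining classification (Proposition 37 quoted in Section~\ref{wacs}), any isomorphism $\bk \cong \bk^{-1}$ is an invertible anti-synchronous graph joining (projecting to $x \mapsto -x$ on the odometer factor); moreover such an isomorphism exists precisely when $\phi_N$ is true. Applying the functor $\mcf$, which by Theorem~\ref{gst} sends anti-synchronous isomorphisms in $\mco B$ to anti-synchronous isomorphisms in $\mcc B$, produces an isomorphism $\mcf(\bk) \cong \mcf(\bk)^{-1}$ whenever $\phi_N$ is true. For the converse, given any measure isomorphism $\mcf(\bk) \cong \mcf(\bk)^{-1}$, I would use Proposition~\ref{K-factor of circ} to identify the Kronecker factor of $\mcf(\bk)$ with $\mcr_\alpha$ and observe that the induced isomorphism $\mcr_\alpha \to \mcr_{-\alpha}$ can be brought into the form of $\natural$ after composing with an appropriate shift of $\mcf(\bk)$; this yields an anti-synchronous isomorphism in $\mcc B$, whose image under $\mcf^{-1}$ is an anti-synchronous isomorphism of $\bk$ with $\bk^{-1}$, forcing $\phi_N$ to be true by Theorem~\ref{red to odos}.

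The arguments for clauses (2) and (3) are shorter. For (2), the construction of $F_\mco(N)$ in Section~\ref{bldg the words} is strongly uniform, and this property is preserved by the $\mcc$-operator because each preword in $P_{n+1}$ contributes its component words with equal multiplicity; strongly uniform circular construction sequences are uniquely ergodic (as noted in Section~\ref{base defs}), hence ergodic. For (3), Corollary~\ref{one one} asserts that for $M \ne N$ the associated irrationals $\alpha(M)$ and $\alpha(N)$ are distinct, so by Proposition~\ref{K-factor of circ} the Kronecker factors $\mcr_{\alpha(M)}$ and $\mcr_{\alpha(N)}$ are non-isomorphic rotations of the circle; since any isomorphism of ergodic systems induces an isomorphism of Kronecker factors, $\mcf \circ F_\mco(M) \not\cong \mcf \circ F_\mco(N)$. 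The main obstacle will be the converse direction of clause (1), specifically the step where an arbitrary measure isomorphism is modified to be anti-synchronous in the strict sense required by the category $\mcc B$; Proposition~\ref{K-factor of circ}, pinning down the Kronecker factor as exactly $\mcr_\alpha$, is essential here, since it lets one diagnose how any such isomorphism acts on the rotation factor and adjust it by shifts to match $\natural$ before invoking $\mcf^{-1}$.
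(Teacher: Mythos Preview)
Your overall architecture matches the paper's: clauses (2) and (3) are handled exactly as you describe, and the forward direction of clause (1) is precisely the application of $\mcf$ to the anti-synchronous isomorphism built from the $\eta_n$'s. The paper in fact presents Proposition~\ref{victory for circs} as a consequence of Theorem~\ref{red to odos}, Theorem~\ref{gst}, and Proposition~\ref{K-factor of circ} together, without a separate written proof.

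There is, however, a genuine gap in your proposed converse for clause (1). Knowing only that the Kronecker factor is $\mcr_\alpha$ tells you that an arbitrary isomorphism $\bk^c\to(\bk^c)^{-1}$ induces some reflection $z\mapsto \bar z\,e^{2\pi i\delta}$ on $\mcr_\alpha$. Composing with a shift $sh^n$ of $\bk^c$ changes $\delta$ by $n\alpha$; since $\{n\alpha:n\in\B Z\}$ is only a dense (countable) subgroup of $S^1$, you cannot in general hit the specific value $\gamma$ that defines $\natural$, so the composed map need not be anti-synchronous in the sense required by $\mcc B$, and $\mcf^{-1}$ does not apply. What is actually needed is the circular analogue of Proposition~37 (the joining classification for $\bk^c\times(\bk^c)^{-1}$), which shows that any ergodic joining of $\bk^c$ with $(\bk^c)^{-1}$ either is relatively independent over its projection to $\mck_\alpha\times\mck_\alpha^{-1}$ or projects to a finite shift of $\natural$. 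The paper proves only the $\bk^c\times\bk^c$ version of this (Lemma~\ref{yeah yeah yeah}, used to compute the Kronecker factor) and otherwise relies on the analysis in \cite{part3}, where the $\bk^c\times(\bk^c)^{-1}$ classification is carried out for exactly these construction sequences using the specifications J10.1, J11, J11.1. Proposition~\ref{K-factor of circ} alone does not supply the missing constraint on $\delta$; you need to invoke that classification (or reproduce it) rather than argue purely from the Kronecker factor.
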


 \paragraph{Review of the Kronecker factor} Let $\vec{\gamma}=\la \gamma_m:m\in\poZ\ra$ be an enumeration of  the eigenvalues of the Koopman operator of a measure preserving transformation $(X,\mcb,\mu,T)$. Then  
 $\vec{\gamma}$ determines a measure preserving action on 
 $((S^1)^\poZ, \lambda^\poZ)$ (where $\lambda^\poZ$ is the product measure on $(S^1)^\poZ$) by coordinatewise multiplication.  The action is ergodic, discrete spectrum and isomorphic to the Kronecker factor of $(X,\mcb,\mu, T)$. 

If $\alpha$ is an eigenvalue of the shift operator then the powers of $\alpha$,
$\vec{\alpha}=\la \alpha^n:n\in\poZ\ra$ are also eigenvalues corresponding to a 
subsequence of $\vec{\gamma}$ and hence the coordinatewise multiplication of 
$\vec{\alpha}$ on $(S^1)^\poZ$  determines a factor of the Kronecker factor.  This is 
a proper factor if and only if there is an eigenvalue of the Koopman operator that is 
not a power of $\alpha$. In particular there is a non-trivial projection map from the 
 Kronecker factor to the dual of the countable group $\{\alpha^n:n\in\poZ\}$ 
 \medskip
   
  The proof of Proposition \ref{K-factor of circ} follows the outline of the proof of Corollary 33 of \cite{FRW}. Working in the context of odometer based 
  systems built with coefficients $\la k_n:n\in\nn\ra$,  it says that the Kronecker factor $\kron$ of  each system $\bk$ in the range of 
  $F_\mco$ is the odometer transformation $\mco$ based on $\la k_n:n\in\nn\ra$. Note that the odometer $\mco$ is a subgroup of the Kronecker factor since rotation by the $k_n^{th}$ root of unity is an eigenvalue of the Koopman operator. The steps there are:
  
  \begin{enumerate}
  \item Any joining $\mcj$ of $\bk$ with $\bk$ 
  projects to a joining $\mcj_\mco$ of 
  $\mco$ with itself. If $\mcj_\mco$ is not given by the graph joining coming from a finite shift of the odometer then $\mcj$ must be the relatively independent joining of $\bk$ with itself over $\mcj_\mco$. (This is Proposition 32 of \cite{FRW}.)
  \item If there is an eigenvalue of the unitary operator associated with $\bk$ that is not a power of $\alpha$ there is a non-identity element $t$ in the Kronecker factor whose projection to the odometer $\mco$ is the identity. 
  \item Multiplying  $t$ by an element $h\in\mco$ which is not a finite shift gives an element $t'$ of the Kronecker factor 
  $\kron$ that is not in $\mco$ and projects to an element of $\mco$ that is not a finite shift.
  \item Let $\mch^*$ be the sub-$\sigma$-algebra of the measurable subsets of $\bk$ generated by 
  $\kron$.  Then  multiplication by $t'$ gives a graph joining $\mcj^*$ of $\mch^*$ with itself that projects to the joining of $\mco$ given by multiplication by $h$. Extend $\mcj^*$ to a joining $\mcj$ of $\bk$ with $\bk$.  Then $\mcj$ does not project to a finite shift of the odometer but it is also not the relatively independent joining of $\bk$ with itself over the joining of $\mco$ with itself given by $h$.  This is a contradiction.
  \end{enumerate}
  
  To imitate this argument we first note that for circular systems, the analogue of the odometer is the rotation $\mck_\alpha$, and that every element $\beta\in S^1$ determines an invertible graph joining $\mcs_\beta$ of $\mck_\alpha$ with itself, corresponding to multiplication by $\beta$ in the group $S^1$. We need to identify the analogue of the ``finite shifts on the odometer" in the case of circular systems.  The appropriate notion is given in Definition 78 in \cite{part3}, namely the \emph{central values}. The central values form a subgroup of the unit circle. 
  
To prove Proposition \ref{K-factor of circ}, fix a circular system $\bk^c$ in the range of 
$\mcf\circ F_\mco$.
We first show that there is a $\beta\in S^1$ that is \emph{not} a central value.  This 
$\beta$ plays the roll of $h$ in the outline given above. Then the analogue of Proposition 32 is proved: any joining of 
$\bk^c$ with itself that does not project to the joining given by  multiplication on $S^1$  by a central value is the 
relatively independent joining over its projection. 

Suppose now that there is an  eigenvalue of the Koopman operator that is not a power of $\alpha$. Then the action of $\vec{\alpha}$ on 
$(S^1)^\poZ$ is a non-trivial projection of the Kronecker factor $\kron^c$ of 
$\bk^c$.  Hence we can fix a non-identity element $t$ of the Kronecker factor whose projection to the factor determined by the powers of $\alpha$ is 
the identity. As in step 3 above we multiply $t$ by a non-central $\beta$ to get a $t'$ in the Kronecker factor which:
	\begin{enumerate}[a.)]
	\item induces a joining $\mcj^*$ of $\mch^*$ with itself that projects to the graph 
	joining of $\mck_\alpha$ with itself  induced by $\mcs_\beta$. 
	
	Extending $\mcj^*$ to a joining $\mcj$ of $\bk^c$ with itself we see that:
	\item $\mcj$ is not the relatively independent joining over the joining of 
	$\mck_\alpha$ given by $\mcs_\beta$. 
	\end{enumerate}
  After the details are filled in, this contradiction establishes Proposition \ref{K-factor of circ}.
  
  \bigskip
 \bfni{Notation}  As in  previous sections we identify the unit interval $[0,1)$ with the unit circle via the map $x\mapsto e^{2\pi i *x}$, which identifies  ``addition mod one" on the unit interval with multiplication on the unit circle. When we write 
 ``$+$" in this section it means addition mod one, interpreted in this manner.
\medskip
  
  We use the following numerical requirement in explicit proof of Proposition \ref{K-factor of circ}:
  
   \begin{description}
 \item[Numerical Requirement F] \hypertarget{nr4}{The} $k_n$'s must grow fast enough that $\sum {6^n\over k_n}<\infty$.
\end{description}
  To finish the proof of Proposition \ref{K-factor of circ}, we  fix a circular system $\bk^c$ in the range of $\mcf\circ F_\mco$  and prove the following two lemmas.

 \begin{lemma}\label{not central}
 There is a non-central value $\beta$.
 \end{lemma}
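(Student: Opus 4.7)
The plan is to show that the collection of central values is a countable subset of $S^1$, so that the uncountability of $S^1$ immediately produces a non-central $\beta$. Recall that central values are defined combinatorially from the finite data governing the $\mcc$-operator construction at each stage (Definition 78 of \cite{part3}); concretely, at level $n$ the contribution to central values is a finite set of size polynomially bounded in the parameters $k_n, l_n, q_n$. Thus the set of all central values is a countable union of finite sets, hence countable. Since $S^1$ is uncountable, a non-central $\beta$ exists.

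If a more quantitative argument is desired, one can instead exploit \hyperlink{nr4}{Numerical Requirement F}: $\sum_n 6^n/k_n<\infty$. Let $B_n\subseteq S^1$ denote the set of candidate central values arising at stage $n$; by examining the combinatorial definition one sees $|B_n|$ grows at most like $6^n$ (coming from the choices of prewords, positions within a $\mcc$-block, and orientation data). Fattening each point of $B_n$ into an interval of radius $1/k_n$ produces a set $U_n$ of Lebesgue measure at most $2\cdot 6^n/k_n$, so $\sum_n \lambda(U_n)<\infty$. By Borel–Cantelli, $\lambda\bigl(\limsup_n U_n\bigr)=0$, hence Lebesgue-almost every $\beta\in S^1$ lies outside $U_n$ for all large $n$, and in particular is non-central.

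The only real work is to verify that the counting/width estimates for $B_n$ indeed conform to the definition of central values given in \cite{part3}; this is a direct unpacking of that definition together with the growth bounds on $(k_n,l_n,q_n)$ already imposed. The rest of the argument is a one-line cardinality (or measure-zero) observation; there is no dynamical subtlety beyond bookkeeping. I expect the only mildly delicate step to be confirming that the notion of ``central value'' is genuinely closed under the group operation on $S^1$ (so that speaking of ``the subgroup of central values'' is well-defined), but this is recorded in Definition~78 of \cite{part3} and is not needed for mere nonemptiness of the complement. With a non-central $\beta$ in hand, we can then proceed to the joining-theoretic step (the analogue of Proposition~32 of \cite{FRW}) that completes the identification of the Kronecker factor with $\mcr_\alpha$.
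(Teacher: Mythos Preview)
Your proposal has a genuine gap: it treats ``central value'' as though it were a discrete combinatorial tag attached to finitely many points of $S^1$ at each stage, but that is not what the definition says. In the notation recalled just after the paper's proof, $\beta$ is \emph{central} exactly when $\Delta(\beta)=\sum_n\Delta_n(\beta)<\infty$, where $\Delta_n(\beta)=\nu(\{s:s\text{ is ill-}\beta\text{-matched at stage }n\})$. This is an analytic condition on $\beta$ involving an infinite sum of measures of sets depending continuously on $\beta$; there is no stage-$n$ finite set $B_n$ of ``candidate central values'' to count. A priori the set of central $\beta$ could be all of $S^1$ (which is precisely what the lemma must rule out), so neither the countability argument nor the fattening/Borel--Cantelli argument on the putative $B_n$ gets off the ground. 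Your sentence ``by examining the combinatorial definition one sees $|B_n|$ grows at most like $6^n$'' is the entire content of the lemma, asserted rather than proved.

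The paper's argument is quite different and genuinely constructive. One builds $\beta=\sum_{n\ge1}a_n/(k_nq_n)$ with $0\le a_n<6^n$, choosing each $a_n$ by a pigeonhole step: the sets $M_i=\{x:x\text{ and }x+\beta_n+i/(k_nq_n)\text{ are well-matched}\}$ for $i<6^n$ are pairwise disjoint, so some $M_i$ has measure $\le 6^{-n}$, and one takes $a_n=i$. Numerical Requirement~F ($\sum 6^n/k_n<\infty$) enters not to bound a number of central values but in a separate Borel--Cantelli argument showing that the finite approximations $\beta_m$ eventually land on the same tower level as $\beta$ itself, so the ill-matching engineered for $\beta_{n+1}$ persists for $\beta$. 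One then gets $\Delta_n(\beta)\to1$, hence $\Delta(\beta)=\infty$, hence $\beta$ is non-central. The paper even remarks that ``it seems very likely there is a measure one set of examples,'' so your instinct that generic $\beta$ work is probably right---but turning that instinct into a proof still requires the matching analysis you have not supplied.
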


 \begin{lemma}\label{yeah yeah yeah}
 Suppose that $\beta$ is not a central value.  Let $\mcj$ be a joining of $\bk^c\times \bk^c$ whose projection to $\mck_\alpha\times \mck_\alpha$ is the 
 graph joining of $\mck_\alpha$ with itself given by multiplication by $\mcs_\beta$.  Then $\mcj$ is the relatively independent joining of $\bk^c\times \bk^c$ over the joining of $\mck_\alpha$ with itself given by  $\mcs_\beta$.
 \end{lemma}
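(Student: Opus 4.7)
The strategy is to adapt the proof of Proposition 32 of \cite{FRW} from the odometer setting to the circular setting. The key point is that the joining specifications \hyperlink{J10.1}{J10.1}, \hyperlink{J11}{J11}, and \hyperlink{J11.1}{J11.1} on the words in $\mcw_n$ translate, under the functor $\mcf$ of Section~\ref{sec:gst}, into corresponding counting estimates on the circular words $\mcw_n^c$. These estimates quantify that pairs of circular words at a given relative shift behave essentially independently unless the shift respects a specific alignment structure coming from the factor map $\rho:\bk^c\to \mck_\alpha$.

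First I would reduce to checking that for each pair of cylinder sets $[u^c], [v^c]$ with $u^c, v^c\in \mcw_n^c$, the joining measure $\mcj([u^c]\cap \sh^{-t}[v^c])$ agrees, up to an error vanishing as $n\to\infty$, with the mass assigned by the relatively independent joining over $\mcs_\beta$. Since $\mcj$ projects to $\mcs_\beta$ on $\mck_\alpha\times\mck_\alpha$, the joining is supported on pairs $(x,y)$ with $\rho(y)=\rho(x)+\beta$, so $\mcj([u^c]\cap \sh^{-t}[v^c])$ vanishes unless the relative shift $t$ is compatible with $\beta$ at level $n$ in the sense that the induced displacement on $\mck_\alpha$ matches $\beta$ modulo error $O(1/q_n)$. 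For such compatible $t$, I would apply the circular analogues of J10.1 and J11.1 to estimate the density of pairs $(u^c,v^c)$ of level-$n$ words occurring at that relative shift; non-centrality of $\beta$, via Definition 78 of \cite{part3}, ensures that the corresponding alignment class falls outside the central (``finite-shift'') orbit, so the densities converge to $|\mcw_n^c|^{-2}$, matching the independence prediction.

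Summing these estimates over all $v^c\in \mcw_n^c$ compatible with the rotation constraint determined by $[u^c]$ and passing to the limit $n\to\infty$ shows that $\mcj$ coincides with the relatively independent joining of $\bk^c$ with itself over $\mcs_\beta$. The main obstacle will be in the second step: one must carefully quantify what it means for a shift $t$ to be ``compatible with $\beta$ at level $n$'' in terms of the $\mcc$-operator, and then verify that non-centrality of $\beta$ is precisely the combinatorial property which forces the corresponding frequencies of circular-word pairs to equidistribute rather than concentrate on a graph-joining-like locus. Here \hyperlink{nr4}{Numerical Requirement~F}, $\sum 6^n/k_n<\infty$, will be needed to absorb the cumulative contribution of the spacer regions $b^{q_n-j_i}$ and $e^{j_i}$ produced at each application of $\mcc$, in the same spirit as the stems/tails estimate carried out for J10.1 at the end of Section~\ref{bldg the words}.
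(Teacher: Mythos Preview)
Your overall shape---reduce to cylinder counts, show they are forced to equal the product frequencies, hence the joining is the relatively independent one---matches the paper's. But there is a genuine missing step, and one misattribution.

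The missing step is the passage from ``$\beta$ is non-central'' to ``the relevant word pairs equidistribute with density $1/s_n^2$.'' Non-centrality is the \emph{global} condition $\sum_n \Delta_n(\beta)=\infty$; it does not say that at any given stage $n$ the $n$-subwords of a generic pair $(x,y)$ are misaligned, and J10.1 only applies when the relative shift $t$ between the $n$-blocks satisfies $1\le t<(1-\epsilon_n)k_n$, i.e.\ when there \emph{is} misalignment. The paper bridges this by: (i) fixing a configuration $P_{hd_1,hd_2}$ and a spaced-out set $G$ so that $\sum_{n\in G}\nu(\misal_n)=\infty$ (equation~\ref{root cause}); (ii) observing that the events $\misal_n$, $n\in G$, are pairwise independent because membership in $\misal_n$ depends only on $d^{n+1}-d^n$; and (iii) applying the \emph{second} Borel--Cantelli lemma to conclude that for almost every $s$ there are infinitely many $n\in G$ with $s\in\misal_n$. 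Only then, working at those infinitely many misaligned stages, does J10.1 deliver the $1/s_n^2$ frequency count that pins down the joining. Your proposal jumps directly from Definition~78 to equidistribution without this probabilistic input, and that jump is where a referee would stop you.

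The misattribution: \hyperlink{nr4}{Numerical Requirement~F} is used in the proof of Lemma~\ref{not central} (to control the tail $\sum_{p\ge m} a_p/k_pq_p$ in the construction of a specific non-central $\beta$), not here. In the present lemma the spacer/boundary contributions are handled by discarding the non-mature $n$-subwords and using the structure of the overlaps (Sections~4.3--4.6 of \cite{part3}), not by invoking~F.
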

 
 \pf\!\![Lemma \ref{not central}] While it seems very likely that there is a measure one set of examples we just need one.  The example will be of the form $\beta=\sum_{n=1}^\infty {a_n\over k_nq_n}$ for an inductively chosen sequence of natural numbers $\la a_n:n\in\nn\ra$ with $0\le a_n< 6^n$.

 To describe $\beta$ completely and verify it is non-central we need several facts 
 from  sections 5, 6 and 7 in \cite{part3}, which discuss the relationship between the geometric and the symbolic representations of 
 $\mck_\alpha$.

 The geometric construction builds a sequence of periodic approximations of lengths 
 $\la q_n:n\in\nn\ra$ with the resulting limit being  the rotation of the circle by 
 $\mcr_\alpha$. Expanding on Lemma \ref{lrf}, these approximations are given by the towers of intervals $\mct_n=\{[0,1/q_n), [p_n/q_n, p_n/q_n+1/q_n), [2p_n/q_n, 2p_n/q_n+1/q_n), \dots [kp_n/q_n, kp_n/q_n+1/q_n), \dots\}$ viewed as a periodic system. 

  The symbolic representation uses the 
 $\mcc$ operation to build the construction sequence for the symbolic system.  The 
 latter is described in Example \ref{circ fact}. We give the geometric description of the periodic approximations presently.

\smallskip
 
 We use the following notions and notation from  \cite{part3}:
 \begin{enumerate} 
 \item
 $\phi_0:(\mck_\alpha, sh)\to ([0,1), +)$ 
is the measure theoretic isomorphism between the shift on $\mck_\alpha$ and the 
rotation $\mcr_\alpha$ given in Lemma \ref{lrf}. We use $s$'s to refer to elements of $\mck_\alpha$ and 
$x$'s to refer to elements of $[0,1)$ and $s$ \emph{corresponds} to $x$ if 
$\phi_0(s)=x$.
\item The notion of $s\in \mck_\alpha$ being \emph{mature} implies that $s$  has a principal $n$-subword and it is repeated multiple times both before and after $s(0)$.

\item $\mcs_\beta=\phi^{-1}_0\mcr_\beta\phi_0$ is the symbolic conjugate of the rotation $\mcr_\beta$, via the map $\phi_0$.   If $s$ corresponds to $x$ then $\mcs_\beta(s)$ corresponds to $x+\beta$ (mod 1). We will occasionally be sloppy and use the language $s+\beta$ for $s\in \mck_\alpha$ when we mean $\mcs_\beta(s)$.
\item In \cite{part3}, a set $S$ is defined as  the collection of elements  $s\in\mck_\alpha$ such that the 
left and right endpoints of the principal $n$-subwords of $s$ go to minus and plus infinity respectively. Explicitly suppose  that $s\in \mck_\alpha$ is  such that for all large enough $n$, the principal $n$-subword exists and lives on an interval $[-a_n, b_n]\subseteq \poZ$.  The point $s\in S$ if  $\lim_n a_n=\lim_n b_n=\infty$.

The set 
$S_\beta$ is $\bigcap_{n\in\poZ}\mcs_\beta(S)$, the maximal $\mcs_\beta$ invariant subset of $S$. It is of measure 
one for the unique invariant measure on $\mck_\alpha$.  Since $\mck_\alpha$ is a factor of every circular system 
$\bk^c$ with the same coefficient sequence $\la k_n, l_n:n\in\nn\ra$ for all invariant measures $\mu$ on $\bk^c$, 
$\{s\in \bk^c:$ the left and right endpoints of the principal $n$-subwords of $s$ go to minus and plus infinity is of $\mu$-measure one.
\item Given an arbitrary $\beta$ we can intersect $S_\beta$ with $S_q$ for all rational $q$ and get another set of measure one.  Hence in a slight abuse of notation we assume that $S_\beta$ is invariant under conjugation by  rational rotations.
\item For $s\in \mathcal K_\alpha$, if  $r_n(s)=i$ and $x$ is the corresponding element of $[0,1)$ then $x$ is in the $i^{th}$ level of the tower corresponding to the $n^{th}$ approximation to $\mcr_\alpha$.  This tower is given by $\mcr_{\alpha_n}$.
 \end{enumerate}

 In the geometric picture, at stage $n$, we have a tower of intervals of the form 
 $[{i\alpha_n}, {i\alpha_n+1/q_n})$ ordered in the dynamical ordering--where the successor of the interval
 $[{i\alpha_n}, {i\alpha_n+1/q_n})$ is 
 $[{(i+1)\alpha_n}, {(i+1)\alpha_n+1/q_n})$.
     Thus level $I_{i+1}$ is $I_{i}+\alpha_n$.

 Passing from stage $n$ to stage $n+1$ involves subdividing the old levels into  new levels, which are of the form $[i\alpha_{n+1}, i\alpha_{n+1}+1/q_{n+1})$. These subintervals move diagonally up and to the right through the 
 $n$-levels. The diagonal movement corresponds to addition of 
 $\alpha_{n+1}$.  The key formula is \ref{alphans}:
 \begin{equation*} \alpha_{n+1}=\alpha_n+1/q_{n+1}.
 \end{equation*}
 
 As illustrated in diagrams 9 and 10 of \cite{part1} and Figure \ref{nvn2}, the $n+1$ tower proceeds 
 diagonally up through the $n$-tower. This is evident from the form of equation 
 \ref{alphans} and the fact that $q_{n+1}=k_nl_nq_n^2$.

 \newgeometry{margin=1.5in}    
     \begin{sidewaysfigure}
    \includegraphics[height=.85\textheight]{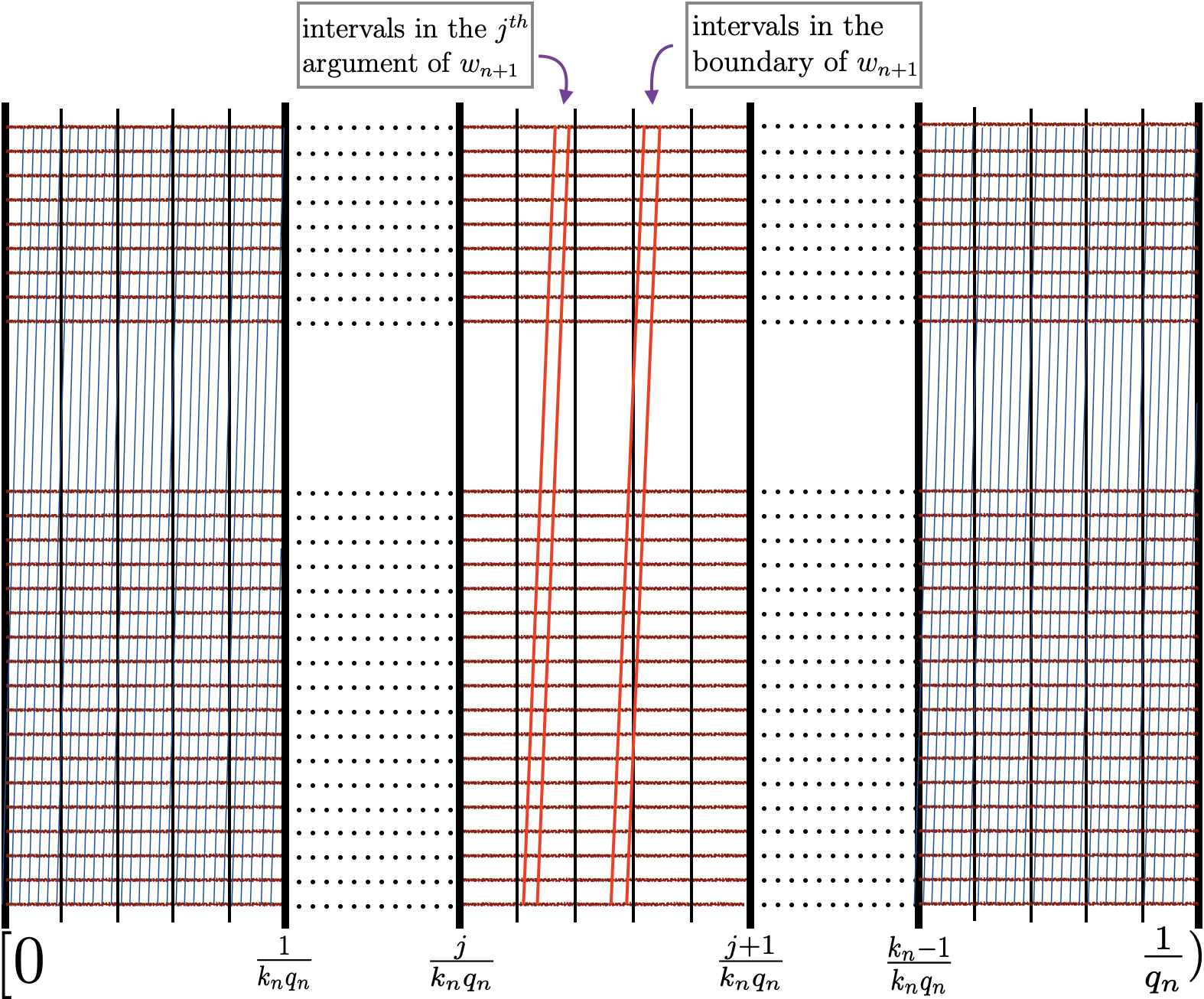}
    \caption{The $n$-tower, showing the diagonal progression of the $n+1$-tower. The heavy horizontal lines are the levels 
    $(i\alpha_n, i\alpha_n+1/q_n)$, starting with $[0, 1/q_n)$ on the bottom. The levels the $n+1$-tower are the horizontal segments between the diagonal lines.}
    \label{nvn2}
    \end{sidewaysfigure}
\restoregeometry     
\newpage
 Again, following \cite{part1}, the geometric picture in Figure \ref{nvn2} corresponds to  the symbolic 
 representation as a circular system in the following way.  Some of the diagonal paths 
 hit the left or right vertical strips bounding the $[j/k_nq_n, (j+1)/k_nq_n)$ subdivisions of the levels.  
 Those diagonals correspond to the  boundary portion of the $n+1$-words 
 (the $b$'s and the $e$'s). The diagonal paths that start in the region 
 $[j/k_nq_n, (j+1)/k_nq_n)$ and  traverse from the bottom to the top level while staying in that region correspond 
 to the $j^{th}$ argument of the $\mcc$ operator at stage $n$.
 
 Restating this in terms of the isomorphism $\phi_0$ between $\mck_\alpha$ and $([0,1),\mcr_\alpha)$, 
 if $s_1$ and $s_2$ are mature elements of $\mck_\alpha$ corresponding to $x_1, x_2\in S^1$, then:
 \begin{itemize}
 \item $x_1, x_2$ belong to two diagonal strips that do not touch a vertical strip and with base in the same interval 
 $[j/k_nq_n, (j+1)/k_nq_n)$
 
 iff
 
 \item Inside their principal $n+1$-subwords, $s_1(0)$ and $s_2(0)$ are in $n$-words coming from the same argument 
 $w^\alpha_j$ of $\mcc(w^\alpha_0, w^\alpha_1, \dots w^\alpha_{k_n-1})$.
 \end{itemize}
 
  We now continue the enumeration of basic notions in \cite{part3} we use here.
 \begin{enumerate}

 \item[7.] In a very slight variation of the notation of \cite{part3}, when we are 
comparing $s$ with $t$ we define $d^n(s,t)=r_n(t)-r_n(s)$ (mod $q_n$). 
In this argument frequently $t=\mcs_\beta(s)$ and if $\beta$ is clear from the context 
we simply write  $d^n(s)$.  If $x$ and $y$ correspond to $s$ and $t$,
the number $d^n(s,t)$ can be viewed  either as the number of levels in the $n$-tower between $x$ and $y$ or as the difference between the locations of $0$ in the principal $n$-subwords of $s$ and $t$.

\item[8.] For mature $s$ and $t$, the result of shifting $t$ by $-d^n(s,t)$ units is that the location of $0$ is in the same position in its principal $n$-subword as is the position of $s(0)$ in its principal $n$-subword. 

\item[9.] 
Applying the shift map $d^{n+1}(s,t)$ times to $s$  moves its zero to the same point as $t$'s is relative to its $n+1$ subword. Subsequently moving it back 
$-d^n(s,t)$ steps moves the zero of result back to the same position in its $n$-subword as zero is in $s$'s $n$-subword. In other words, if $s'$ is the result of applying the shift map to $s$ $d^{n+1}(s,t)-d^n(s,t)$ times, then    $0$ is  in the same position relative to the $n$-block of $s'$ as it is in $s$. 

\item[10.] The $n+1$-word in the construction sequence for $\mck_\alpha$ is of the form
\[\mcc(w^\alpha_0,w^\alpha_1\dots w^\alpha_{k_n-1})\]
and hence if $s$ is mature at stage $n$ then $s(0)$ occurs in an $n$-block 
corresponding to the position of $w^\alpha_{j_0}$ for some $j_0$. We can ask 
whether the $j_0$ corresponding to the principal $n$-subword of the 
$(d^{n+1}(s,t)-d^n(s,t))$-shift of $s$ is the same as the $j_0$ corresponding to the principal $n$-subword of $s$. 

If it does, then $s$ and $t$ are \emph{well-matched} at stage $n$ and if not the $s$ and $t$ are \emph{ill-matched} at stage $n$.

\item[11.]  If $s'$ is the result of shifting $s$ $d^{n+1}(s,t)-d^n(s,t)$ times then the $0$ of $s'$ is in the same $w^\alpha_j$ as is the zero  of $t$. So for the purposes of determining whether $s$ is  well-$\beta$-matched at stage $n$, we can compare which argument of $\mcc$ $s(0)$ and $t(0)$ belong to. As a result we can speak of $s$ and $t$ well or ill-matched at stage $n$.  If $x$ and $y$ are the corresponding members of $[0,1)$ we can say say that $x$ and $y$ are well or ill-matched at stage $n$.
 \end{enumerate}
    We are now ready to construct the non-central $\beta$. We do this by induction. At stage 1, $a_1=0$. At stage $n$, we let $\beta_n=\sum_{p=1}^{n-1}{a_p\over k_pq_p}$.  For $i=0, \dots 6^n-1$, in the terminology of item 11 consider 
\[M_i=\{x: x\mbox{ and }x+\beta_n+i/k_nq_n\mbox{ are well-matched}\}.\]
Since the $M_i$'s are disjoint,  for some $i$, $\lambda(M_i)\le{1\over 6^n}$. Let $a_{n}$ be such an $i$ and let $\beta_{n+1}=\beta_{n}+a_n/k_nq_n$. Finally we let $\beta=\sum_1^\infty {a_p\over k_pq_p}$, so $\beta=\lim_{n\to \infty}\beta_n$.

To see this works, we first show that:

\begin{quotation}\noindent For almost all $x$, for large enough $m$, if $s_m$ corresponds to  $x+\beta_m$ then for all mature $n\le m, r_n(s_m)=r_n(\mcs_\beta(s))$.\end{quotation}
This is a Borel-Cantelli argument.  
Note that if $r_m(s_m)=r_{m}(\mcs_\beta(s))$ then for all mature $n\le m, r_n(s_m)=r_n(\mcs_\beta(s))$.  Hence it suffices to show that for almost all $s$, all sufficiently large $m$, $r_m(s_m)=r_m(\mcs_\beta(s))$.

If $x$ corresponds to $s$ then the only way that $r_m(s_m)\ne r_m(\mcs_\beta(s))$ is 
if $x+\beta_m$ is in a different level of the $m$-tower than 
$x+\beta_m+ \sum_{p=m}^\infty {a_p\over k_pq_p}$. In turn, the only way that this can happen is if for some $i$,
\[ x+\beta_m\in [i\alpha_m+1/q_m-{\sum_{p=m}^\infty {a_p\over k_pq_p}}, i\alpha_m+1/q_m).\]
  The latter interval is the right hand portion of a level in the $m$-tower, i.e. of an interval of the form $[i\alpha_m, i\alpha_m+1/q_m)$. 

The collection of $x$ that have this property for a given level  $i$ has measure 
$\sum_{p=m}^\infty {a_p\over k_pq_p}$.  Since there are $q_m$ many levels $i$, the measure  of all of the $x$ with this property at stage $m$ is 
$q_m*\left(\sum_{p=m}^\infty {a_p\over k_pq_p}\right)$.
Computing:

\begin{align*}
q_m*\left(\sum_{p=m}^\infty {a_p\over k_pq_p}\right)&=
{a_m\over k_m}+q_m*\sum_{m+1}^\infty {a_p\over k_pq_p}\\
&<{a_m\over k_m}+{q_m\over q_{m+1}}\sum_{m+1}^\infty {a_p\over k_p}\\
&\le {a_m\over k_m}+{1\over k_ml_mq_m}C,
\end{align*}
where $C=\sum_1^\infty {a_p\over k_p}$. Since we assume that \hyperlink{nr4}{$\sum {6^n\over k_n}<\infty$} and $a_p<6^{n-1}$, $C$ is finite. We see immediately that the measures of the  collections of $x$ such that at some stage $m$ the level of $x+\beta_m$ in the 
$m$-tower is different from the level of $x+\beta$ in the $m$-tower is summable.  By Borel-Cantelli, it follows that for almost all $s$ there is an $N$ for all $m\ge N$, $r_m(s_m)=r_m(\mcs_\beta(s))$.

\medskip

From the choice of $a_n$ for all but  a set of measure at most $1/6^n$, the  $s$ are ill-matched with
$s_{n+1}$. Again by the Borel-Cantelli lemma, for almost all $s$ there is an $N_1$ for all 
$n\ge N_1$ $s$ and $s_{n+1}$ are ill-matched. Since for almost all $s$ and all large enough $n$ the level of $s_{n+1}$ is equal to the level  $\mcs_\beta(s)$ it follows that for almost all $s$ and all large enough $n$ $s$ is ill-matched with $\mcs_\beta(s)$.  If $\nu$ is the unique invariant measure on $\mck_\alpha$ then equation 33 of \cite{part3} defines
\[\Delta_n(\beta)=\nu(\{s:s\mbox{ is ill-$\beta$-matched at stage $n$}\}).\]
We have shown that $\Delta_n(\beta)\to_n 1$.  Hence 
\[\Delta(\beta)=\sum_n\Delta_n(\beta)\]
is infinite. 
\medskip
Hence we have shown that $\beta$ is not central.\qed

We now prove Lemma \ref{yeah yeah yeah}
\medskip

\pf\!\!\! [Lemma \ref{yeah yeah yeah}] First note that the 
analysis in section 6.3, on page 50 of \cite{part3}, says that  for any non-central 
$\beta$ we can choose $hd_1$ 
	and $hd_2$ and a spaced out set $G$ such that, as in equation 35 on page 50, letting 
	\begin{equation}\label{newdef misal}
	\misal_n=\{s:s \mbox{ is ill-$\beta$-matched at stage $n$ and in 
	configuration {$P_{{{hd_1},{hd_2}}}$}}\}\notag
	\end{equation}
	 we get equation 36 on page 50 of \cite{part3}:

    \begin{equation}
    \label{root cause} 
    \sum_{n\in G}\nu(\misal_n)=\infty.
    \end{equation}

We now observe that for $n<m\in G$, $\misal_n$ and $\misal_m$ are 
probabilistically independent. This follows from Lemma 75 on page 47 of \cite{part3}: belonging 
to $\misal_n$ is an issue of the value of $d^{n+1}-d^n$.
The differences $d^{m+1}-d^m$ are independent of the differences 
$d^{n+1}-d^n$, hence the sets $\misal_n$ and $\misal_m$ are pairwise 
independent.
Which level $x$ is on in the $n$-tower is independent of whether or not $x$ is misaligned at the next stage.

 Let $M_m$ be the collection of $s$ that are mature at stage $m$. Then applying the  ``hard" Borel-Cantelli lemma, for almost all $s\in M_m$, there are infinitely 
many $n\in G, s\in \misal_n$.  Since $\bigcup_m M_m$ has measure one, for almost all $s\in \bk^c$ there are infinitely many $n, s\in \misal_n$.
\bigskip

We now argue that if $\mcj_1$ and $\mcj_2$ are two joinings of 
$\bk^c\times \bk^c$ over $\mcs_\beta$, then they are identical. Thus they are 
both the relatively independent joining. The result follows from the following claim which is an analogue of the claim in Proposition 32 of \cite{FRW}:

	\paragraph{Claim} Let $\mcj$ be a joining of $\bk^c$  with itself that projects to the graph joining of 
	$\mck_\alpha$ with itself given by $\mcs_\beta$. Then for all cylinder sets $\la a\ra\times \la b\ra$ in 
	$\bk^c\times \bk^c$, the density of occurrences of $(a,b)$ in a generic pair 
	$(x,y)$ for $\mcj$ does not depend on the choice of $(x,y)$.

	\bigskip
\pf  Since $\beta$ is non-central, and $x,y$ are generic and $\mcj$ extends $\mcs_\beta$, we know that for infinitely many $n\in G$ the $n$-words of $x$ and $y$ are misaligned. Let $G^*$ be this set.

 It suffices to show that: \begin{itemize}
 \item There is a sequence of subblocks of the principal $n+1$-subwords of $x$ and $y$ of total  length $B_n$,
 \item as $n\in G^*$ goes to infinity, $B_n/q_{n+1}$ goes to $0$,
 \item after removing the subwords in $B_n$  the number of occurrences of $\la a\ra\times \la b\ra$ is independent of the choice of $(x,y)$.
 \end{itemize}

Fix a large $n\in G^*$. We count occurrences of $(a,b)$ in $(x,y)$ over the portion of the principal $n+1$-subwords of $x$ that overlap with the $n+1$-blocks of $y$. As in Proposition 32 of \cite{FRW}, we show that, up to a negligible portion, this is independent of $(x,y)$. From the definition of  $\misal_n$ for $n\in G$, there are fixed values of $hd_1$ and $hd_2$. The number $hd_2$ determines the overlap of the 
$n+1$-block of $x$ containing $x(0)$ is the left or right overlap. For convenience,  assume that $hd_1=L$ and $hd_2=R$. 

First: discard $n$-subwords that are not mature. This is a negligible portion.

Next, shift $y$ back by $d^n(x)$, so that the mature $n$-subwords of $x$ in the principal $n+1$-subword are aligned along the overlap of the 
principal $n+1$ subword of $y$ with the corresponding $n$-subword of $y$.\footnote{ Sections 4.3-4.6 of \cite{part3} discuss how the spacings of 
left and right overlaps correspond.}

Then by specification {J.10.1} and the fact that $x$ and $y$ are misaligned, any pair of 
$n$--words $(u,v)$ occurs almost exactly $1/s_{n}^2$  times. So, after discarding a 
negligible portion of  the occurrences all pairs occur the same number of 
times. Shifting them all  back by $d^n(x)$,  an amount determined by $\beta$ and thus independent of $x$ and 
$y$, gives a collection of counts of occurrences of $(a,b)$ in all pairs 
$(u, sh^{d^n}(v))$ with all pairs occurring essentially the same number of times.  The result 
is independent of the choice of $x$ and $y$. The errors from the {negligible portions} and they go to zero  in proportion to $n+1$.
This proves the claim. \qed

\subsection{Diffeomorphisms of the Torus}
\label{sec:CT:ss:tor}

The map $\mcf\circ F_\mco$ maps codes for $\Pi^0_1$ sentences to construction
sequence for circular systems. We now indicate how to realize circular systems
as diffeomorphisms and why these diffeomorphisms are computable. The realization
map is described completely in \cite{part1}.  We review it here to verify its
effectiveness.

The construction is in two stages. In both parts a sequence of periodic
transformations is constructed and the limits are isomorphic to the given
uniform circular system.  In both constructions, the torus, viewed as \(
[0,1]\times[0,1]\) with appropriate edges identified, is divided into rectangles. These are then permuted by the
periodic transformations according to the action of the shift operator on the
circular system.  In the first stage, this permutation is built without regard to
continuity. The result is an abstract measure preserving transformation. In the
second part, using smooth approximations to these permutations, the limit is a
$C^\infty$ diffeomorphism.

The main tool for moving from the  discontinuous, symbolic transformations to
the smooth geometric transformations is the Anosov-Katok method of Approximation
by  Conjugacy \cite{AK}.  To allow for this smoothing the parameter sequence
$\la k_n, l_n:n\in\nn\ra$  must have the sequence of $l_n$'s grow sufficiently
fast.  

The lower bounds $l_n^*(\la k_m:m\le n\ra, \la l_m:m<n\ra)$ will be
determined inductively, the complete list of requirements on $l_n^*$ appears in Appendix \ref{NumPar}. 

For the
moment we assume we are given the circular sequence $\la \mcw_n^c:n\in\nn\ra$ with prescribed coefficient sequences
$\la k_n, l_n:n\in\nn\ra$ where the $l_n$ grow sufficiently fast.

The periodic approximations to the  first stage transformation  are of the form 
    \begin{equation}
        T_n = Z_n\circ\rot_{\alpha_n}\circ Z_n^{-1}\label{form of approx}
    \end{equation}
which result from   conjugating  horizontal  rotations
\((x,y)\mapsto_{\rot_{\alpha_n}}(x+\alpha_n,y)\), with the more complicated
transformations \(h_n:\B T^2\to\B T^2\) that permute rectangular subsets of 
\(\B T^2\).  The $\alpha_n$ are the rationals constructed from the coefficient sequence
$\la k_n, l_n:n\in\nn\ra$  described in section~\ref{prelims}. The maps $Z_n$
are of the form \[Z_n = h_1\circ h_2\circ\ldots\circ h_n\] where  $h_i$ codes
the combinatorial behavior of the $i^{th}$ application of the $\mcc$-operation.
The initial, discontinuous transformation \(T\) will then be the almost-everywhere pointwise limit
of the sequence \(\la T_n:n\in\B N\ra\).

In the second part of the construction the $h_n$'s will be replaced by smooth
transformations $h_n^s$ that are close measure theoretic
approximations to the $h_n$'s. This results in a new sequence
    \begin{equation}
    \label{form of Hn}
        H_n=h_1^s\circ h_2^s \circ \dots \circ h_n^s.
    \end{equation}

The analogue of equation~\ref{form of approx} for the final smooth transformation is:
    \begin{equation}
    \label{AK approx}
        S_n=H_n\rot_{\alpha_n}H_n^{-1}
    \end{equation}
The sequence of $S_n$'s converge in the $C^\infty$-topology to a $C^\infty$ measure preserving transformation $S:\bt^2\to \bt^2$. 
    
\paragraph{Why do we do this?} In \cite{part1} it is shown that $T$ is measure isomorphic to $\bk^c$.  Hence if 
$\bk^c=\mcf\circ F(N)$ we have $\phi_N$ is true if and only $T\cong T^{-1}$.    Since $S\cong T$, $\phi_N$ is true if and only $S\cong S^{-1}$. 
    Thus if we define the realization function $R$ by setting $R(\bk^c)=S$, we see that $R\circ\mcf\circ F_\mco$ is a 
    reduction of the collection of codes for true $\Pi^0_1$-sentences to the set of recursive diffeomorphisms 
    isomorphic to their inverses. This is the content of figure  \ref{the square}.

In addition to these results in \cite{part1}, we will show that the sequence of $S_n$'s can be taken to be effective,
converge in the $C^\infty$ topology and that if $S(N)$ comes from $N$ and $S(M)$ comes from $M$, then $S(N)\not\cong S(M)$. This will complete the proof of Theorem \ref{thm:main}.

\subsubsection{Painting the circular system on the torus }

We encode the symbolic system $\bk^c$ on the torus by inductively constructing
the sequence of $h_n$'s.  The map $h_0$ is the identity map corresponding to $\mcw^c_0=\Sigma$.  To build $h_{n+1}$, \(\B T^2\) is subdivided  into
rectangles which are then permuted.

\begin{definition}[Rectangular subdivisions]
\label{def:rects}
    Let \(n,m\in\B N\).
        \begin{itemize}
            \item For an arbitrary natural number $q$,  \(\C I_q\) represent the collection of intervals\\
                \([0,\tfrac1q),\ [\tfrac1q, \tfrac2q),\ \ldots, [\tfrac{q-1}{q},
                1)\).
            \item Given \(\C I_q\) and \(\C I_s\), let \(\C I_q\otimes\C I_s\)
                be the collection of all rectangles \(R = I_0\times I_1\), where
                \(I_0\in\C I_q\) and \(I_1\in\C I_s\).
            \item Let \(D\subseteq\B T^2\). Then, for a collection of rectangles
                \(\xi\), the restriction of \(\xi\) to \(D\) is given by
                    \begin{equation*}
                        \xi\rest D = \{R\cap D: R\in\xi\}.
                    \end{equation*}
            \item Recall the parameter sequences \(\la q_n:n\in \nn\ra\) and
                \(\la s_n : n \in \B N\ra\). Further recall that
                $s_n=|\mcw^c_n|$ and \(q_n = |u|\) for \(u \in \mcw^c_n\).
                Define
                    \begin{equation*}
                        \xi_n = \C I_{q_n}\otimes \C I_{s_n}.
                    \end{equation*}
            \item Lastly, for \(0\leq i < q_n\) and \(0\leq j < s_n\), let
                \(R^n_{i,j}\) be the element of \(\xi_n\) given by
                \([\tfrac{i}{q_n}, \tfrac{i+1}{q_n})\times[\tfrac {j}{s_n},
                \tfrac{j+1}{s_n})\).
        \end{itemize}
\end{definition}
Note that  there is a straightforward description of the action of
\(\rot_{\alpha_n}\) on \(\xi_n\):
    \begin{equation*}
        \rot_{\alpha_n}: R^n_{i,j}\mapsto R^n_{i+p_n,j}
    \end{equation*}
where addition in the subscript is performed modulo \(q_n\).

The map $h_{n+1}$ will be defined as a permutation of $\mci_{k_nq_n}\otimes
\mci_{s_{n+1}}$ and thus induces a permutation  of $\xi_{n+1}$. It is important to make
$h_{n+1}$ commute with $\rid{\alpha_n}$.  To do this $h_{n+1}$ is first defined on
$(\mci_{k_nq_n}\otimes \mci_{s_{n+1}})\rest([0,1/q_n)\times [0,1))$ and then copied
over equivariantly to $\bt^2$.

\paragraph{Constructing  the $h_{n}$'s:}  The paper \cite{part1} is concerned with realizing circular systems, and so builds the $h_n$'s in terms of the  \emph{prewords} used to construct the  sequence $\la \mcw_n^c:n\in\nn\ra$.  In the case that $\la \mcw_n^c:n\in\nn\ra$ is in the range of $\mcf$, the prewords are determined by the underlying odometer based sequence $\la \mcw_n:n\in\nn\ra$. We describe $h_{n+1}$ directly in terms of the odometer sequence $\la \mcw_n:n\in\nn\ra=F_\mco(N)$.

 Fix enumerations $\la w^n_s:0\le s < s_n\ra$ of each  $\mcw_n$. 
The words in $\mcw_{n+1}$ are concatenations of words in $\mcw_n$: 
\[w^{n+1}_s=w_0w_1\dots w_{k_n-1}\]
where each $w_i=w^n_{s'}$ for some $s'$.

To each $w^{n+1}_s$ associate the horizontal strip $[0,1)\times [s/s_{n+1},
(s+1)/s_{n+1})$ and each $w^n_{s'}$ with $[0,1)\times [s'/s_{n},
(s'+1)/s_{n})$.
\begin{prop}\label{build perms}
There is  a permutation of $\mci_{k_nq_n}\otimes \mci_{s_{n+1}}\rest
    [0,1/q_n)\times [0,1)$ such that for all $0\le s<s_{n+1}$,
        \begin{quotation}
            \noindent if $w_i=w^n_{s'}$ then 
                \begin{eqnarray}\label{in place}
                    h_{n+1}([i/k_nq_n,(i+1)/k_nq_n)\times
                    [s/s_{n+1},(s+1)/s_{n+1}))\\
                    \subseteq [0,1/q_n)    \times [s'/s_n, (s'+1)/s_n).\notag
                \end{eqnarray}
        \end{quotation}
\end{prop}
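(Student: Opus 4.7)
The plan is to reduce the construction of $h_{n+1}$ to a purely combinatorial matching problem between two partitions of a common set of $k_n s_{n+1}$ elementary rectangles of equal area, and then to verify that the two partitions have matching block sizes by invoking the strong uniformity of the construction sequence $\la \mcw_n : n\in\nn\ra$. First I would observe that
\[
\xi \;:=\; \mci_{k_n q_n}\otimes\mci_{s_{n+1}}\rest\bigl([0,1/q_n)\times[0,1)\bigr)
\]
consists of exactly $k_n\cdot s_{n+1}$ rectangles of area $1/(k_n q_n s_{n+1})$, which we index by pairs $(i,s)$ with $0\le i<k_n$ and $0\le s<s_{n+1}$. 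Each such source pair comes with a target label $s'=s'(i,s)\in\{0,\dots,s_n-1\}$ defined by the rule that $w^n_{s'}$ is the $i$-th letter of $w^{n+1}_s$. The containment requirement \eqref{in place} then says precisely that the image of the rectangle labeled $(i,s)$ must lie in the horizontal sub-strip
\[
S_{s'} \;=\; [0,1/q_n)\times\bigl[s'/s_n,(s'+1)/s_n\bigr).
\]
Since $s_n\mid s_{n+1}$, each $S_{s'}$ is a union of $k_n\cdot(s_{n+1}/s_n)$ rectangles of $\xi$.

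The heart of the argument is a counting identity coming from strong uniformity: every $w^n_{s'}\in\mcw_n$ occurs exactly $f_n=k_n/s_n$ times as a letter in every $w^{n+1}_s\in\mcw_{n+1}$. Consequently the number of source pairs $(i,s)$ with $s'(i,s)=s'$ equals
\[
s_{n+1}\cdot f_n \;=\; k_n\cdot\frac{s_{n+1}}{s_n},
\]
which is exactly the number of elementary rectangles of $\xi$ contained in $S_{s'}$. Hence the partition of the source rectangles according to the function $(i,s)\mapsto s'(i,s)$ and the partition of the target rectangles according to which sub-strip $S_{s'}$ they lie in have the same block sizes for every $s'$.

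To finish, for each $s'\in\{0,\dots,s_n-1\}$ I would pick an arbitrary bijection between the $s'$-block of source rectangles and the $k_n s_{n+1}/s_n$ elementary rectangles of $\xi$ sitting inside $S_{s'}$, and take $h_{n+1}$ to be the resulting permutation of $\xi$. Property \eqref{in place} then holds by construction, and $h_{n+1}$ is automatically measure preserving since all rectangles in $\xi$ have the same Lebesgue measure. I do not expect a substantive obstacle here: the only content of the proposition is the counting identity, which is immediate from strong uniformity together with the divisibility $s_n\mid s_{n+1}$. No smoothness issue enters at this stage because $h_{n+1}$ is merely a combinatorial permutation of rectangles; the passage from the $h_n$'s to the smooth $h^s_n$'s is carried out separately via the Anosov--Katok approximation-by-conjugacy machinery, and the extension of $h_{n+1}$ from the fundamental strip $[0,1/q_n)\times[0,1)$ to all of $\bt^2$ is obtained afterwards by copying equivariantly under $\rot_{\alpha_n}$.
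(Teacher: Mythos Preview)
Your proposal is correct and follows essentially the same approach as the paper: both arguments reduce the existence of $h_{n+1}$ to a counting check, use strong uniformity to see that each $n$-word $w^n_{s'}$ occurs exactly $k_n/s_n$ times in each $w^{n+1}_s$ so that $s_{n+1}\cdot k_n/s_n$ source rectangles are assigned the label $s'$, and then match this against the $k_n\cdot(s_{n+1}/s_n)$ elementary rectangles of $\xi$ inside the target strip $S_{s'}$. Your closing remarks about smoothing and equivariant extension also mirror the paper's post-proof discussion.
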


\pf Equation~\ref{in place} gives  regions that each atom of
$\mci_{k_nq_n}\otimes \mci_{s_{n+1}}\rest[0,1/q_n)\times [0,1)$ must be sent to
by $h_{n+1}$. To prove there  is such a permutation we see that each region has
exactly the same number of subrectangles as  the cardinality of the collection
of atoms that must map into it.

We count occurrences of $n$-words in $(n+1)$-words.  Fix a word
    \[w^{n+1}_s=w_0w_1 \ldots w_{k_n-1} \in \mcw_{n+1}.\]
Then, by strong
uniformity each $n$-word $w^n_{s'}$ occurs $k_n/s_n$ times as a $w_i$. So each
word $w^{n+1}_s$ puts $k_n/s_n$ rectangles in a target region. Since there are
$s_{n+1}$ many words of the form $w^{n+1}_s$ the target regions must contain $s_{n+1}(k_n/s_n)$
rectangles. 

Each horizontal strip of $[0,1)\otimes \mci_{s_n}$ is divided into
$s_{n+1}/s_n$ many horizontal strips by $[0,1)\otimes \mci_{s_{n+1}}$ and each
vertical strip of $\mci_{q_n}\otimes [0,1)$ is divided into $k_n$ many vertical
strips by $\mci_{k_nq_n}\otimes [0,1)$.  Thus each atom of the partition $\xi_n\rest[0,1/q_n)\times \mci_s$
is divided into $k_n(s_{n+1}/s_n)$ rectangles by $\mci_{k_nq_n}\otimes
\mci_{s_{n+1}}$. In particular $\mci_{k_nq_n}\otimes \mci_{s_{n+1}}\rest
[0,1/q_n)\times [s'/s_n, (s'+1)/s_n)$ has $k_n(s_{n+1}/s_n)$ many atoms. 

Hence each target region contains the same number of rectangles as atoms sent to
it and there is a map $h_{n+1}$ satisfying equation~\ref{in place}.\qed

Since $h_{n+1}$ is a permutation of $\mci_{k_nq_n}\otimes \mci_{s_{n+1}}\rest
[0,1/q_n)\times [0,1)$ for each $1\le i<q_n$, it can be copied onto each $\mci_{k_nq_n}\otimes
\mci_{s_{n+1}}\rest [ip/q_{n},(ip+1)/q_n)$.  The result of this is  a permutation of
$\mci_{k_nq_n}\otimes \mci_{s_{n+1}}$ (and hence $\xi_{n+1}$) that commutes with
the rotation $\rid{\alpha_n}$.

\begin{remark}
It is a clear that $h_{n+1}$ can be defined in a primitive recursive way using the data $\mcw_{n+1}$.
\end{remark}

\paragraph{Remark} It is shown in \cite{part1} that having defined the sequence
of $h_n$'s in this manner, for sufficiently fast growing $l_n$ the
transformations $T_n$ converge in measure to a measure preserving transformation
$T:(\bt^2,\lambda)\to (\bt^2,\lambda)$ that is isomorphic to the original
circular system defined by $\la \mcw_n^c:n\in\nn\ra$. The map taking  $\la
\mcw^c_m:m\le n\ra$ to $T_n$ is primitive recursive.

\subsubsection{Smoothing the $T_n$}
\label{sec:smooth_tn}

We now must smooth the $T_n$'s to produce $S_n$'s that have 
measure-theoretic limit $S$ which is isomorphic to $T$. Secondly, we show that
$S$ is a recursive diffeomorphism.

For our discussion of smoothing we need an effective complete metric on the $C^\infty$-diffeomorphisms. 
Note that the \(C^\infty\) topology is the coarsest common
refinement of the \(C^k\) topologies for each \(k \in \B N\). There are many choices for 
effective/recursive metrics generating the \(C^k\) topology for each \(k\);
for instance the metric derived from the norm given by
	\begin{equation*}
        \|f\|_k = \max_{x \in \B T^2} \|f(x)\|^0 + \|D f(x)\|^1 + \cdots + \|D^k
        f(x)\|^k
	\end{equation*}
where \(\| - \|^j\) is the $j$-norm on \(\B R^{2j+2}\).  Given an effective sequence of complete metrics $\la d^k:k\in\nn\ra$ generating the $C^k$ topologies, with distances bounded by 1, then    
	\begin{equation*}
        d^\infty = \sum_{k = 0}^\infty 2^{-(k + 1)} d^k
    \end{equation*}
generates the \(C^\infty\) topology.

Fix such a complete effective metric giving rise to the \(C^\infty\) topology
on \(\B T^2\).
    Without loss of generality we can assume that
    \begin{equation}
    \label{just the obvious}
        d^\infty(S,T)\le \max_{x\in \bt^2}d_{\bt^2}(S(x),T(x)),
    \end{equation}
where $d_{\bt^2}$ is the ordinary metric on $\bt^2$.

To pass from the discontinuous $Z_n$'s to diffeomorphisms, the $h_i$'s are
replaced by  smooth $h^s_i$  which are very close approximations and give the
$H_n$'s  in equation~\ref{form of Hn}. Then the $H_n$'s  will also be
diffeomorphisms.  While there is no control over the $C^\infty$-norms of the
\(H_n\), the key observation at the heart of the Anosov-Katok method is the following: 
if \(h^s_{n+1}\) commutes with \(\rot_{\alpha_n}\) then
    \begin{align}
    \label{commutes}
            S_n &= H_n \circ\rot_{\alpha_n}\circ H_n^{-1}\notag \\
            &= H_n \circ h^s_{n+1} \circ (h^s_{n+1})^{-1} \circ
            \rot_{\alpha_n} \circ H_n^{-1}\notag\\
            &= H_n \circ h^s_{n+1} \circ \rot_{\alpha_n} \circ
            (h^s_{n+1})^{-1} \circ H_n^{-1}\notag\\
            &= H_{n+1} \circ \rot_{\alpha_n} \circ H_{n+1}^{-1}.
    \end{align}
Hence by taking $\alpha_{n+1}$ sufficiently close to $\alpha_n$, $S_{n+1}$ can
be taken as close as necessary to $S_n$ in the $C^\infty$-norm.

To carry out this plan we begin by describing how we smooth the $h_n$'s. This is done
explicitly in Theorem 35 of \cite{part1}, which says: 

\begin{theorem}[Smooth permutations]
\label{smp}
    Let \(\B T^2\) be divided into the collection of rectangles \(\C I_n
    \otimes \C I_m\) and choose \(\epsilon > 0\). Let \(\sigma\) be a
    permutation of the rectangles.  Then there is  an area preserving $C^\infty$-diffeomorphism
    \(\phi:\B T^2\to\B T^2\) such that \(\phi\) is the identity on a
    neighborhood of the boundary of \([0,1]\times[0,1]\) and for all but a
    set of measure at most \(\epsilon\), if \(x\in R\), then
    \(\phi(x)\in\sigma(R)\) for all \(R\in\C I_n \otimes \C I_m\).
\end{theorem}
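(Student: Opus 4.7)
The plan is to build $\phi$ as a composition of smooth near-transpositions of adjacent rectangles, where the basic building block (a smooth area-preserving near-transposition of two adjacent rectangles) is provided in Appendix~\ref{diffeos}. The first step is to reduce an arbitrary permutation $\sigma$ of the $nm$ rectangles in $\C I_n\otimes\C I_m$ to a product of transpositions of \emph{adjacent} rectangles (i.e., pairs sharing an edge). Since the dual graph of the grid $\C I_n\otimes\C I_m$ is connected, any transposition can be written as a conjugate of adjacent transpositions, so there is a decomposition
\[
    \sigma = \tau_1\circ\tau_2\circ\cdots\circ\tau_N
\]
where each $\tau_i$ swaps a pair of adjacent rectangles and $N \le C (nm)^2$ for some absolute constant $C$.

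Next, for each $\tau_i$ invoke the construction in Appendix~\ref{diffeos}, which produces, for every $\delta>0$, a $C^\infty$ area-preserving diffeomorphism $\phi_i:\B T^2\to\B T^2$ that is the identity outside a small neighborhood of the union of the two rectangles being swapped (and in particular on a neighborhood of $\partial([0,1]\times[0,1])$), and which agrees with $\tau_i$ on all of $\B T^2$ except on a set $E_i$ of measure at most $\delta$ concentrated near the boundary strip between the two rectangles. Set
\[
    \phi = \phi_1\circ\phi_2\circ\cdots\circ\phi_N.
\]
This is a composition of $C^\infty$ area-preserving diffeomorphisms that are each the identity near the boundary of $[0,1]\times[0,1]$, hence $\phi$ itself has these properties.

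It remains to control the exceptional set. Let $B_i = E_i \cup \phi_i^{-1}(E_{i-1})\cup\cdots\cup(\phi_i\circ\cdots\circ\phi_2)^{-1}(E_1)$ be the set on which $\phi_1\circ\cdots\circ\phi_i$ disagrees with $\tau_1\circ\cdots\circ\tau_i$; since each $\phi_j$ is measure preserving, $|B_N|\le\sum_{i=1}^N|E_i|\le N\delta$. Choosing $\delta = \epsilon/N$ at the outset ensures that the total exceptional set has measure at most $\epsilon$. Off this set, if $x\in R$ then $\phi(x)\in\tau_1\circ\cdots\circ\tau_N(R)=\sigma(R)$, which is the desired conclusion.

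The main (though not severe) obstacle is bookkeeping: one must verify that composing the near-transpositions does not cause the exceptional sets to interfere in unforeseen ways, which is handled by the measure-preservation estimate above. A secondary concern is that each adjacent-transposition factor from Appendix~\ref{diffeos} is constructed for a fixed pair of rectangles of potentially differing shapes (rectangles in $\C I_n\otimes\C I_m$ all have the same dimensions, so this is not an issue here); thus a single template suffices up to translation and the construction is uniform in $\sigma$. Primitive recursiveness of the resulting $\phi$ (which is needed for the main theorem but is not part of Theorem~\ref{smp} itself) follows because $N$, the decomposition of $\sigma$, and the template near-transposition are all presented primitive recursively.
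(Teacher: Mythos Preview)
Your proposal is correct and follows essentially the same approach as the paper: decompose $\sigma$ into at most $(nm)^2$ transpositions of adjacent rectangles, realize each by the smooth area-preserving near-transposition from Appendix~\ref{diffeos}, and compose with per-step error $\epsilon/N$ so that measure preservation gives a total exceptional set of size at most $\epsilon$. Your explicit bookkeeping of the propagated exceptional set $B_N$ is a bit more detailed than what the paper writes out, but the argument is the same.
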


  In   Lemma 36
of \cite{part1} it is shown that an arbitrary permutation of $\mci_n\otimes \mci_m$ can be built by taking a composition of transpositions of adjacent 
rectangles. The transformation $\phi$ is then built effectively as a composition of smooth
near-transpositions that swap adjacent rectangles. We summarize the proof.  More details appear in Appendix~\ref{diffeos}, where it is shown that it can be 
carried out recursively in a code for the permutation $\sigma$.

\medskip

The main technical point for building the near-transpositions of adjacent rectangles
is captured by showing that for all $0< \gamma<1$ and arbitrarily small
$\epsilon<1-\gamma$, there is a diffeomorphism $\phi_0$ of the unit disk in
$\poR^2$ such that:
    \begin{enumerate}
        \item $\phi_0$ rotates the top half of the disk of radius $\gamma$ to
            the bottom half and vice versa.
        \item $\phi_0$ is the identity in a neighborhood of the unit circle
            of width less than $\epsilon$.
    \end{enumerate}
The map  $\phi_0$ is constructed by considering a primitive recursive $C^\infty$
map $f:[0,1]\to [0,\pi]$ that is identically equal to $\pi$ on $[0,\gamma]$ and
is $0$ in a neighborhood of 1.  Then $\phi_0$ rotates the circle of radius $r$
by $f(r)$ radians. Taking $\gamma$ very close to $1$ gives a
smooth near transposition.

Using Riemann mapping theorem techniques, these rotations of the disk can be copied over to  measure preserving maps from 
$[-1,1]\times[0,1]$ to itself that 
	\begin{enumerate}
	\item take  all but $1-\epsilon/2$ mass of $[-1,0]\times[0,1]$ to $[0,1]\times[0,1]$ and vice versa,
	\item are analytic on the interior of $[-1,1]\times[0,1]$,
	\item are the identity in a neighborhood of the boundary.
	\end{enumerate}

Since every permutation of $\{0,1,\dots mn\}$ can be written as a composition of less than or equal to $(nm)^2$ transpositions of the form $(k, k + 1)$, given any $\sigma$ we can build $\phi$ by taking $\epsilon$ small enough and composing sufficiently good approximations between adjacent rectangles corresponding to the transpositions composed to create $\sigma$.

\paragraph{Building $S_n$.} Using Theorem \ref{smp} we can effectively
choose a smooth $h^s_{n+1}$ which well-approximates $h_{n+1}$ measure
theoretically.  By choosing the approximation well, we can guarantee that the
$S_n$ in equation~\ref{AK approx} moves the partitions $\xi_n$ very close to
where the $T_n$'s move the $\xi_n$'s.

Since $h^s_{n+1}$ is effective, using the continuity of composition with respect
to $d^\infty$,  $S_{n+1}$ can be made arbitrarily close to $S_n$  by taking
$\alpha_{n+1}$ sufficiently close to $\alpha_n$.  Thus if \(\alpha_n\) converges
to $\alpha$ sufficiently quickly, the sequence \(\la S_n:n\in\nn\ra\) is Cauchy
with respect to the complete metric $d^\infty$ and hence converges to a smooth measure preserving
diffeomorphism \(S\).  Taking the sequence of $h^s_n$'s to be sufficiently close
to the $h_n$'s the $S_n$'s are sufficiently close to the $T_n$'s to apply Lemma
30 of \cite{part1} to  show that the diffeomorphism $S$ is measure theoretically
isomorphic to $T$.  Hence $(\bt^2, \lambda, S)$ is measure theoretically
isomorphic to $(\bk^c, \nu, \sh)$.

\paragraph{The induction.} The discussion above was predicated on choosing the
$l_n$'s to grow \emph{fast enough}.  We now show how to inductively choose lower
bounds $l_n^*$ on the $l_n$. \hyperlink{nre}{Numerical Requirement E} gives one collection of lower bounds for the $l$'s, independently of the choices of the maps $H_n$ and numbers $\alpha_n$. Hence we choose $l_n^*$ to dominate this sequence of lower bounds, as well as the lower bounds we add here.

Suppose we have defined $H_{n+1}$ from $h_{n+1}^s$ and $H_n$ in a manner that
satisfies equation~\ref{commutes} holds and that the $H_n$'s can be computed effectively. 
Then, for any given $\epsilon$, and
small {rational} $\beta$,
    \begin{equation}
        {d^\infty(H_{n+1}\rid{\alpha_n + \beta}H^{-1}_{n+1}, H_n\rid{\alpha_n}H^{-1}_n)}
    \end{equation}
can be primitively recursively computed to within a given
$\epsilon$.
Moreover, this is a decreasing function of \(\beta\)  for small
$\beta>0$.  Thus one  one can effectively find a $\delta$ such that if
$|\alpha_{n+1}-\alpha_n|<\delta$, then $d^\infty(S_{n+1},S_n)< 2^{-(n + 1)}$.

Recall the definitions of the $\alpha_n=p_n/q_n$ from equations~\ref{qns},
\ref{pns} and \ref{alphans}. Then $\alpha_n$ does not depend on $l_n$ and 
    \[\alpha_{n+1}=\alpha_n+{1/k_nl_nq_n^2}.\]
Thus to make $\alpha_{n+1}$ close to $\alpha_n$ it suffices to make $l_n$
sufficiently large that 
	\begin{equation}\label{deldel}
	1/k_nl_nq_n^2<\delta.
	\end{equation}
 
\begin{description}
\item[Numerical Requirement G] \hypertarget{nr5}{The parameter} $l_n$ is
chosen sufficiently large that
    \begin{equation}
    \label{growth 2}
        d^\infty(S_{n+1}, S_n)< 2^{-(n + 1)}
    \end{equation}
\end{description}

The numbers $\alpha_n$, $\la \mcw^c_m:m\le n\ra$ and $\la h^s_m:m\le n+1\ra$
determine the $\delta$ in equation \ref{deldel} and thus how large $l_n$ must be.  All of this data can be computed
recursively from $\la \mcw_m:m\le n+1\ra$.  (We note that neither the choice of $s_{n+1}$
nor the definition $h^s_{n+1}$ uses $l_{n}$.)

\subsubsection{The effective computation of $S_n$}
\label{sssec:ecsn}

We now show that each element of the sequence \(\la S_n : n \in \B N\ra\) is
effectively computable (Definition~\ref{effcont}). 
    \begin{claim}
\label{clm:euc2}
        The functions \(h_n^s\) and \(\C R_{\alpha_n}\) are effectively computable $C^\infty$-functions. As a consequence each  \(S_n\) is
effectively uniformly continuous.
    \end{claim}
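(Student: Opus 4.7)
The plan is to exhibit primitive recursive moduli of continuity and primitive recursive approximations for $h_n^s$ and $\C R_{\alpha_n}$ separately, and then deduce the statement for $S_n$ by closure of effective $C^\infty$ computability under composition and inversion.

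First I would handle $\C R_{\alpha_n}$. The integers $p_n, q_n$ are defined by the primitive recursion in equations \eqref{qns}--\eqref{pns} from the data $\la k_m, l_m : m \le n\ra$, so $\alpha_n = p_n/q_n$ is a primitively recursively computable rational. Rotation by a rational on $\B T^2$ is then trivially effectively $C^\infty$: the map $(x, y) \mapsto (x + \alpha_n, y)$ is an isometry whose differential is constantly the identity matrix and whose higher-order differentials vanish identically, so the identity function serves as a modulus of continuity at every order.

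For $h_n^s$, I would trace through the explicit construction of Theorem \ref{smp} using the material of Appendix \ref{diffeos}. The map $h_n^s$ is obtained as a composition of a primitively recursively bounded number of smooth near-transpositions of adjacent rectangles in $\C I_{k_{n-1} q_{n-1}} \otimes \C I_{s_n}$. Each near-transposition is built explicitly from the radial model diffeomorphism $\phi_0$ on the unit disk, which rotates the circle of radius $r$ through $f(r)$ radians for a fixed bump-type profile $f : [0,1] \to [0,\pi]$. This $f$ is given by a closed-form expression built from the standard template $e^{-1/x}$, so $f$ and all its derivatives admit primitive recursive moduli of continuity read off directly from the formula. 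The transfer of $\phi_0$ to a rectangle via the area-preserving changes of coordinates described in Appendix \ref{diffeos} preserves primitive recursivity of the moduli at every order, and the composition of finitely many such building blocks is again effectively $C^\infty$ because the chain rule converts primitive recursive bounds on the derivatives of the factors into primitive recursive bounds on the derivatives of the composite.

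For the consequence, $S_n = H_n \circ \C R_{\alpha_n} \circ H_n^{-1}$ with $H_n = h_1^s \circ \cdots \circ h_n^s$. The same closure of effective $C^\infty$ computability under finite composition gives that $H_n$ is effectively $C^\infty$. By the paragraph on inverses of recursive diffeomorphisms in Section \ref{effcompdif}, the inverse of a primitive recursive diffeomorphism of a compact manifold is again primitive recursive, so $H_n^{-1}$ is effectively $C^\infty$ as well. One more composition then yields $S_n$ effectively $C^\infty$, which in particular gives effective uniform continuity in the sense of Definition \ref{effcont}. The main obstacle I anticipate is bookkeeping uniformity: one must verify that the primitive recursive moduli of continuity of each order for the building blocks can be packaged into a single algorithm taking a pair $(k, \epsilon)$ and returning a modulus of continuity for the $k$-th differential of the composite, uniformly in $k$, so that no unbounded search creeps in when iterating the chain rule through the $n+1$ factors.
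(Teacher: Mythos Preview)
Your proposal is correct and follows essentially the same approach as the paper: both arguments treat $\C R_{\alpha_n}$ as an isometry with trivial modulus of continuity, decompose each $h_m^s$ into a primitively recursively bounded composition of explicit smooth near-transpositions with computable Lipschitz bounds, and then invoke closure of effective computability under composition together with the remark in Section~\ref{effcompdif} on inverses of primitive recursive diffeomorphisms. The paper simply declares the higher-differential case ``conceptually identical but notationally cumbersome,'' whereas you flag the uniformity-in-$k$ bookkeeping explicitly; this is a fair point but not a genuine difference in strategy.
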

\begin{proof}[Proof of Claim~\ref{clm:euc2}]

 For simplicity of
exposition, we only show how to compute the modulus of continuity and
approximation for \(S_n\) itself; finding the modulus of continuity and
approximations to the higher differentials is conceptually identical but
notationally cumbersome.

Recall that we must produce two functions:
    \begin{itemize}
        \item A modulus of continuity, \(d : \B N \to \B N\), and
        \item An approximation, \(f : (\{0, 1\} \times \{0, 1\})^{< \B N} \to
            (\{0, 1\} \times \{0, 1\})^{< \B N}\).
    \end{itemize}
    
It is routine to check that 
          if \(T_0\) and \(T_1\) are effectively uniformly continuous---that is, if there exist moduli
        of continuity \(d_0\) and \(d_1\) and approximations \(f_0\) and \(f_1\)
        corresponding to each---then the composition, \(T_1 \circ T_0\) is
        effectively uniformly continuous.

The second part of the claim follows from the first since Equations \eqref{AK approx} and \eqref{form of Hn} show,
    \begin{equation}
    \label{eq:euc}
        S_n = h_1^s \circ h_2^s \circ \cdots \circ h_n^s \circ \rot_{\alpha_n}
        \circ (h_n^s)^{-1} \circ \cdots \circ (h_2^s)^{-1} \circ (h_1^s)^{-1}.
    \end{equation}

    The case of \(\rot_{\alpha_n}\) is particularly simple. Since
    \(\rot_{\alpha_n}\) is an isometry, it has a Lipschitz constant of \(1\). In
    particular, the modulus of continuity is simply given by \(d(n) = n\), and,
    since \(\rot_{\alpha_n}\) is well-defined on rational points, we can also
    determine the approximation by setting \(f\) to be
        \begin{equation*}
            ([x]_m, [y]_m) \mapsto ([x]_m + [\alpha_n]_m, [y]_m)
        \end{equation*}
    Where \([z]_m\) denotes the smallest dyadic rational \(k \times 2^{1-m}\) for \(0 \leq k
    \leq 2^m\) minimizing \(|z - [z]_m|\).

    In the case of \(h_m^s\) for \(m \leq n\), recall from the discussion after Theorem \ref{smp} that $h^s_m$ can be built as a composition of a sequence of smooth transpositions: 
        \begin{equation*}
            h_m^s = \sigma_0^s \circ \sigma_1^s \circ \cdots \circ
            \sigma_{t(m)}^s
        \end{equation*}
     Note that the number of transpositions necessary, \(t(m)< |\xi_{m+1}|^2 = (k_m\cdot q_m \cdot
    s_{m+1})^2\), and is a computable function of \(m\)
    since it is the number of transpositions necessary to build the permutation in Proposition \ref{build perms}.

    Since \(\sigma_j^s\) is a \emph{smooth} transposition of an explicit
    form (given in Appendix~\ref{diffeos}), one can calculate a uniform
    Lipschitz constant $L_j^s$ {for it; hence, taking \(L_m > \max_{s \le
    t(m)} L^s_j\), we have that}
        \begin{equation*}
            |h_m^s(x) - h_m^s(y)| < (L_m)^{t(m)+1}|x - y|.
        \end{equation*}
    Consequently, a suitable modulus of continuity for \(h_m^s\) is given by
        \begin{equation}
        \label{eq:explicit}
            d(n) = n + \lceil t(m) \cdot \log_2(L_m) \rceil,
        \end{equation}
    where \(\lceil x \rceil\) is the smallest integer greater than \(x\).
The  construction of a primitive recursive approximation  to \(h_m^s\)
    is straightforward from the primitive recursive approximations to the $\sigma^s_n$'s.  
    As we remarked in Section \ref{effcompdif}, it follows that  $(h_m^s)^{-1}$ is primitive recursive.
\end{proof}

In summary, the modulus of continuity and approximation for \(S_n\) can be
calculated using the following steps:
    \begin{enumerate}
        \item\label{enum:first_step} Compute the 
        \(\la h_m :
            m\leq n\ra\);
        \item Build the approximations to \(h_m^s\) and \((h_m^s)^{-1}\) using
            \(h_m\) and the smooth transpositions given in
            Appendix~\ref{diffeos} for \(m \leq n\);
        \item Compute the moduli of continuity of \(\la h^s_m : m \leq n\ra\) and
            their inverses;
        \item\label{enum:last_step} Compute \(\la \alpha_m : m \leq n\ra\) (and,
            consequently, the approximations and moduli of continuity for \(\la
            \rot_{\alpha_m} : m\leq n\ra\));
        \item Compute the approximation and modulus of continuity of \(S_n\) by
            composing the approximations and moduli of continuity calculated in
            Steps~\ref{enum:first_step} through~\ref{enum:last_step} according
            to Equation \eqref{eq:euc}.
    \end{enumerate}

\subsection{Completing the proof}
\label{sec:victory}

Theorem~\ref{thm:main} claims the existence a computable function $F$, which on
inputting a natural number $N$ (corresponding to the $\Pi^0_1$ sentence
{$\phi$}) outputs a code for a computable diffeomorphism $S(N)$ of $\bt^2$.
Whether or not $S(N)$ is measure theoretically conjugate to $S(N)^{-1}$ is equivalent
to the truth of falsity of {$\phi$}. Finally for different numerical inputs the
corresponding $S$'s will not be isomorphic. In summary, letting $S=S(N)$,
    \begin{enumerate}[(A)]
        \item If $N$ codes {$\phi$}, then $\phi$ is true if and only iff
            $S\cong S^{-1}$
        \item On input $N$, $F$ recursively determines a code for an effectively
            $C^\infty$ map of the torus to itself, i.e., \(F\)
            determines: 
                \begin{enumerate}[i.)]
                    \item A 
                        computable function \(d: \B N \times \B N \to \B N\), where $d(k,-)$ computes the moduli of uniformity of the 
                        $k^{th}$ differential of $S(N)$, 
                        and
                    \item A 
                        computable function $f(k,-)$ where \(f(k, -)\) is a map on dyadic
            rational points of \(\B T^2\) approximating \(D^k S(N)\) (the \(k\)-th
            differential of \(S(N)\)). Given an input that is precise to
            \(d(k, n)\) digits, $f(k,-)$ approximates the first $n$-partial derivatives $\{{\partial^n\over \partial^ix\partial^{n-i}y}:0\le i\le n\}$ to $n$-digits.
                \end{enumerate}

        \item If $N\ne M$, then the associated diffeomorphisms $S(N)$ and $S(M)$
            are not conjugate.
    \end{enumerate}
Because the function $F$ maps natural numbers to natural numbers (the codes for the diffeomorphisms) we let
 $F^\flat$ be the associated function $R\circ\mcf\circ F_\mco$ that maps into the space of actual diffeomorphisms.   It
produces a diffeomorphism of $\bt^2$ from a G\"odel number $N$ for a
$\Pi^0_1$ set. We show $F^\flat$ satisfies $(A)$ and $(C)$ and then argue there
is a (primitively) computable routine coded by $F(N)$ that has the same values.

\paragraph{Item (A)} Given $N$, $F_\mco(N)$ computes an odometer-based
construction sequence $\la \mcw_n:n\in\nn\ra$. By Theorem~\ref{red to
odos}, if $\bk(N)$ is the uniquely ergodic symbolic shift associated with the
construction sequence then  $\bk(N)\cong\bk(N) ^{-1}$ if and only if $\phi$ is
true.

The sequences $\la l_n:n\in\nn\ra$, $\la \mcw_n^c:n\in\nn\ra$ and $\la
h_n^s:n\in\nn\ra$ are computed. If $\bk^c$ is the circular system associated
with $\la \mcw_n^c:n\in\nn\ra$, then Proposition~\ref{victory for circs} shows
that $\bk^c\cong (\bk^c)^{-1}$ if and only if $\phi$ is true. 

Finally the realization map $R$ preserves isomorphism.  So if $S=R(\bk^c)$, then 
$(\bt^2,\lambda, S)\cong (\bt^2,\lambda, S^{-1})$ if and only if $\phi$ is true.

\paragraph{Item (C)} We need to see that for $M<N, S(N)\not\cong S(M)$. Since the
realization map $R$ preserves isomorphism if suffices to see that
$(\bk^c)^M=\mcf\circ F_\mco(M)$ is not isomorphic to $(\bk^c)^N$.

By Corollary \ref{one one} we see that the Kronecker factor
of $\bk^M$ is $\mck_{\alpha^M}$. Any isomorphism between
$(\bk^c)^N$ and $(\bk^c)^M$ must take the respective Kronecker factor of one to
the other, hence would imply an isomorphism between $\mck_{\alpha^N}$ and
$\mck_{\alpha^M}$. 

However this is impossible since Corollary~\ref{one one} implies that
$\pi>\alpha^M>\alpha^N>0$.

\paragraph{Item (B)}  \(F^\flat\) is a map from \(\B N\) to 
diffeomorphisms. By the result of Section~\ref{sssec:ecsn}, the diffeomorphisms
are recursive. We must show that there is a  recursive algorithm coding
a function $F$ that computes the moduli of continuity and approximations to each
$F^\flat(N)$ and its differentials.

We use the notation \(\mathbbm{d}^N\) to denote the modulus of continuity returned by
\(F(N)\), and we use the notation \(\mathbbm{f}^N\) to denote the approximation. Without
loss of generality, we restrict our attention to \(d(0, -)\) and \(f(0,
-)\)---that is, the $C^0$ modulus of continuity and approximation of \(S\). The
calculation for \(d(k, -)\) and \(f(k, -)\) for \(k > 0\) is virtually identical
conceptually. We simplify the notation of (B) above and  write $\mathbbm{d}^N(n)$ for $d(0,n)$ and 
$\mathbbm{f}^N(\vec{s},\vec{t})$ for $f(0,\vec{s},\vec{t})$.

\medskip

Let us first consider the modulus of continuity.  
The routine for computing \(\mathbbm{d}^N\)  depends on choosing a large number of numerical parameters:
\[\epsilon_n, e(n), s_n, k_n, l_n, P_N.\]
These have numerical dependencies that are generally of the form  \(a_n \gg b_n\) or \(a_n \ll b_n\).  It is routine that these can be satisfied in a 
primitive recursive manner--provided that the dependencies are consistent.  This is verified in Appendix \ref{NumPar} where it is shown that    the 
dependencies among these constants form a directed acyclic graph.
        
    The subroutine we describe next comes during the computation of $F(N)$, and hence we 
    may assume that we have the coefficients $k_{n}(N-1), l_n(N-1)$ already computed.  This computation was made during the  first $n$ steps of the computation of $F(N-1)$, but we neglect that recursion in this discussion.

For the inductive construction we note that:
\medskip

For each \(0 \leq m\leq n + 1\), make the following calculations, which recursively depend on
smaller \(m\). Specifically, $\mcw_m$ is built from $\mcw_{m-1}$ using the Substitution Lemma (Proposition \ref{lem:sl}) as described in section \ref{bldg the words}.  Then $h_m^s$ is built from the information in the  words in $\mcw_{m}$.  This allows $l_m$ to be chosen large enough that Numerical Requirement \hyperlink{nr5}{G} holds. This in turn defines $p_{m}$ and $q_m$ and  allows $\mcw^c_{m}$ to be built.

The algorithm is illustrated in Figure~\ref{second algo}.

\begin{enumerate}
    \item Using \(\la \epsilon_k : k \leq m \ra\) and \(\C W_{m-1}\), choose $k_m$ large enough to satisfy the Numerical Requirements \hyperlink{nr1}{C} and \hyperlink{nr2}{D} and apply the
        Substitution Lemma $m+1$ times to generate \(\C W_m\);
        \item Build $h_m$, smooth it to get $h_m^s$ and hence $H_m$.  Calculate $H_m$'s modulus of continuity.
        \item  Choose \(l_m\) sufficiently large that Numerical Requirements \hyperlink{nre}{E} and \hyperlink{nr5}{F} hold (with $n+1=m$).

     \item Build $\mcw_m^c$.
        
    \item Calculate the approximation and modulus of continuity corresponding to
        \(S_m\) using the methods of  Section~\ref{sssec:ecsn}.
        
    \item Continue until $m=n+1$ and  $d^{S_N}(n+1)$, the $n+1^{st}$ approximation to the modulus of continuity of $S_{n+1}$ is determined.
    \item Output $d(n+1)(n+1)=d^{S_N}(n+1)$.
    \end{enumerate}
At the end, using the modulus of continuity \(d\) corresponding to \(S_{n+1}\),
output \(\mathbbm{d}^N(n) = d(0,n+1)\) where $d(n+1)$ is the modulus of continuity of $S_{n+1}$.

To verify that this procedure actually yields a modulus of continuity for \(S\),
recall that by Numerical Requirement \hyperlink{nr5}{G}, Equation
\eqref{growth 2}, it follows that
    \begin{equation*}
        d^\infty(S_{n+1}, S) < 2^{-(n+1)}.
    \end{equation*}
By inequality~\ref{just the obvious},
    \[\max_{x\in\B T^2}d_{\bt^2}(S_{n+1}(x) , S(x))\le d^\infty(S_{n+1}, S)< 2^{-(n + 1)}.\]
Since \(d(n+1)\) yields the number of digits of input necessary to approximate
\(S_{n+1}\) to an accuracy of \(2^{-(n + 1)}\), it follows that the
approximation of \(S_{n+1}\) is \emph{itself} an approximation of \(S\) which,
given \(d(n+1)\) digits of binary input, is accurate to within \(2^{-n}\).

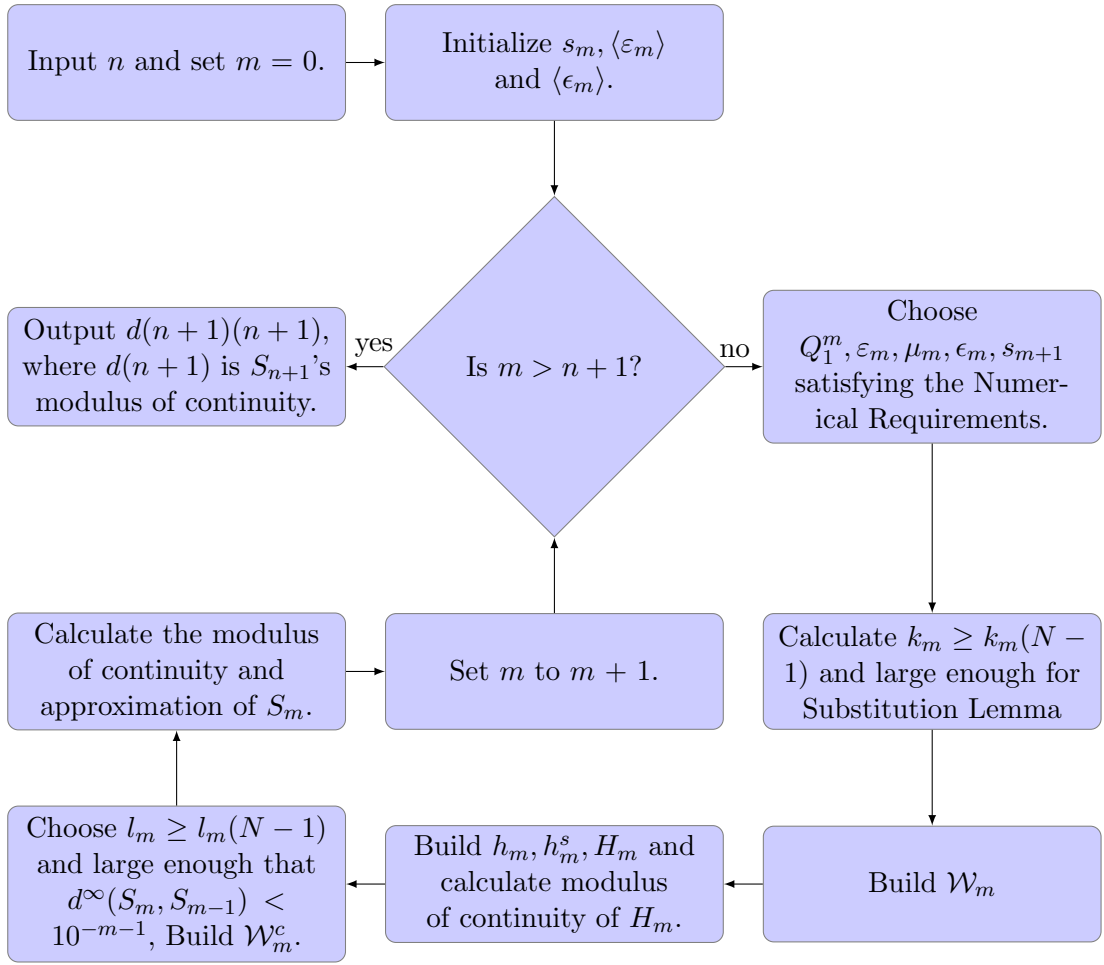
\begin{figure}
  \begin{tikzpicture}[
    decision/.style = {
      draw = black!50,
      diamond,
      fill = blue!20,
      text width = 11em,
      text badly centered,
      node distance = 3cm,
      inner sep = 0pt
    },
    block/.style = {
      draw = black!50,
      rectangle,
      fill = blue!20,
      text width = 11em,
      text centered,
      rounded corners,
      minimum height = 4em,
    },
    node distance = 3cm and 3cm,
    > = latex,
    auto
  ]
    \matrix[column sep = 0.5cm, row sep = 1cm]{
      \node[block] (start) {Input \(n\) and set \(m = 0\).};
        & \node[block] (init) {Initialize \(s_m, \la \varepsilon_m \ra\) and \(\la \epsilon_m \ra\).};
          & \\
      \node[block] (output) {Output \(d(n+1)(n+1)\), where \(d(n+1)\) is \(S_{n+1}\)'s modulus of continuity.};
        & \node[decision] (decision) {Is \(m > n + 1\)?}; 
          & \node[block] (sl) {Choose $Q^m_1, \varepsilon_m, \mu_m, \epsilon_m, s_{m+1}$ satisfying the Numerical Requirements.}; \\
      \node[block] (moc) {Calculate the modulus of continuity and approximation of 
      \(S_{m}\).};
        & \node[block] (increment) {Set \(m\) to \(m + 1\).};
          & \node[block] (l) {Calculate $k_m\ge k_m(N-1)$ and large enough for Substitution 
          Lemma};\\
      \node[block] (smooth) {Choose $l_m\ge l_m(N-1)$ and large enough that $d^\infty(S_{m}, S_{m-1})<10^{-m-1}$, Build $\mcw_m^c$.};
        & \node[block] (hn) {Build $h_m, h_m^s, H_m$ and calculate modulus of continuity of $H_m$.};
          & \node[block] (spin) {Build $\mcw_m$};\\
    };
  
    \draw[->] (start) -- (init);
    \draw[->] (init) -- (decision);
    \draw[->] (decision) -- node[near start] {no} (sl);
    \draw[->] (sl) -- (l);
    \draw[->] (l) -- (spin);
    \draw[->] (spin) -- (hn);
    \draw[->] (hn) -- (smooth);
    \draw[->] (increment) -- (decision);
    \draw[->] (decision) -- node[near start, swap] {yes} (output);
    \draw[->] (smooth) -- (moc);
    \draw[->] (moc) -- (increment);
  \end{tikzpicture}
\caption{
    The algorithm for calculating the modulus of continuity \(d^N\) of \(S\).
    This algorithm is easily altered to produce the approximation \(\mathbbm f^N\) of
    \(S\) simply by changing the output to \((f_0(\vec{x}), f_1(\vec{y}))\), where
    \(f(n+1) = (f_0, f_1)\) is \(S_{n+1}\)'s approximation.
}
\label{second algo}
\end{figure}

The approximation \(f\) for \(S\) is calculated almost identically, except
for the output. Given
    \begin{equation*}
        ([x]_{\mathbbm{d}^N(n)}, [y]_{\mathbbm{d}^N(n)}) \in (\{0,1\} \times \{0,1\})^{d(n)}
    \end{equation*}
the output is
    \begin{equation*}
       \mathbbm f^N(n+1)= \left( f_0([x]_{\mathbbm{d}^N(n)}), f_1([y]_{\mathbbm{d}^N(n)}) \right)
    \end{equation*}
where \(f(n+1) = (f_0, f_1)\) is the approximation of \(S_{n+1}\) produced in Step 5,
again in the notation that \([z]_m\) is a \(m\)-digit binary approximation of \(z\).

\appendix
\begin{center}
\underline{\bf Appendices}
\end{center}
\noindent Appendices B-D were originally written by Johann Gaebler. The author has  cleaned up exposition and done his best to fix various mistakes that appeared in Gaebler's original version.

\section{Numerical Parameters}
\label{NumPar}

\subsection{The Numerical Requirements Collected.}
 In this appendix we review the requirements on the numerical parameters used in the construction. Specifically, in constructing  the diffeomorphism $F(N)$ we build construction sequences $\la \mcw_n:n\in\nn\ra$, $\la \mcw_n^c:n\in\nn\ra$ that depend on $N$ and realize the corresponding circular system $\bk^c$ as a diffeomorphism.  These steps are intertwined--for example the circular system is built as a function of the sequence $\la k_n, l_n:n\in\nn\ra$.  In turn the $l_n$ are chosen as function of $\la \mcw_m:m\le n\ra$ in order to facilitate the smooth construction.
 To rigorously complete the proof we need to review all of these parameters and see that the inductive choices can be made consistently in a primitive recursive way.

At many stages in this paper we appeal to results from \cite{part3}.  Hidden in those appeals is a sequence of  parameters $\la \mu_n:n\in\nn\ra$ that is not explicitly mentioned in the construction presented here.  For this reason this review includes the inductive construction of $\la \mu_n:n\in\nn\ra$.

A substantial difference between this paper and earlier constructions is that the domain of the reductions in \cite{FRW} and 
\cite{part3} is the space of trees of finite sequences of natural numbers. The analogue in this 
paper is that the only trees  considered here are the trees of sequences 
$\la (0, 1, \dots n):n<\Omega\ra$ for $\Omega$ finite or infinite depending on the input $N$. We note that these trees are really ``stalks" and are finite or infinite depending on $\Omega$. 
Since the trees we consider in this paper are of this very special form, the requirements are easier to satisfy.  

Closely following section 10 of  \cite{part3} we begin with a review of the inductive requirements from \cite{FRW}. We give them in the notation of \cite{part3}. These inductive requirements are modified and simplified in the construction in the current manuscript.  We note the versions used in this paper.

\paragraph{Requirements that were instituted in \cite{FRW} and their modifications.} These requirements were dubbed \emph{Inherited Requirements} in \cite{part3}. Requirements that were new in \cite{part3} are called simply \emph{Numerical Requirements}, and requirements 
that explicitly mentioned in the text of the paper are labelled with capital letters A-F.  

Recall that  the number of elements of $\mcw_m$ is denoted $s_m$; 
 the numbers $Q^m_s$ and $C^m_s$ denote the
    number of classes and sizes of each class of $\mcq^m_s$ respectively. From the 
    construction in \cite{FRW} we have sequences $\la \epsilon_n:n\in\nn\ra$, 
$\la s_n,k_n, e(n):n\in\nn\ra$. 
    
\begin{description}
  
    \item[Inherited Requirement 1] $\la \epsilon_n:n\in\nn\ra$ is summable.
    
       \item[Inherited Requirement 2] $2^{e(n-1)}$ the number of $\mcq^n_{i+1}$ classes inside each
    $\mcq^n_i$ class. The numbers  $e(n)$ will be chosen to grow fast enough that 
    \begin{equation}
    2^n2^{-e(n)}<\epsilon_n \label{equ: crudite 2.1}
    \end{equation} 
    Similarly we set $C^n_n=2^{e(n)}$ as well. 
    
   {\bf Modification:}  In this paper the construction is simplified so to build $\mcw_{n+1}$ we  have exactly $n+2$-substitutions of each of size $2^{2e(n)}$.  Hence we can replace this requirement by the simple formula
   $s_n=2^{(n+1)e(n-1)}$. In particular  $s_m, Q^m_i$ and $C^m_i$ are all to be powers of 2.

    \item[Inherited Requirement 3]     	
    	For all $n$, 
	\begin{equation}
    	2\epsilon_{n}s_{n}^2<\epsilon_{{n-1}}\label{equ: crudite 1.1}
    	\end{equation}
    	
  \item[Inherited Requirement 4]  	
  \begin{equation}\epsilon_{n}k_ns_{{n-1}}^{-2}\to \infty\mbox{ as }n\to
    	\infty  \label{eqn: random prep2}
    	\end{equation}
    	
    \item[Inherited Requirement 5]	\item \begin{equation}
    	\prod_{n\in\nn}(1-\epsilon_n)>0 \label{equ: chisel2}
    	\end{equation}
	\emph{Comment:} Since this  is equivalent to the summability of the $\epsilon_n$-sequence, it is   redundant  and we will ignore in the rest of this paper
    	
    	\item[Inherited Requirement 6] 
	
	(Original Version) There will be  prime numbers $p_{i}$ such that $K_i=p_i^2s_{i-1}K_{i-1}$
    (i.e. $k_{i}=p_{i}^{2}s_{{i-1}}$). The $p_n$'s grow fast enough to allow the probabilistic arguments  in 
    	\cite{FRW} involving $k_n$ to go through.

	{\bf Modification} For all $n$, 
    	$k_{n}=P_N2^\ell s_{{n}}$, where for each $n$,  $\ell$ is large enough for the    	substitution argument involving $k_n$ to go through.
	
	\emph{Comment:} In \cite{FRW} $K_n$ was a product of a  sequence of prime 
numbers. The requirement 
on the sequences of prime numbers was that they were almost disjoint for different trees 
and that they grew sufficiently quickly.  In this paper $K_N$ is $P_N*2^\ell$ for a large $\ell$.  

 Since we have only one collection of very special 
trees the requirement simplifies to needing that the $\ell$ in the exponent grows sufficiently 
quickly for the Substitution Lemma (Proposition \ref{lem:sl}) argument to work.

	\item[Inherited Requirement 7] $s_n$ is a power of 2. 
	
	\emph{Comment:} This again is redundant as the modified Inherited Requirement 2 
	says directly  that $s_n=2^{(n+1)e(n-1)}$ 
		
	\item[Inherited Requirement 8] For all $n$,  $\epsilon_n<2^{-n}$.

\end{description}

\bigskip
\bfni{\underline{Numerical Requirements introduced in \cite{part3}}}
\medskip

\begin{description}
    \item[Numerical Requirement 1]
    $l_n>20*2^n$ and $1/l_{n-1}>\sum_{k=n} 1/l_k$.

    \item[Numerical Requirement 2]
    $\la \varepsilon_n:n\in\nn\ra$ is a  sequence of numbers in $\zoo$ such that $\varepsilon_N>4\sum_{n>N}\varepsilon_n$. 

    \item[Numerical Requirement 3] For all $n$, 
    \[\varepsilon_{n-1}>\\sup_{n<m}(1/q_m)\sum_{n\le k<m} 3\varepsilon_kq_{k+1}\] %

    \item[Numerical Requirement 4]
    $\mu_n$ is chosen sufficiently small  relative to $\min(\varepsilon_n, 1/Q^n_1)$. 
    
   {\bf Modification} Set $t_n=\min(\varepsilon_n, 1/Q^n_1)$ and take
    \[0<\mu_n<t_n\min_{k\le n}2^{-n-2}\left({1\over t_k}\right).\]
    Then for all $m$ 
    	\begin{equation*}
	t_m>\sum_{n=m}^\infty {\mu_n\over t_n}.
	\end{equation*}
    This is sufficient to carry out the various arguments, for example using the Borel-Cantelli Lemma.

    \item[Numerical Requirement 5]
    $\sum{|G^n_1|\over Q^n_1}<\infty$. 

    {\bf Modification} In this paper case $|G^n_1|\le 2$ so this becomes 
    \[\sum_n{1\over Q^n_1}<\infty.\]
    \emph{Comment:} Since $Q^n_1=e(n)$ and Requirement 2 implies that $2^{n-1}2^{-e(n)}\to 0$, this requirement is redundant in this paper.
    \item[Numerical Requirement 6]
     $l_n$ is big  enough relative to a lower   bound determined by $\la k_m, s_m:m\le n\ra$, $\la l_m:m<n\ra$ and $s_{n+1}$ to make the periodic approximations to the diffeomorphism $F(N)$ converge.

    \item[Numerical Requirement 7] 
    $s_n$ goes to infinity as $n$ goes to infinity and $s_{n+1}$ is a power of $s_n$.

	 \emph{Comment} Since $s_n$ is a power of $2^{e(n)}$ and $e(n)\to \infty$, this is trivial.
    \item[Numerical Requirement 8] $s_{n+1}\le s_n^{k_n}$.    
    \item[Numerical Requirement 9] The $\epsilon_n$'s are decreasing, $\epsilon_0<1/40$ and
    $\epsilon_n<\varepsilon_n.$

    \item[Numerical Requirement 10]
    $k_n$ is chosen  large enough relative to the lower bound determined by $s_{n+1}, \epsilon_n$ to apply the Substitution Lemma and construct the
    words in $\mcw_{n+1}$. Implicitly this requires that $1/k_n<\epsilon_n^3/4$.
    
    \emph{Comment:} This is essentially the same as Inherited Requirement 6.
    
    \item[Numerical Requirement 11]
     $\epsilon_n$ is small  relative to $\mu_n.$
    
    {\bf Modification} Remark 94 of \cite{part3} says that for  quantities $r(x,y), r(x,\mcc), f(x)$ 
    that 
    are determined by counting occurrences of $x, y$ in words in an alphabet $\mcl$ with $s$ 
    letters that have a given length $\ell$.  It says for all $\epsilon>0$ there is a $\delta$ such 
    that if for all $x,y\in \mcl$,
    \begin{eqnarray*}
    \left|{r(x,y)\over \ell}-{1\over s^2}\right|<\delta\\
    \end{eqnarray*}

{then} for all $x$:
\begin{equation*}
\left|{r(x,\mcc)\over f(x)}-{C\over s}\right|<\epsilon
\end{equation*}
From the proof of the lemma it is straightforward to find an explicit formula for 
$\delta(\epsilon, |C|,  \ell, s)$ for an upper bound on $\delta$.  The \emph{small relative clause} can be rephrased as asking that \[\epsilon_n<\delta(\mu_n, C^n_1, q_n, s_n).\]
Since $C^n_1=2^{-e(n)}s_n$, $\delta$ is really a function of $\mu, q_n, s_n$. 
    
    \item[Numerical Requirement 12]
    $\epsilon_0k_0>20$, the $\epsilon_nk_n$'s are increasing and\\ $\sum 1/\epsilon_nk_n<\infty$.

    \item[Numerical Requirement 13]
    The numbers $\epsilon_n$ should be small  enough, as a function of $Q^n_1$, that for all $w_0, w_1\in \mcw_{n+1}^c\cup \rev{\mcw_{n+1}^c}$ with $[w_0]_1\ne[w_1]_1$ the following inequality holds:
    \begin{equation*}\dbar(w_0\rest I_1, w_1\rest I_1)>(1- 2/Q^n_1)\gamma_n.
	\end{equation*}    
where the $\gamma_i$'s are defined inductively as:
	\begin{align*}
	\gamma_1&=(1-1/4-\epsilon_0)(1-1/\epsilon_0k_0)(1-1/l_0)\\
	\mbox{for }n\ge 2& \\
	\gamma_n&=\gamma_1\prod_{0<m<n} (1-10(1/k_m\epsilon_m+1/q_m+1/l_m+1/Q^m_1+\epsilon_{m-1}))
	\end{align*}
\end{description}

\bigskip
\bfni{\underline{Numerical Requirements introduced in this paper}}
\medskip

In this paper we have some supplemental numerical requirements.  We list only those that are not redundant given the requirements listed above. 
	\begin{description}
	\item[Numerical Requirement B] \hyperlink{nrb}{$k_n(N-1)\le k_n(N).$}
	\item[Numerical Requirement D]\hyperlink{nrd}{$1/k_n<\epsilon_n^3/100.$}
	\item[Numerical Requirement E] \hyperlink{nre}{$l_n(N-1)\le l_n(N).$}
	\item[Numerical Requirement F] \hyperlink{nr4}{$\sum{6^n\over k_n}<\infty$}. 
	\item[Numerical Requirement G] \hyperlink{nr5}{$d^\infty(S_{n+1}, S_n)<2^{-(n+1)}$}.
	
	\end{description}

 \subsection{Resolution}

\begin{center}
{\bf \underline{A list of parameters, their first appearances and their constraints}}
\end{center}

We classify the constraints on a given sequence according to whether they refer to other sequences or not. 

Requirements that inductively refer to the same sequence are straightforwardly consistent and can be satisfied with a primitive recursive construction. For example a requirement  that a certain inductively sequence involving a given variable be summable is  satisfied by asking that the $n^{th}$ sequence be less than $2^{-n}$. We call these \emph{absolute} conditions.

Those requirements that refer to other sequences risk the possibility of being circular and thus inconsistent or not being computable from the data in the other sequences.  We refer to these conditions  as \emph{dependent} conditions.
\begin{enumerate}
\item {\bf The sequence $\la k_n:n\in \nn\ra$.}  
\underline{Absolute conditions:}
    \begin{description}
    \item[A1] The sum $\sum_n 6^n/k_n$ is finite.
    \item[A2] $k_0=P_N$ and $k_n(N)\ge k_n(N-1)$
    \end{description}

\underline{Dependent conditions:}

    \begin{description}
    \item[D1] Numerical Requirement 10, is a lower bound for $k_n$ depends on $s_{n+1},\epsilon_n$, asking that $k_n$ be large enough for the word construction using the Substitution Lemma to work.
    
    \emph{Why is this primitive recursive?}    Given $s_{n+1}$ and $\epsilon_n$, the discussion in the proof of Lemma \ref{HoefPR} shows that a lower bound for $k_n$ can be given from Hoeffing's Inequality (Theorem \ref{Hoeffding's Inequality}) in a primitive recursing way.  So the only possible issue is circularity.
    
    \item[D2] Inherited Requirement 6.  In this context it says that $k_n=P_N2^\ell s_n$ for a large $\ell$. 
    
    \emph{Why is this primitive recursive?} $k_n$ is defined in equation \ref{kn def} where it explicitly is a multiple of  $s_{n}$. The size of $\ell$ is determined by {\bf D1}.

    \item[D3] \label{you got it} From Inherited Requirement 4, equation \ref{eqn: random prep2} requires that $\epsilon_{n}k_ns_{{n-1}}^{-2}$ 
    goes to $\infty$ as $n$ goes to $\infty$. 
    
    \emph{Why is this primitive recursive?:} This can be satisfied primitively recursively by choosing $k_n$ to be an integer larger than ${s_n^2\over \epsilon_n}$.
    
    We note that equation \ref{eqn: random prep2} implies that $\sum 1/\epsilon_nk_n$ is finite.
     \item[D4] Numerical Requirement 8 implies that $k_n$ is large enough that $s_{n+1}\le s_n^{k_n}$. 
     
      \emph{Comment:} This is easily satisfied by taking $k_n\ge {log(s_{n+1})\over log (s_n)}$.

	\item[D5] Numerical Requirement D says $1/k_n<\epsilon_n^3/100$. 
	\emph{Comment:} As long as $\epsilon_n$ is defined before $k_n$, Requirement D is immediate by taking $k_n>4/\epsilon_n^3$.
    \item[D6] Numerical Requirement 12 says that $\epsilon_0k_0>20$ and the $\epsilon_nk_n$'s are increasing and $\sum 1/\epsilon_nk_n$ is finite. 
        
    \emph{Why is this primitive recursive?} As noted the last condition follows from D3. The other parts of Numerical Requirement 12 are satisfied primitive recursively by taking $k_n$ to be an integer at least ${n\over \epsilon_n}$.

    \end{description}

From D1-D5, we see that $k_n$ is dependent on the choices of $\la k_m, l_m:m<n\ra, \la s_m:m\le n+1\ra$, and  $\epsilon_n$, and these dependencies can be satisfied primitively recursively.

\item {\bf The sequence $\la l_n:n\in\nn\ra$.}

\underline{Absolute conditions}
    \begin{description}
    \item[A3] Numerical Requirement E: $l_n(N)\ge l_n(N-1)$.
    \item[A4] Numerical Requirement 1 says that  $1/l_n>\sum_{k=n+1}^\infty 1/l_k$. We also require that $l_n>20*2^n$, an exogenous requirement.
    \end{description}

\underline{Dependent conditions}

    \begin{description}
    \item[D7] By Numerical Requirement 6, $l_n$ is bigger than a number determined by  $\la k_m, s_m:m\le n\ra, \la l_m:m<n\ra$ and 
    $s_{n+1}$. This is superseded by the more explicit Numerical Requirement F says that 
    $d^\infty(S_{n+1}, S_n)<2^{-(n+1)}.$
    \end{description}
 Thus $l_n$ depends on $\la k_m, s_m: m\le n\ra$, $\la l_n:m<n\ra$ and $s_{n+1}$. 
  
  \emph{Why is this primitive recursive?} The $\|\ \|_\infty$-norm of $S\circ T$ can be computed effectively from the $\|\ \|_\infty$-norms of $S$ and $T$.  In particular there is a primitively recursively computable real number $M$ such that 
  \begin{align*}
  d^\infty(S_{n+1}, S_n)&< M|\alpha_{n+1}-\alpha_n|\\
    &\le{M\over q_{n+1}}.
  \end{align*}
  and the latter inequality is  from equation \ref{alphans}. 
Since $q_{n+1}=k_nl_nq_n^2$ we get an explicit lower bound on $l_n$.

\item {\bf The sequences $\la s_n:n\in\nn\ra$ and $\la e(n):n\in\nn\ra$.}
We treat these sequences as equivalent since $s_n=2^{(n+1)e(n-1)}$.

\underline{Absolute conditions} 
\begin{description}
\item[A5] Inherited Requirement 7 says that $s_n$ is a power of 2.
\end{description}
\begin{description}
\item[A6] The sequence $s_n$ goes to infinity.
\item[A7] $s_{n+1}$ is a multiple of $s_n$.
\end{description}

\underline{Dependent conditions}

\begin{description}

\item[D8] The function $e(n):\nn\to \nn$ referred to in equation \ref{equ: crudite 2.1} gives the number of $\mcq^n_{s+1}$ classes inside each $\mcq^n_s$ class.  It has the dependent requirement that $2^n2^{-e(n)}<\epsilon_{n-1}$.  Moreover $s_n=2^{(n+1)e(n)}$.

\end{description}

The result is that $s_{n+1}$ depends on the first $n+1$ elements of $\mct$ $\la k_m, s_m, l_m:m<n\ra$, $s_n$,  and $\epsilon_n$.\footnote{It is important to observe that the choice of $s_{n+1}$ does \emph{not} depend on $k_n$ or $l_n$.} 

	\emph{Why is primitive recursive?} The only requirement for choosing $s_{n+1}$ is that 
	\[2^{-e(n+1)}<\epsilon_n2^{-n}\]
	and this is clearly primitively recursively satisfiable.
\item {\bf The sequence $\la \epsilon_n:n\in\nn\ra$.}

\underline{Absolute conditions}
    \begin{description}
    \item[A8] Numerical Requirement 9 and Inherited Requirement 1  say that the $\la \epsilon_n:n\in\nn\ra$ is decreasing and summable and $\epsilon_0<1/40$.
    \item[A9]  Inherited Requirement 8 says that  $\epsilon_n<2^{-n}$
    \end{description}

\underline{Dependent conditions}

    \begin{description}
    \item[D9] Numerical Requirement 9 says $\epsilon_n<\varepsilon_n$. 
    \item[D10] Equation \ref{equ: crudite 1.1} of Inherited Requirement 3 says $2\epsilon_{n}s_{n}^2<\epsilon_{n-1}$
    \item[D11] Numerical Requirement 11 says that $\epsilon_n$ must be small enough relative to $\mu_n$.
    \item[D12] Numerical Requirement 13 says that $\epsilon_n$ is small as a function of $Q^n_1$.
    \end{description}

   The result is that $\epsilon_n$ depends exogenously on the first $n$ elements of $\mct$, and on $Q^n_1, s_n$, $\varepsilon_n$, $\epsilon_{n-1}$ and $\mu_n$.

\emph{Why is this primitive recursive?} The only issue might be D10, but this is explicitly solved in Numerical Requirement 11, which describes how to calculate an explicit function 
$\delta(\mu_n,  q_n,  s_n)$ such that Numerical Requirement 11 holds if $\epsilon_n<\delta(\mu_n, q_n, s_n)$.

\item {\bf The sequence $\la \varepsilon_n:n\in\nn\ra$. }

\underline{Absolute conditions}
\begin{description}
\item[A10] Because the sequence $\la q_n:n\in\nn\ra$ is increasing Numerical Requirement 3 is satisfied if  $\varepsilon_{n-1}>4\sum_{k\ge n}\varepsilon_n$, a rephrasing of Numerical Requirement 2.  This is an absolute condition and implies that $\la \varepsilon_n:n\in\nn\ra$ is decreasing and summable.
\end{description}

\underline{Dependent conditions}

\begin{description}

\item[D13] $\la k_n\varepsilon_n:n\in\nn\ra$ goes to infinity.  This already follows from the fact that $\epsilon_n<\varepsilon_n$ and item D4 .
\end{description}

Since item D12 follows from item D4, all of the requirements on $\la\varepsilon_n:n\in\nn\ra$ are absolute or follow from previously resolved dependencies. Moreover they are trivial to satisfy primitively recursively.

\item {\bf The sequence $\la Q^n_1:n\in\nn\ra$.}

 Recall $Q^n_1$ is the number of equivalence classes in $\mcq^n_1$. 
We require:

\underline{Absolute conditions}
\begin{description}
\item[A11] The only requirement on the choice of $Q^n_1$ not accounted for by the choices of the other coefficients is that $\sum 1/Q^n_1<\infty.$

\end{description}

\underline{Dependent conditions}

None

\item {\bf The sequence $\la \mu_n:n\in\nn\ra$.} 

This sequence gives the required pseudo-randomness in the timing assumptions. 

\underline{Absolute conditions}

None.

\underline{Dependent conditions}

\begin{description}
\item[D14] Numerical Requirement 4 appearing in this paper is written explicitly as follows:
Set $t_n=\min(\varepsilon_n, 1/Q^n_1)$ and take
    \[0<\mu_n<t_n\min_{k\le n}2^{-n-2}\left({1\over t_k}\right).\]
    Then for all $m$ 
    	\begin{equation}
	t_m>\sum_{n=m}^\infty {\mu_n\over t_n}.\notag
	\end{equation}

\end{description}

\end{enumerate}

\noindent The recursive dependencies of the various coefficients are summarized in Figure \ref{ecosystem}, in which an arrow from a coefficient to another coefficient shows that the latter is dependent on the former. Here is the order the the coefficients can be chosen consistently.
\medskip

\bfni{Assume:}
\begin{quotation}
\noindent The coefficient sequences $\la k_m, l_m, Q^m_1, \mu_m, \epsilon_m, \varepsilon_m:m<n\ra$ and $s_{n}$ have been chosen. 
\end{quotation}
\medskip

\bfni{To do:}
\begin{quotation}\noindent Choose $k_n, l_n, Q^n_1, \mu_n, \epsilon_n, \varepsilon_n$ and $s_{n+1}$. Each requirement is to choose the corresponding variable \emph{large enough} or \emph{small enough} where these adjectives are determined by the dependencies outlined above. 
\end{quotation}
\begin{figure}
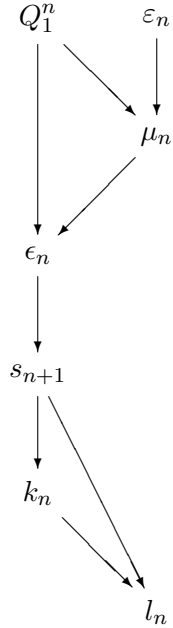

\centering

\[\begin{diagram}
\node{Q^n_1}\arrow[2]{s}\arrow{se}\node{\varepsilon_n}\arrow{s,r}{}\\
\node{}\node{\mu_n}\arrow{sw,r}{}\\
\node{\epsilon_n}\arrow{s}\node{}\\
\node{s_{n+1}}\arrow{s}\arrow{sse}\node{}\\
\node{{k_n}}\arrow{se}\node{}\\
\node{}\node{l_n}
\end{diagram}\]

\caption{Order of choice of Numerical parameters dependency diagram.}
\label{ecosystem}
\end{figure}

Figure \ref{ecosystem} gives an order to consistently choose the next elements on the sequences; Choose the successor coefficients  in the following order:

\[Q^n_1, \varepsilon_n, \mu_n,\epsilon_n, s_{n+1}, k_n, l_n.\]
\pagebreak

\section{Logical Background}
\label{app:LB}

\subsection{Logical Basics}

What follows is a very brief and relatively informal introduction to basic
first-order logic and the formal language \(\C L_{\PA}\), the language of
\emph{first-order} (or \emph{Peano}) \emph{arithmetic}. For a more thorough
treatment, see \cite{ENDERTON}.

\subsubsection{The language \(\C L_{\PA}\)}

The \emph{language of first-order arithmetic}, \(\C L_{\PA}\), is made up of the
following pieces:
    \begin{enumerate}
    	\item It has variables $x_0, x_1, \dots x_n, \dots $, two constant symbols $0,1$, a relation symbol $<$ and two function symbols $+, *$.
        \item \textbf{Terms:} Terms are expressions in \(\C L_{\PA}\) that are
            intended to correspond to actual objects (i.e., numbers). The most
            basic terms are \emph{variables} (which we frequently  denote with the informal
            symbols ``\(x\)'', ``\(y\)'', ``\(z\)'', etc.) and \emph{constants},
            namely ``0'' and ``1''. New terms can be built inductively using old terms
            by applying \emph{function} symbols to them.  Formally: if $\tau_1\dots \tau_n$ are terms and 
            $f$ is an $n$-place function symbol then $f\tau_1\dots \tau_n$ is a term. Since our function symbols are binary the new term would either be $+\tau_1\tau_2$  or $*\tau_1\tau_2$.
            
            It is necessary to prove unique readability for terms--that from the sequence of symbols constituting the term 
            one can uniquely recapture its inductive construction.  Frequently one drops the formal definition and inserts parenthesis to make the terms humanly readable. For example instead of writing $+x1$ for the term, we might right $(x+1)$.   Or instead of writing ${*}+x1+x2$ one might write $((x+1){*}(x+2))$.  However the first is the formally correct string of symbols constituting the term. 
            
           Thus,
                \begin{equation*}
                    ``x", ``y"
                \end{equation*}
            are terms, as is
                \begin{equation*}
                ``((x+y)+1)* y"
                \end{equation*}
                For $n$ a natural number, we often write $\underline{n}$ as though it were a term, but this is an abbreviation for the term  \begin{equation*}
            \overbrace{1 + 1 + \cdots + 1}^{\text{\(n\)-times}}
        \end{equation*} (with parenthesis suitably added).
                So $\underline{n}$ is a term in the language \(\C L_{\PA}\), \emph{not} a natural number
    in \(\B N\).)

               It is important to note that all polynomials with natural number 
               coefficients can be expressed as terms.
        \item \textbf{Atomic formulas:} An atomic formula is an expression of
            the form
                \begin{equation*}
                    ``t_1 = t_2"
                \end{equation*}
            or
                \begin{equation*}
                    ``t_1 < t_2''
                \end{equation*}
            where \(t_1\) and \(t_2\) are terms.
        
        \item \textbf{Compound formulas:} A \emph{compound formula}---also
            called a \emph{well-formed formula} or, more simply, a
            \emph{formula}---is defined recursively as follows:
                \begin{enumerate}
                    \item An atomic formula is a formula.
                    \item If \(F_1\) and \(F_2\) are formulas, then ``\(\lnot F_1\)'', ``\(F_1 \land F_2\)'', ``\(F_1
                        \lor F_2\)'', ``\(F_1 \rightarrow F_2\)'', etc., are
                        formulas. (Here, ``\(\land\)'' represents \emph{and},
                        ``\(\lor\)'' represents \emph{(inclusive) or},
                        ``\(\rightarrow\)'' represents \emph{implies}, and so
                        on.)
                    \item If \(F\) is a formula, then ``\((\forall u) F\)'' and
                        ``\((\exists u) F\)'' are formulas, where \(u\) here is
                        any variable.  (These quantifiers have their usual
                        meanings: ``\(\forall\)'' can be read ``for all'' and
                        ``\(\exists\)'' can be read ``there exists.'')
                \end{enumerate}
    \end{enumerate}
A variable is \emph{free} if it doesn't appear inside the scope of a quantifier;
otherwise, a variable is \emph{bound}. To illustrate, the variable \(x\) is free in the formula
``\(x=x\)'' but bound in the formula ``\((\forall x)(x = x)\)''.

A \emph{sentence} is a formula with no free variables. For instance, ``\(x + y =
y + x\)'' is a well-formed formula, but not a sentence. However, its universal
closure ``\((\forall x)(\forall y)(x + y = y + x)\)'' \emph{is} a sentence.

Whether the variable \(x\) is ``really'' universal---i.e., within the scope of
the quantifier \((\forall x)\)---or existential---i.e., within the scope of the
quantifier \((\exists x)\)---can be a slightly subtle matter. For instance, the
following two formulas are equivalent:

    \begin{equation*}
        ``[(\forall x) (y * x = y)] \rightarrow (0 = 1)"
    \end{equation*}
and
    \begin{equation*}
        ``[(\exists x)\neg(y * x = y)] \lor (0 = 1)"
    \end{equation*}
Note that these are both equivalent to the formula
 \begin{equation*}
        ``(\exists x)[\neg(y * x = y) \lor (0 = 1)]"
    \end{equation*}
    In the last formula the quantifier is ahead of all Boolean Combinations.
\subsubsection{Bounded quantifiers}

It is convenient to think of \(\C L_{\PA}\) as having two different kinds of
quantifiers: \emph{unbounded quantifiers}---e.g., \((\forall x, y, z)\)---and
\emph{bounded quantifiers}---e.g., \((\exists x < \underline{2})\). %
    Morally, bounded and unbounded quantifiers are different because unbounded
quantifiers \emph{increase} the logical complexity of a formula, whereas bounded
quantifiers leave the logical complexity of a formula the same.

The clearest manifestation of this is the fact that the order of
\emph{unbounded} quantifiers cannot be changed without changing the meaning of a
formula, while, in general, \emph{bounded} quantifiers can be pushed past
unbounded quantifiers, possibly at the cost of introducing additional unbounded
quantifiers, but without altering the truth or falsity of the formula in
question. This is a consequence of working in the setting of Peano Arithmetic.

For instance,
    \begin{equation*}
        ``(\forall x)(\exists y)(x = 0 \lor x = y + 1)"
    \end{equation*}
is a true statement about the natural numbers, whereas
    \begin{equation*}
        ``(\exists y)(\forall x)(x = 0 \lor x = y + 1)"
    \end{equation*}
is a false statement. However,
    \begin{equation*}
        ``(\forall x < \underline{10})(\exists y)(x = 0 \lor x = y + 1)"
    \end{equation*}
is equivalent to
    \begin{equation*}
        ``(\exists z)(\forall x < \underline{10})(\exists y < z)(x = 0 \lor x =
        y + 1)",
    \end{equation*}
which has the unbounded quantifier in the outermost position. In this example the two sentences are trivially equivalent because they are both true.  However this quantifier interchange would preserve true no matter what is inside the scope of the quantifiers.

Formulas containing \emph{only} bounded quantifiers are known as
\(\Delta_0^0\)-formulas. A \(\Delta^0_0\)-formula can be rewritten as a Boolean
combination of polynomials. For instance
    \begin{equation*}
        ``(\forall x < \underline{3})(\exists y < \underline{3})( x + y = z)"
    \end{equation*}
can be rewritten in the less compact but equivalent form
    \begin{align*}
        ``
            &\left[(0 + 0 = z) \lor (0 + 1 = z) \lor (0
                + z = z)\right]\\
            &\land \left[(1 + 0 = z) \lor (1 + 1 = z)
                \lor (1 + z = z)\right]\\
            &\land \left[(z + 0 = z) \lor
                (z + 1 = z) \lor (z +
                z = z)\right]''.
    \end{align*}

\subsubsection{Formula complexity}

As was the case for the expression
    \begin{equation*}
      ``[(\forall x) (y * x = y)] \rightarrow (0 = 1)"
    \end{equation*}
every formula in \(\C L_{\PA}\) can be rewritten in an equivalent form where all
of the quantifiers are at the beginning. As was noted in the previous section,
we can also ensure that any bounded quantifiers appear \emph{after} all of the
unbounded quantifiers. Thus, we can ensure that every formula has the following
bipartite structure: (1) alternating blocks of unbounded ``\(\forall\)'' and
``\(\exists\)'' quantifiers (not necessarily in that order); (2) a
\(\Delta^0_0\)-formula afterward. For instance,
    \begin{equation*}
        ``\overbrace{(\forall x)(\forall y)(\exists z) \ldots (\forall
        w)}^{\text{unbounded quantifiers}}\overbrace{(\exists x' <
        \underline{n}) \ldots (\forall y' < [z' + \underline{m}])(x = y + z \lor
        \ldots \lor (\pri(z') \land y ' \mid x)}^{\text{\(\Delta^0_0\) formula}}"
    \end{equation*}
The \emph{logical complexity} of a given expression is governed by the block of
unbounded quantifiers at its beginning when written in this way. Expressions are
classified according to (1) the outermost quantifier (i.e., ``\(\exists\)'' or
``\(\forall\)'') and (2) the number of blocks of quantifiers of the same type. A
formula that begins with a universal quantifier (``\(\forall\)'') and is
followed by \(n\) alternating blocks of quantifiers is a \(\Pi^0_n\)
formula;%
\footnote{
    The superscript 0 in \(\Pi^0_1\) refers to the fact that only
    \emph{first-order} quantifiers are allowed; that is, quantifiers may only
    make assertions about objects in the structure referred to, rather than subsets of the objects in a structure. If we allow
    expressions about sets of objects (e.g., ``There exists a set of numbers
    such that every element except the first is double the previous element.'')
    then we have moved beyond the so-called \emph{arithmetic} formulas into
    second-order arithmetic.} a formula beginning with an existential quantifier (``\(\exists\)'') and
followed by \(n\) alternating blocks of quantifiers is a \(\Sigma^0_n\)-formula.
Thus,
    \begin{equation*}
        ``(\forall x)(\exists y)(x = 0 \lor x = y + 1)"
    \end{equation*}
is a \(\Pi^0_2\) formula, as is
    \begin{equation*}
        ``(\forall x)(\exists y)(\exists z)(x = 0 \lor x = z + y + 1)".
    \end{equation*}
However, 
    \begin{equation*}
        ``(\exists y)(y = \underline{2})"
    \end{equation*}
is a \(\Sigma^0_1\) formula, and
    \begin{equation*}
        \underline{2} = 1 * \underline{3}
    \end{equation*}
and
    \begin{equation*}
        x = 0    
    \end{equation*}
are (possibly false) \(\Delta^0_0\)-formulas. (A formula is \(\Delta^0_n\) if it
is \emph{both} \(\Pi^0_n\) and \(\Sigma^0_n\). Note that this agrees with our
earlier definition of \(\Delta^0_0\)-formulas.)

\subsubsection{\(\Pi^0_1\)-formulas}
\label{GC}

A particularly interesting class of formulas is those that live at the second
level of the hierarchy, i.e., the \(\Pi^0_1\)-formulas. A large number of
statements in elementary number theory take this form. For instance, Goldbach's
Conjecture may be stated as
    \begin{equation*}
        ``(\forall x)(\exists y, z < x)(\underline{2} \mid x \rightarrow
        (\pri(y) \land \pri(z) \land x = y + z))".
    \end{equation*}
(I.e., ``For all even \(x\in\B N\), there exist primes \(y\) and \(z\) such that
\(x = y + z\). Here, ``\(\pri(x)\)'' is an abbreviation for the
\(\Delta^0_0\)-formula ``\((\forall y < x)(y \mid x \rightarrow y = 1)\)'' and
``\(y \mid x\)'' is an abbreviation for the \(\Delta^0_0\)-formula ``\((\exists
z < x) (y * z = x)\)''.)

Moreover, formalizations of consistency statements, such as ``\(\PA\) does not
prove that \(0 = 1\)'' or ``\(\ZFC\) does not prove that \(0 = 1\)'' are
likewise \(\Pi^0_1\). The reason is straightforward: informally, \(\Pi^0_1\)
sentences express ``for all'' statements where what comes after the ``for all''
is something that for any given number \(n\) can be verified with finite
resources. The statement ``\(\PA\) does not prove that \(0 = 1\)'' can be
understood as ``All valid proofs from the first order axioms of  \(\PA\) do not prove \(0 = 1\)'' or,
equivalently, ``There is no valid proof in \(\PA\) of the fact that \(0 = 1\)''.
Since proofs can be coded as numbers through the use of G\"odel numbering, and
for any given proof, it is possible to tell using finite resources whether it
proves a contradiction such as ``\(0 = 1\)'', these statements can be expressed
using \(\Pi^0_1\) formulas in \(\C L_{\PA}\).

The question of whether a formula is \(\Pi^0_1\) is sometimes quite subtle. The
Riemann hypothesis (\(\RH\)), for instance, can be expressed in the form of a \(\Pi^0_1\)
formula. This result was first shown in \cite{DMR}, and a simpler expression was
given in \cite{lag}. While fully unpacking its expression in \(\C L_{\PA}\) is
outside the scope of this appendix,\footnote{
    Naively, the expression depends upon the logarithm and exponential
    functions, as well as rational numbers. Although we do not do so, it is
    possible to adequately ``capture'' these notions in plain arithmetic. The
    approach generally used in reverse mathematics---whose reliance on second
    order concepts is, in this case, superficial---is adequate to the task; see
    \cite{SIMPSON}.
} the \(\Pi^0_1\) form of \(\RH\) given in \cite{lag} can be seen from the following paraphrase:
    \begin{equation*}
        ``(\forall n) \left(\sum_{d \mid n} d \leq H_n + \exp(H_n)\cdot
        \log(H_n)\right)",
    \end{equation*}
where \(H_n = \sum_{j = 1}^n \tfrac 1j\).

\subsubsection{Truth}

Formalizing the notion of truth is complicated. Tarski proved a theorem (\emph{Tarski's theorem}), that there can
be no first order formula in \(\C L_{\PA}\) that completely captures what it means for a
sentence in \(\C L_{\PA}\) to be true. That is, if we denote the G\"odel number
of a formula \(\phi\) by \(\ulcorner \phi \urcorner\), there is no formula
\(\Tr\) for which it is always true that
    \begin{equation*}
        \Tr(\ulcorner \phi \urcorner) \leftrightarrow \phi.
    \end{equation*}
In logical jargon, a formula that holds of the G\"odel numbers of sentences that are true and belong to some class $\Gamma$ is called a \emph{truth predicate} for $\Gamma$. 

The difficulty lurking behind \emph{Tarski's Theorem} lies in the fact that \emph{truth} is a notion that is highly
dependent on logical complexity. There \emph{is} a truth predicate for
\(\Delta^0_0\)-formulas, but its expression is itself a \(\Delta^0_0\)-formula;
likewise, we can construct truth predicates for \(\Pi^0_n\)- and
\(\Sigma^0_n\)-sentences which will themselves be \(\Pi^0_n\)- and
\(\Sigma^0_n\)-sentences.
    For this reason, a \emph{general} truth predicate would seem to have to be
    an element of \(\Delta^0_n\) for all \(n \in \B N\), i.e., have
    \emph{infinite} logical complexity.

Since truth for \(\Delta^0_0\)-sentences can be defined in a \(\Delta^0_0\)
way---that is, the truth of a \(\Delta^0_0\)-sentence can be ascertained using
\emph{only} finite resources---and \(\Pi^0_1\)-sentences are of the form
``\((\forall x, y, \ldots, z) F(x, y, \ldots, z)\)'', where ``\(F(x, y, \ldots,
z)\)'' is a \(\Delta^0_0\)-formula, we can define truth for \(\Pi^0_1\)-sentences
in a \(\Pi^0_1\) way.

\begin{definition}[\(\Delta^0_0\) Truth]
    Suppose ``\(F\)'' is a \(\Delta_0^0\)-sentence. Then ``\(F\)'' is equivalent
    to a Boolean combination of equalities and inequalities of polynomials in
    \(0\) and \(1\). The formula ``\(F\)'' is \emph{true} if and only if the
    Boolean combination is actually true in \(\B N\).
\end{definition}

\begin{definition}[\(\Pi^0_1\) Truth]
    The \(\Pi^0_0\)-sentence ``\((\forall x\forall y \ldots\forall z) F(x, y, \ldots,
    z)\)'' is true if and only if for all \(n_0, n_1, \ldots, n_k \in \B N\),
    the \(\Delta^0_0\)-sentence ``\(F(\underline{n_0}, \underline{n_1}, \ldots,
    \underline{n_k})\)'' is true.
\end{definition}

\subsection{Computability Theory}
\label{app:LB:CT}

Computability or recursion theory\footnote{
    As previously noted, we use the terms ``computable'' and ``recursive''
    synonymously.
} is the subdomain of logic concerned with the comparative difficulty of various
problems from the perspective of an idealized computer, such as a Turing machine
with an infinitely long tape. It turns out that a large number of abstract
models of computation coincide with one another and are captured by the notion
of a \emph{recursive} function. The celebrated \emph{Church's Thesis} is the meta-claim that \emph{all} general notions a inherently finite computability are equivalent. The general reference for this section is
\cite{ODIFREDDI}.

\subsubsection{Primitive recursion}

There are a large number of equivalent definitions of the primitive recursive
functions. The most straightforward is to define a set of \emph{basic} primitive
recursive functions and then to define the primitive recursive functions as
their closure under certain operations. That is:
    \begin{itemize}
        \item \textbf{Initial primitive recursive functions:} The following
            functions mapping \(\B N^k\) to \(\B N\) for some \(k \geq 1\) are
            all primitive recursive:
                \begin{itemize}
                    \item \textbf{Zero map:} The function \(\C O(x) = 0\).
                    \item \textbf{Successor map:} The function \(S(x) = x + 1\).
                    \item \textbf{Projection maps:} The functions \(\pi_k(x_0,
                        x_1, \ldots, x_k, \ldots, x_n) = x_k\).
                \end{itemize}

        \item \textbf{Composition:} The primitive recursive functions are closed
            under composition; that is, if \(f:\B N^n \to \B N\) is primitive
            recursive, and \(g_1,\ldots,g_n: \B N^m \to \B N\) are primitive
            recursive, then the function \(h: \B N^m \to \B N\) given by
                \begin{equation*}
                    h(\vec x) = f(g_1(\vec x), \ldots, g_n(\vec x))
                \end{equation*}
            is primitive recursive.

        \item \textbf{Primitive recursion:} If \(g: \B N^n \to \B N\) is
            primitive recursive and \(h: \B N^{n + 2}\) is primitive recursive,
            then the unique function \(f: \B N^{n+1} \to \B N\) satisfying
                \begin{align*}
                    f(\vec x, 0) &= g(\vec x)\\
                    f(\vec x, y + 1) &= h(\vec x, y, f(\vec x, y))
                \end{align*}
            is primitive recursive.
    \end{itemize}

A wide variety of familiar ``computable'' procedures are primtive recursive,
such as addition, subtraction, powers, and logarithms. Even comparatively
``expensive'' operations, such as calculating the prime factorization of a given
integer or solving an arbitrary 3-SAT problem, are primitive recursive. This is
because solving these problems does not involve a possibly ``unbounded'' search:
one can delimit in advance a {\emph{bounded}} set of values the function can take.
Informally, the set of primitive recursive functions consists of all functions
which can be implemented in a standard programming language using only FOR
loops and not WHILE loops.

What if one needs WHILE loops?  The broader class of \emph{recursive} functions
consists of all problems which could in principle be solved by a computer like a
Turing machine. Unlike primitive recursive functions, recursive functions can
involve \emph{unbounded} searches, and, indeed, it is not possible in general to
determine whether or not a recursive function is total (i.e., produces output
for any given input).

More precisely, the set of \emph{(partial) recursive functions} is the smallest
set of functions containing the primitive recursive functions and closed under
\emph{\(\mu\)-recursion}; that is, given a recursive function \(g : \B N^{n+1}
\to \B N\), the function \(f : \B N^n \to \B N\) given by
    \begin{equation*}
        f(\vec x) = \text{ the least \(z \in \B N\) such that for all \(y \leq
        z\), \(g(\vec x, y)\) is defined and \(g(\vec x, z) = 0\)},
    \end{equation*}
is also recursive.

\subsubsection{Computable real functions}
\label{app:LB:CRF1}

It is possible to extend the notion of a recursive function from the discrete
domain \(\B N\) to a general continuous framework. For a more thorough
introduction to the theory of computable continuous functions, see
\cite{WEIHRAUCH}.

The primary obstacle to a theory of computable real functions is the observation
that uncountable metric spaces, such as \([0,1] \subseteq \B R\), are inherently
``incomputable'' objects: there is no computer program that can enumerate all
\(x \in [0,1]\).

However, \([0,1]\) does have a computable \emph{presentation}, in the following
sense: the rationals are an inherently effective object, and we can imagine a
recursive function \(n \mapsto q_n\) mapping \(n\) to the \(n\)-th rational in
\([0,1]\) in an effective enumeration.\footnote{
    Strictly speaking, there is no such thing as a recursive map from \(\B N\)
    to \(\B Q\) as we have defined the term ``recursive.'' However, one can
    ``build'' the rational numbers \(\B Q\) ``inside'' the natural numbers \(\B
    N\) in any of a variety of ways. For instance, pairs of integers can be
    ``coded'' by representing \((m, n)\) using the single integer \((m + n)^n +
    m\). The rational number \(\tfrac p q\) can be represented by the pair \((p,
    q)\), etc.
} Then, the real numbers \(\B R\) can be ``presented'' as rapidly convergent
sequences of the form \(\la x_0, x_1, x_2, \ldots \ra\), where \(x_i \in \B Q\)
and \(|x_i - x_{i + 1}| < 2^{-(n+1)}\).

Since real numbers are given by rapidly converging sequences, \emph{computable}
functions on the real numbers should modify the sequences in a recursive way to
produce a new rapidly converging sequence. Intuitively, given a more and more
accurate representation of the real number \(x\)---that is, the rapidly
converging sequence of rational numbers \(\la x _n \ra\)---the function \(f\)
should output more and more accurate information about the real number \(f(x)\)
by producing a new rapidly converging sequence of rational numbers.

This process is most easily envisioned by means of a partial recursive function
\(f : \B Q^{<\B N} \to \B Q\) mapping finite sequences of rational numbers to
rational numbers. Using the real number \(x\) represented by the sequence
\begin{equation*}
        \la q_{i(0)}, q_{i(1)}, q_{i(2)}, \ldots \ra,
    \end{equation*}
one can imagine successively feeding \(f\) the inputs
    \begin{align*}
        \vec q_0 & = \la q_{i(0)} \ra\ \\
        \vec q_1 & = \la q_{i(0)}, q_{i(1)} \ra \\
        \vec q_2 & = \la q_{i(0)}, q_{i(1)}, q_{i(2)} \ra\\
        \vdots\ &
    \end{align*}
Let \(d(i) : \B N \to \B N\) enumerate the \(q_i\) for which \(f\) returns a
value. Then, \(f(x)\) is the sequence
    \begin{equation*}
        \la f(\vec q_{d(0)}), f(\vec q_{d(1)}), f(\vec q_{d(2)}), \ldots \ra.
    \end{equation*}
(Here, we assume that it does indeed hold that \(|f(\vec q_{d(i)}) - f(\vec
q_{d(i + 1)})| < 2^{-(i + 1)}\); otherwise, the value of \(f\) at \(\la
q_{i(0)}, q_{i(1)}, q_{i(2)}, \ldots\ra\) is undefined.) For this reason, we can
speak of the \emph{computable continuous function} \(f : \B R \to \B R\).

It is clear that the basic notion is not changed if the rational approximations are made with dyadic rationals.  The computability assumption of $f$ can be strengthened.  For example one can ask that $f$ be primitive recursive.

\subsubsection{Modulus of continuity and approximation}
\label{app:LB:CRF2}

For the proof  {Theorem}~\ref{thm:main}, we describe the notion of 
``computable real function" in an equivalent but slightly different way. We take them to have two parts:
    \begin{enumerate}
        \item \textbf{Modulus of continuity:} The modulus of continuity is a
            recursive map \(d : \B N \to \B N\) which calculates how much
            accuracy in the input is sufficient to get a desired accuracy in the
            output. In other words, an input accurate to \(2^{-d(n)}\) ensures that 
            the output can be specified to within
            \(2^{-n}\). 
        \item \textbf{Approximation:} This is a recursive function \(g\) taking
            in a \(d(n)\)-digit binary sequence \(s\) representing some \(q \in \B
            Q\) and producing an \(n\)-digit binary output \(g(s)\) which
            ``approximates'' the value of \(f\) at \(s\). 
    \end{enumerate}

\bigskip

\noindent  We illustrate this idea with the
example of \(f(x) = \exp(x)\), \(x \in [0, 1]\).

\paragraph{Modulus of continuity} To define the modulus of continuity, as in the
main text, we can avail ourselves of a Lipschitz constant. We illustrate the way
we use the terms \emph{modulus of continuity} and \emph{approximate} in the main
text with the example $f(x)=e^x$. Since \(\tfrac d{dx} \exp(x) = \exp(x)\), and
\(\max_{x \in [0, 1]} \exp(x) < 3\), it follows that
    \begin{equation*}
        |f(x) - f(y)| < 3{|x-y|}. 
    \end{equation*}
Therefore, an input accurate to \(2^{-(n+2)}\) can be used to generate an output
accurate to \(2^{-n}\) places. Consequently, we set \(d(n) = n + 2\).

\paragraph{Approximation} At the same time, we can generate good binary
approximations of \(\exp(x)\) in a primitive recursive way. Let \(s\) be an \(n\)-digit binary sequence representing the dyadic rational
\(k \cdot 2^{-n}\) for \(0 \leq k \leq 2^n\). We simply set
    \begin{equation*}
        g(s) = \left[
            \sum_{j = 0}^{n + 3} \frac{(k \cdot 2^{- n})^j}{j!}
        \right]_{n}
    \end{equation*}
where \([x]_n\) denotes rounding \(x\) to its nearest length \(n\) binary
approximation. By Taylor's Theorem, the \(m\)-th Taylor polynomial approximates
\(\exp(x)\) on \([0,1]\) to within \(6 / m!\). Moreover, \(|x - [x]_n | \leq 2
^{-(n + 1)}\). Therefore, it follows that the dyadic rational represented by
\(g(s)\) is within \(2^{-n}\) of \(\exp(k \cdot 2^{-n})\).

Putting together the modulus of continuity and approximation is sufficient to
recover the computable real function \(\exp(x)\) exactly. 

This process is equivalent to the process outlined in section \ref{app:LB:CRF1}. Suppose the real
number \(x\) is given by the rapidly converging sequence of rationals \(\la
q_{i(n)} \ra\). Then, to calculate the \(n\)-th rational in the representation
of \(\exp(x)\), simply:
    \begin{enumerate}
        \item Set \(m = d(n)\),
        \item Extract the binary sequence \([q_{i(m)}]_m\),
        \item Calculate \(f(s)\),
        \item Output \(y = k \cdot 2^{- m}\), where \(k \cdot 2^{-m}\) is the
            dyadic rational represented by the binary sequence \(f(s)\).
    \end{enumerate}
Note that \(y\) is a dyadic rational number. Since \(q_{i(m)}\) approximates
\(\exp(x)\) to within \(2^{- (n + 2)}\), \([q_{i(m)}]_{n + 2}\) approximates
\(q_{i(m)}\) to within \(2^{- (n + 2)}\), and the approximation \(f\) is
accurate on \(n + 2\)-digit binary sequences to within \(2^{-(n + 2)}\), it
follows that \(|y - \exp(x)| < 2^{-n}\), as desired.

\paragraph{Computable $C^k$ functions} 
Fix a  $k\in \nn$.  A function $T:\bt^2\to \bt^2$ is $C^k$ if and only for all $0\le i\le k'\le k, j\in \{0,1\}$, the partial derivatives 
${\partial^{k'}\over \partial^ix\partial^{k'-i}y}(T_j)$ is uniformly continuous.  For each $k', j, i$ we can apply the notions of \emph{modulus of continuity} 
and \emph{approximation} to ${\partial^{k'}\over \partial^ix\partial^{k'-i}y}(T_j)$  individually and ask the ensemble of partial derivatives as 
$0\le i\le k'\le k, j\in \{0,1\}$ is computable. This gives the definition of a computable $C^k$ function.  
Similarly for $k=\infty$ we ask that for all $0\le i\le k'\in\nn, j\in \{0,1\}$, ${\partial^{k'}\over \partial^ix\partial^{k'-i}y}(T_j)$ is uniformly continuous and 
that there is a single algorithm that computes the modulus of continuity and approximation uniformly in $k'$.

This generalizes easily to arbitrary smooth manifolds, adding complexity without content.

\section{Ergodic Theory Background}
\label{app:ET}

In this appendix, we briefly explain and define important notions in ergodic
theory relevant to Sections~\ref{sec:Odom} to~\ref{sec:CT}.

\subsection{Why $\poZ$? Why $\bt^2$? Why $C^\infty$?}

The short answer is that we want to work in  the simplest, best behaved and
most classical context.

Physical systems are often modeled by ordinary differential equations on a
smooth compact manifold $M$.  Solutions are formalized as dynamical systems:
\[\phi:\mathbb R\times M\to M\] such that $\phi(s,\phi(t,x_0))=\phi(s+t,x_0)$
and $\phi(s,\cdot):M\to M$ is measure preserving.  

Doing repeated experiments in a physical realization of such a system---say to
measure a constant of interest such as the average value of an $L^1$ function on
$M$---is viewed as measuring $\phi(t_0,x_0), \phi(t_0+t_0, x_0), \dots
\phi((N-1)t_0, x_0)$ and averaging:  ${\tfrac 1 N}\sum_i f(\phi(i*t_0, x_0))$.
Provided that the system is sufficiently mixing (``ergodic''), the Ergodic Theorem implies that  for almost every
$x_0$ the averages along trajectories converge to the integral of $f$ over $N$. 

Thus empirical experiments are construed as sampling along portions of a
$\poZ$-action given by: \[\psi(n,x_0)=\phi(nt_0,x_0).\]

The manifold is required to be compact to avoid wild behavior and asked to be of
the smallest possible dimension.  Dimension one is impossible because there are
very few conjugacy classes of measure preserving diffeomorphisms on one dimensional manifolds.  On
the unit circle there are exactly two.

Thus we move to two dimensional compact
manifolds.  The most convenient choice is $\bt^2$, the two torus.

As $k$ increases, the behavior of $C^k$ diffeomorphisms becomes more regular--the behavior of $C^1$-diffeomorphisms 
can be quite wild.  Thus the theorem involves   $C^\infty$-diffeomorphisms because it illustrates that the basic issue is not how wild the diffeomorphism is. 

It could be argued that the tamest situation of all involves real analytic transformations of the 2-torus.  The results in this paper  can be extended to real-analytic maps using the work of Banerjee and Kunde \cite{BK}.
\paragraph{In Summary} We are proving that the question of forward vs.\ backward
time encodes some of the most complex problems in mathematics.  This claim is
made stronger by taking the simples possible context: time is given by  a
$\poZ$-action, $\bt^2$ is the simplest, most concrete manifold possible, and the
diffeomorphisms in question are the most regular possible.

\subsection{Basics}\label{basicerg}

Unless otherwise stated, all definitions are modulo a null set. That is, \(A =
B\) is shorthand for \(\mu[A\diff B] = 0\), \(f = g\) is shorthand for
\(\mu[\{x:f(x)\neq g(x)\}] = 0\), and so on. Put differently, all statements in
this appendix contain a tacit ``a.e.''

\begin{definition}[Measurable Dynamical Systems]
    A \emph{measurable measure preserving system}
     is a quadruple \((X, \C F, \mu, T)\)
    where:
        \begin{itemize}
            \item \(X\) is a set,
            \item \(\C F\) is a \(\sigma\)-algebra of subsets of \(X\),
            \item \(\mu\) is a measure on \(\C F\) such that \(\mu[X] = 1\), and
            \item \(T: X\to X\) is an invertible, \(\C F\)-measurable, \(\mu\)-preserving
                transformation on \(X\), i.e., \(\mu[A] = \mu[T^{-1}(A)]\).
        \end{itemize}
    An \emph{ergodic} system (or, more specificially, an \emph{ergodic
    transformation}) is a measurable dynamical system in which \(T\) satisfies
    the additional hypothesis that if \(T^{-1}(A) = A\), then \(A = X\) or \(A =
    \emptyset\) a.e.
\end{definition}

One way of understanding this definition is that ergodic systems are the atomic
units of measurable dynamical systems; measurable dynamical systems can be
broken down into ergodic pieces, but no further. This intuition is formalized by
the ergodic decomoposition theorem; see Theorem 3.22 in \cite{glasbook}.

Measurable dynamical systems can be studied from several perspectives. In
addition to their straightforward pointwise actions, they can also be viewed
from the perspective of functional analysis.

\begin{definition}[Koopman Operator]
    Let \(T:X\to X\) be a measure-preserving transformation. Then \(T\) induces
    an operator \(U_T: L^p(X)\to L^p(X)\) in the following way:
        \begin{equation*}
            f\mapsto f\circ T.
        \end{equation*}
    The operator \(U_T\) is known as the \emph{Koopman operator}.
\end{definition}

The following theorem is a fundamental tool in ergodic theory.
\begin{theorem}[Pointwise Ergodic Theorem]
\label{app:ET:thm:ET}
    Let \((X, \C F, \mu, T)\) be a measure-preserving dynamical system. Then,
    for any \(f\in L^{1}(X)\) and a.e.\ \(x\in X\),
        \begin{equation*}
            \lim_{n\to\infty}\frac{1}{n}\sum_{k = 0}^{n-1} f(T^kx),
        \end{equation*}
    converges a.e.\ to an \(L^1(X)\) function \(\bar f\), which is invariant
    under the action of \(T\) and \(\int_X \bar f d\mu = \int_X f d\mu\).
\end{theorem}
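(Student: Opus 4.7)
The plan is to prove the Pointwise (Birkhoff) Ergodic Theorem in the classical way, via the Maximal Ergodic Theorem. First I would reduce to showing that $\limsup_n A_n f(x) = \liminf_n A_n f(x)$ a.e., where $A_n f(x) = \frac{1}{n}\sum_{k=0}^{n-1} f(T^k x)$, since this common value $\bar f$ is automatically $T$-invariant (the difference between $A_n f$ and $A_n f \circ T$ is $O(1/n)\cdot (f(T^n x) - f(x))$, which tends to $0$ a.e.\ on the set where some subsequence of $A_n f$ converges). Once the limit exists a.e., an application of Fatou and the $T$-invariance of $\mu$ gives $\int \bar f\, d\mu = \int f\, d\mu$.

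The core step is the Maximal Ergodic Theorem: for $g \in L^1$ and $E_\alpha = \{x : \sup_{n\geq 1} A_n g(x) > \alpha\}$, one has $\mu(E_\alpha) \leq \|g\|_1/\alpha$. I would prove this via Hopf's trick: setting $S_n g = \sum_{k=0}^{n-1} g \circ T^k$ and $M_n g = \max_{1\leq k\leq n} S_k g$, one shows $\int_{\{M_n g > 0\}} g\, d\mu \geq 0$ by observing that on $\{M_n g > 0\}$, $g \geq M_n g - (M_n g)\circ T$ pointwise (using that $M_n g \circ T \geq S_k g \circ T = S_{k+1} g - g$), then integrating and using measure-preservation to cancel. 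Applying this to $g = f - \alpha$ yields the maximal inequality.

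Next, fix rationals $\beta < \alpha$ and consider $E_{\alpha,\beta} = \{x : \liminf_n A_n f(x) < \beta < \alpha < \limsup_n A_n f(x)\}$. This set is $T$-invariant, so restricting everything to $E_{\alpha,\beta}$ and applying the maximal inequality to $f - \alpha$ (in the invariant set, with its restricted measure) gives $\alpha \mu(E_{\alpha,\beta}) \leq \int_{E_{\alpha,\beta}} f\, d\mu$; applying it to $\beta - f$ gives the reverse inequality $\beta \mu(E_{\alpha,\beta}) \geq \int_{E_{\alpha,\beta}} f\, d\mu$. Since $\beta < \alpha$, this forces $\mu(E_{\alpha,\beta}) = 0$. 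Taking a countable union over rationals $\beta < \alpha$ gives $\liminf A_n f = \limsup A_n f$ a.e., establishing pointwise convergence to some $\bar f$.

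The main obstacle is the maximal inequality itself; once Hopf's combinatorial argument is in hand the rest is bookkeeping. To finish, I would verify $\int \bar f\, d\mu = \int f\, d\mu$: for $f \geq 0$, Fatou gives $\int \bar f\, d\mu \leq \liminf \int A_n f\, d\mu = \int f\, d\mu$; the reverse inequality follows by truncating $f \wedge N$, applying the maximal inequality to obtain a dominating integrable envelope, and then invoking dominated convergence on $A_n(f \wedge N)$ before letting $N \to \infty$. For general $f \in L^1$, decompose $f = f^+ - f^-$. This yields $\int \bar f\, d\mu = \int f\, d\mu$, completing the proof.
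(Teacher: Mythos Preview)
The paper does not prove this theorem; it is stated in Appendix~\ref{app:ET} purely as standard background and is simply quoted without proof (followed only by the remark that in the ergodic case $\bar f$ is the constant $\int_X f\,d\mu$). So there is nothing to compare your argument against.

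Your outline is the classical Birkhoff proof via Hopf's maximal ergodic lemma and is correct. One minor comment on the final paragraph: for the truncation $g=f\wedge N$ you do not actually need the maximal inequality to produce a dominating envelope, since $0\le A_n g\le N$ already allows dominated convergence to give $\int \overline{f\wedge N}=\int(f\wedge N)$; then $\overline{f\wedge N}\le\bar f$ yields $\int(f\wedge N)\le\int\bar f$, and monotone convergence as $N\to\infty$ gives $\int f\le\int\bar f$. Combined with the Fatou direction this finishes the equality of integrals cleanly.
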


While Theorem~\ref{app:ET:thm:ET} is stated for general measurable dynamical
systems, note that if \(T\) is ergodic, the only invariant functions are
constant a.e.\ on \(X\). Therefore, in the ergodic case, for a.e.\ \(x\in X\),
    \begin{equation*}
            \lim_{n\to\infty}\frac{1}{n}\sum_{k = 0}^{n-1} f(T^kx) = \int_X f
            d\mu
    \end{equation*}

\subsection{Symbolic systems}\label{symbsys}

Symbolic systems, some of which are examined in detail in Section~\ref{sec:Odom}
and Subsection~\ref{sec:CT:ss:circ}, represent a large and well-studied class of
measurable dynamical systems. What follows in this subsection collects and
further elaborates on the background given in Section~\ref{sec:Odom}.

\begin{definition}[Symbolic System]
\label{app:ET:defn:ss}
    Let \(\Sigma\) be some finite or countable alphabet. Then \(\Sigma^{\B Z}\)
    is the collection of all bi-infinite words in \(\Sigma\), i.e., functions
    \(s:\B Z\to\Sigma\). Let \(\sh\) represent the shift operator, i.e., for all
    \(n\in\B Z\),
        \begin{equation*}
            \sh(s)(n) = s(n + 1).
        \end{equation*}
    Let \(\C B\) be a shift-invariant \(\sigma\)-algebra of subsets of \(\Sigma^{\B Z}\).  Lastly, let \(\mu\) be a
    shift-invariant probability measure measure on \(\C B\). Then we call the
    quadruple \((\Sigma^\B Z, \C B, \mu, \sh)\) a \emph{symbolic system}.
\end{definition}

A common way of constructing symbolic systems is by combining finite words into
infinite words according to some specified pattern. For finite words, we make
use of the following notation and conventions:
    \begin{itemize}
        \item The length of a word \(u\) is \(|u|\);
        \item The concatenation of the words \(u\) and \(w\) is written \(u\cdot
            w\), or, where clear from context, \(uw\);
        \item The repeated concatenation of a word with itself is denoted as
            follows:
            \begin{equation*}
                w^n = \overbrace{w\cdot w\cdots w}^{\text{\(n\)-times}};
            \end{equation*}
            and,
        \item All finite words are assumed to be zero-indexed, i.e., if
            \(|w|=n\), then \(w(0)\) is the first letter in \(w\) and \(w(n)\)
            is undefined.
        \item For a finite or infinite word \(s\), \(s\rest[n,m)\) for \(n, m
            \in \B Z\) is the finite subword beginning at \(n\) and having
            length \(m - n\).
    \end{itemize}

\begin{definition}[Unique Readability]\label{unique}
    A collection of words \(\C W\) is \emph{uniquely readable} when for any \(u,
    v, w\in\C W\), if \(u\cdot v = p\cdot w \cdot p'\), then either \(p\) or
    \(p'\) is the empty word.
\end{definition}

\begin{definition}[Construction Sequence]\label{consteq}
    Let \(\Sigma\) be a finite or countable alphabet. We call a squence of sets
    of finite words \(\la\C W_n: n\in\B N\ra\) in the alphabet \(\Sigma\) a
    \emph{construction sequence} if:
        \begin{enumerate}
            \item All the words in \(\C W_n\) have the same length \(q_n\);
            \item Each \(w\in\C W_{n+1}\) contains each \(v\in\C W_n\) as a
                subword;
            \item For all \(n > 0\), if \(w\in\C W_{n+1}\), then there is a
                unique parsing of \(w\) into segments in terms of \(w_0, \ldots,
                w_l\in\C W_n\), \(u_0, \ldots, u_{l + 1}\notin\C W\), such that
                    \begin{equation}
                    \label{app:ET:defn:cs-eq}
                        w = u_0\cdot w_0\cdot u_1 \cdots w_l \cdot u_{l+1}.
                    \end{equation}
            \item There is a summable sequence \(\la \epsilon_n: n\in\B N\ra\)
                such that if \(u_0,\ldots, u_{l+1}\) are as in Equation
                \ref{app:ET:defn:cs-eq}, then
                    \begin{equation}
                    \label{app:ET:defn:eq2}
                        \frac{\sum_{i = 0}^{l + 1} |u_i|} {q_{n+1}} =
                        \epsilon_{n+1}.
                    \end{equation}
        \end{enumerate}
    In Equation~\ref{app:ET:defn:cs-eq}, we call the words \(u_i\)
    \emph{spacers}.

    A construction sequence gives rise to a symbolic dynamical system in a
    natural way. Define the set \(\B K\) to be
        \begin{equation*}
            \{s\in\Sigma^{\B Z}: (\forall n,m)(\exists k, N) s\rest[n,m) =
            w\rest[n + k, m + k)\text{ for some } w\in\C W_N\}.
        \end{equation*}
    That is, \(\B K\) consists of all bi-infinite words with the property that
    every finite subword is itself a subword of some word \(w\in\C W_N\) for
    some \(N\in\B N\).

    Let \(\C B\) be the Borel sets and let \(\nu\) be some \(\sh\)-invariant
    measure on \(\C B\) such that \(\nu[S]=1\). Then the system \((S, \C B, \nu,
    \sh)\) is the \emph{symbolic system corresponding to the construction
    sequence \(\la\C W_n:n\in\B N\ra\)}.
\end{definition}

\begin{remark}
While unique readability and construction sequences are fundamental for the systems constructed in this paper they are not a  property of typical symbolic systems.
\end{remark}
\noindent Suppose that $v\in \mcw_n$ and $w\in \mcw_{n+1}$.  Let $r(v,w)$ be the number of occurrences of $v$ in $w$.
\begin{definition}[Uniform Construction Sequence]
    We say that a construction sequence is \emph{uniform} if there is a summable
    sequence \(\la \epsilon_n: n \in\B N\ra\) and a sequence \(\la d_n: n\in\B
    N\ra\) in \((0, 1)\) of densities such that for all \(w\in\C W_{n+1}\) and
    \(v\in\C W_n\),
        \begin{equation*}
            \left|\frac{r(v,w)}{q_{n + 1} / q_n} - d_n \right| < \epsilon_n.
        \end{equation*}
    That is, each \(v\in\C W_n\) occurs in each \(w\in\C W_{n + 1}\) with
    approximately the same frequency as all of the others.

    If \(r(v,w)\) is actually constant over \(v\in\C W_n\) and \(w\in \C
    W_{n+1}\), then we say that \(\la\C W_n\ra\) is \emph{strongly uniform}.
\end{definition}

\begin{definition}[Cylinder Sets]
    Let \(w\) be a finite word in \(\Sigma\). Then the \emph{cylinder set}
    associated to \(w\), also written \(\la w\ra\), is given by
        \begin{equation*}
            \la w\ra\ = \{s\in\Sigma^\B Z: s\rest[0, |s|) = w\}.
        \end{equation*}
\end{definition}

These notions are useful largely for the following lemma which is proved in \cite{part1}, which allows us to
further restrict the set \(\B K\) to especially well-behaved bi-infinite words
\(s\).  
\begin{lemma}
\label{app:ET:lem:ue}
    Let \((\Sigma^\B Z, \C B, \nu, \sh)\) be a symbolic system arising from the
    construction sequence \(\la\C W_n\ra\). Then:
        \begin{enumerate}
            \item The set \(\B K\) is the smallest closed shift-invariant subset
                of \(\Sigma^\B Z\) with non-empty intersection with every basic
                open set \(\la w\ra\) for \(w\in\C W_n\) for some \(n\).
            \item Suppose \(\la\C W_n\ra\) is
                 uniform. Then, let \(S\) be the collection of
                \(s\in\Sigma^\B Z\) such that there are increasing integer
                sequences \(\la a_n: n \in\B N\ra\) and \(\la b_n: n\in\B N\ra\)
                such that \(a_n, b_n\to\infty\) as \(n\to\infty\) and
                \(s\rest[-a_n, b_n)\in\C W_n\). Then there is a unique
                non-atomic shift-invariant measure \(\nu\) concentrating on
                \(S\), and this \(\nu\) is ergodic. 
        \end{enumerate}
\end{lemma}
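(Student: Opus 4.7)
\textbf{Proposal for a proof of Lemma \ref{app:ET:lem:ue}.}

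For part (1), the plan is to verify the three properties in turn: closedness, shift-invariance, meeting every cylinder, and then minimality. Shift-invariance is immediate from the definition of $\B K$ since the set of finite subwords of $\sh(x)$ equals the set of finite subwords of $x$. For closedness, if $x_k \to x$ in $\Sigma^{\B Z}$ and $u$ is any finite subword of $x$, then for all sufficiently large $k$ the same $u$ appears as a subword of $x_k$ at the same position, hence $u$ occurs inside some $w\in\mcw_N$ by hypothesis. To see $\B K$ meets $\la w\ra$ for each $w\in\mcw_n$: using clause~3 of Definition \ref{consteq}, inductively extend $w$ to words $w = w^{(n)} \sqsubseteq w^{(n+1)} \sqsubseteq \cdots$ living inside words of $\mcw_{n+1}, \mcw_{n+2},\ldots$; after suitable shifts this yields a bi-infinite point of $\B K$ whose restriction to $[0,|w|)$ is $w$. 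For minimality, let $L$ be any closed shift-invariant set meeting every $\la w\ra$ with $w\in\bigcup_n\mcw_n$. Given $x\in\B K$, its principal $n$-subword is some $w_n\in\mcw_n$ sitting on an interval $[-a_n,b_n)$; by hypothesis there exists $y_n\in L$ with $y_n\rest[0,q_n) = w_n$, and by shift-invariance we may replace $y_n$ by $\sh^{-a_n}(y_n)\in L$, which agrees with $x$ on $[-a_n,b_n)$. Since $a_n,b_n\to\infty$ for $x\in\B K$ on a dense subset (and then everywhere by closure), $x\in L$.

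For part (2), the plan is to construct $\nu$, show it concentrates on $S$, then prove uniqueness and ergodicity simultaneously by showing unique ergodicity. First define $\nu$ on cylinders: by strong (or approximately strong) uniformity, for $w\in\mcw_n$ the density $d_n$ determines a well-defined limit
\[
\nu(\la w\ra) \;=\; \lim_{m\to\infty}\frac{r(w,w')}{q_m}, \qquad w'\in\mcw_m,
\]
independent of $w'\in\mcw_m$ up to the error $\epsilon_n$. Extend additively to the algebra generated by cylinders $\la w\ra$ for $w\in\bigcup_n\mcw_n$ and apply Carathéodory; shift-invariance on this algebra follows from the fact that each cylinder can be shifted, and the frequencies are translation-insensitive modulo the vanishing spacer proportion $\epsilon_n$ in clause~4 of Definition \ref{consteq}. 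That $\nu$ concentrates on $S$ is a direct Borel--Cantelli argument: by clause~4, the $\nu$-measure of $\{x:x(0)\text{ lies in a spacer of level } m\}$ is at most $\epsilon_m$, which is summable, so almost every $x$ lies in a $\mcw_m$-block for all sufficiently large $m$, and by shift-invariance almost every $x$ has its principal $m$-subword extending arbitrarily far in both directions.

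Uniqueness and ergodicity I would obtain together by proving unique ergodicity of $(\B K,\sh)$ on $S$. The key point, following the familiar Oxtoby criterion, is that uniformity forces the ergodic averages of the indicator $\mathbbm{1}_{\la w\ra}$ to converge to the constant $\nu(\la w\ra)$ \emph{uniformly} on $S$: for $x\in S$ with principal $m$-subword on $[-a_m,b_m)$, the fraction of positions $j\in[-a_m,b_m)$ with $\sh^j(x)\in\la w\ra$ equals $r(w,w^{(m)})/q_m$ up to boundary effects from the spacers, hence approaches $\nu(\la w\ra)$ as $m\to\infty$ independently of $x$. Applying the ergodic theorem together with this uniform convergence to any invariant probability measure $\mu$ forces $\mu(\la w\ra)=\nu(\la w\ra)$ for all basic cylinders, yielding $\mu=\nu$; and unique ergodicity implies ergodicity (plus non-atomicity follows from the exponential growth of $|\mcw_n|$ together with $\nu(\la w\ra)\to 0$ uniformly in $w\in\mcw_n$). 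The main technical obstacle is bookkeeping the spacer contributions in this last step, verifying that the total spacer mass in $\mcw_m$-words is $O(\sum_{k\le m}\epsilon_k)$, which is small by summability, so the uniform convergence of ergodic averages on $S$ is not disturbed.
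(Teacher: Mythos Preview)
The paper does not actually prove this lemma: it is stated in the appendix with the remark that it ``is proved in \cite{part1}'' and that ``the fact that \(\nu\) exists and is unique is proved in Lemma 11 of \cite{part1}.'' The only in-text argument is the one-line observation after the lemma that for strongly uniform systems the ergodic theorem gives $\nu(\la v\ra)$ as a limit of occurrence frequencies in generic points---which is exactly the mechanism you invoke in part~(2). So your approach is the standard one and agrees with what little the paper indicates.

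Two small technical points in your outline. In part~(1), your minimality argument invokes the ``principal $n$-subword'' of an arbitrary $x\in\B K$, but not every $x\in\B K$ has one (the definition of $\B K$ only requires every finite subword of $x$ to occur inside some $\mcw_n$-word, not that $x(0)$ sits inside a $\mcw_n$-block). The clean fix is simpler than what you wrote: for any $x\in\B K$ and any $N$, the subword $x\rest[-N,N]$ occurs inside some $w\in\mcw_m$, and since $L$ meets $\la w\ra$ and is shift-invariant, some point of $L$ agrees with $x$ on $[-N,N]$; closedness of $L$ finishes. In part~(2), your unique-ergodicity argument quietly assumes the competing measure $\mu$ concentrates on $S$ (otherwise the uniform convergence of ergodic averages on $S$ says nothing about $\mu$-generic points), and your non-atomicity justification via ``exponential growth of $|\mcw_n|$'' is not supplied by the hypotheses---uniformity alone does not force $|\mcw_n|\to\infty$. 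The lemma as stated is really asserting uniqueness \emph{among non-atomic measures on $S$}, so you should keep that restriction explicit throughout rather than claiming full unique ergodicity of $(\B K,\sh)$.
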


In Lemma~\ref{app:ET:lem:ue}, the fact that \(\nu\) exists and
is unique is proved in Lemma 11 of \cite{part1}. For strongly uniform systems it is a 
direct consequence of the ergodic theorem that for a generic $x$
    \begin{equation*}
        \nu(\la v\ra) = \lim_{n\to\infty}\frac {|\{\text{Occurences of \(w\) in
        \(x\rest[-a_n, b_n)\)}\}|}{n}
    \end{equation*}
which, because \(\la\C W_n\ra\) is strongly uniform, is fixed independently of $x$.

\subsection{Odometers}\label{append:odo}

Odometers are an important class of transformations. They give a symbolic realization  
of the Kronecker factors of the odometer-based
transformations built  in Section~\ref{sec:Odom} using the Substitution Lemma.

\begin{definition}[Odometers]
    Let \(\la k_n: n \in\B N\ra\), \(k_n > 1\) be a sequence of natural numbers.
    Then the group
        \begin{equation*}
            O=\prod_{i=0}^\infty\B Z/k_n\B Z
        \end{equation*}
    is the \(\la k_n\ra\)-adic integers. By giving each factor of the form \(\B
    Z/k_n\B Z\) a uniform measure, we can form the product measure \(\mu\).
    (This is also the Haar measure on \(O\).)
    \smallskip

    Define the transformation \(\C O\) which acts by \(\la k_n\ra\)-adic addition
    by one. That is, the transformation is given by addition by one with a carry
    to the right:
        \begin{equation*}
            T: \la x_0, x_1, x_2, \ldots \ra \mapsto \la x_0 + 1, x_1, x_2,
            \ldots\ra,
        \end{equation*}
    unless \(x_0 = k_0 - 1\), in which case
        \begin{equation*}
            T: \la x_0, x_1, x_2, \ldots \ra \mapsto \la 0, x_1 + 1, x_2,
            \ldots\ra,
        \end{equation*}
    unless both \(x_0 = k_0 - 1\) and  \(x_1 = k_1 - 1\), in which case 
        \begin{equation*}
            T: \la x_0, x_1, x_2, \ldots \ra \mapsto \la 0, 0, x_2 + 1,
            \ldots\ra,
        \end{equation*}
    and so on.
\smallskip

\noindent    A measure preserving system given in this way is called an \emph{odometer}.
\end{definition}

A useful property of the odometers is that the eigenvalues of their associated
Koopman operators are easily characterized.

\begin{theorem}
\label{app:ET:thm:odom-eigen}
    Let \(q_n = k_0\cdot k_1 \cdots k_{n-1}\), and let \(A_n\subseteq O\) be the set of
    points whose first \(n \) coordinates are zero. Define
        \begin{equation*}
            \C R_n = \sum_{j = 0}^{q_n - 1}(e^{2\pi i / q_n})^j\cdot\chi_{\C
            O^j(A_n)},
        \end{equation*}
where we denote the result of applying the transformation $\mco$ $j-$times to $A_n$ by $\mco^j(A_n)$.
    Then:
        \begin{itemize}
            \item The function \(\C R_n\) is an eigenvector of \(U_{\C O}\) with
                eigenvalue \(e^{2\pi i / q_n}\),
            \item The function \(\C R_{n + 1}^{q_{n + 1}}\) is equal to \(\C
                R_n\), and
            \item The collection \(\{\C R_n^k: 0\leq k \leq q_n, n\in\B N\}\)
                form a basis of \(L^2(O)\).
        \end{itemize}
\end{theorem}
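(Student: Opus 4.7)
The plan is to verify each of the three bullets in sequence, exploiting the cyclic structure of $\mco$ acting on the sets $\mco^j(A_n)$.

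\textbf{First bullet (eigenvector property).} The key observation is that the sets $A_n, \mco(A_n), \mco^2(A_n), \ldots, \mco^{q_n-1}(A_n)$ partition $O$ into $q_n$ pieces of equal Haar measure $1/q_n$: indeed, $A_n$ consists of those $x$ with $x_0=\cdots=x_{n-1}=0$, and applying $\mco$ successively runs the first $n$ coordinates through all $q_n = k_0 k_1 \cdots k_{n-1}$ possible values (with carrying strictly among the first $n$ coordinates) before returning to $A_n$. Hence $\mco^{q_n}(A_n)=A_n$ and $\mco(\mco^j(A_n))=\mco^{j+1 \bmod q_n}(A_n)$. Pulling back:
\begin{equation*}
U_{\mco}\chi_{\mco^j(A_n)}(x)=\chi_{\mco^j(A_n)}(\mco x)=\chi_{\mco^{j-1 \bmod q_n}(A_n)}(x).
\end{equation*}
Plugging this into the definition and re-indexing the sum, a single factor of $e^{2\pi i/q_n}$ comes out, giving $U_{\mco}\C R_n = e^{2\pi i/q_n}\C R_n$. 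The boundary term works out precisely because $(e^{2\pi i/q_n})^{q_n}=1$.

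\textbf{Second bullet (compatibility across $n$).} I read the claim as saying that $\C R_{n+1}$ raised to the appropriate integer power---namely $k_n = q_{n+1}/q_n$---recovers $\C R_n$ (the eigenvalues satisfy $(e^{2\pi i/q_{n+1}})^{k_n}=e^{2\pi i/q_n}$, so this is the only dimensionally sensible identity). The verification uses the refinement
\begin{equation*}
A_n \;=\; \bigsqcup_{i=0}^{k_n-1}\mco^{iq_n}(A_{n+1}),
\end{equation*}
which holds because adding $q_n$ to a point of $A_{n+1}$ bumps the $n$-th coordinate by $1$ without touching coordinates $0,\ldots,n-1$ or $n+1,n+2,\ldots$. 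Applying $\mco^j$ to both sides, one re-expresses each $\chi_{\mco^j(A_n)}$ as a sum of $k_n$ characteristic functions of $\mco^{j+iq_n}(A_{n+1})$, and the index change $m=j+iq_n$ (which bijects $\{0,\ldots,q_n-1\}\times\{0,\ldots,k_n-1\}$ with $\{0,\ldots,q_{n+1}-1\}$) converts $\C R_n$ into $\sum_m (e^{2\pi i/q_n})^m\chi_{\mco^m(A_{n+1})}$. Since $(e^{2\pi i/q_n})^m=(e^{2\pi i/q_{n+1}})^{mk_n}$, this is precisely the pointwise $k_n$-th power of $\C R_{n+1}$.

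\textbf{Third bullet (basis of $L^2(O)$).} The $\sigma$-algebra on $O$ is generated by the cylinder sets determined by fixing finitely many coordinates. Every such cylinder is a finite union of translates $\mco^j(A_n)$ for $n$ large enough, so the linear span of $\{\chi_{\mco^j(A_n)}:0\le j<q_n,\ n\in\nn\}$ is dense in $L^2(O)$. For each fixed $n$, the vectors $\C R_n^0,\C R_n^1,\ldots,\C R_n^{q_n-1}$ are related to the $q_n$ characteristic functions $\chi_{\mco^j(A_n)}$ by the discrete Fourier / Vandermonde matrix $\bigl((e^{2\pi i/q_n})^{jk}\bigr)_{j,k}$, which is invertible. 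Hence the two collections have the same finite-dimensional span, and by taking unions over $n$ one obtains density.

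\textbf{Where the work lies.} None of the three steps is deep; the main obstacle is simply bookkeeping in the second bullet, where one must be careful that the refinement $A_n=\bigsqcup_i \mco^{iq_n}(A_{n+1})$ is correct (relying on the fact that carrying within coordinates $0,\ldots,n-1$ does not occur when one adds $q_n$ to an element of $A_{n+1}$) and that the re-indexing $m=j+iq_n$ is a genuine bijection of the correct range. Once this combinatorial identity on the partitions is in place, the computations with roots of unity follow mechanically.
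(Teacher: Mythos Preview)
The paper states this theorem in the appendix as standard background material and does not supply a proof, so there is no in-paper argument to compare against. Your verification of all three bullets is correct: the partition $\{\mco^j(A_n):0\le j<q_n\}$ and the re-indexing under $U_{\mco}$ handle the first bullet cleanly; the refinement $A_n=\bigsqcup_{i<k_n}\mco^{iq_n}(A_{n+1})$ together with the substitution $m=j+iq_n$ establishes the second; and the Vandermonde/DFT observation reduces the third to density of cylinder functions. You were also right to read the exponent in the second bullet as $k_n=q_{n+1}/q_n$ rather than $q_{n+1}$ --- the eigenvalue count forces this, and the printed exponent is evidently a slip.
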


Odometer systems are ergodic and canonically isomorphic to their inverses via
the map \(x\mapsto -x\).

We have the following immediate corollary, which gives a simple sufficient
condition for two parameter sequences \(\la k_n^0\ra\) and \(\la k_n^1\ra\)
define distinct (i.e., non-isomorphic) odometers.

\begin{cor}
\label{cor:odoms}
    Let \(\C O_0\) and \(\C O_1\) be odometers with associated parameter
    sequences \(\la k_n^0\ra_n\) and \(\la k_n^1\ra_n\). Let
        \begin{equation*}
            \F p_0 = \{p \in \B N: \text{\(p\) is prime and for some \(m \in \B
            N\), } p\mid k_m^0\},
        \end{equation*}
    and
        \begin{equation*}
            \F p_1 = \{p \in \B N: \text{\(p\) is prime and for some \(m \in \B
            N\), } p\mid k_m^1\}.
        \end{equation*}
    Then, if \(\F p_0\neq \F p_1\), \(\C O_0\not\cong \C O_1\).
\end{cor}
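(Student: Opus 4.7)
The plan is to deduce non-isomorphism from the fact that the point spectra of the associated Koopman operators must differ whenever $\mathfrak p_0 \neq \mathfrak p_1$. I would argue as follows.

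First, I would recall the standard fact that if two measure preserving systems $(X_0, \mu_0, T_0)$ and $(X_1, \mu_1, T_1)$ are measure theoretically isomorphic, then their Koopman operators $U_{T_0}$ and $U_{T_1}$ are unitarily equivalent, and hence have the same point spectrum. So it suffices to show that if $\mathfrak p_0 \neq \mathfrak p_1$, then the point spectra of $U_{\mco_0}$ and $U_{\mco_1}$ differ.

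Next, I would use Theorem \ref{app:ET:thm:odom-eigen} to identify the point spectrum of $U_{\mco_i}$ explicitly. Since the functions $\{\mcr_n^k : 0 \le k \le q_n^i,\ n \in \nn\}$ form an orthonormal basis of $L^2(O_i)$ consisting of eigenvectors, every eigenvalue is of the form $e^{2\pi i k / q_n^i}$; in particular, every eigenvalue is a root of unity whose order divides $q_n^i = k_0^i \cdot k_1^i \cdots k_{n-1}^i$ for some $n$. Thus the set of orders of eigenvalues is precisely $\{d : d \mid q_n^i \text{ for some } n\}$, and so the set of primes appearing as orders of eigenvalues of $U_{\mco_i}$ is exactly $\mathfrak p_i$.

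Finally, suppose $\mathfrak p_0 \neq \mathfrak p_1$, and without loss of generality pick $p \in \mathfrak p_0 \setminus \mathfrak p_1$. By construction, $p$ divides some $k_m^0$, hence divides $q_{m+1}^0$, and so $e^{2\pi i / p}$ is an eigenvalue of $U_{\mco_0}$. On the other hand, if $e^{2\pi i/p}$ were an eigenvalue of $U_{\mco_1}$, then $p$ would divide some $q_n^1$, forcing $p$ to divide some $k_j^1$ for $j < n$, contradicting $p \notin \mathfrak p_1$. Therefore the point spectra of $U_{\mco_0}$ and $U_{\mco_1}$ differ, completing the proof. There is no substantial obstacle here; the content is entirely carried by Theorem \ref{app:ET:thm:odom-eigen}, and the argument is a direct spectral comparison.
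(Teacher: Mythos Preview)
Your proof is correct and follows essentially the same approach as the paper: both argue that the Koopman operator's point spectrum is an isomorphism invariant and use Theorem~\ref{app:ET:thm:odom-eigen} to show that the set of prime orders of eigenvalues is exactly $\mathfrak p_i$. Your version is simply a more detailed unpacking of the paper's two-sentence argument.
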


\begin{proof}
    The Koopman operators \(U_{\C O_0}\) and \(U_{\C O_1}\) are invariants of
    the respective odometers. By Theorem~\ref{app:ET:thm:odom-eigen}, the
    eigenvalues of these operators are roots of unity, and the prime divisors of
    these orders are \(\F p_0\) and \(\F p_1\), respectively.
\end{proof}

\subsection{Factors, joinings, and conjugacies}\label{factors and joinings}

Like most objects, measurable dynamical systems interact with one another via
certain kinds of morphisms. These morphisms must respect the action of the
transformations, but only need be defined a.e.

\begin{definition}[Factors]
    Let \((X, \C B(X), \mu, T)\) and \((Y, \C B(X), \nu, S)\) be ergodic
    systems. A map \(\phi:X\to Y\) is a \emph{factor} if and only if
    \(\phi^*\mu\), the pushforward measure, equals \(\nu\), and the following
    diagram commutes for almost all \(x\in X\): 
        \begin{equation*}
            \begin{tikzcd}
                X \rar["T"]\dar["\phi"]
                    & X \dar["\phi"]\\
                Y\rar["S"]
                    & Y
            \end{tikzcd}
        \end{equation*}
    A map \(\phi\) meeting these requirements is a \emph{factor map} and \((Y,
    \C B(X), \nu, S)\) is said to be a \emph{factor} of \((X, \C B(X), \mu,
    T)\). (More briefly, \(S\) is a factor of \(T\).)
\end{definition}

\begin{definition}[Isomorphism]
    Let \({\mathbb X}=(X, \C B(X), \mu, T)\) and \({\mathbb Y}=(Y, \C B(X), \nu,
    S)\) be ergodic systems.  Then $\mathbb X$ is \emph{measure isomorphic} to $\mathbb Y$ if
    and only if there is an invertible factor map $\phi:X\to Y$. We write
    $S\cong T$.
    
    Informally we say ``$S$ is isomorphic" or ``$S$ is congruent" to $T$. It is
    easy to verify that if $X=Y$, then $S\cong T$ if and only if $S$ and $T$ are
    conjugate in the group of measure preserving transformations of $X$. For
    this reason we  use \emph{congruent} as a synonym of \emph{isomorphic}.
\end{definition}

We now consider  generalizations of factor maps, \emph{joinings.}
\begin{definition}[Joining]
    A joining of two ergodic systems \(\B X =(X, \C B(X), \mu, T)\) and \(\B Y =
    (Y,\C B(Y), \nu, S)\) is a \(T\times S\)-invariant measure \(\eta\) on
    \((X\times Y,\C B(\B X)\otimes \C B(\B Y))\) such that:
        \begin{itemize}
            \item \(\eta(A\times Y)=\mu(A)\), and
            \item \(\eta(X\times B)=\mu(B)\).
        \end{itemize}
\end{definition}

Let $\eta$ be a joining of $\B X$ and $\B Y$. If $\B X'$, $\B Y'$ are factors of
$\B X$,$\B Y$, respectively, then the measure algebras associated with $\B X'$
and $\B Y'$ can be viewed as subalgebras of the measure algebras of $\mathbb X$
and $\mathbb Y$.  Hence $\eta$ induces an invariant measure on $\mathbb X'
\otimes \mathbb Y'$ and hence a joining $\eta'$ of $\mathbb X'$ and $\mathbb
Y'$.

\begin{definition}[Graph Joinings]
    If $\pi:X\to Y$ is a factor map, then viewed as a subset of $X\times Y$, $\pi$ is 
    $T\times S$-invariant and can be canonically identified with a joining $\mcj$ by setting:
    \[\mcj(A)=\mu(\{x:(x,\pi(x))\in A\}).\]
   Joinings coming 
    from  factor maps are called \emph{graph joinings}. An
    \emph{invertible graph joining} is a graph joining coming from an isomorphism.
\end{definition}

Joinings arise in other forms as well. First, we need the standard notion of a
disintegration:

\begin{definition}[Disintegration]
    Let \(\phi:\B X\to\B Y\) be a factor map, and, for \(y\in Y\), $A\subseteq X$,  let \(A^y\)
    be the fiber of \(A\) over \(y\). Then there exists a family of measures
    \(\{\nu_y:y\in Y\}\) concentrating on  respective fibers \(X^y\), such
    that:
        \begin{enumerate}
            \item Each \(\nu_y\) is a standard probability measure on \(X^y\);
            \item For \(A\in\C B(X)\), \(r\in[0,1]\), and \(\epsilon>0\), the
                set \[\{y\in Y:A^y\text{ is \(\nu_y\)-measurable and
                \(|\nu_y(A^y)-r|<\epsilon\)}\}\] is \(\nu\)-measurable; and
            \item For all \(A\in\C B(X)\), \(\mu(A)=\int_Y\nu_y(A^y)d\nu\).
        \end{enumerate}
   The family  \(\{\nu_y:y\in Y\}\) is the \emph{disintegration} of \(\nu\) over \(\phi\).
\end{definition}

\begin{definition}[Relatively Independent Joinings]
    Let \(\B X=(X,\mcb,\mu, T)\), \(\B Y=(Y,\mcc, \nu, S)\), and \(\B Z=(Z,\mathcal D, \rho, R)\) be ergodic systems such that
        \begin{equation*}
            \begin{tikzcd}
                \B X\ar[swap, ddr,"\phi_1"] && \B Y\ar[ddl,
                "\phi_2"]\\
                    \\
                & \B Z &\\
            \end{tikzcd}
        \end{equation*}
    and $\{\mu_z\}, \{\nu_z\}$ be the respective disintegrations.
    Then define \(\eta\), the \emph{relatively independent joining} of \(\B X\)
    and \(\B Y\) over \(\B Z\) by
        \begin{equation*}
            \eta(A\times B)=\int_Z\mu_z(A^z)\nu_z(B^z)d\rho(z)
        \end{equation*}
    which concentrates on \(\{(x,y):\phi_1(x)=\phi_2(y)\}\).
\end{definition}

\section{Diffeomorphisms}
\label{diffeos}

This appendix gives a very brief description of the role of diffeomorphisms play in this paper, as opposed to abstract measure-preserving transformations.

\subsection{Diffeomorphisms of the torus}

\begin{definition}[The two-torus]
    We define the two-torus \(\B T^2\) to be the product of the
    unit interval with itself with opposite edges identified: define an equivalence relation on $[0,1]\times [0,1]$ by setting $(0,y)\sim (1,y)$ for all $y$ and $(x,0)\sim (x,1)$ for all $x$. We view \(\B T^2\) as the unit square modulo the equivalence relation, i.e.,
        \begin{equation*}
            \B T^2 = [0, 1] \times [0, 1] /\sim.
        \end{equation*}
 \end{definition}
For this reason, to ensure that a continous map  \(T:[0, 1]\times [0,1]\to [0,1]\times [0,1]\) induces a
diffeomorphism of the \emph{torus}, it suffices to ensure that
	\begin{enumerate}
	\item for all  $x, T(x,0)=T(x,1)$ and for $y, T(0,y)=T(1,y)$,
	\item  there is an $\mathbb R^2$ neighborhood
	$U$ of $[0,1]\times [0,1]$ such that $T$ can be extended smoothly to a diffeomorphism 
	$T^*:U\to U$ such that  
	$T^*(x,y)=T([x]_1,[y]_1)$.	
	\end{enumerate}

The set of \(C^\infty\)-maps on a compact manifold have a natural topology in
which two maps are close if the transformations themselves are close, their
differentials are close, and so on. More precisely, if \(M\) is a \(C^k\)-smooth
compact finite-dimensional manifold and \(\mu\) is a standard measure on \(M\)
determined by a smooth volume element, then, for each \(k\) there is a Polish
topology on the \(k\)-times differentiable homeomorphisms of \(M\). The
\(C^\infty\)-topology is the coarsest common refinement of these topologies.

This topology is induced by a very explicit metric.

\begin{definition}[The \(d^\infty\) metric]
    Let \(S\) and \(T\) be smooth maps from \(\B T^2\) to \(\B T^2\). Then, set
    the distance between \(S\) and \(T\), \(d^\infty(S, T)\) as follows:
        \begin{equation}
        \label{eq:d-infty-def}
            d^\infty(S, T) = \sum_{k = 0}^\infty 2^{-k} \cdot \frac{\max_{x \in
            \B T^2} ||D^kS(x) - D^kT(x)||}{1 + \max_{x \in \B T^2} ||D^kS(x) -
            D^kT(x)||}.
        \end{equation}
\end{definition}

Here \(D^k f\) denotes the \(k\)-th differential of \(f\). Note that this
measure is \emph{effective} in the sense that to determine the distance between
\(S\) and \(T\) to an accuracy of \(2^{-k}\), it is only necessary to examine
the first \(k\) differentials of \(S\) and \(T\).

The two-torus carries a natural Lebesgue measure and the diffeomorphisms we
consider will preserve the Lebesgue measure. However the conjugacy relation
between diffeomorphisms does not require smoothness: two diffeomorphisms $S, T$
are conjugate if and only if there is a (not necessarily smooth) invertible
measure preserving transformation $\theta$ such that:
    \[\theta\circ T=S\circ\theta.\] 

\subsection{Smooth permutations}
\label{smoothPerms}

In our construction of diffeomorphisms of \(\B T^2\), it was required to find smooth approximations to arbitrary permutations of a rectangular partition of a larger rectangle. This is discussed in Theorem \ref{smp}. This is done by composing $C^\infty$ maps that swap most of the interior of adjacent squares.  These maps emulate the transpositions used to construct an arbitrary permutation. (See \cite{part1}). For completeness we exhibit concrete proofs to illustrate that the swap-diffeomorphisms can be taken to be primitive recursive.

As throughout the paper we say \emph{smooth} to mean $C^\infty$.  When the domain has boundary we mean $C^\infty$ in the interior and continuous at the boundary. 
One ingredient is the following example:

\begin{ex}(Bump function)\label{lem:ssf}
Define 
\begin{equation*}
            g(x) = \begin{cases}
                  e^{-{1\over x}},      & x>0\\
                       
                    0, &  x\le 0
                \end{cases}
\end{equation*}
and set
\[f(x)={g(x)\over g(x)+g(1-x)}.\]
One verifies directly that $f\in C^\infty$, $f\rest(-\infty,0]=0$ and $f\rest[1,\infty)=1$ and $f$ maps the unit interval to the unit interval.

By rescaling and translating the input to $f$ and possibly replacing it by $1-f$,  for all 
$\alpha<\beta\in \mathbb R, a\in \{0,1\}$ one can create total  $C^\infty$ functions that map 
$[\alpha,\beta]$ to $[0,1]$, are constantly 
$a$ on $(-\infty,\alpha]$ and $1-a$ on $[\beta,\infty)$.

\end{ex}

Here is an explicit version of Lemma 36 in \cite{part1}. The author is grateful to A. Gorodetski for providing the complex analysis background for this argument.  A reference for the results cited is 
\cite{SCM}. A guide to a Matlab software package for computing the relevant mappings claimed is available at 
\url{http://www.math.udel.edu/~driscoll/SC/guide-v5.pdf}.
 \begin{lemma} \label{squareswaps}
 Let $A=[0,1]\times [-1,1]$ and $\epsilon>0$. Then there is an area preserving map $f:A\to A$ that is $C^\infty$ on the interior of $A$, is the identity on a neighborhood of the boundary of $A$ and there is an open set $\mco\subseteq A$ such that
 \begin{enumerate}
 \item $\mco$ is symmetric about the $x$ axis,
 \item  $\lambda(\mco)> 2-\epsilon$,
 \item {For each $x$ on the boundary of $\mco$ the distance from $x$ to the boundary of $A$ is less than 
 $\epsilon$.}
 \item $f$ maps the top half of $\mco$ to the bottom half of $\mco$.
 \end{enumerate}
 \end{lemma}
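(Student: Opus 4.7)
The plan is to construct $f$ as an area-preserving twist map in action-angle coordinates on $A$, which automatically yields area preservation and reduces the swap property to a rigid angular shift by half a period. Fix a smooth $\sigma$-symmetric function $H: A \to [0, H_{\max}]$ that vanishes on a thin collar $C \subset A$ of $\partial A$ with $\lambda(C) < \epsilon/4$, has a unique non-degenerate interior maximum at $p_0 = (1/2, 0)$, and no other critical points; each level set $\{H = c\}$ for $0 < c < H_{\max}$ is then a smooth $\sigma$-symmetric Jordan curve enclosing $p_0$. Define the action $J(p) := \lambda(\{H > H(p)\})$ and, via the Hamiltonian flow of $H$, define the conjugate angle $\theta \in \mathbb R / 2\pi \mathbb Z$ on $A \setminus (\{p_0\} \cup C)$, normalized so that $\theta$ is odd under $\sigma$ and so that any twist of the form $(J,\theta) \mapsto (J, \theta + c)$ preserves Lebesgue area. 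The Arnold--Liouville construction produces such symplectic polar coordinates; any overall constant factor between $dJ \wedge d\theta$ and $dx \wedge dy$ can be absorbed into the definition of $J$ or $\theta$.

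Using the bump function of Example~\ref{lem:ssf}, build a smooth $h$ on the range of $J$ with $h \equiv \pi$ on an inner interval $[0, J_1]$ and $h \equiv 0$ on an outer interval $[J_1 + \eta, J_{\max}]$, choosing $J_1$ and $\eta$ so that the level set $\{J = J_1\}$ lies within Euclidean distance $\epsilon$ of $\partial A$ and $\lambda(\{J \leq J_1\}) > 2 - \epsilon$; both are possible because $\lambda(C) < \epsilon/4$ and the $J$-level curves foliate $A \setminus (C \cup \{p_0\})$ and accumulate on $\partial A$ as $J \uparrow J_{\max}$. Set
\[
\mco := \{p \in A : J(p) \leq J_1\} \quad\text{and}\quad f(J, \theta) := (J, \theta + h(J))
\]
with $f$ extended by the identity on $C$ and on $\{J \geq J_1 + \eta/2\}$ (consistent, since $h$ vanishes there). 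The Jacobian of $f$ in $(J, \theta)$ coordinates is triangular with $1$'s on the diagonal and hence has determinant $1$, so $f$ preserves area. On $\mco$, $h \equiv \pi$ and the involution $\theta \mapsto \theta + \pi$, combined with the $\sigma$-oddness of $\theta$, exchanges the upper portion $\{\theta \in (0, \pi)\}$ with the lower portion $\{\theta \in (-\pi, 0) \bmod 2\pi\}$ of each level curve, giving conclusion (4); conclusions (1)--(3) follow from the $\sigma$-symmetry of $J$ and the choice of $J_1$.

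The main technical obstacle is the global $C^\infty$-smoothness of $f$, especially at the central point $p_0$. Smoothness in the interior away from $p_0$ and $C$ is immediate from smoothness of $(J, \theta)$ and of $h$. At $p_0$, the Morse lemma supplies Darboux coordinates $(u, v)$ in which $H = H_{\max} - (u^2 + v^2)/2$; in this chart, $J$ is a smooth positive multiple of $u^2 + v^2$ and $\theta$ is the standard polar angle, and because $h \equiv \pi$ on a neighborhood of $J = 0$, the map $f$ restricted to a neighborhood of $p_0$ is exactly the linear involution $(u, v) \mapsto (-u, -v)$, hence $C^\infty$ at $p_0$. Across the outer transition $\{J = J_1 + \eta/2\}$ smoothness is automatic because $h$ vanishes on an open interval containing the outer edge, and matching with the identity on $C$ is likewise automatic. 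Thus $f$ is a $C^\infty$ area-preserving diffeomorphism of $A$ satisfying the lemma.
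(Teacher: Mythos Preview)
Your approach is correct in outline and genuinely different from the paper's. The paper conjugates a radial twist on a round disk to $A$ via a Schwarz--Christoffel conformal map, post-composed with a Moser-type correction to make the conjugacy area-preserving; the $\sigma$-symmetry is inherited from Schwarz reflection. You instead manufacture ``polar coordinates'' intrinsically on $A$ as action-angle variables of a well-chosen Hamiltonian $H$, which builds in both area-preservation and the reflection symmetry from the start and avoids any complex analysis or Moser argument. Your route is more self-contained; the paper's route is more explicit (there are integral formulas for the Schwarz--Christoffel map), which matters elsewhere in the paper where primitive recursiveness of the whole construction is at stake.

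There is, however, one incorrect step in your smoothness argument at $p_0$. You assert that ``the Morse lemma supplies Darboux coordinates $(u,v)$ in which $H = H_{\max} - (u^2+v^2)/2$.'' No such lemma exists: the standard Morse lemma is not symplectic, and there is in general no symplectic change of variables making a smooth $H$ \emph{exactly} quadratic near a nondegenerate critical point (Birkhoff normal form is only formal). Two easy repairs are available. First, since you are free to choose $H$, simply demand that $H$ equal $H_{\max} - \tfrac12\bigl((x-\tfrac12)^2 + y^2\bigr)$ on a small ball around $p_0$; then near $p_0$ your $(J,\theta)$ really are standard polar coordinates in the original chart, and $\theta\mapsto\theta+\pi$ is the linear map $p\mapsto 2p_0-p$. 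Second, and without constraining $H$, note that on $\{J\le J_1\}$ your $f$ is the half-period map $p\mapsto \Phi^{\,T(H(p))/2}(p)$ of the Hamiltonian flow $\Phi^t$, where the period $T(c)=-\tfrac{d}{dc}\lambda(\{H>c\})$ extends smoothly to $c=H_{\max}$ (in Morse coordinates the enclosed area is a smooth function of the squared radius, hence of $c$); thus $f$ is a composition of smooth maps on a full neighborhood of $p_0$. Either fix completes your argument.
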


 \pf Let $D$  be the closed disk of area 2 centered at the origin viewed as a subset of $\mathbb C$.  By the 
 Schwarz-Christoffel theorem there is  an analytic bijection $\theta_0$ from the upper  half disk,   
 $\overline{D\cap\{\text{Im}z>0\}}$,   to $[0,1]\times [0,1]$ that is analytic on the interior and continuous on the boundary. {Moreover it sends the  intersection of the disk with the $x$-axis to $[0,1]\times \{0\}$.}

 By the Schwartz Reflection Principle, if $\theta_0$ is extended symmetrically about the $x$-axis the result 
 (which we also call $\theta_0$) is still analytic. (So the map $\theta_0$ satisfies the identity 
 $\theta_0(\bar{z})=\overline{\theta_0(z)}$.)
 
 \paragraph{Note:} For uncomplicated regions such as the half-disk and a square, there are explicit integral formulas for the Schwarz-Christoffel mapping.  These are given in \cite{SCM}.
\medskip

 Using a slight variation on Moser's `Lemma 2' in  \cite{moser}, this map can be composed with another map so that the result $\theta$ is measure preserving, $C^\infty$ and still symmetric about the horizontal line.  
 
Since the  proof of Moser's lemma in \cite{moser} is a little bit confusing, the following is a painfully explicit proof of a slight variation of  a very special case.
 
 \begin{lemma}(Moser prime) Suppose that $\theta_0:D\to A$ is a bijection that is analytic on the interior of D.
  Then there is a bijection $u:A\to A$ that is $C^\infty$ in the interior of $A$
   and preserves the boundary of $A$ 
   such that $\theta=u\circ \theta_0$ is Lebesgue measure preserving and takes the top half of $A$ to the top half of $A$. 
 \end{lemma}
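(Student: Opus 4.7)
The plan is to apply Moser's measure-transport trick, adapted to preserve the reflection symmetry $\sigma(x,y) := (x,-y)$ of $A$. First, compute the pushforward $\mu := (\theta_0)_*\mathrm{Leb}_D$. Since $\theta_0$ is analytic on $\mathrm{int}(D)$, its real Jacobian equals $|\theta_0'|^2$, so $\mu$ has smooth positive density $\rho(w) = |\theta_0'(\theta_0^{-1}(w))|^{-2}$ on $\mathrm{int}(A)$ relative to Lebesgue, and the Schwarz reflection identity $\theta_0(\bar z) = \overline{\theta_0(z)}$ gives $\rho \circ \sigma = \rho$. Bijectivity of $\theta_0$ yields $\int_A \rho\,dA = \mathrm{Leb}(D) = 2 = \mathrm{Leb}(A)$.

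The goal is then to produce a bijection $u : A \to A$, smooth on $\mathrm{int}(A)$, sending $\partial A$ to $\partial A$, commuting with $\sigma$, and satisfying $u_*(\rho\, dx\,dy) = dx\,dy$. Granted such $u$, the composition $\theta := u \circ \theta_0$ is measure-preserving by construction, smooth on $\mathrm{int}(D)$ (as a smooth composite with an analytic map), and sends the top half of $D$ to the top half of $A$: $\theta_0$ already maps the top half of $D$ onto the top half of $A$, and a $\sigma$-equivariant self-bijection of $A$ preserves the $\sigma$-invariant decomposition of $A$ into upper half, real diameter, and lower half.

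To build $u$, I solve the Neumann problem $\Delta\phi = \rho - 1$ on $\mathrm{int}(A)$ with $\partial_n\phi = 0$ on $\partial A$; the solvability condition $\int_A(\rho - 1) = 0$ is exactly the matching of total masses above. The symmetry $\rho\circ\sigma = \rho$ lets me choose $\phi$ smooth on $\mathrm{int}(A)$ and $\sigma$-invariant. Set $Y := \nabla\phi$, so $\mathrm{div}\,Y = \rho - 1$, $Y$ is tangent to $\partial A$, and $Y$ is $\sigma$-equivariant. Define the time-dependent vector field $X_t := -Y/(1 - t + t\rho)$ on $\mathrm{int}(A)$ for $t\in[0,1]$ and let $\psi_t$ be its flow. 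The Moser calculation with $\omega_t := (1-t+t\rho)\,dx\wedge dy$ gives $\tfrac{d}{dt}[\psi_t^*\omega_t] = 0$, whence $\psi_1^*\omega_1 = \omega_0$ and $u := \psi_1^{-1}$ satisfies $u_*(\rho\,dx\,dy) = dx\,dy$. Since $X_t$ is $\sigma$-equivariant and tangent to $\partial A$, the flow (and hence $u$) commutes with $\sigma$ and maps $\partial A$ to $\partial A$.

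The main technical obstacle is corner behavior. The four corners of $A$ correspond under $\theta_0^{-1}$ to smooth boundary points of $D$ at which $\theta_0$ is locally a branched square-root, so $\rho$ vanishes quadratically at each corner and $X_1$ is singular there. I expect to resolve this by showing that integral curves of $X_t$ do not reach the corners in finite parameter time: standard elliptic regularity for the Neumann problem on a rectangle forces $Y = \nabla\phi$ to vanish at the corresponding rate, so $X_t$ remains integrable along any flow line approaching a corner, and the corners become fixed points of $\psi_1$. If this asymptotic analysis is delicate, a mild mollification of $\rho$ in disjoint small neighborhoods of the corners (keeping $\int \rho = 2$ and preserving the $\sigma$-symmetry) together with a smooth matching to $\theta_0^{-1}$ near corners will do the job, since $u$ is only required to be smooth in $\mathrm{int}(A)$. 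Either route delivers the required bijection and completes the lemma.
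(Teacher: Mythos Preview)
Your approach is essentially correct but takes a genuinely different route from the paper. Both proofs begin identically: compute the pushforward density $\rho$ (the paper calls it $h$) and reduce to finding a self-map $u$ of $A$ with $u_*(\rho\,d\lambda)=d\lambda$, using the $\sigma$-symmetry of $\rho$ to get the top-half preservation. The divergence is in how $u$ is built. You run the standard Moser flow: solve a Neumann problem $\Delta\phi=\rho-1$, set $X_t=-\nabla\phi/(1-t+t\rho)$, and take $u=\psi_1^{-1}$. The paper instead writes down an explicit Knothe--Rosenblatt map: set $u_1(x_1)=\int_0^{x_1}\!\int_{-1}^{1}h(s,t)\,dt\,ds$ and $u_2(x_1,x_2)=\bigl(\int_{-1}^{x_2}h(x_1,t)\,dt\bigr)\big/\bigl(\int_{-1}^{1}h(x_1,t)\,dt\bigr)$ (suitably rescaled), so that $\det Du = h$ by direct computation, and the symmetry of $h$ in $x_2$ makes $u_2(x_1,0)$ land on the midline.

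What each buys: your flow argument is the textbook route and handles the symmetry cleanly via equivariance, but it introduces a PDE and a flow whose corner behavior you yourself flag as delicate (the density vanishes quadratically at the corners, so $X_1$ is singular there, and your ``integral curves don't reach the corners'' sketch would need real work; the mollification fallback is fine but costs you the clean statement). The paper's construction is entirely elementary---two one-dimensional integrations---which sidesteps the PDE entirely and, crucially for this paper, is transparently primitive recursive given a primitive recursive $h$. Since the surrounding appendix is devoted to verifying that all constructions are effective, the explicit integration approach is the better fit here even though yours is mathematically sound.
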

\pf  Let $\mu_0$ be the measure on $A$ given by 
 \[\mu_0(A)=\lambda^2(\theta_0^{-1}(A)).\]
 Then $\mu$ is absolutely continuous with respect to Lebesgue measure on $A$. Let $h$ be density associated with  
  $\mu_0$. Then $h$ is continuous and  analytic on the interior of $A$.  We first show that it suffices to find a $u:A\to A$ such that the Jacobian of $u$ is $h$. Let $\theta=u\circ \theta_0$ and $X\subseteq A$. Then:
 \begin{align*}
 \lambda^2(X)&=\int_X1\ d\lambda^2\\
 &=\int_{u^{-1}(X)}(1\circ u)det(D(u))d\lambda\\
 &=\int_{u^{-1}(X)}h\ d\lambda\\
 &=\lambda^2((\theta_0\circ u)^{-1}(X))\\
 &=\lambda^2(\theta^{-1}(X)).
 \end{align*}
 
Let  $u=(u_1,u_2)$ and suppose that $u_1$ is a function of $x_1$ and $u_2$ is a function of $(x_1, x_2)$. Then   $det(D(u))=\frac{du_1}{dx_1}\frac{du_2}{dx_2}$. We rewrite $h=h_1(x_1)h_2(x_1, x_2)$ and find functions $u_1, u_2$ so that 
\begin{align}
\frac{du_1}{dx_1}&=h_1(x_1)\label{de1}\\
\frac{du_2}{dx_2}&=h_2(x_1, x_2).\label{de2}
\end{align} 
Set
\begin{align*}
h_1(x_1)=\int_{-1}^1h(x_1,t)dt\\
h_2(x_1,x_2)={h(x_1,x_2)\over h(x_1)}
\end{align*} 
 Then equations \ref{de1} and \ref{de2} give two equations that can be solved effectively by ordinary integration 
 and yield smooth solutions.
 \begin{align*}
 u_1(x_1)&=\int_0^{x_1}h_1(x_1)\\
 &=\int_0^{x_1}\int_{-1}^1h(x_1,t)dt
 \end{align*}
and
 \begin{align*}
 u_2(x_1,x_2)&=\int_0^{x_2}h_2(x_1,x_2)dx_2\\
 &={\int_0^{x_2}h(x_1,x_2)dx_2\over \int_{-1}^{1}h(x_1,t)dt}.
 \end{align*}
  Clearly $u_1(0)=0$ and $u_1(1)=1$ and $u_2(x_1,0)=0, u_2(x_1,1)=1$, so $u$ preserves the boundary.
 
 By the symmetry of $\theta_0$, for each $x_i$, 
 \[\int_{-1}^0h(x_1, t)dt=\left(1\over 2\right)\int_{-1}^1h(x_1,t)dt.\] It follows immediately that $u_2(x_1,x_2)\ge 0$ for $x_2\ge 0$. Hence $u$ takes the top half of $A$ to the top half of $A$. (In fact it is easy to verify that $u$ is symmetric about the $x$-axis.)
 
 It is also easy to verify that $u$ is one to one.  If $x_1\ne y_1$ then $u_1(x_1)\ne u_1(y_1)$. If $u_1(x_1)=u_1(y_1)$ but $x_2\ne y_2$ then $u_(x_1,x_2)\ne u(y_1, y_2)$. Since $u$ is continuous and takes the faces of the $A$ to faces of $A$ it follows that $u$ is a surjection.\qed

We now finish the proof of Lemma \ref{squareswaps}. Let $R$ be the radius of the disk $D$ of area 2. By the uniform continuity of  $\theta$ we can choose an $\gamma>0$ so small that each point on the circle of radius 
$R-\gamma$ is sent to a point within $\epsilon$ of the boundary of $A$.  Without loss of generality, the area of the disk of radius 
$R-\gamma$ is bigger than $2-\epsilon$. As in Lemma 36 of \cite{part1} we consider a 
$C^\infty$ function 
$F:[0,R]\to [0,\pi]$ such that 
 $F\rest[0,R-\gamma]$ is identically equal to $\pi$, and $F$ is identically equal to zero in a neighborhood of 
 $R$.  Define a $C^\infty$, area preserving map $\phi:D\to D$ in polar coordinates by 
\[\phi(r,\theta)=(r,\theta+F(r))\]  
Then $\phi$ rotates the upper  half disk of radius $R-\gamma$ to the lower half disk of radius $R-\gamma$. 
Such a function is given explicitly in Lemma \ref{lem:ssf}.

It is now routine to check that the map $f=\theta\circ \phi\circ \theta^{-1}$ satisfies the conclusions of 
Lemma \ref{squareswaps}.

 \qed

\bibliography{godel}

\bibliographystyle{plain}

\end{document}